%% file: OCP_revised2.tex
\documentclass[a4paper,11pt]{article}
\usepackage[normalem]{ulem} 
\usepackage{titlesec}

\usepackage{amsmath,amsthm,amssymb,enumerate}
\usepackage[T1]{fontenc}
\usepackage{gensymb}
\usepackage[square,sort&compress,comma,numbers]{natbib}
\usepackage{outlines}
\usepackage[sc]{mathpazo}
\usepackage{multirow} 
\usepackage{newtxtext,newtxmath}
\usepackage{subfig}
\usepackage{graphicx}
\usepackage{tabularx}
\usepackage{epstopdf}
\usepackage{cancel}
\usepackage[left=2.5cm, right=2.2cm, top=2cm]{geometry}
\usepackage{tikz}
\usetikzlibrary{patterns,calc}
\usepackage{caption}
\usetikzlibrary{shapes,calc}
\usepackage{verbatim}
\usepackage{mathrsfs}
\usepackage{accents}
\usepackage[utf8]{inputenc}
\usepackage{appendix}
\usepackage[ruled]{algorithm2e}
\usepackage{chngcntr}

\usepackage{multirow}
\usepackage{array}
\usepackage{tcolorbox}
\usepackage{tcolorbox}
\tcbuselibrary{skins} 

\newcommand{\M}{\text{M}}
\newcommand{\yb}{\bar{y}}
\newcommand{\yd}{y_{\rm d}}
\newcommand{\pb}{\bar{p}}
\newcommand{\ub}{\bar{u}}
\newcommand{\bu}{{\bf{u}}}
\newcommand{\by}{{\bf{y}}}

\def\tp{\widetilde{p}}
\def\ty{\widetilde{y}}

\def\osc{{\rm osc}}
\usepackage[colorinlistoftodos]{todonotes}

\usetikzlibrary{decorations.pathmorphing}
\usetikzlibrary{decorations.pathreplacing}
\usetikzlibrary{positioning}
\usetikzlibrary{shapes}
\usetikzlibrary{arrows}
\usetikzlibrary{patterns}
\usetikzlibrary{fadings}
\usetikzlibrary{plotmarks}
\usetikzlibrary{calc}
\usetikzlibrary{intersections}
\tikzstyle{every picture}+=[font=\footnotesize]
\usepackage{paralist}
\usepackage{bbm}
\usepackage{latexsym}           
\usepackage{enumerate}
\usepackage{enumitem}

\setlist{noitemsep, topsep=0.8ex, partopsep=0pt
	, leftmargin=2em}
\setlist[1]{labelindent=\parindent}

\newlist{axioms}{enumerate}{1}
\setlist[axioms]{font=\bfseries}

\newlist{alphenum}{enumerate}{1}
\setlist[alphenum]{label=\textbf{(\alph*)}, leftmargin=3em}

\newlist{alphienum}{enumerate}{1}
\setlist[alphienum]{label=\textit{(\alph*)}}

\newlist{romanenum}{enumerate}{1}
\setlist[romanenum]{label=\textit{(\roman*)}}

\newlist{romaninenum}{enumerate*}{1}
\setlist[romaninenum]{label=\textit{(\roman*)}}
\usepackage[ruled]{algorithm2e}
\SetKwIF{If}{ElseIf}{Else}{if}{}{else if}{else}{endif}
\SetKwFor{For}{for}{}{endfor}
\usepackage[noabbrev, capitalise]{cleveref}
\usepackage{tabu}
\crefname{equation}{\unskip}{\unskip}
\creflabelformat{equation}{#2(#1)#3}\usepackage{amsmath}
\usepackage{amsfonts}
\usepackage{amssymb}
\usepackage[ruled]{algorithm2e}

\SetFuncSty{textsc}
\SetKwFunction{frun}{Run}
\SetKwHangingKw{arun}{\frun}
\SetKwFunction{fset}{Set}
\SetKwHangingKw{aset}{\fset}
\SetKwFunction{ffind}{Find}
\SetKwHangingKw{afind}{\ffind}
\SetKwFunction{fselect}{Select}
\SetKwHangingKw{aselect}{\fselect}
\SetKwFunction{fcompute}{Compute}
\SetKwHangingKw{acompute}{\fcompute}
\SetKwFunction{fsolve}{Solve}
\SetKwHangingKw{asolve}{\fsolve}
\SetKwFunction{fupdate}{Update}
\SetKwHangingKw{aupdate}{\fupdate}
\SetKwFunction{festimate}{Estimate}
\SetKwHangingKw{aestimate}{\festimate}
\SetKwFunction{fmark}{Mark}
\SetKwHangingKw{amark}{\fmark}
\SetKwFunction{fbreak}{Break}
\SetKwHangingKw{abreak}{\fbreak}
\SetKwFunction{frefine}{Refine}
\SetKwHangingKw{arefine}{\frefine}
\SetKwFunction{compute}{Compute}
\SetKwFunction{set}{Set}

{\algorithm}%
{\endalgorithm}

\input{command_ccnn}

\newcommand{\ad}{{\rm{ad}}}
\newcommand\useleqno{\renewcommand\@eqnnum{\hb@xt@.01\p@{}%
                      \rlap{\normalfont\normalcolor
                        \hskip -\displaywidth(\theequation)}}}

\newcommand{\h}{{\rm h}}

\newcommand{\heta}{\widehat{\eta}}
\newcommand{\rJ}{{\rm J}}
\newcommand{\rM}{{\rm M}}
\newcommand{\rI}{{\rm I}}
\newcommand{\dd}{{\texttt {d}}}
\newcommand{\z}{{\rm z}}
\newcommand{\zy}{y_{\zeta,\rm d}}

\newcommand{\no}{{\nonumber}}

\def \D{{{\rm {\!}\cal D}}}
 \def \Z{{{\mathcal Z}}}

\allowdisplaybreaks

\def\pw{{\rm pw}}

\def\cJ{\mathcal{J}}

\def\cT{\mathcal{T}}

\newcommand{\norm}[1]{{\vert\kern-0.25ex\vert #1 	\vert\kern-0.25ex \vert}}

\newcommand{\enorm}[1]{{\vert\kern-0.25ex\vert\kern-0.25ex\vert #1 	\vert\kern-0.25ex \vert\kern-0.25ex \vert}}

\title{Quasi-optimality of adaptive FEM for optimal control problems involving Dirac measures governed by biharmonic equation}
	\author{Asha K. Dond\footnote{ashadond@iisertvm.ac.in, School of Mathematics, Indian Institute of Science Education and Research Thiruvananthapuram, India.} \quad
 Neela Nataraj\footnote{neela@math.iitb.ac.in, Department of Mathematics, Indian Institute of Technology Bombay, Powai, Mumbai, India.} \quad
   Subham Nayak\footnote{sn20@iisertvm.ac.in, School of Mathematics, Indian Institute of Science Education and Research Thiruvananthapuram, India.}}
	
\begin{document}
		\maketitle
		\begin{abstract} 
			\noindent 
  This article establishes the quasi-optimality of adaptive nonconforming finite element methods for a class of optimal control problems involving Dirac measures governed by the biharmonic equation. The nonconforming Morley finite elements are employed for discretising both the state and adjoint variables. 
  A modified right-hand side through a companion operator that maps Morley finite elements to a conforming space helps to overcome the challenge in handling point sources on the right-hand side. A~priori and a~posteriori error estimates for the optimal control problems are derived. Further, optimal convergence rates for adaptive finite element methods are established using an axiomatic framework: by proving key properties such as
 {\it stability}, {\it reduction}, {\it discrete reliability}, and {\it quasi-orthogonality}. Numerical experiments for three types of optimal control problems are discussed extensively and they validate the theoretical results.
   \end{abstract}
		
\noindent{\bf Keywords:} {optimal control, point source, point-wise tracking, nonconforming, Morley FEM, a~posteriori, error estimates, convergence, quasi-optimality}

\section{Introduction}  
 We consider three types of optimal control problems (OCP) that aim to minimize the cost functional $\cJ$ within the admissible set $U_\ad$ subject to the biharmonic state equation with clamped boundary conditions. Let $\Omega\subset \mathbb{R}^2$ be a bounded domain with Lipschitz boundary $\partial \Omega$. Let $\Z$ and $\D$ be finite-ordered subsets of $\Omega$ with cardinality $n$ and $m$, respectively, and $\delta_{\rm z}$ denote the Dirac distribution at the point ${\rm z}$ in $\Omega$. The first  OCP is governed by a {\it biharmonic equation with point sources} and is described below.
 \begin{align}
\begin{aligned}
\arg\min_{\bu\in U_\ad}\cJ(y,\bu)&:=\frac{1}{2}\norm{y-\yd}^2_{L^2(\Omega)}+\frac{\alpha}{2}\norm{\bu}_{\mathbb{R}^n}^2 
\text{ subject to } \Delta^2 y=\sum_{{\rm z}\in\Z}u_{\rm z}\delta_{\rm z} \text{ in } \Omega \notag \\
\text{ with } U_{\ad}&:=\big\{\bu=\{u_{\rm z}\}_{{\rm z}\in\Z}\in \mathbb{R}^n:~ a_{\rm z}\leq u_{\rm z} \leq b_{\rm z} \fl {\rm z}\in \Z \big\}.
\end{aligned}
\hspace{1em} \left.\rule{0pt}{2.5em}\right\} \tag{\bf{P1}}\label{opt1}
\end{align}
The second OCP  involves {\it point-wise tracking} and is defined by
\begin{align}
\begin{aligned}
\arg\min_{u\in U_\ad}\cJ(y,u)&:=\frac{1}{2}\sum_{\zeta\in\D}|y(\zeta)-y_{ \zeta\rm,d}|^2+\frac{\alpha}{2}\norm{u}_{L^2(\Omega)}^2
\text{ subject to }
\Delta^2 y=f+u\text{ in }\Omega\\
\text{ with } U_{\rm ad}&:=\{u\in L^2(\Omega):u_a\leq u(x)\leq u_b\text{ a.e. in } \Omega\}\no.
\end{aligned}
\hspace{1em} \left.\rule{0pt}{2.5em}\right\} \tag{\bf{P2}}\label{opt1n}
\end{align}
The third OCP focuses on {\it point-wise tracking problem} governed by the biharmonic equation with {\it point sources} defined by
\begin{align} 
\begin{aligned}
\arg\min_{\bu\in U_\ad}\cJ(y,\bu)&:=\frac{1}{2}\sum_{\zeta\in\D}|y(\zeta)-y_{ \zeta\rm,d}|^2+\frac{\alpha}{2}\norm{\bu}_{\mathbb{R}^n}^2 \text{ subject to }\Delta^2 y=\sum_{{\rm z}\in\Z}u_{\rm z}\delta_{\rm z} \text{ in }\Omega\\
 \text{ with } U_{\ad}&:=\big\{\bu=\{u_{\rm z}\}_{{\rm z}\in\Z}\in \mathbb{R}^n:~ a_{\rm z}\leq u_{\rm z} \leq b_{\rm z}\fl {\rm z}\in \Z \big\}.\no
\end{aligned}
\hspace{1em} \left.\rule{0pt}{2.5em}\right\} \tag{\bf{P3}}\label{opt2n}
\end{align}
In $\eqref{opt1}$, the desired state $y_{\rm d}\in L^2(\Omega)$ while in \eqref{opt1n} and \eqref{opt2n}, ${\bf y}_{\rm d}:=\{y_{ \zeta\rm,d}\}_{\zeta\in\D}\in \ell^2(\D)$. The regularization parameter is denoted by $\alpha>0$. In \eqref{opt1} and \eqref{opt2n}, the control variable corresponds to the amplitude of forces modeled as point sources such that the control bounds ${\bf{a}}=\{a_{\rm z}\}_{{\rm z}\in\Z}$ and ${\bf{b}}=\{b_{\rm z}\}_{{\rm z}\in\Z} \in \mathbb R^n$ satisfy $a_{\rm z} \leq b_{\rm z}$  for all  ${\rm z}\in\Z$. In \eqref{opt1n}, distributed control is considered with constraints $u_a,u_b\in {\mathbb R}$ satisfying $u_a\leq u_b$. In \eqref{opt1}, the force is applied at specific points in the domain, causing the state variable to approach the desired state. In contrast, in \eqref{opt1n} and \eqref{opt2n}, the state variable moves towards the desired state at certain points when the force is applied across the entire domain or only at finitely many points, respectively. 
  
\medskip
\noindent  
Some applications of optimal control problems governed by the biharmonic equation that involve Dirac measures are described now. In structural mechanics, they model the deflection of thin elastic plates under point loads, aiming to optimize load distribution for desired plate deformation \cite{JLL1971}. 
In electrostatics, the partial differential equations (PDE) model the potential fields due to point charges, with the control problem focusing on the optimal charge distribution to achieve a specified electric potential \cite{Jackson1999}. The OCP involving Dirac measures have diverse applications; such as in the fields of Caffarelli–Silvestre extension for fractional
diffusion, boundary controllability of parabolic and hyperbolic degenerate
equations, the optimal control of selective cooling of steel, and in the active
control of sound (see ref. \cite{HEJ18} and references therein). The problem of pointwise controlling the motion of a vibrating plate is modeled as an OCP governed by a biharmonic equation with point sources in \cite{AS1991}.


\medskip
\noindent Conforming finite element methods (FEM) for higher-order PDE involve cumbersome $C^1$ finite elements. In this context, nonconforming Morley finite element methods with piecewise $\mathcal{P}_2$ polynomials offer an attractive alternative, especially for fourth-order problems where the exact solution has minimal regularity. Notable contributions to Morley FEM can be found in works such as \cite{BGPS18,CCNN2021,CN2020,DG_Morley_Eigen,CCBGNN24a} and the references therein. For more references on convergence of optimal control problems governed by fourth-order PDE and Morley FEM, see \cite{AKDNNSN2024}.

\medskip
\noindent In \cite{AKDNNSN2024}, the quasi-optimality of the adaptive finite element method (AFEM) for the OCP is established, where the load function in the state equation, as well as the control variable, is in $L^2(\Omega)$. However, this analysis does not apply to the current case, as we focus on AFEM for a class of OCP that involve Dirac measures, and $\delta_{\rm z} \notin L^2(\Omega)$. While there is existing literature on biharmonic equations governed by loads that belong to the dual of the solution space, see for example \cite{DF85,BS,CCNN2021,CCBGNN24}, specific literature for OCP with point sources is notably absent. 

\medskip
\noindent Nevertheless, second-order PDE with point loads, and even OCP featuring point sources governed by the Poisson equation, have been studied using conforming finite element methods. 
A~priori and a~posteriori error estimates are established for OCP with point sources governed by second-order PDE with respect to some weighted norm with Muckenhoupt’s weights of class A2 in \cite{AOES17,AOR18}. The OCP for the Navier-Stokes equations with point sources is discussed in \cite{AFED21}. The point-wise tracking OCP governed by second-order PDE has also been studied in the literature, where the state variable approaches the desired state at certain locations to minimise the cost functional in \cite{AOES17,HEJ18}. Moreover, in \cite{AFED19,NB21,AFE22}, the pointwise tracking OCP governed by Stokes equations and the OCP governed by semi-linear elliptic equations are studied. 

\medskip
\noindent This article discusses a unified approach for adaptive FEM for OCP {\bf (P1)}-{\bf (P3)} governed by the biharmonic equation involving Dirac measures. {\it To the best of our knowledge, this article is the first to study the AFEM for OCP governed by the biharmonic equation involving Dirac measures.} The nonconforming Morley finite elements are employed for discretising both the state and adjoint variables. For the control variable, variational discretization is considered in \eqref{opt1n}. In \eqref{opt1} and \eqref{opt2n}, the control variable is already in a finite-dimensional space. The point load leads to employing the definition of $\langle \delta_{\rm z}, \phi \rangle$ for any function $\phi$ that is continuous at the point ${\rm z}\in \Omega$, using the point-wise evaluation $\phi({\rm z})$. {However, since Morley finite elements are not continuous except at the nodes and the interior of elements,} $\langle \delta_{\rm z}, \phi_\rM \rangle$ is not well-defined for any function $\phi_\rM$ in the Morley finite element space. This necessitates the use of a companion operator, $J$ that maps Morley elements to the conforming space (see \cite{BS, CCNN2021}) and modifies the load function in the discrete formulation as $\delta_{\rm z}\circ J$. 

\medskip
\noindent One of the key ideas in the error analysis is to establish the equivalence between the total error in the solution to the OCP and the errors in the solution of certain auxiliary biharmonic equations with $L^2$ load and/or point loads. The strategy of construction of auxiliary problems is identical for \eqref{opt1}-\eqref{opt2n}. This enables a unified analysis for all the problems \eqref{opt1}-\eqref{opt2n}, as the point loads shift from the state equation in \eqref{opt1} to the adjoint equation in \eqref{opt1n} and appear in both state and adjoint equations in \eqref{opt2n}. The a~priori and a~posteriori error analysis for the corresponding biharmonic equation with $L^2$ load and point loads forms the basis for establishing the a~priori and a~posteriori error estimates for the OCP. 

\medskip \noindent Figure~\ref{no_ref}
 illustrates examples of adaptive mesh refinements for \eqref{opt1}-\eqref{opt2n}. The a~posteriori error estimators of OCP help to identify the critical regions in the domain. As forces are applied at the points in $\Z$ for \eqref{opt1} and \eqref{opt2n}, we expect finer refinements in the mesh at those locations. Similarly, since the state solutions need to approach the desired state only at the locations in $\D$ for \eqref{opt1n} and \eqref{opt2n}, finer adaptive refinements are predicted at those locations. The adaptive mesh refinements for \eqref{opt1}-\eqref{opt2n} show finer refinements at the locations of points in $\Z$ and $\D$ {(ref. Figure~\ref{no_ref})}. 
\begin{figure}[ht!] 
\begin{tabular}{cccc} 
\hspace{5cm}&\includegraphics[width=4.5cm,height=3.5cm]{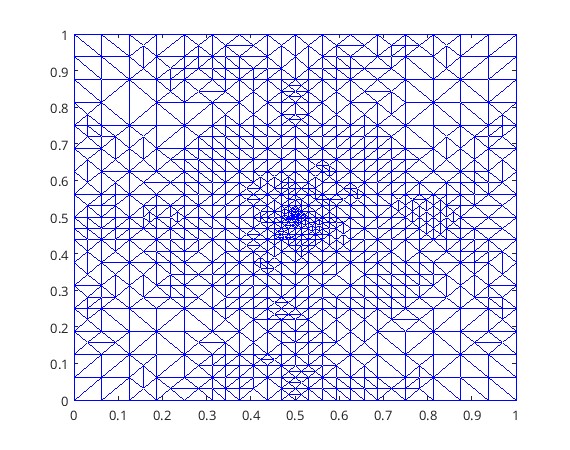}
& \includegraphics[width=4.5cm,height=3.5cm]{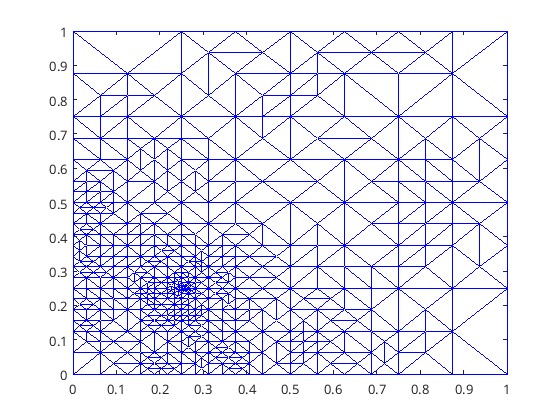}
& \includegraphics[width=4.5cm,height=3.5cm]{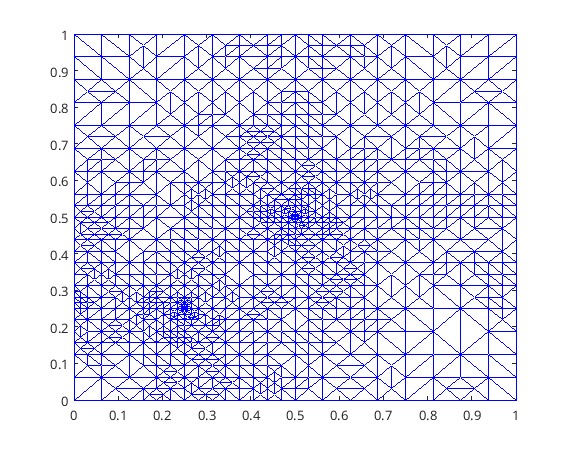}\\
\hspace{5cm}& {\small\eqref{opt1} $\Z=\{(0.5,0.5)\}$} &
 {\small\eqref{opt1n} $\D=\{(0.25,0.25)\}$} & {\small \eqref{opt2n} $\Z=\{(0.5,0.5)\}$, $\D=\{(0.25,0.25)\}$}
\end{tabular}   	
\caption{\small   The adaptive mesh for OCP in \eqref{opt1}-\eqref{opt2n}, having refinements  near point sources $\Z$ and at the point $\D$.}
\label{no_ref}
\end{figure}

\medskip
\noindent  The primary objective of this paper is to establish optimal convergence rates of adaptive nonconforming FEM for the OCP with Dirac measures governed by fourth-order elliptic PDE.
To establish the quasi-optimality of AFEM, an axiomatic framework that comprises {\it stability}, {\it reduction}, {\it discrete reliability}, and {\it quasi-orthogonality} is followed.  
The {\it quasi-orthogonality} is established under the assumption that the point sources are at the nodes of the initial triangulation in the case of \eqref{opt1}. However, this assumption is required throughout the analysis for the problems \eqref{opt1n} and \eqref{opt2n} discussed in Section \ref{p2ocp}-\ref{OCP_new2}. Achieving quasi-optimality for the OCP is challenging due to the nonconformity of Morley finite element spaces and the less regularity of the Dirac measure. 

\medskip \noindent {\bf Contributions}

The novel contributions of this article are summarized below: 
  \begin{itemize}
      \item {\it Nonconforming FEM} are analysed for OCP governed by biharmonic equation with point sources, pointwise tracking OCP governed by biharmonic equations, and pointwise tracking OCP governed by biharmonic equations with point sources. 
    \item {\it a~priori estimates} in the energy norm as well as in the lower-order norms are established under realistic regularity assumptions on the solution of the state and adjoint equations. 
      \item {\it Reliable} and {\it efficient}  a~posteriori error control is derived for the adaptive Morley finite element method.
      \item {\it Quasi-optimality} of adaptive Morley finite element method for OCP in \eqref{opt1}-\eqref{opt2n} is established. The axiomatic framework \cite{CMMD2014,CS24} with the axioms, {\it stability} {\bf (A1)}, {\it reduction} {\bf (A2)}, {\it discrete reliability} $({\rm \bf A3}_{\varepsilon})$, {\it quasi-orthogonality} $({\rm \bf A4}_{\varepsilon})$ are verified to prove quasi-optimality of the AFEM. 
      \item Extensive numerical experiments for \eqref{opt1}-\eqref{opt2n} in convex and non-convex domains with single and multiple sources are presented by constructing many interesting examples, since computational benchmarks are not available in the literature. We observe that the numerical results validate the theoretical results.
  \end{itemize}
\medskip
\noindent This introductory section is followed by Section \ref{prelim}, which presents the preliminaries and establishes crucial embedding results needed in the sequel. In Section \ref{probP1}, a~priori and a~posteriori estimates for the OCP in \eqref{opt1} are derived. The quasi-optimality for the adaptive FEM for \eqref{opt1} is achieved by verifying the axioms, {\it stability}, {\it reduction}, {\it discrete reliability}, {\it quasi-orthogonality} in Section \ref{AOA}. Section \ref{p2ocp} explores \eqref{opt1n}: the pointwise tracking OCP governed by biharmonic equations with source in $L^2(\Omega)$. {The pointwise tracking OCP with point sources \eqref{opt2n} is discussed in Section \ref{OCP_new2}.} Section \ref{sec:num} is dedicated to numerical experiments for OCP in \eqref{opt1}-\eqref{opt2n}. The derivation of the optimality condition for \eqref{opt1} is presented in the appendix in Section \ref{ap:1stcndn}.


		\section{Preliminaries}\label{prelim}
        In this section, we present some preliminaries followed by the definition of the nonconforming Morley finite element space. The interpolation and companion operators and their properties are presented. Utilizing these operators, discrete Sobolev embedding results are established.

        \medskip
\noindent  Throughout the paper, standard notations on Lebesgue and  Sobolev spaces and their norms are used. {Let $X\subset {\mathbb R}^2$ be a polygonal domain. The standard semi-norm (resp. norm) on $H^{\gamma}(X)$ for $\gamma>0$ is denoted by $| \bullet |_{H^{\gamma}(X)}$ (resp. $\|\bullet\|_{H^{\gamma}(X)}$).} The norm and inner product in $L^2(\Omega)$ are denoted by $\|\bullet\|$ and $(\bullet,\bullet)$, respectively. The Euclidean norm on $\mathbb{R}^n$ is defined as $\norm{\bullet}_{\mathbb{R}^n}$. The notation $P\lesssim Q$ means that  $P\leq C~Q$ for some generic constant $C$ independent of the mesh-size. Further, $P\approx Q$ represents  $P\lesssim Q\lesssim P$. The constants defined in the sequel depend on the shape regularity parameter and the regularization parameter $\alpha$, and will be clear from the context. The notations $C_{\rm AE}$ and $C_{\rm AP}$ denote the generic constants in a~priori error estimates in energy norm and lower order norms for \eqref{opt1}-\eqref{opt2n}. Similarly, $C_{\rm REL}$ and $C_{\rm EFF}$ denotes the generic constants in reliability and efficiency of a~posteriori error estimates for \eqref{opt1}-\eqref{opt2n}. For $a,b,\epsilon  >0$, the weighted Young's inequality reads as
$ab\leq \frac{a^2}{2\epsilon}+\frac{\epsilon b^2}{2}.$

\medskip
\noindent Let $\T$ be an admissible and shape regular triangulation of $\Omega$ with the mesh-size defined as $h_T:=|T|^{1/2}$ for all  $T \in \T$, where $|T|$ is the area of $T$. Let  $h:=\max_{T \in \T} h_T$ and  $h_\T|_T:=h_T$. 
{ Let $\bT(\T)$ denote the set of all admissible refinements of $\T$} and $\T_k$ denote the $k^{\text{th}}$ refinement of an initial triangulation $\T_0$. Further, let $\bT(\delta)$ denote the set of all shape regular triangulations with $h\leq \delta$ for $ \delta>0$. Let ${\cal N}$  (resp. $\E$) denote the set of all the nodes (resp. edges) in $\T$ with ${\cal N}={\cal N}^i\cup {\cal N}^b$ (resp. $\E=\E^i\cup\E^b$), where ${\cal N}^i$ and ${\cal N}^b$ (resp. $\E^i$ and $\E^b$) are the set of all interior and boundary nodes (resp. edges) in $\T$, respectively. Let ${\cal N}(T)$ denote the set of vertices of $T \in \T$ and let $\omega_T:=\text { int } \left(\cup_{{\rm z} \in {\cal N}(T)} \cup \T({\rm z})\right)$, where $\T({\rm z})$ are the triangles that share the vertex $\z$ and $\E(\omega_T)$ are the edges in $\omega_T$. 
  Let $\E(T)$  denote the set of edges of $T$ and $h_E:=|E|$ be the length of edge $E$. Since $\T$ is shape regular triangulation, there exist $\kappa_1,\kappa_2>0$ such that $\kappa_{1}^{-1}h_T\leq h_E\leq \kappa_2 h_T$ for all $T\in \T$ and $E\in \E(T)$. Let $\nu_E$ and $\tau_E$ be the unit vectors in the outer normal direction and tangential direction to $E$, respectively. For an interior edge $E\in \E^i$ shared with triangles $T^+$ and $T^-$, define the jump and averaging operators $[\cdot]$ and $\{\cdot\}$ as
			$$\left [ \phi \right ]_E := (\phi|_{T^+})|_E-(\phi|_{T^-})|_E\text{ and }\left\{\phi\right\}_E:=\frac{1}{2}\big((\phi|_{T^+})|_E+(\phi|_{T^-})|_E\big).$$	 
Further, $D_\pw^{m}$ denotes the {piecewise gradient for $m=1$ and the piecewise Hessian for $m=2$}. {Let \( |{\bullet}|_{H^m(T)} \) denote the \(m\)-th order seminorm on \(H^{m}(T)\). For any triangulation \(\T\) and \(0<\gamma<1\), define the broken \(H^{\gamma}\)-norm by 
$\norm{\bullet}_{H^{\gamma}(\T)} := 
\big( \sum_{T \in \T} \norm{\bullet}_{H^{\gamma}(T)}^{2} \big)^{1/2}$, where $H^{\gamma}(\T) := \{ v \in L^{2}(\Omega) : v|_{T} \in H^{\gamma}(T) 
\ \text{for all } T \in \T \}$. The \(H^{\gamma}(T)\)-norm is given by $\norm{v}_{H^{\gamma}(T)}^{2}= \norm{v}_{L^{2}(T)}^{2} + |v|_{H^{\gamma}(T)}^{2}$, where the seminorm is defined as
\begin{align*}
|v|_{H^{\gamma}(T)}^{2}= \int_{T}\int_{T} \frac{|v(x)-v(y)|^{2}}{|x-y|^{2+2\gamma}} \, \dx \dy.
\end{align*}
For more details on fractional order Sobolev spaces and embedding results, refer to \cite{EGE12}.} Let \( \enorm{\bullet}_\pw \) denote the piecewise \( H^2 \)-seminorm, that is, \( \enorm{\bullet}_\pw := \|D^2_\pw \bullet\| \). Let $\mathcal P_k(\T)$ denote the piecewise polynomials of degree at most $k\in \mathbb N$ and $\Pi_{k,\T}$ denote the $L^2$ projection to  $\mathcal P_k(\T)$ defined by $\norm{v-\Pi_{k, {\T}} v}= \min_{p_k\in \mathcal{P}_k(\T)}\norm{v-p_k}\fl v\in L^2(\Omega)$. {Any projection operator $\Pi:H\rightarrow K$ satisfy
\begin{align}\label{FAproj}
    \norm{\Pi x-\Pi y}_{H}\leq \norm{x-y}_{H}\fl x,y\in H,
\end{align}
where $H$ is a Hilbert space, and $K$ is a closed and convex subset of $H$  \cite[Proposition 7.1.2]{KesavanFA}.}

\medskip
\noindent {{\bf Continuous Sobolev embeddings:}~
   Let $X$ be a Lipschitz domain in ${\mathbb R}^2$. For $1<\gamma\leq 2$, there exists an extension operator $E_\gamma:H^\gamma(X)\rightarrow H^{\gamma}({\mathbb R}^2)$ such that $Ew|_{X}=w$ and $\norm{Ew}_{H^{\gamma}({\mathbb R}^2)}\leq C_0 \norm{w}_{H^{\gamma}(X)} \fl w\in H^\gamma(X),$
    where the constant in this inequality depends on $\gamma$ and the Lipschitz continuity of the boundary of the domain \cite[Theorem A.4]{WM2000}. The definition of the extension operator reveals $\norm{w}_{L^\infty(X)}\leq \norm{Ew}_{L^\infty({\mathbb R}^2)}$. The continuous Sobolev embedding $H^\gamma({\mathbb R}^2)\hookrightarrow L^\infty({\mathbb R}^2)$ (ref. \cite[Theorem 2.3]{HT20}) for $Ew\in H^\gamma({\mathbb R}^2)$ shows $\norm{Ew}_{L^\infty({\mathbb R}^2)}\leq C_{\rm em} \norm{Ew}_{H^\gamma({\mathbb R}^2)}$. The above estimates lead to
    \begin{align}\label{inf2gamma}
        \norm{w}_{L^\infty(X)}\leq 
        \norm{Ew}_{L^\infty({\mathbb R}^2)}\leq C_{\rm em}\norm{Ew}_{H^\gamma({\mathbb R}^2)}\leq 
        C_{\rm emb}\norm{w}_{H^\gamma(X)},
    \end{align}
     where $C_{\rm emb}:=C_{\rm em}C_0$ depends on $\gamma$ and the Lipschitz continuity of the boundary of the domain. The second Sobolev embedding result from \cite[Corollary 2.3]{EGE12} shows for $1<\gamma<2$,
    \begin{align}\label{gamma2H2}
        \norm{w}_{H^\gamma(X)}\leq C_{\rm emb1} \norm{w}_{H^2(X)}\fl w\in H^2(X),
    \end{align}
    where $C_{\rm emb1}$ depends on $\gamma$ and the Lipschitz continuity of the boundary of the domain.}
    
        \medskip
 \noindent  The nonconforming Morley element space is defined by\\
$V_\rM :=$
$   \begin{Bmatrix}
 &   v_{\rm M} \text{ and its normal derivative are continuous at the interior nodes and } \\
 v_{\rm M} \in {\mathcal P}_2(\T): & \text{  midpoints of interior edges, respectively, and vanishes at boundary nodes}  \\
 & \text{ and the midpoints of boundary edges, respectively.}   \\
\end{Bmatrix} $  \\
Let $V_{\rm HCT}$ denotes the conforming Hsieh-Clough-Tocher finite element space \cite[Chapter 6]{Ciarlet}.
\begin{lem}[Morley interpolation operator]\label{Interpolation}
 Let  $\widehat{\T}\in \mathbb{T}$ be a shape regular refinement of  $\T$ and $\widehat{V}_\rM$ be the Morley finite element space defined on $\widehat{\T}$. There exist interpolation operators
			$ I_\rM: V+\widehat{V}_\rM \rightarrow V_\rM \text{ and } \widehat{I}_\rM: V \rightarrow \widehat{V}_\rM$ that satisfy
   \begin{itemize}
	\item[(a)] the integral mean property 
			$D_{\rm pw}^2 I_\rM =\Pi_{0, {\T}} D_{\rm pw}^2$ and as a consequence, the orthogonality condition 
				$a_\pw(w,(1-I_\rM)(v+\widehat{v}_\rM))=0$ for all $  w\in  {\mathcal{P}_2(\T)} \text{ and }v+\widehat{v}_\rM\in V+\widehat{V}_\rM$, 
		\item[(b)]	   
			$	\displaystyle{\sum_{j=0}}^2 h_T^{j-2}\|{D_\pw^j(1-I_\rM) v}\|_{L^2(T)}\leq C_\rI   \norm{D_\pw^2(1-I_\rM) v}_{L^2(T)}   =C_\rI \big(\|  D_\pw^2 v\|_{L^2(T)}-\|D_\pw^2 I_\rM v\|_{L^2(T)}\big)$
   for all $v \in H^2(\T)$ or $ v \in V+\widehat{V}_\rM$ and $T \in \T$,
			\item[(c)] 
				$(1-I_\rM)\widehat{v}_\rM=0\text{ and }(\widehat{I}_\rM-I_\rM)(v+\widehat{v}_\rM)=0$ in $\T\cap \widehat{\T} $ for all $v\in V$ and $\widehat{v}_\rM\in \widehat{V}_\rM$.
   \end{itemize}
			 \end{lem}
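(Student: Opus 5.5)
The plan is to define $I_\rM$ (and likewise $\widehat I_\rM$ on $\widehat\T$) through the Morley degrees of freedom. For $v\in V+\widehat V_\rM$, set $I_\rM v\in V_\rM$ by requiring:
\begin{align*}
(I_\rM v)({\rm z})&=v({\rm z})\quad \text{for all } {\rm z}\in{\cal N},\\
\frac{1}{h_E}\int_E \partial_{\nu_E} I_\rM v\,{\rm d}s&=\frac{1}{h_E}\int_E \partial_{\nu_E} v\,{\rm d}s\quad \text{for all } E\in\E.
\end{align*}
This is well defined for the following reasons. Functions in $V$ are continuous. Functions in $\widehat V_\rM$ are continuous at the vertices of $\widehat\T$, and these include all vertices of $\T$. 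The edge integrals of normal derivatives of $\widehat v_\rM$ over a coarse edge $E$ are single-valued, since $E$ is a union of fine edges and on each fine edge the integral mean of the normal derivative is continuous. The same definitions on $\widehat\T$, restricted to $V$, give $\widehat I_\rM$. The boundary conditions are inherited from $V$ and $\widehat V_\rM$.

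For (a), integrate by parts twice on each $T$. For $v$ in the domain and a constant matrix $Q$ we have
\[
\int_T D^2 v:Q\,{\rm d}x=\sum_{E\in\E(T)}\int_E \nabla v\cdot Q\nu_E\,{\rm d}s .
\]
We then split $\nabla v$ into its normal and tangential components. The normal part only involves $\int_E\partial_\nu v$. The tangential part integrates to the difference of the vertex values. Hence $\int_T D^2(1-I_\rM)v:Q=0$, which gives $D^2_\pw I_\rM v=\Pi_{0,\T}D^2_\pw v$. Since $D^2_\pw w$ is piecewise constant for $w\in\mathcal P_2(\T)$, this immediately yields $a_\pw(w,(1-I_\rM)(v+\widehat v_\rM))=0$.

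For (b), the equality part follows from the Pythagoras identity implied by (a) on each $T$, namely $\|D^2 v\|^2_{L^2(T)}=\|D^2 I_\rM v\|^2_{L^2(T)}+\|D^2(1-I_\rM)v\|^2_{L^2(T)}$; I read the stated identity in this squared sense. For the lower-order terms, set $e=(1-I_\rM)v$. This $e$ vanishes at the vertices of $T$ and has zero edge means of its normal derivative. Because $e$ vanishes at the vertices, each tangential derivative has zero edge mean, so $\nabla e$ has zero integral on every edge. A Poincaré–Friedrichs inequality with a scaling argument on the reference element then gives $\|\nabla e\|_{L^2(T)}\lesssim h_T\|D^2e\|_{L^2(T)}$. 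Vertex vanishing together with a Friedrichs-type inequality gives $\|e\|_{L^2(T)}\lesssim h_T^2\|D^2 e\|_{L^2(T)}$. The constant $C_\rI$ depends only on shape regularity. For $v\in H^2(\T)$ the same argument works, provided vertex values are taken elementwise; $H^2(T)\hookrightarrow C(\overline T)$ makes these meaningful.

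For (c), if $\widehat v_\rM\in V_\rM$, or more precisely on coarse elements $T\in\T\cap\widehat\T$, the degrees of freedom of $\widehat v_\rM$ and of $I_\rM\widehat v_\rM$ coincide, so the two quadratics agree on $T$. The same holds for $\widehat I_\rM$ and $I_\rM$ on unrefined triangles, since their degrees of freedom are computed from identical data.

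I expect the main obstacle to be the careful justification that the degrees of freedom are well defined on $V+\widehat V_\rM$, in particular the single-valuedness of the coarse-edge normal-derivative means for nonconforming fine functions. I would handle this by splitting $E$ into fine edges and using the Morley edge-mean continuity on each of them.
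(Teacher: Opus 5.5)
Your overall route is the standard one behind the references the paper cites in place of a proof: the definition through the Morley degrees of freedom, integration by parts for (a), Pythagoras for the identity in (b), and unisolvence for (c). Your squared reading of the identity in (b) is correct. As printed, without squares, it holds only if $D^2_\pw I_\rM v|_T=0$ or $D^2_\pw(1-I_\rM)v|_T=0$.

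The gap is in the lower-order part of (b) for $v\in V+\widehat V_\rM$. The lemma asserts this case explicitly, and the paper relies on it: with $v=\widehat v_\rM$ in the proof of Lemma~\ref{Poincare}(b), and with $\pb_{k+1}-\pb$ in Lemma~\ref{lem_A4}(c). On a refined coarse triangle $T\in\T\setminus\widehat{\T}$, the function $e=(1-I_\rM)v$ is not in $H^2(T)$. It is only piecewise $H^2$ on the sub-triangulation $\widehat{\T}|_T$, $\nabla_\pw e$ jumps across the fine edges inside $T$, and the $D^j_\pw$ in (b) are broken derivatives. The Poincar\'e--Friedrichs inequality you invoke needs $\nabla e\in H^1(T)$, so it does not apply. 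Mapping to the reference triangle does not help either: the image of $\widehat{\T}|_T$ is an arbitrary shape-regular sub-triangulation of unbounded depth, so no compactness or finite-dimensionality argument gives a constant independent of $\widehat{\T}$.

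What is missing is a discrete Friedrichs inequality on $T$ that is uniform in the refinement depth, for piecewise $H^2$ functions with Morley-type continuity. Here that continuity means the fine-edge jumps of $\nabla_\pw e$ have zero mean: the normal part is affine and vanishes at the midpoint, and the tangential part integrates to vertex jumps, which vanish.

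One way to supply it is to reduce to your $H^2(T)$ argument with a local companion. Write $v=w+\widehat v_\rM$ with $w\in V$. Build $A\widehat v_\rM\in H^2(T)$ on $\widehat{\T}|_T$, averaging only inside $T$ and bubble-corrected as in Lemma~\ref{enrithm}, so that all fine Morley degrees of freedom on $\overline T$ are preserved. Set $Ae:=(w-I_\rM v)|_T+A\widehat v_\rM$.

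Then $Ae$ has the same coarse degrees of freedom as $e$, namely zero. Hence $Ae=(1-I_\rM)Ae$, and your argument yields $\|Ae\|_{L^2(T)}+h_T\|\nabla Ae\|_{L^2(T)}\lesssim h_T^2\|D^2Ae\|_{L^2(T)}$.

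For the remainder, use the analogue of Lemma~\ref{enrithm}(b) on $T$: $\sum_{j=0}^2\|h_{\widehat{\T}}^{j-2}D^j_\pw(\widehat v_\rM-A\widehat v_\rM)\|_{L^2(T)}\lesssim\min_{\phi\in H^2(T)}\|D^2_\pw(\widehat v_\rM-\phi)\|_{L^2(T)}\le\|D^2_\pw e\|_{L^2(T)}$, choosing $\phi=I_\rM v-w$. Together with $h_{\widehat{\T}}\le h_T$, this closes (b). Alternatively, you may quote a Poincar\'e--Friedrichs inequality for piecewise $H^2$ functions whose jump terms vanish under Morley continuity.

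A milder form of the same omission occurs in (a). On a refined $T$, your formula $\int_T D^2v:Q=\sum_{E\in\E(T)}\int_E\nabla v\cdot Q\nu_E$ requires two further facts:
\begin{itemize}
\item the interior fine-edge contributions $\int_{\widehat E}[\nabla_\pw\widehat v_\rM]\cdot Q\nu_{\widehat E}\,ds$ vanish;
\item the tangential terms telescope across the fine vertices on $\partial T$.
\end{itemize}
Both follow from the same continuity you use for the degrees of freedom, but they need to be stated.
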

 {  \noindent  For proofs of $(a)$-$(c)$ refer to  
 \cite[Sect. 5]{CCSP20} and \cite[Lemma 2.2, Remark 2.1]{CN2020}.}\qed
 
 \medskip
\noindent {Define the set of neighboring triangles of $\T\setminus\widehat{\T}$ in $\T$ as $${\mathcal R}(\T,\widehat{\T}):=\{K\in \cT| \, \exists T\in \mathcal{T} \setminus \widehat{\T} \text{ with } \text{dist}(K,T)=0\}.$$} 
       \begin{lem}[companion operator]\label{enrithm}  {\cite[Lemma 5.1]{CCSP20}}
      There exists a companion operator $J:V_\rM \to V_{\rm HCT}+({\mathcal P}_5(\T)\cap V) $ that satisfies
    \begin{itemize}
	\item[(a)] $\enorm{Jv_\rM}_\pw\leq C_{\rm J}\enorm{v_\rM}_\pw$ for all $v_\rM \in V_\rM $,
	\item[(b)]  
   $\displaystyle{\sum_{j=0}}^{2}h_T^{2(j-2)}\|D_\pw^j(1-J)v_\rM\|_{L^2(T)}^2 
    \leq \Lambda_\rJ^2 \min_{v \in V} \|D_{\rm pw}^2(v_{ \rM} -v)\|^2_{L^2(\omega_T)}\no\\
    \leq  C_{\rm J1}^2  \sum_{T\in \omega_T} h_T \sum_{E\in \E(T)} \norm{[D_{\rm pw}^2 v_\rM]_E  \tau_E}_{L^2(E)}^2 \leq C_{\rm J2}^2 \min_{v \in V} \|D_{\rm pw}^2(v_{ \rM} -v)\|^2_{L^2(\omega_T)}$ for all $v_\rM \in V_\rM $,
	\item[(c)]   for $\widehat{v}_\rM^*:=\widehat{I}_{\rM} J v_\rM \in \widehat{V}_\rM$, 	$\displaystyle{\enorm{\widehat{v}_\rM^*-v_\rM}_{\pw}^2 \leq C_{\rm IJ}^2  \sum_{T\in  {\mathcal R}(\T,\widehat{\T})} h_T \sum_{E\in \E(T)} \norm{[D_{\rm pw}^2 v_\rM]_E  \tau_E}_{L^2(E)}^2}$.
      \end{itemize}
   \end{lem}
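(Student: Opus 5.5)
The plan is to build $J$ in two stages: first a nodal averaging into the HCT space, then a local bubble correction that restores the moment conditions of the Morley degrees of freedom. In the first stage, for $v_\rM\in V_\rM$ I define $J_1 v_\rM\in V_{\rm HCT}$ through the HCT degrees of freedom.
\begin{itemize}
\item At each interior vertex $z$, the value is $v_\rM(z)$, which is well defined by the continuity of Morley functions at the nodes.
\item The gradient at $z$ is the average of $\nabla(v_\rM|_T)(z)$ over the triangles $T\in\T(z)$.
\item The normal derivatives at edge midpoints are taken from $v_\rM$, which is admissible since these are continuous.
\item All boundary degrees of freedom are set to zero, to respect the clamped boundary conditions.
\end{itemize}
Then $J_1 v_\rM$ is a $C^1$ piecewise polynomial vanishing with its gradient on $\partial\Omega$, so $J_1 v_\rM\in V$. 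In the second stage I add an element bubble correction in ${\mathcal P}_5(\T)\cap V$ (with $b_T^2$-type bubbles supported on $T$) and an edge bubble correction. These are chosen so that $Jv_\rM=J_1v_\rM+b$ satisfies $\int_E \partial_\nu (Jv_\rM-v_\rM)\,ds=0$ on every edge and $\int_T (Jv_\rM-v_\rM)\,dx=0$ on every triangle. This yields the target space $V_{\rm HCT}+({\mathcal P}_5(\T)\cap V)$.

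For (b) I argue elementwise on $T$. The function $(1-J_1)v_\rM|_T$ is a polynomial of fixed degree whose degrees of freedom are differences of one-sided derivatives of $v_\rM$ at vertices of $\omega_T$. By scaling and equivalence of norms on the reference element, $\sum_j h_T^{2(j-2)}\|D^j(1-J_1)v_\rM\|^2_{L^2(T)}$ is bounded by $h_T^{-2}$ times the sum of squared gradient jumps at the vertices. Each such jump is a difference of affine functions along an edge $E$, anchored by the Morley conditions (continuity at the vertices and of the normal-derivative mean). Hence a gradient jump at a vertex is controlled by $h_E^{1/2}\|[D^2_\pw v_\rM]_E\tau_E\|_{L^2(E)}$, using the tangential part of the Hessian jump together with the mean-normal-continuity. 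The bubble corrections are bounded in the same way by scaling, since their coefficients are linear in the same defects. This gives the middle inequality with $C_{\rm J1}$.

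The outer inequalities need two further facts.
\begin{itemize}
\item The tangential jump $[D^2_\pw v_\rM]_E\tau_E$ equals $[D^2_\pw(v_\rM-v)]_E\tau_E$ for any $v\in V$, so a discrete trace inequality on the polynomial $v_\rM$ (compared with $v$ via $\Pi_{0}$-type arguments on $\omega_T$) gives the bound by $C_{\rm J2}\min_v\|D^2_\pw(v_\rM-v)\|_{L^2(\omega_T)}$.
\item The first inequality, with $\Lambda_\rJ$, then follows from the chain of the two bounds just established.
\end{itemize}
Part (a) follows from (b) with $j=2$ and $v=0$, after summing over $T$ and using the finite overlap of the patches $\omega_T$.

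For (c), set $\widehat v^*_\rM=\widehat I_\rM Jv_\rM$. On triangles $T\in\T\cap\widehat\T$ that are not touching refined elements, I show $\widehat v^*_\rM=v_\rM$. The argument uses the moment conditions of $J$: the Morley degrees of freedom of $Jv_\rM$ on $T$ coincide with those of $v_\rM$, because the vertex values and edge normal means were preserved. Hence $\widehat I_\rM Jv_\rM|_T=I_\rM Jv_\rM|_T=v_\rM|_T$ by Lemma~\ref{Interpolation}(c). The difference is therefore supported in $\bigcup{\mathcal R}(\T,\widehat\T)$. There, Lemma~\ref{Interpolation}(b) (stability of $\widehat I_\rM$ in the piecewise energy norm) gives $\enorm{\widehat I_\rM(Jv_\rM-v_\rM)}_\pw\lesssim\enorm{(1-J)v_\rM}_{\pw}$ restricted to the patch, and (b) finishes the proof.

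I expect the main obstacle to be the second stage: designing the bubble correction so that the Morley degrees of freedom are exactly preserved while the HCT $C^1$ conformity is kept. The locality-based argument in (c) hinges on this preservation. I would first try the standard choice from the cited constructions of Carstensen et al., using cubic/quintic element bubbles $b_T$ and edge bubbles $b_E^2$ times suitable polynomials, and then verify that these corrections are $C^1$ and vanish on $\partial\Omega$.
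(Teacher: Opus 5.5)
Your architecture (nodal averaging into $V_{\rm HCT}$, a local correction restoring the Morley degrees of freedom, elementwise scaling, and locality for (c)) is the construction behind the results the paper cites; the paper gives no proof of its own. Two steps, however, fail as written, and there is a scaling slip.

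\textbf{The correction space.} Your element correction cannot land in the stated target. The function $b_T^2$ has degree $6$. In fact $H^2_0(T)\cap\mathcal P_5(T)=\{0\}$, because a polynomial vanishing together with its gradient on $\{\lambda_j=0\}$ is divisible by $\lambda_j^2$. So no element-supported correction lies in $\mathcal P_5(\T)\cap V$, and your $J$ does not map into $V_{\rm HCT}+(\mathcal P_5(\T)\cap V)$. The mean condition $\int_T(Jv_\rM-v_\rM)\,dx=0$ is not needed for (a)--(c), so drop it.

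For the edge moments, note that $b_E^2=\lambda_1^2\lambda_2^2$ is not $C^1$ across $E$. Two fixes are available:
\begin{itemize}
\item Use the $C^1$ quintic edge bubble $(\lambda_3^+-\tfrac{|T^-|}{|T^+|}\lambda_3^-)\lambda_1^2\lambda_2^2$, the function in the proof of Lemma~\ref{eff_bubble}. It vanishes at all vertices and has zero normal-derivative moments on all other edges.
\item Alternatively, $\partial_\nu$ of an HCT function is quadratic on each edge. Simpson's rule then lets you enforce $\int_E\partial_\nu(Jv_\rM-v_\rM)\,ds=0$ through the midpoint degree of freedom, staying inside $V_{\rm HCT}$.
\end{itemize}

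\textbf{The scaling step.} Delete the factor $h_T^{-2}$. For $w$ in the local HCT space with zero vertex values, $\sum_j h_T^{2(j-2)}\|D^jw\|_{L^2(T)}^2$ is bounded by the squared gradient-level degrees of freedom themselves. At the endpoints $z$ of $E$ one has $|[\nabla v_\rM]_E(z)|=\tfrac{h_E}{2}|[D^2_\pw v_\rM]_E\tau_E|$, which produces the correct weight $h_E\|[D^2_\pw v_\rM]_E\tau_E\|_{L^2(E)}^2$. At vertices on $\partial\Omega$ the degree of freedom is $\nabla v_\rM|_T(z)$ itself; read $[\cdot]_E$ as the trace on boundary edges.

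\textbf{The efficiency bound (last inequality in (b)).} This step is not justified. For $v\in V$ the Hessian $D^2v\in L^2(\Omega)$ has no trace on $E$. So neither the identity $[D^2_\pw v_\rM]_E\tau_E=[D^2_\pw(v_\rM-v)]_E\tau_E$ nor a discrete trace inequality for $v_\rM-v$ is available.

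Bounding the jump by $\inf_A\|D^2_\pw v_\rM-A\|_{L^2(\omega_E)}$ over constant matrices $A$ cannot close the argument either. Take $x_\nu^2$ on $T^+$ and $0$ on $T^-$, with $x_\nu$ the signed distance to the line through $E$. This is locally in $H^2(\omega_E)$, so the best-approximation error vanishes while $D^2_\pw v_\rM$ is not constant.

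Your $\Pi_0$ idea does work once you route it through $I_\rM v$ and go down one derivative, where traces exist.
\begin{itemize}
\item For every Morley function $w_\rM$, $[\nabla w_\rM]_E$ is affine on $E$ and vanishes at ${\rm mid}(E)$. Hence $\|[D^2_\pw w_\rM]_E\tau_E\|_{L^2(E)}=\sqrt{12}\,h_E^{-1}\|[\nabla w_\rM]_E\|_{L^2(E)}$.
\item Split $v_\rM=(v_\rM-I_\rM v)+I_\rM v$.
\item For the first part, a discrete trace inequality bounds $h_E^{1/2}\|[D^2_\pw(v_\rM-I_\rM v)]_E\tau_E\|_{L^2(E)}$ by a multiple of $\|D^2_\pw(v_\rM-I_\rM v)\|_{L^2(\omega_E)}$. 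Since $D^2_\pw I_\rM v=\Pi_{0,\T}D^2v$, this is at most $2\|D^2_\pw(v_\rM-v)\|_{L^2(\omega_E)}$.
\item For the second part, $[\nabla I_\rM v]_E=[\nabla(I_\rM v-v)]_E$. A trace inequality and Lemma~\ref{Interpolation}(b) give $h_E^{1/2}\|[D^2_\pw I_\rM v]_E\tau_E\|_{L^2(E)}\lesssim\|(1-\Pi_{0,\T})D^2v\|_{L^2(\omega_E)}\le\|D^2_\pw(v_\rM-v)\|_{L^2(\omega_E)}$.
\end{itemize}
The standard alternative is an edge-bubble argument with integration by parts.

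\textbf{Part (c).} You need not exclude neighbours of refined triangles. The edges of an unrefined triangle remain edges of $\widehat{\T}$, so $\widehat{v}_\rM^*=v_\rM$ on every $T\in\T\cap\widehat{\T}$.

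On a refined $K$, use $D^2_\pw(\widehat{I}_\rM Jv_\rM-v_\rM)=\Pi_{0,\widehat{\T}}D^2_\pw(Jv_\rM-v_\rM)$. Do not write $\widehat{I}_\rM Jv_\rM-v_\rM=\widehat{I}_\rM(Jv_\rM-v_\rM)$: that needs an elementwise reading, since $V_\rM\not\subset\widehat{V}_\rM$. Then apply (b) on $\omega_K\subset\mathcal R(\T,\widehat{\T})$. With your restriction, the bound would reach one layer beyond $\mathcal R(\T,\widehat{\T})$.
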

    \noindent For proofs of $(a)$-$(c)$, we refer to \cite[Theorem 3.2, Sect. 5]{CCSP20}, \cite[Lemma 3.6]{CSDAKNNSD22}, and \cite[Proposition 2.6]{DG_Morley_Eigen}.
    An application of the inverse estimates \cite[Lemma 12.1]{AJ21} and Lemma \ref{enrithm}$(b)$ also yield the approximation estimates below:
\begin{align}\label{sup2pw}
      \|(1-J)v_\rM\|_{L^\infty(T)}\leq C_{\rm inv}h_T^{-1}\norm{(1-J)v_\rM}_{L^2(T)}\leq C_{\rm J3}h_T\min_{v \in V} \|D_{\rm pw}^2(v_{ \rM} -v)\|_{L^2(\omega_T)},
      \end{align}
      {with $C_{\rm J3}:=C_{\rm inv}\Lambda_{\rm J}$, where $C_{\rm inv}$ denotes the constant from the inverse estimate}.  


\begin{lem}[Poincar\'e type inequalities]\label{Poincare}
 Any $w\in V+ V_\rM$, and $w_\rM\in   V_\rM+\widehat{V}_\rM$ satisfy \\
 $(a)~  \norm{w}+\norm{w}_{L^\infty(\Omega)} +\norm{D_\pw w}\leq C_{\rm PJ} \enorm{w}_\pw \text{ and }
 (b)~  \norm{w_\rM}+\norm{w_\rM}_{L^\infty(\Omega)}+\norm{D_\pw w_\rM}\leq C_{\rm PI} \enorm{ w_\rM}_\pw$. 

\end{lem}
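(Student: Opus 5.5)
The plan is to reduce both estimates to a Poincaré--Friedrichs inequality on the conforming space $V=H^2_0(\Omega)$, and then to control the nonconforming part with the companion operator $J$ from Lemma~\ref{enrithm} (for (a)) and with $\widehat{I}_\rM$, $I_\rM$ from Lemma~\ref{Interpolation} (for (b)).

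\emph{Conforming case.} For $v\in V$, the Friedrichs inequality together with $\enorm{v}=\|D^2v\|$ being a norm on $H^2_0(\Omega)$ gives $\|v\|+\|Dv\|\lesssim\enorm{v}$ and hence $\|v\|_{H^2(\Omega)}\lesssim\enorm{v}$. Fixing some $1<\gamma<2$, the embeddings \eqref{inf2gamma} and \eqref{gamma2H2} then yield
\begin{align*}
\|v\|_{L^\infty(\Omega)}\le C_{\rm emb}C_{\rm emb1}\|v\|_{H^2(\Omega)}\lesssim\enorm{v}.
\end{align*}

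\emph{Proof of (a).} Let $w=v+v_\rM$ with $v\in V$ and $v_\rM\in V_\rM$, and split
\begin{align*}
w=\big(v+Jv_\rM\big)+(1-J)v_\rM .
\end{align*}
The first summand lies in $V$, so the conforming case gives a bound by $\enorm{v+Jv_\rM}\le\enorm{w}_\pw+\enorm{(1-J)v_\rM}_\pw$. For the second summand, I sum Lemma~\ref{enrithm}(b) over $T$; finite overlap of the patches $\omega_T$ gives
\begin{align*}
\|(1-J)v_\rM\|+\|D_\pw(1-J)v_\rM\|+\enorm{(1-J)v_\rM}_\pw\lesssim\min_{\phi\in V}\enorm{v_\rM-\phi}_\pw\le\enorm{v_\rM+v}_\pw=\enorm{w}_\pw .
\end{align*}
Here I choose $\phi=-v$; this choice is what makes the estimate hold for the sum space $V+V_\rM$ and not only for $V_\rM$. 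For the $L^\infty$ part, \eqref{sup2pw} gives $\|(1-J)v_\rM\|_{L^\infty(T)}\lesssim h_T\enorm{w}_\pw\lesssim\enorm{w}_\pw$, using $h_T\le\operatorname{diam}(\Omega)$. Combining these bounds proves (a).

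\emph{Proof of (b).} Since $\widehat{\T}$ refines $\T$, we have $V_\rM\subset V+\widehat V_\rM$, and Lemma~\ref{Interpolation}(c) gives $(1-I_\rM)\widehat v_\rM=0$ on $\T\cap\widehat\T$. The natural route is to apply (a) on the fine mesh $\widehat\T$: Lemma~\ref{enrithm} is mesh-independent, so (a) holds for $\widehat V_\rM$ with the same constant. The obstacle is that $V_\rM$ is generally not contained in $\widehat V_\rM$. To handle this, write $w_\rM=v_\rM+\widehat v_\rM$ and set $\widehat v^*_\rM:=\widehat I_\rM Jv_\rM$. Then
\begin{align*}
w_\rM=\big(v_\rM-Jv_\rM\big)+\big(Jv_\rM-\widehat v^*_\rM\big)+\big(\widehat v^*_\rM+\widehat v_\rM\big).
\end{align*}
The first term is bounded as in (a). The second term is $(1-\widehat I_\rM)$ applied to a function in $V$, so Lemma~\ref{Interpolation}(b) on $\widehat\T$ controls it by $\enorm{Jv_\rM}_\pw\lesssim\enorm{v_\rM}_\pw$. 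The third term lies in $\widehat V_\rM$, so (a) on $\widehat\T$ applies.

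\emph{Main difficulty.} Each of these bounds involves $\enorm{v_\rM}_\pw$ rather than $\enorm{w_\rM}_\pw$, and replacing the former by the latter is the delicate step. My first attempt would be to replace $v_\rM$ by a better choice of decomposition. Since the splitting of $w_\rM$ is not unique, use the best-approximation form $\min_{\phi\in V}\enorm{v_\rM-\phi}_\pw$ from Lemma~\ref{enrithm}(b). Then estimate this minimum by the tangential jump terms of $w_\rM$ on the edges of the coarse mesh. These are controlled by $\enorm{w_\rM}_\pw$ through the jump characterisation in Lemma~\ref{enrithm}(b), applied on $\widehat\T$, together with an inverse estimate.
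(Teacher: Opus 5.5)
Your part (a) is correct. Splitting $w=(v+Jv_\rM)+(1-J)v_\rM$ and taking $\phi=-v$ in Lemma~\ref{enrithm}(b) and \eqref{sup2pw} gives a self-contained proof; the paper simply cites the literature for (a). Part (b), however, has a genuine gap at exactly the step you call delicate. All your bounds are in terms of $\enorm{v_\rM}_\pw$ or $\min_{\phi\in V}\enorm{v_\rM-\phi}_\pw$. Neither can be bounded by $\enorm{w_\rM}_\pw$, because neither is a function of $w_\rM$: the splitting $w_\rM=v_\rM+\widehat v_\rM$ can be changed by any $z\in V_\rM\cap\widehat V_\rM$. Take $\widehat\T=\T$ and any $z\in V_\rM$, or, on a genuine refinement, a coarse Morley basis function $z$ whose vertex patch is not refined. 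Then $v_\rM=tz$, $\widehat v_\rM=-tz$ gives $w_\rM=0$, while $\enorm{v_\rM}_\pw$ and the tangential jumps of $D^2_\pw v_\rM$ grow linearly in $t$. Your proposed remedy fails for the same reason. The tangential jumps of $D^2_\pw w_\rM$ on coarse edges superpose those of $v_\rM$ and $\widehat v_\rM$, and all of them vanish in this example. So no inverse estimate or jump characterisation can recover $\min_{\phi\in V}\enorm{v_\rM-\phi}_\pw$ from them. Re-choosing the splitting would need a stable-decomposition result that you have not provided. Incidentally, your aside $V_\rM\subset V+\widehat V_\rM$ is false in general. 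If a coarse edge $E$ is bisected, $[v_\rM]_E$ is generically a nonzero quadratic vanishing only at the endpoints of $E$. Membership in $V+\widehat V_\rM$ would force it to vanish also at the midpoint of $E$, which is a vertex of $\widehat\T$.

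The missing idea, which is the paper's proof, is to transfer the fine component to the coarse mesh with $I_\rM$, rather than the coarse component to the fine or conforming level with $J$ and $\widehat I_\rM$. Write $w_\rM=(v_\rM+I_\rM\widehat v_\rM)+(1-I_\rM)\widehat v_\rM$. Since $D^2_\pw v_\rM$ is piecewise constant on $\T$, the integral mean property in Lemma~\ref{Interpolation}(a) gives
\[
\enorm{(1-I_\rM)\widehat v_\rM}_\pw=\norm{(1-\Pi_{0,\T})D^2_\pw\widehat v_\rM}=\norm{(1-\Pi_{0,\T})D^2_\pw w_\rM}\le\enorm{w_\rM}_\pw .
\]
So this piece does not depend on how $w_\rM$ is split; indeed $(1-I_\rM)z=0$ for $z\in V_\rM\cap\widehat V_\rM$. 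Lemma~\ref{Interpolation}(b) then bounds its $L^2$ norm by $C_\rI h^2\enorm{w_\rM}_\pw$ and its broken $H^1$ norm by $C_\rI h\enorm{w_\rM}_\pw$. The remaining piece $v_\rM+I_\rM\widehat v_\rM\in V_\rM$ is covered by (a), since $\enorm{v_\rM+I_\rM\widehat v_\rM}_\pw\le 2\enorm{w_\rM}_\pw$. The only point needing extra care is the $L^\infty$ norm of $(1-I_\rM)\widehat v_\rM$, which is only piecewise $H^2$. The paper invokes \eqref{inf2gamma} there, which by itself presupposes $H^2$-regularity, so a local scaled argument on each coarse triangle is needed.
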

\begin{proof}
 For the proof of $(a)$ refer to \cite[Lemma 4.7]{CCGMNN21}. 
The proof of $(b)$ is presented below.




       
\noindent         For any $w_\rM\in V_\rM+\widehat{V}_\rM$, there exist $v_\rM \in V_\rM$ and $\widehat{v}_\rM\in \widehat{V}_\rM$ such that $w_\rM=v_\rM-\widehat{v}_\rM$.    Lemma \ref{Interpolation}$(a)$ and the definition of projection operator $\Pi_{0, {\T}}$ show
         \begin{align}
       \enorm{\widehat{v}_\rM-I_\rM \widehat{v}_\rM}_\pw=\norm{(1-\Pi_{0, {\T}}) D_\pw^2 \widehat{v}_\rM}= \min_{p_0\in \mathcal{P}_0( {\T})}\norm{D_\pw^2 \widehat{v}_\rM-p_0}\leq \enorm{\widehat{v}_\rM -v_\rM}_\pw.\label{call3}
        \end{align}
        Apply Lemma \ref{Interpolation}$(b)$ with $v=\widehat{v}_\rM$ to arrive at
        \begin{align}
        \sum_{i=0}^{1}\norm{D^i_\pw(\widehat{v}_\rM-I_\rM \widehat{v}_\rM)}
        \leq C_\rI \sum_{i=0}^{1} h^{2-i}\enorm{ \widehat{v}_\rM-I_\rM \widehat{v}_\rM}_\pw.\label{call1}
        \end{align}
        Utilize \eqref{call3} in the above step to conclude
        \begin{align}
        \sum_{i=0}^{1} h^{2-i}\enorm{ \widehat{v}_\rM-I_\rM \widehat{v}_\rM}_\pw\leq 2  C_\Omega\enorm{\widehat{v}_\rM-v_\rM}_\pw \label{NNcal04}
        \end{align}
        with $C_\Omega:=\max\{|\Omega|,|\Omega|^{1/2}\}$.
        Since, $v_\rM-I_\rM \widehat{v}_\rM\in V_\rM$, Lemma \ref{Poincare}$(a)$ yields
        $\sum_{i=0}^{1}\norm{D_\pw^i (v_\rM-I_\rM \widehat{v}_\rM)}\leq C_{\rm PJ} \enorm{v_\rM-I_\rM \widehat{v}_\rM}_\pw.$
        This, a triangle inequality, and \eqref{call3} reveal
        \begin{align}	
\sum_{i=0}^{1}\norm{D_\pw^i(v_\rM-I_\rM \widehat{v}_\rM)}\leq
        2C_{\rm PJ}\enorm{v_\rM-\widehat{v}_\rM}_\pw. \label{call2}
	\end{align}
    A triangle inequality plus \eqref{call1}-\eqref{call2} establish $\norm{w_\rM}+\norm{D_\pw w_\rM}\leq C_{\rm p1} \enorm{ w_\rM}_\pw$ with $C_{\rm p1}:=2C_{\rm PJ}+2 C_\rI C_\Omega.$ {The continuous Sobolev embedding 
    \eqref{inf2gamma} (with $\gamma=2$) and the above estimates lead to $\norm{w_\rM}_{L^\infty(\Omega)}\leq C_{\rm emb}(1+C_{\rm p1}) \enorm{w_\rM}_\pw$. 
    This concludes the proof of $(b)$ with $C_{\rm PI}:=C_{\rm p1}+C_{\rm emb}(1+C_{\rm p1})$.}
  \end{proof}   

        \section{Optimal control problem with point sources {\bf(P1)}}\label{probP1}
   This section states the optimality system at the continuous and discrete levels for \eqref{opt1}. Further, a~priori and a~posteriori error estimates for the OCP are derived in Theorems~\ref{apriori} and Theorem~\ref{aposteriori}, respectively, based on the error equivalence result in Theorem~\ref{equivalence} presented in Section \ref{errorap}.

	\medskip	
	\noindent	The weak formulation that corresponds to  the optimal control problem in \eqref{opt1} is
		\\
		$$\displaystyle{\min_{(y,\bu)\in V\times  U_{\rm ad}} \cJ(y,\bu)} \text{ subject to } a(y,\phi)=\sum_{{\rm z}\in\D}u_{\rm z}\langle\delta_{\rm z},\phi\rangle \text{ for all }\phi \in V,$$
		where $V:=H^2_0(\Omega)$  and for all $\phi,w \in V$, the symmetric bilinear form
 $a(\phi,w):=\int_\Omega D^2 \phi:D^2 w \dx$ is bounded and $V$-elliptic. 
  The existence and uniqueness of the solution of the above optimal control problem follow from \cite[Section 5.3]{AAS21}.
  The optimality system seeks $(\yb,\pb,\bar{\bu})\in V\times V\times U_{\rm ad}$ with $\bar{\bu}:=\{\ub_{\rm z}\}_{{\rm z}\in \Z}$ such that
		\begin{subequations}\label{sym1}
			\begin{align}
				&a(\yb,\phi)=\sum_{{\rm z}\in\Z}\ub_{\rm z}\langle\delta_{\rm z},\phi\rangle\text{  for all }\phi\in V,\label{sym11}\\
				&a(\phi,\pb)=(\yb-\yd,\phi)\text{ for all }\phi\in V,\label{sym12}\\
				&\sum_{{\rm z}\in\Z}{(\pb({\rm z})+\alpha \ub_{\rm z})(v_{\rm z}-\ub_{\rm z})\geq 0\text{ for all }\{v_{\rm z}\}_{{\rm z}\in \Z} =: {\bf{v}}\in U_{\rm ad}.}\label{sym13}
			\end{align}
		\end{subequations}
\noindent The optimal control $\ub_z$ in \eqref{sym13} has the representation in terms of the adjoint variable $\pb$ and the projection operator $\Pi_{[a,b]}$ defined by $\Pi_{[a,b]}(g):= \min\{b, \max \{a,g\} \}$ as 
		$	\ub_{\rm z}= \Pi_{[a_{\rm z}, b_{\rm z}]}\big( -  \alpha^{-1} \pb({\rm z}) \big)\fl {{\rm z}\in \Z}.$ Since $H^2_0(\Omega)$ is continuously embedded in $C(\bar{\Omega})$, the above projection operator is well-defined. The projection operator $\Pi_{[a_{\rm z}, b_{\rm z}]}$ is Lipschitz continuous  (ref. \cite[step 3, Proposition 2.5]{KKRK2014}) and
  \begin{align}\label{lipschitz}
		    |{\Pi_{[a_{\rm z}, b_{\rm z}]}\big( g \big)}|\leq |{g}| \fl g\in \mathbb{R},
		\end{align}
where $|\cdot|$ denotes the absolute value. The right-hand side of \eqref{sym11}, $\langle \delta_{\rm z}, \phi\rangle$ with $\phi\in V$ is well-defined as $H^{\gamma}(\Omega)$ is continuously embedded in $C(\bar{\Omega})$ for $\gamma>1$. The Dirac distribution $\delta_{\rm z}:H^{\gamma}(\Omega)\rightarrow {\mathbb R}$ is a linear and bounded operator. The solution of \eqref{sym11}, $\yb\in V$ satisfies $\yb\in H^{4-\gamma}(\Omega)$ and  $\norm{\yb}_{4-\gamma}\lesssim  \norm{\delta_{\rm z}}_{-\gamma}$ for $2-\sigma\leq\gamma\leq2$ with the elliptic regularity index $0<\sigma\leq 1$ \cite{BlumRannacher}. The index of elliptic regularity $\sigma=1$ follows for a convex polygonal domain $\Omega$, while $1/2<\sigma<1$ holds for a non-convex polygonal domain $\Omega$ \cite[Section 2.1]{CCNN2021}. This notation for regularity is used throughout the article.

  \medskip
\noindent    The discrete control problem seeks $(\yb_\rM,\bar{{\bu}}_\h)\in V_\rM\times U_{\rm  ad}$ with $\bar{\bu}_\h:=\{\ub_{\rm z,h}\}_{{\rm z}\in \Z}$ such that
		\begin{subequations}\label{discrete_cost}
			\begin{align} 
				&   \cJ(\yb_\rM, \bar{\bu}_\h)=\min_{(y_\rM, {\bu}) \in  V_\rM \times  U_{\rm ad}} \cJ(y_\rM, {\bu}) \,\,\, \textrm{ subject to } \\
				&  a_\pw(y_\rM,\phi_\rM)=\sum_{{\rm z}\in\Z}{u}_{\rm z}\langle \delta_{\rm z}, J\phi_\rM\rangle \fl \phi_\rM\in V_\rM, \label{dis_cost_b}
			\end{align}
		\end{subequations}
		 where for $\psi_\rM, \phi_\rM \in V_\rM$, the bilinear form $a_\pw(\psi_\rM,\phi_\rM):=\sum_{T\in\T}\int_T D_\pw^2\psi_\rM:D_\pw^2 \phi_\rM \dx$ is elliptic with respect to  $\enorm{\cdot}_{\pw}:=a_\pw(\cdot,\cdot)^{1/2}$. Here $J:V_\rM\rightarrow V$ denotes the companion operator (see Lemma \ref{enrithm} for details). Since, $J\phi_\rM\in V$, $\langle \delta_{\rm z}, J\phi_\rM\rangle=(J\phi_\rM)({\rm z})$ for all $\phi_\rM\in V_\rM$. In the remaining parts of the paper, $(J\phi_\rM)({\rm z})$ is abbreviated as $J\phi_\rM(z)$ for notational simplicity. The existence and uniqueness of the solution to the discrete problem (\ref{discrete_cost}) follow from \cite[Section 5.3]{AAS21}.

\medskip
\noindent	The discrete optimality system seeks $(\bar{y}_\rM,\bar{p}_\rM,\bar{\bu}_{\h})\in V_\rM \times V_\rM \times U_{\rm ad}$ such that 
   \begin{subequations}\label{discrete1}
			\begin{align} 
				&  a_\pw({\bar y_\rM},\phi_\rM)=\sum_{{\rm z}\in\Z}\ub_{\rm z,h}\langle \delta_{\rm z}, J\phi_\rM\rangle \fl \phi_\rM \in V_\rM   \label{state_eq1}
				\\  
				& a_\pw(\phi_\rM,{\pb_\rM})=(\bar y_\rM- \yd, \phi_\rM) \fl \phi_\rM \in V_\rM \label{dadj}
				\\
				& \sum_{{\rm z}\in\Z} ( J\pb_\rM({\rm z})+\alpha \ub_{\rm z,h}) ( w_{\rm z} - {{ \bar u}_{\rm z,h}}) \ge 0   \; \fl {{\bf{w}}:=\{w_{\rm z}\}_{{\rm z}\in \Z} \in U_{\rm ad}}. \label{optimality}
			\end{align}
   \end{subequations}
   The optimal control $\bar{u}_{\rm z,h}$ in \eqref{optimality} is represented as $\ub_{\rm z,h} = \Pi_{[a_{\rm z}, b_{\rm z}]}\big(-\alpha^{-1}  J\pb_\rM({\rm z})\big )\fl  {{\rm z}\in \Z}.$ The derivation of the discrete optimality condition in \eqref{optimality} is  provided in the appendix.
		


\subsection{Auxiliary results}\label{auxresult}
For given discrete control and state solutions $\bar{\bu}_\h:=\{\ub_{\rm z,h}\}_{{\rm z}\in \Z}$ and 
$\yb_\rM$, define auxiliary problems that seek $(\ty,\tp)\in V\times V$
such that
\begin{align}
    a(\ty,\phi)&=\sum_{{\rm z}\in \Z}\ub_{\rm z,h}\langle\delta_{\rm z},\phi\rangle~\text{and}~a(\phi,\tp)=(\yb_\rM-\yd,\phi) \text{ for all }\phi\in V.\label{aux1}
\end{align}
Since $V\hookrightarrow C(\bar{\Omega})$, $\langle \delta_{\rm z}, \phi\rangle$ is well-defined and $|\langle \delta_{\rm z}, \phi\rangle|\leq \norm{\delta_{\rm z}}_{H^{-2}(\Omega)}\norm{\phi}_{V}$. Therefore, the well-posedness of the solutions is assured by the Lax-Milgram Lemma \cite[Section 6.2.1]{Evans}.

 \begin{thm}[equivalence]\label{equivalence}
Let $(\yb,\pb,\bar{\bu})\in V\times V\times U_{\rm ad}$ (resp. $(\yb_\rM,\pb_\rM,\bar{\bu}_\h)\in V_\rM \times V_\rM \times U_{\rm ad}$) solve \eqref{sym1} (resp. \eqref{discrete1}) and $(\ty,\tp)$ solve \eqref{aux1}. Then the following equivalence result holds: 
$$\norm{\bar{\bu}-\bar{\bu}_\h}_{\mathbb{R}^n}+\enorm{\yb-\yb_\rM}_\pw+\enorm{\pb-\pb_\rM}_\pw\approx  \enorm{\ty-\yb_\rM}_\pw+\enorm{\tp-\pb_\rM}_\pw.$$
The constants in $\approx$ depend on the shape regularity parameter, the cardinality of $~\Z$, and the regularization parameter $\alpha$.
\begin{proof}
{\it Step 1.}~(key estimates). For $\phi \in V$, \eqref{sym11}, \eqref{sym12}, and \eqref{aux1} imply
\begin{align}
    a(\yb-\ty,\phi)=\sum_{{\rm z}\in \Z}(\ub_{\rm z}-\ub_{\rm z,h})\langle\delta_{\rm z},\phi\rangle \text{ and } a(\phi,\pb-\tp)=(\yb-\yb_\rM,\phi).\label{NNcal14}
\end{align}
The test function $\phi=\yb-\ty$ in the first equation of \eqref{NNcal14}, the ellipticity of $a(\cdot,\cdot)$, and Lemma \ref{Poincare}$(a)$  show 
\begin{align}
\enorm{\yb-\ty}_\pw^2\leq  \sum_{{\rm z}\in \Z}|\ub_{\rm z}-\ub_{\rm z,h}||\yb({\rm z})-\ty({\rm z})|&\leq  \sum_{{\rm z}\in \Z}|\ub_{\rm z}-\ub_{\rm z,h}|\norm{\yb-\ty}_{L^{\infty}(\Omega)}\leq \sqrt{n}C_{\rm PJ} \norm{\bar{\bu}-\bar{\bu}_\h}_{\mathbb{R}^n}\enorm{\yb-\ty}_\pw. \no\\
{\rm Thus} \; \enorm{\yb-\ty}_\pw &\leq \sqrt{n}C_{\rm PJ} \norm{\bar{\bu}-\bar{\bu}_\h}_{\mathbb{R}^n}. \label{NNcal15}
\end{align}   
The choice $\phi=\pb-\tp$ in the second equation of \eqref{NNcal14} and analogous arguments show
\begin{align}
\enorm{\pb-\tp}_\pw&\leq C_{\rm PJ} \norm{\yb-\yb_\rM}. \label{NNcal16}
\end{align}
\noindent {\it Step 2.}~(State and adjoint w.r.t. control and auxiliary errors). A triangle inequality and \eqref{NNcal15} show
 \begin{align}\label{NNcal18}
&\enorm{\yb-\yb_\rM}_\pw
\leq \sqrt{n}C_{\rm PJ}\norm{\bar{\bu}-\bar{\bu}_\h}_{\mathbb{R}^n}+\enorm{\ty-\yb_\rM}_\pw. 
\end{align}
Similarly, a triangle inequality, \eqref{NNcal16}, Lemma \ref{Poincare}$(a)$, and \eqref{NNcal18} with $C_1:=1+C_{\rm PJ}^2(1+\sqrt{n}C_{\rm PJ})$ lead to
\begin{align}
\enorm{\pb-\pb_\rM}_\pw
&\leq C_1\big( \norm{\bar{\bu}-\bar{\bu}_\h}_{\mathbb{R}^n}+\enorm{\ty-\yb_\rM}_\pw+\enorm{\tp-\pb_\rM}_\pw\big).\label{NNcal21}
\end{align}

\noindent {\it Step 3.}~(upper bound for control error). 
 The choice $v_{\rm z}=\ub_{\rm z,h}$ (resp. $w_{\rm z}=\ub_{\rm z}$) in $\eqref{sym13}$ (resp. $\eqref{optimality}$) yields
\begin{align*}
\sum_{{\rm z}\in \Z}(\alpha \ub_{\rm z}+\pb({\rm z})) (\ub_{\rm z,h}-\ub_{\rm z})\geq 0\quad ({\rm resp. }\quad
\sum_{{\rm z}\in \Z}(\alpha \ub_{\rm z,h}+J\pb_\rM({\rm z}))(\ub_{\rm z}-\ub_{\rm z,h})\geq 0).
\end{align*}
Add the above displayed inequalities and introduce the auxiliary solution $\tp$ to obtain
\begin{align}\label{Nth1}
\alpha \sum_{{\rm z}\in \Z}(\ub_{\rm z}-\ub_{\rm z,h})^2&\leq \sum_{{\rm z}\in \Z}(J\pb_\rM({\rm z})-\pb({\rm z}))(\ub_{\rm z}-\ub_{\rm z,h})\no\\
&=\sum_{{\rm z}\in \Z} (J\pb_\rM({\rm z})-\tp({\rm z}))(\ub_{\rm z}-\ub_{\rm z,h})+\sum_{{\rm z}\in \Z}(\tp({\rm z})-\pb({\rm z}))(\ub_{\rm z}-\ub_{\rm z,h}). 
\end{align}
Apply Young's inequality to the first term of the right side of the above inequality to obtain 
\begin{align}
     \sum_{{\rm z}\in \Z}(J\pb_\rM({\rm z})-\tp({\rm z}))(\ub_{\rm z}-\ub_{\rm z,h})&\leq \frac{1}{\alpha}\sum_{{\rm z}\in \Z}(J\pb_\rM({\rm z})-\tp({\rm z}))^2 +\frac{\alpha}{4} \sum_{{\rm z}\in \Z}(\ub_{\rm z}-\ub_{\rm z,h})^2.\no
\end{align}
   The substitutions $\phi=\tp-\pb$ and $\phi=\yb-\ty$ in the first and second inequalities of $\eqref{NNcal14}$, respectively, the symmetry of $a(\bullet,\bullet)$,  and elementary algebra lead to 
   \begin{align*}
      \sum_{{\rm z}\in \Z} (\ub_{\rm z}-\ub_{\rm z,h})(\tp({\rm z})-\pb({\rm z}))&= a(\yb-\ty,\tp-\pb)= (\yb_\rM-\yb,\yb-\ty)=-\norm{\yb_\rM-\yb}^2+(\yb_\rM-\yb,\yb_\rM-\ty)\\
       &=-\norm{\yb_\rM-\yb}^2+\norm{\yb_\rM-\ty}^2+(\ty-\yb,\yb_\rM-\ty).
   \end{align*}
   An application of the Cauchy Schwarz inequality, Young's inequality, Lemma \ref{Poincare}$(a)$, and \eqref{NNcal15} shows
   \begin{align*}
   (\ty-\yb,\yb_\rM-\ty)&\leq \frac{\alpha}{4nC_{\rm PJ}^4}\norm{\ty-\yb}^2+\frac{nC_{\rm PJ}^4}{\alpha} \norm{\yb_\rM-\ty}^2\leq \frac{\alpha}{4} \sum_{{\rm z}\in \Z}(\ub_{\rm z}-\ub_{\rm z,h})^2+\frac{nC_{\rm PJ}^4}{\alpha} \norm{\yb_\rM-\ty}^2.
   \end{align*}
  A combination of the last three displayed results in \eqref{Nth1} (omitting
  the term $-\norm{\yb_\rM-\yb}^2$) and a triangle inequality leads to \begin{align}
   C_2^{-1}\norm{\bar{\bu}-\bar{\bu}_\h}_{\mathbb{R}^n}\leq \norm{\ty-\yb_\rM}+ \norm{\tp-\pb_\rM}_{L^{\infty}(\Omega)}+\norm{\pb_\rM-J\pb_\rM}_{L^{\infty}(\Omega)} \label{call4}
   \end{align}
   with $C_2^2:=\max\{2\alpha^{-1}(1+n\alpha^{-1}C_{\rm PJ}^4),2\}$.
  Lemma \ref{Poincare}$(a)$ and \eqref{sup2pw} (with $v_\rM=\pb_\rM\in V_\rM$ and $v=\tp\in V$) reveal
 \begin{align}
   \norm{\bar{\bu}-\bar{\bu}_\h}_{\mathbb{R}^n}\leq  & {C_2 C_{\rm PJ}(\enorm{\ty-\yb_\rM}_\pw+\enorm{\tp-\pb_\rM}_\pw)+C_2 |\Omega|^{1/2}C_{\rm J3}C_{\rm sr}\enorm{\pb_\rM-\tp}_\pw}\no\\
   \leq& C_{3}(\enorm{\ty-\yb_\rM}_\pw+ \enorm{\tp-\pb_\rM}_\pw),\label{Ncal7}
   \end{align}
   with {$C_3:=C_2(C_{\rm PJ}+|\Omega|^{1/2}C_{\rm J3}C_{\rm sr})$, where $C_{\rm sr}$ depends only on the shape regularity parameter.}\\
\noindent {\it Step 4.}~(complete upper and lower bounds). Altogether \eqref{Ncal7}, \eqref{NNcal18}, and  \eqref{NNcal21} lead to
\begin{align}
\norm{\bar{\bu}-\bar{\bu}_\h}_{\mathbb{R}^n}+\enorm{\yb-\yb_\rM}_\pw&+\enorm{\pb-\pb_\rM}_\pw\leq C_{\rm c1}\big(\enorm{\ty-\yb_\rM}_\pw+\enorm{\tp-\pb_\rM}_\pw\big)\label{equiv1}
\end{align}
with  $C_{\rm c1}(\alpha^{-1}):=1+C_1+C_3(1+\sqrt{n}C_{\rm PJ}+C_1)$.

\smallskip \noindent The triangle inequality (used twice), \eqref{NNcal15}, \eqref{NNcal16}, and Lemma \ref{Poincare}$(a)$ show
   \begin{align}\label{Cc2}
   \enorm{\ty-\yb_\rM}_\pw+\enorm{\tp-\pb_\rM}_\pw&
   \leq C_{\rm c2} \big(\norm{\bar{\bu}-\bar{\bu}_\h}_{\mathbb{R}^n}+\enorm{\yb-\yb_\rM}_\pw+\enorm{\pb-\pb_\rM}_\pw\big)
   \end{align}
with $C_{\rm c2}:=\max\{ \sqrt{n}C_{\rm PJ},1+C_{\rm PJ}^2 \}$. This concludes the proof.
\end{proof}
\end{thm}
\noindent {The estimates in weaker norms for the state and adjoint solutions in terms of errors of solutions of the auxiliary problems and the discrete solutions are established next.} This is essential for proving the quasi-orthogonality of AFEM.
\begin{thm}[weaker norm error bounds]\label{aprioriHs}
Let $(\yb,\pb,\bar{\bu})\in V\times V\times U_{\rm ad}$ (resp. $(\yb_\rM,\pb_\rM,\bar{\bu}_\h)\in V_\rM \times V_\rM \times U_{\rm ad}$) solve \eqref{sym1} (resp. \eqref{discrete1}) and $(\ty,\tp)$ solve \eqref{aux1}. {Then for $1<\gamma\leq 2$, it holds that }
$$ \norm{\yb-\yb_\rM}_{H^{\gamma}(\T)} + \norm{\pb-\pb_\rM}_{H^{\gamma}(\T)} \leq C_{\rm c3}\big(\norm{\ty-\yb_\rM}_{H^{\gamma}(\T)}+\norm{\tp-\pb_\rM}_{H^{\gamma}(\T)}+h\enorm{\pb_\rM-J\pb_\rM}_\pw\big).$$
\end{thm}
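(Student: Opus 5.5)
The plan is to run the proof of Theorem~\ref{equivalence} again, now measuring the state and adjoint errors in the broken norm $\norm{\bullet}_{H^{\gamma}(\T)}$ and the control error as in \eqref{call4}. The three ingredients are:
\begin{itemize}
\item the triangle inequality through the auxiliary solutions $(\ty,\tp)$ of \eqref{aux1};
\item the fact that $\yb-\ty$ and $\pb-\tp$ are conforming, so that \eqref{gamma2H2} and Lemma~\ref{Poincare}$(a)$ control them;
\item the control estimate \eqref{call4}, whose $L^\infty$ terms have to be related to $H^{\gamma}(\T)$ norms and to the companion term $h\enorm{\pb_\rM-J\pb_\rM}_\pw$.
\end{itemize}

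\textbf{State and adjoint.} Since the broken norm of a function in $V$ is bounded by its global norm, the triangle inequality gives
$\norm{\yb-\yb_\rM}_{H^\gamma(\T)}\le \norm{\yb-\ty}_{H^\gamma(\Omega)}+\norm{\ty-\yb_\rM}_{H^\gamma(\T)}$.
By \eqref{gamma2H2}, Lemma~\ref{Poincare}$(a)$ and \eqref{NNcal15}, the first term is $\lesssim \norm{\bar{\bu}-\bar{\bu}_\h}_{\mathbb{R}^n}$. In the same way,
$\norm{\pb-\pb_\rM}_{H^\gamma(\T)}\le \norm{\pb-\tp}_{H^\gamma(\Omega)}+\norm{\tp-\pb_\rM}_{H^\gamma(\T)}$,
and \eqref{gamma2H2} with \eqref{NNcal16} gives $\norm{\pb-\tp}_{H^\gamma(\Omega)}\lesssim\norm{\yb-\yb_\rM}$. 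Since $\norm{\yb-\yb_\rM}\le \norm{\yb-\yb_\rM}_{H^\gamma(\T)}$, this last term is already covered by the state bound. So everything reduces to bounding the control error by the right-hand side of the theorem.

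\textbf{Control error.} I start from \eqref{call4}:
$\norm{\bar{\bu}-\bar{\bu}_\h}_{\mathbb{R}^n}\lesssim \norm{\ty-\yb_\rM}+\norm{\tp-\pb_\rM}_{L^\infty(\Omega)}+\norm{\pb_\rM-J\pb_\rM}_{L^\infty(\Omega)}$.
The first term is trivially at most $\norm{\ty-\yb_\rM}_{H^\gamma(\T)}$. For the third term, \eqref{sup2pw} with the conforming choice $v=J\pb_\rM$ (or any $v\in V$) gives
$\norm{\pb_\rM-J\pb_\rM}_{L^\infty(T)}\le C_{\rm J3}h_T\enorm{\pb_\rM-J\pb_\rM}_{\pw}$.
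Taking the maximum over $T$ produces exactly the term $h\enorm{\pb_\rM-J\pb_\rM}_\pw$ in the statement. Since both elementwise bounds refer only to the local function, I do not need to pass through the energy norm here as \eqref{Ncal7} did.

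\textbf{Main obstacle: $\norm{\tp-\pb_\rM}_{L^\infty(\Omega)}$.} The function $\tp-\pb_\rM$ belongs only to $H^\gamma(\T)$, and the $L^\infty$ norm is a maximum over triangles. I would first try to apply \eqref{inf2gamma} elementwise with $X=T$:
$\norm{\tp-\pb_\rM}_{L^\infty(T)}\le C_{\rm emb}\norm{\tp-\pb_\rM}_{H^\gamma(T)}\le C_{\rm emb}\norm{\tp-\pb_\rM}_{H^\gamma(\T)}$.
The difficulty is the dependence of $C_{\rm emb}$ on $T$. For the full $H^\gamma(T)$ norm, shape regularity and $h_T\le |\Omega|^{1/2}$ should make $C_{\rm emb}$ uniform: a scaling argument gives $\norm{w}_{L^\infty(T)}\lesssim h_T^{-1}\norm{w}_{L^2(T)}+h_T^{\gamma-1}|w|_{H^\gamma(T)}$. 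The $h_T^{-1}$ factor on the $L^2$ part is exactly the delicate point, and I would need to check it carefully.

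If the elementwise bound fails, the fallback is to split the term as $(\tp-J\pb_\rM)+(J\pb_\rM-\pb_\rM)$. The first summand lies in $V$, so \eqref{inf2gamma} holds on all of $\Omega$ with a fixed constant. The second summand is again handled by \eqref{sup2pw}. The price is then bounding the global norm $\norm{\tp-J\pb_\rM}_{H^\gamma(\Omega)}$ by $\norm{\tp-\pb_\rM}_{H^\gamma(\T)}$ plus $\norm{\pb_\rM-J\pb_\rM}_{H^\gamma(\T)}$, where the latter is bounded by $h^{2-\gamma}\enorm{\pb_\rM-J\pb_\rM}_\pw$ through Lemma~\ref{enrithm}$(b)$ and interpolation. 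This step, and the resulting power of $h$ compared with the $h^{1}$ in the statement, is where I expect the real work.

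Combining the control bound with the state and adjoint estimates gives the claim, with $C_{\rm c3}$ depending on $\alpha$, $n$, $\gamma$ and shape regularity.
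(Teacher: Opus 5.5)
Your reduction of the state and adjoint errors to the control error coincides with the paper's proof, as does your treatment of the first and third terms of \eqref{call4}: the paper uses \eqref{gamma2H2}, Lemma~\ref{Poincare}$(a)$, \eqref{NNcal15}--\eqref{NNcal16} and \eqref{sup2pw} in the same way. The genuine gap is the step you flag yourself. The bound on $\norm{\tp-\pb_\rM}_{L^\infty(\Omega)}$ by $\norm{\tp-\pb_\rM}_{H^{\gamma}(\T)}$ (plus the companion term) is never established, so the proposal does not yet prove the theorem.

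The paper settles this step in one line by exactly your first attempt. It applies \eqref{inf2gamma} on each triangle, with $C_{\rm emb}$ treated as mesh-independent. The application has to be elementwise, because Morley functions are discontinuous and $\tp-\pb_\rM\notin H^\gamma(\Omega)$. Your own scaling bound shows this is not justified. Your remark that shape regularity and $h_T\le|\Omega|^{1/2}$ ``should make $C_{\rm emb}$ uniform'' is wrong: $h_T\le|\Omega|^{1/2}$ only controls the factor $h_T^{\gamma-1}$. The factor $h_T^{-1}$ on the $L^2$ part is sharp, since $w|_T\equiv c$ gives $\norm{w}_{L^\infty(T)}/\norm{w}_{H^\gamma(T)}=|T|^{-1/2}$. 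So reproducing the paper's line would not close the gap.

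The fallback breaks down where you expected. It needs the global norm $\norm{\tp-J\pb_\rM}_{H^\gamma(\Omega)}$ of a conforming function to be controlled by broken norms. Take the broken norm with $\norm{w}_{H^\gamma(T)}^2:=\norm{w}_{H^1(T)}^2+|Dw|_{H^{\gamma-1}(T)}^2$. For $1<\gamma<3/2$, not even $\norm{w}_{L^\infty(\Omega)}\lesssim\norm{w}_{H^\gamma(\T)}$ holds uniformly for $w\in V$. To see this, let $v_h$ be a continuous piecewise affine function with compact support in $\Omega$ realising the sharp discrete Sobolev ratio $\norm{v_h}_{L^\infty(\Omega)}\approx|\log h|^{1/2}\norm{v_h}_{H^1(\Omega)}$, and mollify it at a scale $\epsilon\ll h$. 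Then:
\begin{itemize}
\item the gradient equals the constant $Dv_h|_T$ outside an $\epsilon$-neighbourhood of $\partial T$, so, because $\gamma-1<1/2$, the elementwise $H^{\gamma-1}$ seminorms of the gradient tend to zero as $\epsilon\to0$;
\item the broken $H^\gamma$ norm therefore tends to $\norm{v_h}_{H^1(\Omega)}$;
\item the $L^\infty$ norm tends to $\norm{v_h}_{L^\infty(\Omega)}$.
\end{itemize}
Taking $\pb_\rM=0$, your target inequality for $\norm{\tp-\pb_\rM}_{L^\infty(\Omega)}$ fails by a factor $|\log h|^{1/2}$ as a statement about general elements of $V+V_\rM$. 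Moreover, even where a global--broken comparison holds, your fallback would only produce $h^{2-\gamma}\enorm{\pb_\rM-J\pb_\rM}_\pw$ instead of the stated $h\enorm{\pb_\rM-J\pb_\rM}_\pw$.

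The missing ingredient therefore has to use that $\tp-\pb_\rM$ is a Morley Galerkin error, not merely an element of $V+V_\rM$. One option is a duality argument for the point values $(\tp-J\pb_\rM)({\rm z})$ with the biharmonic Green's function. This bounds them by $h^{2-\gamma}$ times energy-norm errors plus data terms, which is the kind of bound used downstream in Theorem~\ref{apriori}$(b)$, though it is not the inequality in the form stated.
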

\begin{proof}
A triangle inequality and {the continuous Sobolev embedding from \eqref{gamma2H2}} show
 \begin{align}
        \norm{\yb-\yb_\rM}_{H^{\gamma}(\T)} + &~\norm{\pb-\pb_\rM}_{H^{\gamma}(\T)}  \no\\
        &\leq C_{\rm emb1}(\norm{\yb-\ty}_{H^2(\T)}+ \norm{\pb-\tp}_{H^2(\T)})+\norm{\ty-\yb_\rM}_{H^{\gamma}(\T)}+\norm{\tp-\pb_\rM}_{H^{\gamma}(\T)}.\label{first}
\end{align}
Lemma \ref{Poincare}$(a)$ and the definition of $\|\bullet \|_{H^2(\T)}$ lead to 
 \begin{align}
     \norm{\yb-\ty}_{H^2(\T)}+ \norm{\pb-\tp}_{H^2(\T)} & \le (1+C_{\rm PJ})(\enorm{\yb-\ty}_\pw  + \enorm{\pb-\tp}_\pw). \label{first_part}
 \end{align}
The inequality in \eqref{NNcal16}, a triangle inequality,  and Lemma \ref{Poincare}$(a)$ yield
 $\enorm{\pb-\tp}_\pw\leq C_{\rm PJ}\norm{\yb-\yb_\rM}  \leq C_{\rm PJ}^2\enorm{\yb-\ty}_\pw +C_{\rm PJ}\norm{\ty-\yb_\rM}.$ Substitute this in \eqref{first_part}, utilize \eqref{NNcal16}, a triangle inequality, the Sobolev embedding $H^{\gamma}(T)\hookrightarrow L^2(T)$, and \eqref{NNcal15} to obtain
\begin{align}
     \norm{\yb-\ty}_{H^2(\T)}+ \norm{\pb-\tp}_{H^2(\T)} 
     &\le C_4(\norm{\bar{\bu}-\bar{\bu}_\h}_{\mathbb{R}^n}  + \norm{\ty-\yb_\rM}_{H^{\gamma}(\T)})\label{c01}
\end{align} 
{with $C_4:=\sqrt{n}C_{\rm PJ}(1+C_{\rm PJ})(1+C_{\rm PJ}^2)$.} The continuous Sobolev embedding $H^{\gamma}(T)\hookrightarrow L^2(T)$, \eqref{inf2gamma}, and \eqref{sup2pw} in \eqref{call4} implies 
\begin{align}
      C_5^{-1}\norm{\bar{\bu}-\bar{\bu}_\h}_{\mathbb{R}^n}\leq \norm{\ty-\yb_\rM}_{H^\gamma(\T)}+ \norm{\tp-\pb_\rM}_{H^{\gamma}(\T)}+h\enorm{\pb_\rM-J\pb_\rM}_\pw \label{call4new}
  \end{align}
 with {$C_5:=C_2(1+C_{\rm emb}+C_{\rm sr}C_{\rm J3})$}. Utilize this in \eqref{c01} to obtain 
    \begin{align}
    \norm{\yb-\ty}_{H^2(\T)} + \norm{\pb-\tp}_{H^2(\T)}
    &\leq C_4(1+C_5)\norm{\ty-\yb_\rM}_{H^{\gamma}(\T)}+ C_4 C_5(\norm{\tp-\pb_\rM}_{H^{\gamma}(\T)}+h\enorm{\pb_\rM-J\pb_\rM}_\pw).\no
    \end{align}
{A substitution of the above estimate in \eqref{first} concludes the proof with $C_{\rm c3}(\alpha^{-1}):= 1+C_{\rm emb1}C_4(1+C_5)$}.
\end{proof}

        \subsection{A~priori and a~posteriori error control}\label{errorap}
        In this section, the a~priori and a~posteriori error estimates for OCP are established. The proofs are based on the error estimates from Section \ref{auxresult} and the a~priori and a~posteriori error estimates for the biharmonic problem with load function in $L^2(\Omega)$ or in $H^{-\gamma}(\Omega)$. 
\subsection*{Biharmonic problem}
 Let $\chi\in V$ and $\chi_\rM\in V_\rM$, respectively, solve the continuous and discrete biharmonic problem stated as:
  \begin{align}\label{intermediate}
         a(\chi,v)=\langle g,v\rangle \text{ for all }  v\in V \text{ and } a_\pw(\chi_\rM,v_\rM)=\langle g,Qv_\rM\rangle \text{ for all } v_\rM\in V_\rM
         \end{align}
       with $Q:=\begin{cases}
            J & \text{ for }g\in H^{-\gamma}(\Omega)\setminus L^2(\Omega),\\
            {\rm id} & \text{ for }g\in L^2(\Omega),
        \end{cases}$ $2-\sigma\leq \gamma\leq 2$,  where $0<\sigma\leq 1$ is the index of elliptic regularity of the biharmonic problem. Note that when $g \in L^2(\Omega)$, $\langle g,v\rangle$ is the $L^2$ inner product. 
        It is well known \cite[Eq. 4.1]{CCNN2021} that  the solution $\chi\in V \cap H^{4-\gamma}(\Omega)$ satisfies $\norm{\chi}_{H^{4-\gamma}(\Omega)}\leq C_{\rm reg} \begin{cases}
            \norm{g}_{H^{-\gamma}(\Omega)}&\text{ for }Q=J,\\
               \norm{g} &\text{ for }Q={\rm id}.
        \end{cases}$  
Moreover, we have the following a~priori error estimates (ref.  \cite[Theorem 4.1, Theorem 3.2(a)]{CCNN2021}). 
    \begin{align}
           C_{\rm ae}^{-1} \enorm{\chi-\chi_\rM}_\pw \leq  \begin{cases}
              h^{2-\gamma} \norm{\chi}_{H^{4-\gamma}(\Omega)}&\text{ for }Q=J,\\
              h^{2-\gamma} \norm{\chi}_{H^{4-\gamma}(\Omega)}+\osc(g,\T) &\text{ for }Q={\rm id}
           \end{cases} 
           \text{ and } \label{M_aprioriHs1}\\
           C_{\rm a\ell}^{-1}\norm{\chi-\chi_\rM}_{H^{\gamma}(\T)}\leq h^{2-\gamma}  \begin{cases}
               \enorm{\chi-\chi_\rM}_\pw & \text{ for }Q=J,\\
               \enorm{\chi-\chi_\rM}_\pw + \osc(g,\T)& \text{ for }Q={\rm id}
           \end{cases}\label{M_aprioriHs}
        \end{align}
with $\osc(g,\T):= \norm{h_{\T}^2(1-\Pi_{2,\T})g}$. 

\medskip
\noindent For given $g\in L^2(\Omega)$, the reliability and efficiency of the a~posteriori error estimator for  \eqref{intermediate} are discussed in \cite[Proposition 2.9]{DG_Morley_Eigen} and are stated below:
       \begin{align}\label{M_apost}
       C_{\rm rel}^{-1}\enorm{\chi-\chi_\rM}_\pw\leq \eta_{\rm aux}(g,\chi_\rM) \leq {C_{\rm eff}\big( \enorm{\chi-\chi_\rM}_\pw+\norm{h_\T^2(1-\Pi_{0,\T})g}\big)}
       \end{align}
where $\displaystyle \eta_{\rm aux}^2(g,\chi_\rM):=\sum_{T\in \T}\big(\eta_{{\rm aux},T}^2(g)+\eta_{{\rm aux},\E(T)}^2(\chi_\rM)\big)$ with $$\eta_{{\rm aux},T}^2(g)=
h_T^4\norm{g}_{L^2(T)}^2\text{ and }\eta_{{\rm aux},\E(T)}^2(\chi_\rM):=\sum_{E\in \E(T)}h_T\norm{[D_\pw^2 \chi_\rM]_E \tau_E}_{L^2(E)}^2. $$ 
For $\displaystyle g=\sum_{{\rm z}\in \Z}\varrho_{\rm z}\delta_{\rm z}$, define the a~posteriori error estimators as {$\displaystyle \eta_{\rm aux}^2(g,\chi_\rM):=\sum_{T\in \T}\big(\eta_{{\rm aux},T}^2(g)+\eta_{{\rm aux},\E(T)}^2(\chi_\rM)\big)$ with }
\begin{align}\label{P3:estmpde}
           { \eta_{{\rm aux}, T}^2(g)}:=	
\begin{cases}
    0 & \text{ for } \Z\cap T=\emptyset,\\
    \sum_{{\rm z}\in \Z \cap T} \lambda_T(\z)h_T^2 |\varrho_\z|^2 & \text{ otherwise},
\end{cases}
{ \eta_{{\rm aux},\E(T)}^2(\chi_\rM)}:= \sum_{E\in \E(T)}h_T\norm{[D_\pw^2 \chi_\rM]_E \tau_E}_{L^2(E)}^2,
          \end{align}
where for any $T\in \T$, $\lambda_T(\z):=\begin{cases}
    0& \text{ for }\z\in {\cal N}(T),\\
    1& \text{ otherwise}.
\end{cases}$ \\

 The reliability and efficiency of the above a~posteriori error estimator are established below. 
{In practice, the initial mesh can always be constructed such that $\Z\subset {\mathcal N}^i(\T_0)$, in which case the volume estimator vanishes. Nevertheless, we establish the reliability and the efficiency of the volume estimator in the general setting. To derive the efficiency estimate, the bubble function technique is adapted following the ideas from \cite[Lemma 3.2]{RER06} and \cite[Theorem 3.7]{DS24}, under the following conditions on the mesh:
\begin{enumerate}[label=({\bf C\arabic*}),leftmargin=3em] 
\item[({\bf C1})] For all ${\rm z}_1, {\rm z}_2 \in \Z\setminus {\mathcal N}^i(\T)$,  the sets $\omega(\T({\rm z}_1))$ and $\omega(\T({\rm z}_2))$ should be disjoint, where $\T({\rm z})$ denotes the set of all triangles which contains ${\rm z}$ and $\omega(\T({\rm z}))$ denotes the union of the patches of $T\in \T({\rm z})$ , 
\item[({\bf C2})] ${\rm dist}(\omega(\T({\rm z})),\partial \Omega)> 0$ for all ${\rm z} \in \Z\setminus {\mathcal N}^i(\T)$.
\end{enumerate}
The strict inequality in the second condition $({\bf C2})$ avoids the steep bubble function for $\z\in\Z$ in the efficiency proof. We start with a lemma that will be needed in the proof of efficiency.}
\begin{lem}\label{eff_bubble}
{Let $E\in \E^i$ be an interior edge that satisfies {$\Z\cap (\omega_E\setminus {\mathcal N}^i(\T))\neq\emptyset$}. Then, it holds
$$\norm{[D_\pw^2 \chi_\rM \nu_E]_E }_{L^2(E)}\leq C_{\rm b} h_E^{-1/2} \norm{D^2(\chi_\rM-\chi)}_{L^2(\omega(\omega_E))},$$
where $C_{\rm b}$ depends only on the shape regularity parameter.}
\end{lem}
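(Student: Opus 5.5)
The plan is the bubble-function technique, modified so that the point source is never seen by the test function. The idea is to build a test function $\varphi\in H^2_0(\Omega)$, supported in $\omega(\omega_E)$, with three properties:
\begin{enumerate}[label=(\roman*)]
\item $\varphi({\rm z})=0$ for every ${\rm z}\in\Z$;
\item elementwise integration by parts of $a_\pw(\chi_\rM,\varphi)$ reproduces $\norm{[D_\pw^2\chi_\rM\nu_E]_E}_{L^2(E)}^2$, up to harmless terms;
\item $\enorm{\varphi}_\pw\lesssim h_E^{-1/2}\norm{[D_\pw^2\chi_\rM\nu_E]_E}_{L^2(E)}$.
\end{enumerate}
Property (i) gives $a(\chi,\varphi)=\langle g,\varphi\rangle=\sum_{\rm z}\varrho_{\rm z}\varphi({\rm z})=0$. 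Hence $a_\pw(\chi_\rM,\varphi)=a_\pw(\chi_\rM-\chi,\varphi)\le\norm{D^2(\chi_\rM-\chi)}_{L^2(\omega(\omega_E))}\enorm{\varphi}_\pw$, and combining this with (ii) and (iii) yields the claim.

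For (ii): since $\chi_\rM$ is piecewise quadratic, $D_\pw^2\chi_\rM$ is piecewise constant and its divergence vanishes on each triangle. For any $\varphi\in H^2_0(\Omega)$ that is smooth on each triangle, the only contributions are therefore the edge terms
\[
a_\pw(\chi_\rM,\varphi)=\sum_{E'\in\E^i}\int_{E'}[D_\pw^2\chi_\rM\nu_{E'}]_{E'}\cdot\nabla\varphi\,{\rm d}s .
\]
First I would take the standard edge bubble. Let $b_E$ be the product of the barycentric coordinates of the endpoints of $E$ on the two triangles $T^\pm$ of $\omega_E$, and let $\lambda_E$ be a signed affine function vanishing on $E$ with $\nabla\lambda_E\cdot\nu_E=1$. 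The vector $[D_\pw^2\chi_\rM\nu_E]_E$ is constant on $E$; call it $\boldsymbol{j}$. Set $\varphi_0:=b_E^2\,\lambda_E\,(\boldsymbol{j}\cdot\nu_E)+b_E^2\,\ell_E\,(\boldsymbol{j}\cdot\tau_E)$, with $\ell_E$ an affine tangential coordinate chosen so that $\nabla\varphi_0|_E$ reproduces $b_E^2\boldsymbol{j}$. Since $\varphi_0$ and $\nabla\varphi_0$ vanish on $\partial\omega_E$, we get $\varphi_0\in H^2_0(\omega_E)$. Norm equivalence on the reference patch and scaling then give
\[
\int_E\boldsymbol{j}\cdot\nabla\varphi_0\,{\rm d}s\approx\norm{\boldsymbol{j}}^2_{L^2(E)},
\qquad
\enorm{\varphi_0}_\pw\lesssim h_E^{-1/2}\norm{\boldsymbol{j}}_{L^2(E)} .
\]

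Property (i) is the new ingredient, and I expect it to be the main obstacle. By (\textbf{C1}) at most one source ${\rm z}\in\Z\setminus{\mathcal N}^i(\T)$ lies in $\omega(\omega_E)$. By (\textbf{C2}) its whole neighbourhood $\omega(\T({\rm z}))$ stays away from $\partial\Omega$, so interior bubbles are available around ${\rm z}$. Sources at interior nodes are harmless, because $\varphi_0$ and all correctors below vanish at vertices.

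The first thing I would try is a corrector $\psi\in H^2_0$, supported in a single triangle $T_{\rm z}\in\T({\rm z})$, with $\psi({\rm z})=1$, and to set $\varphi:=\varphi_0-\varphi_0({\rm z})\psi$.
\begin{itemize}
\item If ${\rm z}$ is interior to $T_{\rm z}$, take $\psi=b_{T_{\rm z}}^2/b_{T_{\rm z}}^2({\rm z})$. Then $\psi\in H^2_0(T_{\rm z})$, so $a_\pw(\chi_\rM,\psi)=0$ by the divergence-free argument above and (ii) is unaffected.
\item If ${\rm z}$ lies on an edge $E'$ but not at a node, take $\psi$ with $\psi=0$ on $\partial\omega_{E'}$, $\nabla\psi=0$ on $\partial\omega_{E'}$, and $\nabla\psi|_{E'}\cdot\nu_{E'}=0$; for instance $\psi=c\,b_{E'}^2(1-\mu\lambda_{E'}^2)$-type polynomials on $\omega_{E'}$. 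The normal-derivative condition on $E'$ is only needed for the normal component of $[D_\pw^2\chi_\rM\nu_{E'}]$; the remaining tangential-component contribution on $E'$ is controlled because $\psi$ is smooth along $E'$.
\end{itemize}
In either case the correction only adds edge terms on $E'\neq E$, or none at all. Those terms are bounded in the same way by $|\varphi_0({\rm z})|\,\enorm{\psi}_\pw\,\norm{D^2(\chi_\rM-\chi)}$, because they arise from $a_\pw(\chi_\rM-\chi,\psi)$. It remains to bound the scaling factor:
\[
|\varphi_0({\rm z})|\lesssim h_E^{3/2}\norm{\boldsymbol{j}}_{L^2(E)}\quad\text{and}\quad\enorm{\psi}_\pw\lesssim h^{-1},
\]
which I would check on the reference configuration using shape regularity. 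This gives $\enorm{\varphi}_\pw\lesssim h_E^{-1/2}\norm{\boldsymbol{j}}_{L^2(E)}$ and completes the argument with a constant $C_{\rm b}$ that depends only on shape regularity.
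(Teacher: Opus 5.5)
\textbf{Main gap: the tangential component of the jump.} Your treatment of the normal component $j_\nu:=\boldsymbol j\cdot\nu_E$ is sound and is essentially the paper's Case (i). Your summand $b_E^2\lambda_E\,j_\nu$ is, up to scaling, exactly the paper's bubble $h_E[D^2_{\nu_E\nu_E}\chi_\rM]_E\big(\lambda_3^+-\tfrac{|T^-|}{|T^+|}\lambda_3^-\big)\lambda_1^2\lambda_2^2$, and it already vanishes at a source on $E$.

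The problem is $j_\tau:=\boldsymbol j\cdot\tau_E=[D^2_\pw\chi_\rM\nu_E]_E\cdot\tau_E$. No test function in $H^2_0(\omega_E)$ can detect it through the edge integral. Indeed, $j_\tau$ is constant on $E$ and the endpoints $P_1,P_2$ of $E$ lie on $\partial\omega_E$, so for every $\varphi\in H^2_0(\omega_E)$
\[
\int_E\boldsymbol j\cdot\nabla\varphi\,{\rm d}s=j_\nu\int_E\partial_{\nu_E}\varphi\,{\rm d}s+j_\tau\big(\varphi(P_2)-\varphi(P_1)\big)=j_\nu\int_E\partial_{\nu_E}\varphi\,{\rm d}s .
\]
Consequences:
\begin{itemize}
\item Your claim $\int_E\boldsymbol j\cdot\nabla\varphi_0\,{\rm d}s\approx\norm{\boldsymbol j}_{L^2(E)}^2$ fails; take $j_\nu=0\neq j_\tau$ and the left side vanishes.
\item No choice of $\ell_E$ can make $\nabla\varphi_0|_E=b_E^2\boldsymbol j$.
\item The term $b_E^2\ell_E$ is not even $C^1$ across $E$, because the barycentric coordinates on $T^\pm$ have different normal derivatives. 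Hence $\varphi_0\notin H^2(\omega_E)$.
\item Enlarging the support beyond $\omega_E$ would only bring in the unsigned jumps on other edges.
\end{itemize}
The missing idea is the paper's Step~I. Write $j_\tau=\partial_{\tau_E}[\partial_{\nu_E}\chi_\rM]_E$. An inverse estimate on $E$ bounds this by $h_E^{-1}\norm{[\partial_{\nu_E}\chi_\rM]_E}_{L^2(E)}$. Because $J\chi_\rM\in H^2_0(\Omega)$, the jump can be replaced by $[\partial_{\nu_E}(\chi_\rM-J\chi_\rM)]_E$. A trace inequality and Lemma~\ref{enrithm}$(b)$ then finish the estimate. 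This is also where the enlarged patch $\omega(\omega_E)$ on the right-hand side comes from.

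\textbf{Secondary gap: the additive corrector is not uniform.} For $\z\in{\rm int}(T_\z)$ and $\psi=b_{T_\z}^2/b_{T_\z}^2(\z)$, one only has $\enorm{\psi}_\pw\approx h^{-1}b_{T_\z}(\z)^{-2}$, not $\enorm{\psi}_\pw\lesssim h^{-1}$. Suppose $\z\in{\rm int}(T^+)$ lies at distance $t$ from $E$, near its midpoint. Then $|\varphi_0(\z)|\approx t\,|j_\nu|$ but $b_{T^+}(\z)\approx t/h_E$, so $|\varphi_0(\z)|\,\enorm{\psi}_\pw\approx (h_E/t)\,|j_\nu|$. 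This is unbounded as $t\to0$, so your $C_{\rm b}$ would depend on the position of the source and not only on shape regularity.

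Two minor points: the correct scaling is $|\varphi_0(\z)|\lesssim h_E^{1/2}\norm{\boldsymbol j}_{L^2(E)}$, not $h_E^{3/2}$; and your edge corrector built from $b_{E'}^2$ has the same $C^1$ defect as above.

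The paper avoids this by multiplying the edge bubble by $\norm{x-\z_1}_{\mathbb R^2}^2$ instead of subtracting a corrector. The product stays in $H^2_0(\omega_E)$ and vanishes at $\z_1$. Moreover, $\int_E\norm{x-\z_1}_{\mathbb R^2}^2\lambda_1^2\lambda_2^2\,{\rm d}s\gtrsim h_E^3$ uniformly in $\z_1$, since the tangential part of $x-\z_1$ vanishes at most at one point of $E$.
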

\begin{proof} {
Any vector $v:E\rightarrow {\mathbb R}^2$ admits a decomposition into normal and tangential parts as
$v=(v\cdot\tau_E)\tau_E+(v\cdot\nu_E)\nu_E$ and using Cauchy Schwarz inequality we obtain
$$\norm{v}_{L^2(E)}\leq \norm{(v\cdot\tau_E)\tau_E}_{L^2(E)}+\norm{(v\cdot\nu_E)\nu_E}_{L^2(E)}\leq \norm{v\cdot\tau_E}_{L^2(E)}+\norm{v\cdot\nu_E}_{L^2(E)}.$$
Choose $v=[D_\pw^2 \chi_\rM \nu_E]_E$ and denote $[D_{\tau_E\nu_E}^2 \chi_\rM]_E:=[D_\pw^2 \chi_\rM\nu_E]_E\cdot\tau_E$ and $[D_{\nu_E\nu_E}^2 \chi_\rM]_E:=[D_\pw^2 \chi_\rM\nu_E]_E\cdot\nu_E$ to obtain 
\begin{align}
    \norm{[D_\pw^2 \chi_\rM \nu_E]_E }_{L^2(E)}\leq \norm{[D_{\tau_E\nu_E}^2 \chi_\rM]_E}_{L^2(E)}+\norm{[D_{\nu_E\nu_E}^2 \chi_\rM]_E}_{L^2(E)}.\label{2rev1}
\end{align}}

\noindent {\it Step I.}~{Consider the first term in the right-hand side of \eqref{2rev1}. Since $[D_{\tau_E\nu_E}^2 \chi_\rM]_E=D_{\tau_E}([D_{\nu_E} \chi_\rM]_E)$, utilizing the inverse estimate results in 
\begin{align*}
\norm{[D_{\tau_E\nu_E}^2 \chi_\rM]_E}_{L^2(E)}= \norm{D_{\tau_E}[D_{\nu_E} \chi_\rM]_E}_{L^2(E)}\leq C_{\rm inv} h_E^{-1}\norm{[D_{\nu_E} \chi_\rM]_E}_{L^2(E)}.
\end{align*}
As $J\chi_\rM\in H^2_0(\Omega)$, $[D_{\nu_E} J\chi_\rM]_E=0$ for any $E\in \E^i$. 
Recall $h_T\leq \kappa_1 h_E$ from Section \ref{prelim}. These results, the trace inequality, and Lemma \ref{enrithm}$(b)$ in the previous estimate yield
\begin{align}
    \norm{[D_{\tau_E\nu_E}^2 \chi_\rM]_E}_{L^2(E)}&\leq C_{\rm inv} h_E^{-1} \norm{[D_\pw (\chi_\rM-J\chi_\rM)\nu_E]_E}_{L^2(E)}\leq C_{\rm inv} h_E^{-1} \norm{[D_\pw (\chi_\rM-J\chi_\rM)]_E}_{L^2(E)}\no\\
    & \leq \kappa_1 C_{\rm inv} C_{\rm tr} \big(h_T^{-3/2}\norm{D_\pw(\chi_\rM-J\chi_\rM)}_{L^2(\omega_E)}+ h_T^{-1/2}\norm{D_\pw^2(\chi_\rM-J\chi_\rM)}_{L^2(\omega_E)}\big)\no\\
    & \leq C_{\rm e3} h_T^{-1/2}\norm{D^2(\chi_\rM-\chi)}_{L^2(\omega(\omega_E))}
\end{align}
with $C_{\rm e3}:=\kappa_1 C_{\rm inv} C_{\rm tr} \Lambda_{\rm J}$.}

\smallskip

\noindent {\it Step II.}~{Consider the second term in the right-hand side of \eqref{2rev1}. With the conditions ({\bf C1})-({\bf C2}), there will be at most one point load in $\Z\cap\omega_E$ and denote it by $\z_1$. Let $T^{+}$ and $T^{-}$ denote the triangles sharing the edge $E$. The estimate for the first term in the right-hand side of \eqref{2rev1} is presented in two cases: (i) the point source $\z_1\in E\setminus {\mathcal N}^i(\T)$ and (ii) the point source $\z_1\in {\rm int}(T^{+})\cup {\rm int}(T^{-})$.}

\noindent {\it Case (i).}~{For $\z_1\in E\setminus {\mathcal N}^i(\T)$, consider the bubble function $b_E:=h_E[D_{\nu_E\nu_E}^2 \chi_\rM]_E \big(\lambda_3^+-\frac{|T^-|}{|T^+|}\lambda_3^-\big)\lambda_1^2\lambda_2^2\in H^2_0(\omega_E)$, where $\lambda_1$ and $\lambda_2$ denote the barycentric coordinates associated with the nodes of the edge $E$ and $\lambda_3^+$ (resp. $\lambda_3^-$) denote the barycentric coordinates associated with the nodes in $T^+$ (resp. $T^-$) away from the edge $E$. The modified bubble function introduced here is based on the construction of the bubble function from \cite[Theorem 3.7]{DS24}.}
{The equivalence of norms, scaling arguments, and estimates for the integration of the product of barycentric coordinates \cite[Theorem 2.2]{FA18} shows
\begin{align}
   &\norm{[D_{\nu_E\nu_E}^2 \chi_\rM]_E}_{L^2(E)}^2\leq C_{\rm b3} ([D_{\nu_E\nu_E}^2 \chi_\rM]_E,Db_E\nu_E)_{L^2(E)}= C_{\rm b3} ([D_\pw^2 \chi_\rM\nu_E]_E,Db_E)_{L^2(E)}\label{2rev2}\\
   &\text{and }\norm{D^2 b_E}_{L^2(\omega_E)}\leq C_{b4} h_E^{-1/2} \norm{[D_{\nu_E\nu_E}^2 \chi_\rM]_E}_{L^2(E)}.\label{2rev3}
\end{align}
The integration by parts in \eqref{2rev2} and the equation \eqref{intermediate} with $g=\sum_{{\rm z}\in \Z}\varrho_{\rm z}\delta_{\rm z}$ leads to
\begin{align}
   C_{\rm b3}^{-1} \norm{[D_{\nu_E\nu_E}^2 \chi_\rM]_E}_{L^2(E)}^2\leq ([D_\pw^2 \chi_\rM\nu_E]_E,Db_E)_{L^2(E)} = a_\pw(\chi_\rM-\chi,b_E)+ \sum_{{\rm z}\in \Z\cap\omega_E}\varrho_{\rm z}\langle\delta_{\rm z},b_E\rangle.
\end{align}
For any $\z\in E\cup {\mathcal N}^i(\omega_E)$, the construction of the bubble function reveals $b_E(\z)=0$ and hence $\langle \delta_\z,b_E\rangle=0$. Utilizing this, the continuity of the bilinear form $a_\pw(\cdot,\cdot)$, $\text{support}(b_E)=\omega_E$, and \eqref{2rev3} in the above estimate show
\begin{align}
    \norm{[D_{\nu_E\nu_E}^2 \chi_\rM]_E}_{L^2(E)}^2 &\leq C_{\rm b3}\norm{D^2_\pw(\chi_\rM-\chi)}_{L^2(\omega_E)}\norm{D^2b_E}_{L^2(\omega_E)}\no\\
    & \leq C_{\rm b3}C_{\rm b4} h_E^{-1/2}\norm{D^2_\pw(\chi_\rM-\chi)}_{L^2(\omega_E)} \norm{[D_{\nu_E\nu_E}^2 \chi_\rM]_E}_{L^2(E)}.\label{2rev4} 
\end{align}
This implies $$\norm{[D_{\nu_E\nu_E}^2 \chi_\rM]_E}_{L^2(E)}\leq C_{\rm b3}C_{\rm b4} h_E^{-1/2}\norm{D^2_\pw(\chi_\rM-\chi)}_{L^2(\omega_E)}.$$
\noindent {\it Case (ii).}~For $\z_1\in {\rm int}(T^{+})\cup {\rm int}(T^{-})$, consider the bubble function $b_{E}(x):=h_E^{-1/2}[D_{\nu_E\nu_E}^2 \chi_\rM]_E \norm{x-\z_1}_{{\mathbb R}^2}^2$ $\big(\lambda_3^+(x)-\frac{|T^-|}{|T^+|}\lambda_3^-(x)\big)\lambda_1^2(x)\lambda_2^2(x)$ which vanishes at $\z_1$.  The idea of construction of this bubble function is inspired by \cite[Lemma 3.7]{HYNP25} and \cite[Theorem 3.7]{DS24}. The rest of the proof follows analogous as in the previous case and hence is skipped.}
\end{proof}

\begin{lem}[a posteriori error control]\label{bih_apost}
Let $\chi\in V$ and $\chi_\rM\in  V_\rM $ solve $\eqref{intermediate}$ with $g=\sum_{\z\in \Z}\varrho_\z\delta_\z$, respectively. Then there exist  $C_{\rm rel}, C_{\rm eff}>0$ such that
			$	 C_{\rm rel}^{-2}\enorm{\chi -\chi_\rM}_{\pw}^2 \leq  \eta_{\rm aux}^2(g,\chi_\rM) 
     \leq  C_{\rm eff}^2\enorm{\chi-\chi_\rM}_{\pw}^2.$
\end{lem}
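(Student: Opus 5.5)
For reliability, I would split the error with the companion operator, $\chi-\chi_\rM=(\chi-J\chi_\rM)+(J\chi_\rM-\chi_\rM)$. The second term is controlled by Lemma \ref{enrithm}$(b)$: since $\min_{v\in V}$ is bounded by the jump terms, $\enorm{\chi_\rM-J\chi_\rM}_\pw^2\lesssim\sum_T\eta_{{\rm aux},\E(T)}^2(\chi_\rM)$.

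For the conforming part, set $e:=\chi-J\chi_\rM\in V$ and $e_\rM:=I_\rM e\in V_\rM$. Lemma \ref{Interpolation}$(c)$ and the fact that $J$ is a right inverse of $I_\rM$ (from \cite{CCSP20}) give $I_\rM J\chi_\rM=\chi_\rM$. Using \eqref{intermediate} with $Q=J$ and Lemma \ref{Interpolation}$(a)$,
\[
\enorm{e}_\pw^2=a(\chi,e)-a_\pw(J\chi_\rM,e)=\sum_{\z}\varrho_\z\big(e(\z)-Je_\rM(\z)\big)+a_\pw(\chi_\rM-J\chi_\rM,e),
\]
because $a_\pw(\chi_\rM,e)=a_\pw(\chi_\rM,e_\rM)=\langle g,Je_\rM\rangle$. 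The last term is bounded by $\enorm{\chi_\rM-J\chi_\rM}_\pw\enorm{e}_\pw$, which was handled above. For the point-source term, write $e-Je_\rM=(e-e_\rM)+(e_\rM-Je_\rM)$ and evaluate at $\z$. If $\z\in{\mathcal N}(T)$, then $e_\rM(\z)=e(\z)$, since the Morley interpolation preserves vertex values. I would also use that $J$ preserves vertex values of Morley functions, so that $Je_\rM(\z)=e_\rM(\z)$, and the contribution vanishes; this matches $\lambda_T(\z)=0$. Otherwise, Lemma \ref{Interpolation}$(b)$ with inverse estimates (as in \eqref{sup2pw}) gives $|(e-e_\rM)(\z)|\lesssim h_T\enorm{e}_{\pw,T}$ (using an $L^\infty$ version via a scaled Sobolev embedding on $T$). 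Similarly, \eqref{sup2pw} gives $|(e_\rM-Je_\rM)(\z)|\lesssim h_T\enorm{e_\rM-e}_{\pw,\omega_T}\lesssim h_T\enorm{e}_{\pw,\omega_T}$. Cauchy--Schwarz over the finitely many $\z$ with finite overlap of patches then yields $\enorm{e}_\pw\lesssim\big(\sum_T\eta_{{\rm aux},T}^2(g)\big)^{1/2}+\enorm{\chi_\rM-J\chi_\rM}_\pw$. Combining the two parts gives the reliability bound $C_{\rm rel}^{-1}\enorm{\chi-\chi_\rM}_\pw\le\eta_{\rm aux}(g,\chi_\rM)$.

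For efficiency, the edge terms follow from the last inequality in Lemma \ref{enrithm}$(b)$, via $\sum h_T\|[D^2_\pw\chi_\rM]_E\tau_E\|^2\lesssim \min_{v\in V}\|D^2_\pw(\chi_\rM-v)\|^2_{L^2(\omega_T)}\le\enorm{\chi-\chi_\rM}_{\pw,\omega_T}^2$ with $v=\chi$, summed with finite overlap.

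The main obstacle is the volume term $h_T^2|\varrho_\z|^2$ for $\z\in T$ not a vertex, because $\chi$ has only $H^{4-\gamma}$ regularity near $\z$. Under (\textbf{C1})--(\textbf{C2}), I would construct a bubble $b_\z\in H^2_0(\omega(\T(\z)))$ with $b_\z(\z)=1$, $\enorm{b_\z}\lesssim h_T^{-1}$, and $D^2 b_\z$ orthogonal to piecewise constants in a suitable sense. A product of an element bubble with an affine correction that kills integral means is one candidate; alternatively, one could use $b_\z=\lambda$-products on $T(\z)$. Testing then gives
\[
\varrho_\z=a(\chi,b_\z)=a_\pw(\chi-\chi_\rM,b_\z)+a_\pw(\chi_\rM,b_\z).
\]
Integrating by parts elementwise, $a_\pw(\chi_\rM,b_\z)$ reduces to edge integrals of $[D^2_\pw\chi_\rM\nu_E]_E\cdot Db_\z$, since $\Delta^2\chi_\rM=0$ piecewise. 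These are bounded by Lemma \ref{eff_bubble}, which applies precisely because such edges satisfy $\Z\cap(\omega_E\setminus{\mathcal N}^i)\ne\emptyset$, with $\|Db_\z\|_{L^2(E)}\lesssim h_E^{-1/2}$. This gives $|\varrho_\z|\lesssim h_T^{-1}\enorm{\chi-\chi_\rM}_{\pw,\omega(\omega_T)}$, hence $h_T^2|\varrho_\z|^2\lesssim\enorm{\chi-\chi_\rM}^2_{\pw,\omega(\omega_T)}$. Summing with finite overlap finishes the proof.
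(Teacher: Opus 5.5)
Your reliability argument is the paper's argument with the specific choice $v=J\chi_\rM$ in place of the paper's minimisation over $v\in V$. Replacing Bramble--Hilbert by a scaled embedding for $\norm{(1-I_\rM)e}_{L^\infty(T)}$ changes nothing essential. The edge part of efficiency is also identical. The volume part follows the paper's skeleton: test with a bubble, integrate $a_\pw(\chi_\rM,b_\z)$ by parts, and bound the normal jumps by Lemma~\ref{eff_bubble}. However, the bubble you specify cannot deliver uniform constants, and this is a genuine gap.

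\textbf{Why the orthogonality requirement fails.} Requiring $\int_K D^2 b_\z\,{\rm d}x=0$ for all $K\in\T$ gives, by Lemma~\ref{Interpolation}(a), $D^2_\pw I_\rM b_\z=0$. Hence $I_\rM b_\z=0$, since $\enorm{\cdot}_\pw$ is a norm on $V_\rM$, so $b_\z$ vanishes at every vertex of $\T$. By the H\"older continuity of $H^2$ functions, $b_\z(\z)=1$ then forces $\enorm{b_\z}\gtrsim h_T^{-1}(h_T/\epsilon)^{\beta}$ when $\z$ lies at distance $\epsilon$ from a vertex. So this requirement is incompatible with $\enorm{b_\z}\lesssim h_T^{-1}$.

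\textbf{Why your candidates fail.} Your element bubbles built from barycentric coordinates show the same defect. The bubble $\lambda_1^2\lambda_2^2\lambda_3^2$ on $T$, normalised to $1$ at $\z$, has $\enorm{b_\z}\approx h_T^{-1}\prod_j\lambda_j(\z)^{-2}$. This is unbounded as $\z\to\partial T$ and cannot be normalised for $\z$ on an edge. Nothing in (\textbf{C1})--(\textbf{C2}) keeps $\z$ away from $\partial T$. Moreover, if $b_\z$ were orthogonal, $a_\pw(\chi_\rM,b_\z)$ would vanish outright (because $D^2_\pw\chi_\rM$ is piecewise constant), making Lemma~\ref{eff_bubble} pointless. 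This signals that the two halves of your argument do not fit together.

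\textbf{The fix.} Use the paper's bubble: a smooth cut-off with $b_\z=1$ for $|x-\z|\le d/4$ and $b_\z=0$ for $|x-\z|\ge 3d/4$. It satisfies $|b_\z|_{W^{m,\infty}}\lesssim d^{-m}$, where $d={\rm dist}(\z,\partial\omega_T)\gtrsim h_T$ by (\textbf{C2}) and shape regularity. It deliberately crosses element interfaces, which produces exactly the normal-jump terms Lemma~\ref{eff_bubble} controls. With $\norm{Db_\z}_{L^\infty}\lesssim h_T^{-1}$ and $\enorm{b_\z}\lesssim h_T^{-1}$, the rest of your computation goes through.

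Finally, $\varrho_\z=a(\chi,b_\z)$ also needs $b_\z(\z')=0$ for every other $\z'\in\Z$. (\textbf{C1}) guarantees this only for non-nodal $\z'$, so a nodal source inside the support of $b_\z$ must be excluded separately. The paper's proof is silent on this point as well.
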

    \begin{proof}
         Denote {$\T_{\rm d}(\T):=\{T\in \T:{\rm z}\in T \text{ for any }{\rm z} \in \Z\setminus{\mathcal N}^i(\T)\}$ with $|\T_{\rm d}(\T)|\leq 2n$.} 
         
\noindent {\it Step 1.}~(reliability) 
The residual of \eqref{intermediate} with load $g=\sum_{\z\in \Z}\varrho_\z\delta_\z$  and the test function $v\in V$ is defined by ${\rm Res}_\pw(v):=a_\pw(\chi-\chi_\rM,v).$ The definitions of the $\pw$-norm and the residual, together with Young's inequality, lead to
	\begin{align}
	    \enorm{\chi-\chi_\rM}_\pw^2
     &={\rm Res}_\pw(\chi-v)+a_\pw(\chi-\chi_\rM,v-\chi_\rM)\no\\
     &\leq {\rm Res}_\pw(\chi-v)+\frac{1}{4}\enorm{\chi-\chi_\rM}_{\pw}^2+\enorm{v-\chi_\rM}_{\pw}^2. \label{res}
	\end{align}
 Define $w:=\chi-v \in V$. The equation \eqref{intermediate} with test function $I_\rM w$ and the orthogonality, $a_\pw(\chi_\rM,w-I_\rM w)=0$ from Lemma \ref{Interpolation}$(a)$ yield  
 \begin{align}
	{\rm Res}_\pw(w)=\sum_{{\rm z}\in\Z}\varrho_\z\langle\delta_{\rm z},w\rangle-a_\pw(\chi_\rM,w)=\sum_{{\rm z}\in\Z} \varrho_\z\langle\delta_{\rm z},w-JI_\rM w\rangle =\sum_{{\rm z}\in\Z}\varrho_\z(w-JI_\rM w)({\rm z}).\label{call100}
\end{align}
The bound for $\enorm{\chi-\chi_\rM}_\pw$ 
is achieved in the following two cases.

\medskip
\noindent {\it Case I.}~($\z\in {\cal N}^i(\T)$)~For ${\rm z}$ in the nodes of the triangulation, the definition of the interpolation operator and the companion operator reveals \cite[Section 2.2 and Section 2.3]{DG_Morley_Eigen}
\begin{align}
(w-JI_\rM w)({\rm z})=0\fl w\in V.\label{res01}
\end{align}
In the case of $\Z\subseteq{\cal N}^i(\T)$, ${\rm Res}_\pw(\chi-v)=0$. Substitute this and utilize Lemma \ref{enrithm}$(b)$ in \eqref{res} to obtain 
{ \begin{align*}
     \enorm{\chi-\chi_\rM}_{\pw}^2 \leq \frac{4}{3}\min_{v\in V}\enorm{v-\chi_\rM}_{\pw}^2\leq \frac{4}{3}C_{\rm J1}^2\Lambda_{\rm J}^{-2}\sum_{T\in \T}\sum_{E\in \E(T)}h_T\norm{[D_\pw^2 \chi_\rM]_E \tau_E}_{L^2(E)}^2\leq C_{\rm rel}^2\eta_{\rm aux}^2(g,\chi_\rM) 
 \end{align*}}
 with $C_{\rm rel}:=3^{-1/2}2C_{\rm J1}\Lambda_{\rm J}^{-1}$. 

\medskip
\noindent {\it Case II.}~($\z\notin {\cal N}^i(\T)$)~The triangle inequality shows 
 \begin{align*} 
 |(w-JI_\rM w)({\rm z})|\leq\norm{w-JI_\rM w}_{L^{\infty}(T)}\leq\norm{w-I_\rM w}_{L^{\infty}(T)}+\norm{I_\rM w-JI_\rM w}_{L^{\infty}(T)}.
 \end{align*}
 Let $T_{\rm ref}$ denote the reference triangle. The scaling argument (used twice) \cite[Theorem 3.1.2]{Ciarlet} and the Bramble-Hilbert Lemma \cite[Theorem 4.1.3]{Ciarlet} yield 
 \begin{align*}
 \norm{w-I_\rM w}_{L^{\infty}(T)}\leq C_{\rm sc1} \norm{w_{\rm ref}-I_\rM w_{\rm ref}}_{L^{\infty}(T_{\rm ref})}\leq C_{\rm sc1}C_{\rm bh} \norm{D^2 w_{\rm ref}}_{L^2(T_{\rm ref})}\leq C_{\rm sc1}C_{\rm bh}C_{\rm sc2} h_T\norm{D^2 w}_{L^2(T)}
 \end{align*}
 where $C_{\rm sc1}$ and $C_{\rm sc2}$ are scaling constants and $C_{\rm bh}$ is the constant in the Bramble-Hilbert lemma, which are independent of mesh-size. The inequality in \eqref{sup2pw}, Lemma \ref{Interpolation}$(a)$, and the stability of the projection operator $\Pi_{0,\T}$ \cite[Lemma 1.131]{ErnJLU_2004} imply $$\norm{I_\rM w-JI_\rM w}_{L^{\infty}(T)}\leq C_{\rm J3} h_T\norm{D^2_\pw I_\rM w}_{L^2(\omega_T)}\leq C_{\rm J3}h_T\norm{D^2 w}_{L^2(\omega_T)}.$$ 
 A combination of last three estimates reveals $|(w-JI_\rM w)({\rm z})|\leq C_{\rm rel1} h_T\norm{D^2 w}_{L^2(\omega_T)}$ with $C_{\rm rel1}:=\max\{C_{\rm sc1}C_{\rm bh}C_{\rm sc2},C_{\rm J3}\}$.
 
 Utilizing these results in \eqref{call100}, we obtain 
 $${\rm Res}_\pw(\chi-v)\leq C_{\rm rel1} \sum_{\substack{\z \in \Z \cap T \\
\z \notin \mathcal{N}^i(\T)}} |\varrho_{\rm z}|h_T\norm{D^2(\chi-v)}_{L^2(\omega_T)}.$$
  This, a triangle inequality, and Young's inequality (used twice) yield
	\begin{align}
	  {\rm Res}_\pw(\chi-v)&
      \leq C_{\rm rel1} \sum_{\substack{\z \in \Z \cap {T} \\
\z \notin \mathcal{N}^i(\T)}}|\varrho_{\rm z}| h_T\big (\norm{D_\pw^2(\chi-\chi_\rM)}_{L^2(\omega_T)}+\norm{D_\pw^2(\chi_\rM-v)}_{L^2(\omega_T)}\big )\nonumber \\
	    &\leq \frac{1}{4} \enorm{\chi-\chi_\rM}_{\pw}^2+\frac{3}{2} C_{\rm rel1}^2 \sum_{\substack{\z \in \Z \cap {T} \\
\z \notin \mathcal{N}^i(\T)}} \varrho_{\rm z}^2h_T^2+\frac{1}{2}\enorm{v-\chi_\rM}_{\pw}^2.\label{res2}
	\end{align}
{Notice that, utilizing the conditions ({\bf C1})-({\bf C2}) in \eqref{res2}, we obtain the constant $C_{\rm rel1}$ independent of the cardinality of the set $\Z$.} {A substitution of \eqref{res2} in \eqref{res} and utilization of Lemma \ref{enrithm}$(b)$ show}
	\begin{align}
	    \frac{1}{2}\enorm{\chi-\chi_\rM}_{\pw}^2&\leq \frac{3}{2} C_{\rm rel1}^2 \sum_{\substack{\z \in \Z \cap {T} \\
\z \notin \mathcal{N}^i(\T)}}\varrho_{\rm z}^2h_T^2+\frac{3}{2}\min_{v\in V}\enorm{v-\chi_\rM}_{\pw}^2\no\\
     &\leq \frac{3}{2}C_{\rm rel1}^2 \sum_{\substack{\z \in \Z \cap {T} \\
\z \notin \mathcal{N}^i(\T)}} \varrho_{\rm z}^2h_T^2+\frac{3}{2}C_{\rm J1}^2\Lambda_{\rm J}^{-2}\sum_{T\in \T}\sum_{E\in \E(T)}h_T\norm{[D_\pw^2 \chi_\rM]_E \tau_E}_{L^2(E)}^2.\label{res1}
	\end{align}
This yields, $\enorm{\chi-\chi_\rM}_{\pw}\leq C_{\rm rel} \eta_{\rm aux}(g,\chi_\rM)$ with {$C_{\rm rel}^2:= \max\{3C_{\rm rel1}^2,3C_{\rm J1}^2\Lambda_{\rm J}^{-2}\}$.}

\medskip
\noindent {\it Step 2.}~(efficiency)~ The ideas are inspired by \cite[Lemma 3.2]{RER06}. For ${\rm z}\in T\in \T_{\rm d}(\T)$, denote $\omega_T:=\bigcup\{K\in \T:K\cap T\neq \emptyset\}$ and $d:={\rm dist}({\rm z},\partial \omega_T)$. Consider the bubble function $b_{\rm z}\in V$ satisfying $0\leq b_{\rm z}(x)\leq 1$ for all $\Omega$, $b_{\rm z}(x)=1$ in $|x-{\rm z}|\leq d/4$, $b_{\rm z}(x)=0$ in $|x-{\rm z}|\geq 3d/4$, and $|b_{\rm z}|_{W^{m,\infty}(\omega_T)}\leq C_{\rm b1} d^{-m}$ for $m=1,2$. It also satisfies $|b_{\rm z}|_{H^m(\omega_T)}\leq C_{\rm b2} h_T^{1-m}$. Here, the constants $C_{\rm b1}$ and $C_{\rm b2}$ only depend on the shape regularity parameter.
{To observe the constructions of the bubble functions for different locations of $\z$, refer to \cite[Figure 2]{RER06}. Unlike the first two figures shown there, in the third figure, a steep bubble function may occur, and then the value of $d$ can be very small. The condition  {\bf (C2)} avoids this steep bubble function and this ensures $h_T\lesssim d$.}

The above properties of the bubble function and \eqref{intermediate} yield
 \begin{align}
     \varrho_{\rm z}=\varrho_{\rm z}\langle \delta_{\rm z},b_{\rm z}\rangle= \sum_{\z\in \Z}\varrho_{\rm z}\langle \delta_{\rm z},b_{\rm z}\rangle&=  \int_\Omega D_\pw^2(\chi-\chi_\rM):D^2 b_{\rm z}\dx +\int_\Omega D_\pw^2\chi_\rM:D^2b_{\rm z}\dx.\label{eff2}
     \end{align}
     The Cauchy-Schwarz inequality and the estimate $|b_{\rm z}|_{H^2(\omega_T)}\leq C_{\rm b2} h_T^{-1}$ show
     \begin{align}
     \int_\Omega D_\pw^2(\chi-\chi_\rM):D^2 b_{\rm z}\dx\leq \enorm{\chi-\chi_\rM}_\pw \enorm{b_{\rm z}}_\pw  \leq C_{\rm b2} h_T^{-1}\enorm{\chi-\chi_\rM}_\pw.\label{sn7}
 \end{align}
Utilizing the integration by parts in the second term of \eqref{eff2}, we obtain
\begin{align}\label{eff3}
   \sum_{T\in \T} \int_T D_\pw^2\chi_\rM:D^2b_{\rm z}\dx={\sum_{E\in \E^i(\omega_T)}}\int_{E}[D_\pw^2\chi_\rM \nu_E]_E Db_{\rm z}{\rm ds}. 
\end{align}
The Cauchy-Schwarz inequality, trace inequality \cite[Eq. 10.3.8]{Brenner}, and the boundedness property of the bubble function shows
{\begin{align}
   {\int_{E}[D_\pw^2\chi_\rM\cdot \nu_E]_E Db_{\rm z}{\rm ds}}
   &\leq C_{\rm tr}\norm{[D_\pw^2\chi_\rM \nu_E]_E }_{L^2(E)}(h_T^{-1/2} \norm{Db_{\rm z}}_{L^2(T)}+h_T^{1/2}\norm{D^2b_{\rm z}}_{L^2(T)})\no\\
   & \leq 2C_{\rm tr} C_{\rm b2} h_T^{-1/2}\norm{[D_\pw^2 \chi_\rM\nu_E]_E }_{L^2(E)}\leq C_{\rm e} h_T^{-1} \norm{D^2_\pw(\chi_\rM-\chi)}_{L^2(\omega_E)},\no
   \end{align}
where the last step utilizes the estimate from Lemma \ref{eff_bubble} with $C_{\rm e}:=2C_{\rm tr} C_{\rm b2}\kappa_1^{1/2} C_{\rm b}$.
Substituting this in \eqref{eff3} bounds the second term on the right side of \eqref{eff2}. This with \eqref{sn7} in \eqref{eff2} show
 $$    |\varrho_{\rm z}|=  |\varrho_{\rm z}\langle \delta_{\rm z},b_{\rm z}\rangle| 
      \leq (C_{\rm b2}+C_{\rm e}) {\sum_{E\in \E^i(\omega_T)}}h_T^{-1} \norm{D^2_\pw(\chi_\rM-\chi)}_{L^2(\omega(\omega_E))}.$$
      A summation over ${\rm z}\in\Z$ in the above  estimates and Lemma \ref{enrithm}$(b)$ for the efficiency of the edge estimator in \eqref{P3:estmpde} yield
  $ \eta_{\rm aux}\leq C_{\rm eff}\enorm{\chi-\chi_\rM}_{\pw}$
 with $C_{\rm eff}:= C_{\rm sr}(C_{\rm b2}+C_{\rm e})+C_{\rm J2}C_{\rm J1}^{-1}$.} For $\Z\subseteq {\cal N}^i(\T)$, the volume estimator vanishes. Hence, utilizing Lemma \ref{enrithm}$(b)$ for the edge estimator implies the efficiency of $\eta_{\rm aux}$ with {$C_{\rm eff}:=C_{\rm J2}C_{\rm J1}^{-1}$.}
    \end{proof}

\begin{remark}[condition on initial mesh]
{For every source point ${\rm z}\in \Z\cap T$, the efficiency of the volume estimator $\eta_{{\rm aux},T}$ requires the bubble function  whose support is a subset of $\omega(\T({\rm z}))$. Hence, the first equality in \eqref{eff2} requires the assumptions {\bf (C1)}-{\bf (C2)} considered on the triangulation. }

{In case of ${\Z}\subseteq {\mathcal N}^i(\T_0)$, {the volume estimator $\eta_{{\rm aux},T}$ vanishes,} so there is no restriction on the mesh-size. For a given the set $\Z$, constructing the initial mesh so that ${\Z}\subseteq {\mathcal N}^i(\T_0)$ is a reasonable assumption in practice. Otherwise, the initial triangulation need to satisfy the assumptions {\bf (C1)}-{\bf (C2)}. Considering a suitable initial triangulation is not very restrictive as the set $\omega(\T({\rm z}))$ contains a patch of one triangle if ${\rm z}$ is inside a triangle or a patch of two triangles when ${\rm z}$ is on an interior edge.} 
\end{remark}

\subsection*{Error control for OCP}
  \begin{thm}[a~priori error control]\label{apriori}
   The solutions $(\yb,\pb,{\bar\bu})\in (V\cap H^{4-\gamma}(\Omega))^2\times U_{\ad}$ to \eqref{sym1} {with $2-\sigma< \gamma \leq 2$} 
   and $(\yb_\rM,\pb_\rM,\bar{\bu}_\h)\in V_\rM\times V_\rM\times U_{\ad}$ to \eqref{discrete1} satisfy 
    \begin{align*}
      (a) & \norm{\bar{\bu}-\bar{\bu}_\h}_{\mathbb{R}^n}+\enorm{\yb-\yb_\rM}_\pw+\enorm{\pb-\pb_\rM}_\pw\leq C_{\rm AE}h^{2-\gamma}\big(\max\{\norm{{\bf a}}_{L^\infty({\mathbb R}^n)},\norm{{\bf b}}_{L^\infty({\mathbb R}^n)}\}+\norm{y_{\rm d}}\big) , \\
    (b) & \norm{\bar{\bu}-\bar{\bu}_\h}_{\mathbb{R}^n}+\norm{\yb-\yb_\rM}_{H^{\gamma}(\T)}+\norm{\pb-\pb_\rM}_{H^{\gamma}(\T)}\no\\
      &\quad\leq C_{\rm AL} h^{2-\gamma}\big( \norm{\bar{\bu}-\bar{\bu}_\h}_{\mathbb{R}^n}+\enorm{\yb-\yb_\rM}_\pw+\enorm{\pb-\pb_\rM}_\pw+h^2\big(\max\{\norm{{\bf a}}_{L^\infty({\mathbb R}^n)},\norm{{\bf b}}_{L^\infty({\mathbb R}^n)}\}+\norm{y_{\rm d}}\big)\big).
    \end{align*}
    \end{thm}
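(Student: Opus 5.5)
For both parts the plan is to reduce to the auxiliary problems \eqref{aux1}, using Theorem~\ref{equivalence} for $(a)$ and Theorem~\ref{aprioriHs} for $(b)$. Each auxiliary pair $(\ty,\yb_\rM)$ and $(\tp,\pb_\rM)$ is a continuous/discrete pair for the biharmonic problem \eqref{intermediate}. The first has $g=\sum_{\z\in\Z}\ub_{\z,\h}\delta_\z$ and $Q=J$, by \eqref{state_eq1}. The second has $g=\yb_\rM-\yd\in L^2(\Omega)$ and $Q={\rm id}$, by \eqref{dadj}. Thus \eqref{M_aprioriHs1} and \eqref{M_aprioriHs} apply directly.

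\emph{Part (a).} By Theorem~\ref{equivalence} it suffices to bound $\enorm{\ty-\yb_\rM}_\pw+\enorm{\tp-\pb_\rM}_\pw$.

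For the state part, \eqref{M_aprioriHs1} with $Q=J$ and elliptic regularity give
\[
\enorm{\ty-\yb_\rM}_\pw\lesssim h^{2-\gamma}\norm{\ty}_{H^{4-\gamma}(\Omega)}\lesssim h^{2-\gamma}\sum_{\z}|\ub_{\z,\h}|\,\norm{\delta_\z}_{H^{-\gamma}(\Omega)} .
\]
Here $\norm{\delta_\z}_{H^{-\gamma}}$ is finite because $\gamma>1$, by \eqref{inf2gamma}. Since $\bar\bu_\h\in U_\ad$, each $|\ub_{\z,\h}|$ is bounded by $\max\{\norm{\bf a}_{L^\infty},\norm{\bf b}_{L^\infty}\}$.

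For the adjoint part, \eqref{M_aprioriHs1} with $Q={\rm id}$ gives
\[
\enorm{\tp-\pb_\rM}_\pw\lesssim h^{2-\gamma}\norm{\yb_\rM-\yd}+\osc(\yb_\rM-\yd,\T).
\]
Since $\yb_\rM\in\mathcal P_2(\T)$, we have $\osc(\yb_\rM-\yd,\T)=\norm{h_\T^2(1-\Pi_{2,\T})\yd}\le h^2\norm{\yd}\le C h^{2-\gamma}\norm{\yd}$. What remains is a uniform bound on $\norm{\yb_\rM}$. I would take $\phi_\rM=\yb_\rM$ in \eqref{state_eq1} and use Lemma~\ref{enrithm}(a) together with Lemma~\ref{Poincare}(a) applied to $J\yb_\rM$. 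This gives $|J\yb_\rM(\z)|\le C_{\rm PJ}C_{\rm J}\enorm{\yb_\rM}_\pw$, hence $\enorm{\yb_\rM}_\pw\lesssim\sqrt n\norm{\bar\bu_\h}_{\mathbb R^n}$, and then Lemma~\ref{Poincare}(a) bounds $\norm{\yb_\rM}$ by the control bounds. Combining the two pieces yields $(a)$.

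\emph{Part (b).} Start from Theorem~\ref{aprioriHs}, which bounds $\norm{\yb-\yb_\rM}_{H^\gamma(\T)}+\norm{\pb-\pb_\rM}_{H^\gamma(\T)}$. For its first two terms use \eqref{M_aprioriHs}: they are at most $h^{2-\gamma}\big(\enorm{\ty-\yb_\rM}_\pw+\enorm{\tp-\pb_\rM}_\pw+\osc\big)$. By the lower bound \eqref{Cc2} in Theorem~\ref{equivalence}, the energy errors here are controlled by the total error, and the oscillation is $\le h^2\norm{\yd}$ as in part $(a)$. For the third term, $h\enorm{\pb_\rM-J\pb_\rM}_\pw$, use Lemma~\ref{enrithm}(b) with $v=\tp$ to get $\lesssim h\enorm{\pb_\rM-\tp}_\pw$, and again bound $\enorm{\pb_\rM-\tp}_\pw$ by the total error via \eqref{Cc2}. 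Since $h\lesssim h^{2-\gamma}$ for $\gamma\ge 1$ on bounded $\Omega$, this term also fits the form of $(b)$. For the control error on the left, use \eqref{call4new}, which bounds $\norm{\bar\bu-\bar\bu_\h}_{\mathbb R^n}$ by exactly the same three quantities. Hence it obeys the same bound.

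\emph{Expected main obstacle.} The delicate step is tracking the data dependence uniformly in $h$, specifically the bound $\norm{\yb_\rM}\lesssim\max\{\norm{\bf a},\norm{\bf b}\}$ and the $H^{-\gamma}$ bound on the Dirac masses. This requires the strict inequality $\gamma>1$, guaranteed by $2-\sigma<\gamma$. I would also need to check that $\gamma<2$ is not needed in the second embedding \eqref{gamma2H2} used in Theorem~\ref{aprioriHs}. For $\gamma=2$ that estimate is trivial, so it should not cause a problem.
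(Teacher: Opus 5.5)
Your proposal is correct and follows the paper's proof essentially step by step. For (a) you combine Theorem~\ref{equivalence} with \eqref{M_aprioriHs1} for the two auxiliary pairs, use the regularity bounds, and control $\|\bar y_{\rm M}\|$ via the discrete state equation tested with $\bar y_{\rm M}$. For (b) you combine Theorem~\ref{aprioriHs} and \eqref{call4new} with \eqref{M_aprioriHs}, Lemma~\ref{enrithm}$(b)$ and \eqref{Cc2}. The only deviation is minor: you bound the oscillation directly through $\bar y_{\rm M}\in\mathcal P_2(\T)$, giving $\osc(\bar y_{\rm M}-y_{\rm d},\T)\le h^2\|y_{\rm d}\|$. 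This is slightly sharper than the paper's route through the bound on $\|\bar y_{\rm M}\|$.
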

    \begin{proof}[Proof of (a)]
     The error equivalence result in Theorem~\ref{equivalence} (see \eqref{equiv1}), and \eqref{M_aprioriHs1} for the state and adjoint equations with $g=\sum_{{\rm z}\in \Z}\ub_{\rm z,h}\delta_{\rm z}$ and $g=\yb_\rM-\yd$, respectively, reveal
        \begin{align}
          \norm{\bar{\bu}-\bar{\bu}_\h}_{\mathbb{R}^n}+\enorm{\yb-\yb_\rM}_\pw &+\enorm{\pb-\pb_\rM}_\pw \leq C_{\rm c1} \big(\enorm{\ty-\yb_\rM}_\pw+\enorm{\tp-\pb_\rM}_\pw \big) \no\\
          &\leq C_{\rm c1}C_{\rm ae}\big( h^{2-\gamma} (\norm{\ty}_{H^{4-\gamma}(\Omega)} +\norm{\tp}_{H^{4-\gamma}(\Omega)}) +\osc(\yb_\rM-\yd,\T)\big).  \label{0cal11}
        \end{align}
       { Lemma \ref{Poincare}$(a)$ (applied twice),  \eqref{state_eq1} with test function $\yb_\rM$, and Lemma \ref{enrithm}$(a)$ yield $\norm{\yb_\rM}\leq C_{\rm PJ}\enorm{\yb_\rM}_{\pw}$ and}
        \begin{align}
        \enorm{\yb_\rM}_{\pw}^2&\leq n\max\{\norm{{\bf a}}_{L^\infty({\mathbb R}^n)},\norm{{\bf b}}_{L^\infty({\mathbb R}^n)}\}\norm{J\yb_\rM}_{L^\infty(\Omega)}\no\\
        &\leq  nC_{\rm PJ}\max\{\norm{{\bf a}}_{L^\infty({\mathbb R}^n)},\norm{{\bf b}}_{L^\infty({\mathbb R}^n)}\}\enorm{J\yb_\rM}_\pw \leq  nC_{\rm PJ}C_{\rm J} \max\{\norm{{\bf a}}_{L^\infty({\mathbb R}^n)},\norm{{\bf b}}_{L^\infty({\mathbb R}^n)}\}\enorm{\yb_\rM}_\pw, \no
        \end{align}
       where $n$ is the cardinality of $\Z$. This with {$C_{\rm ap0}^2:={n}C_{\rm PJ}^{3} C_{\rm J}$} implies
       \begin{align}
           \norm{\yb_\rM}\leq C_{\rm ap0}\max\{\norm{{\bf a}}_{L^\infty({\mathbb R}^n)},\norm{{\bf b}}_{L^\infty({\mathbb R}^n)}\}.\label{call5}
       \end{align}
The regularity result for the solution in  \eqref{intermediate} with $g=\sum_{{\rm z}\in \Z}\ub_{\rm z,h}\delta_{\rm z}$ (resp. $\yb_\rM-y_{\rm d}$) plus \eqref{call5} show 
\begin{align*}
&\norm{\ty}_{H^{4-\gamma}(\Omega)}\leq {nC_{\rm reg} C_{\delta}} \max\{\norm{{\bf a}}_{L^\infty({\mathbb R}^n)},\norm{{\bf b}}_{L^\infty({\mathbb R}^n)}\} \\
\big(\text{resp. }&\norm{\tp}_{H^{4-\gamma}(\Omega)}\leq C_{\rm reg} \norm{\yb_\rM-y_{\rm d}}\leq C_{\rm reg}(C_{\rm ap0}+1) \big(\max\{\norm{{\bf a}}_{L^\infty({\mathbb R}^n)},\norm{{\bf b}}_{L^\infty({\mathbb R}^n)}\}+\norm{y_{\rm d}}\big)\big),
\end{align*}
{where $C_{\delta}:=\max\{\norm{\delta_\z}_{H^{-\gamma}(\Omega)}:\z\in\Z\}$.} To control the third term on the right-hand side of \eqref{0cal11}, utilize the continuity of the projection operator $\Pi_{2,\T}$, a triangle inequality, and \eqref{call5} to obtain
\begin{align}\label{osc_bound}
\osc(\yb_\rM-\yd,\T)\leq h^{2}\norm{\yb_\rM-\yd}\leq h^{2}(C_{\rm ap0}+1) \big(\max\{\norm{{\bf a}}_{L^\infty({\mathbb R}^n)},\norm{{\bf b}}_{L^\infty({\mathbb R}^n)}\}+\norm{y_{\rm d}}\big).
\end{align}
A substitution of all this in \eqref{0cal11} {and $h\leq |\Omega|^{1/2}$ concludes the proof of $(a)$ with $C_{\rm AE}(\alpha^{-1}):= C_{\rm c1}C_{\rm ae}(nC_{\rm reg}C_{\delta}+(C_{\rm ap0}+1)(C_{\rm reg}+|\Omega|^{\gamma/2}))$.} 

\medskip
\noindent {\it Proof of (b).}~ 
The lower order a~priori error estimate for OCP is established here. The inequality in \eqref{call4new}, the lower order error bounds in Theorem~\ref{aprioriHs}, \eqref{M_aprioriHs}, and Lemma \ref{enrithm}$(b)$ lead to 
\begin{align}
\norm{\bar{\bu}-\bar{\bu}_\h}_{\mathbb{R}^n}+\norm{\yb-\yb_\rM}_{H^{\gamma}(\T)} &+\norm{\pb-\pb_\rM}_{H^{\gamma}(\T)} \no\\
&\leq (C_5+C_{\rm c3}) \big(\norm{\ty-\yb_\rM}_{H^{\gamma}(\T)}+h\enorm{\pb_\rM-J\pb_\rM}_\pw+\norm{\tp-\pb_\rM}_{H^{\gamma}(\T)}\big)\no \\
&\leq C_{\rm ap} h^{2-\gamma}\big(\enorm{\ty-\yb_\rM}_\pw+\enorm{\tp-\pb_\rM}_\pw+\osc(\yb_\rM-y_{\rm d},\T)\big)\no 
\end{align} 
{with $C_{\rm ap}:=(C_5+C_{\rm c3})\max\{C_{\rm a\ell}, |\Omega|^{(\gamma-1)/2}C_{\rm sr}\Lambda_{\rm J}\}$.} This and Theorem~\ref{equivalence} with $C_{\rm AP}:=(C_{\rm c2}+1)C_{\rm ap}$ show
\begin{align}\label{A4apriori}
  \norm{\bar{\bu}-\bar{\bu}_\h}_{\mathbb{R}^n}+ \norm{\yb-\yb_\rM}_{H^{\gamma}(\T)} &+\norm{\pb-\pb_\rM}_{H^{\gamma}(\T)} \\
  &\leq C_{\rm AP} h^{2-\gamma}\big(\norm{\bar{\bu}-\bar{\bu}_\h}_{\mathbb{R}^n} +\enorm{\yb-\yb_\rM}_\pw +\enorm{\pb-\pb_\rM}_\pw+\osc(\yb_\rM-y_{\rm d},\T)\big)\no
\end{align} 
A combination with \eqref{osc_bound} concludes the proof of $(b)$ with $C_{\rm AL}(\alpha^{-1}):=C_{\rm AP}(C_{\rm ap0}+2)$.
  \end{proof}
\noindent Now, the a~posteriori error estimators are introduced for \eqref{opt1}, and their reliability and efficiency estimates are derived. 
\begin{tcolorbox}[colback=gray!5, colframe=black, boxrule=0.4pt, arc=2pt,
  left=6pt, right=6pt, top=6pt, bottom=6pt]
\begin{subequations}\label{estmSA}
The {\it complete error estimator}  is defined as
$\displaystyle  \eta^2: =  {  \sum_{T \in \T}{\eta^2(T)} }=\eta_S^2+\eta_A^2. $
Here, the error estimators for the state and adjoint equations are 
 \begin{align}\label{esti_com}
 \eta_S^2:=\sum_{T\in \T}(\eta_{S,T}^2+\eta_{S,\E(T)}^2) \text{ and } \eta_A^2:=\sum_{T\in \T}(\eta_{A,T}^2+\eta_{A,\E(T)}^2),
 \end{align}
 where for any $T\in \T$,
\begin{align}
\eta_{S,T}^2 := 
\begin{cases}
    0 & \text{ for }  \Z \cap T = \emptyset, \\
    \displaystyle\sum_{{\rm z}\in \Z \cap T} \lambda_T(\z)\, h_T^2\, |\ub_{{\rm z},h}|^2 & \text{ otherwise },
\end{cases}
 &\quad
\eta_{S,\E(T)}^2 := \sum_{E \in \E(T)} h_T \norm{[D_{\rm pw}^2 \yb_\rM]_E \tau_E}_{L^2(E)}^2, \\
\eta_{A,T}^2 := h_T^4 \norm{\yb_\rM - \yd}_{L^2(T)}^2,\hspace{2.7cm} &\quad
\eta_{A,\E(T)}^2 := \sum_{E \in \E(T)} h_T \norm{[D_{\rm pw}^2 \pb_\rM]_E \tau_E}_{L^2(E)}^2,
\end{align}
\end{subequations}
and for any $T\in \T$, $\lambda_T:\Z\rightarrow \{0,1\}$ takes value 0 when $\z\in {\cal N}(T)$, and 1 otherwise. In the sequel,  the element estimator in a triangle $T$ is denoted as
\begin{align}
\displaystyle \eta^2(T):=\eta_{S,T}^2+\eta_{S,\E(T)}^2+\eta_{A,T}^2+\eta_{A,\E(T)}^2.\label{elemest}
  \end{align}
\end{tcolorbox}

\begin{thm}[a~posteriori error control]\label{aposteriori}
			Let $(\yb ,\pb,\bar{\bu} ) \in V\times V\times U_{ad}$ and $ ( \yb_\rM,\pb_\rM,\bar{\bu}_\h)\in  V_\rM \times V_\rM\times U_{\rm ad}$ solve $\eqref{sym1}$ and $\eqref{discrete1}$, respectively. Then there exist constants $C_{\rm REL}, C_{\rm EFF}>0$ such that
			\begin{align*}
				 &C_{\rm REL}^{-2}(\norm{\bar{\bu}-\bar{\bu}_\h}_{\mathbb{R}^n}^2+\enorm{\yb -\yb_\rM}_{\pw}^2+\enorm{\pb -\pb_\rM}_{\pw}^2) \leq  \eta^2(\yb_\rM,\pb_\rM,\bar{\bu}_\h,\yd) \\
     &\leq  C_{\rm EFF}^2\big( \norm{\bar{\bu}-\bar{\bu}_\h}_{\mathbb{R}^n}^2+\enorm{\yb -\yb_\rM}_{\pw}^2 + \enorm{\pb-\pb_\rM}_{\pw}^2 +h^{4}\max\{\norm{{\bf a}}_{L^\infty({\mathbb R}^n)}^2,\norm{{\bf b}}_{L^\infty({\mathbb R}^n)}^2\}+\norm{h_\T^2(1-\Pi_{0,\T})\yd}^2\big). 
			\end{align*}
			\end{thm}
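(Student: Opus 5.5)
The plan is to reduce everything to the two auxiliary biharmonic problems in \eqref{aux1} via the error equivalence of Theorem~\ref{equivalence}, and then apply the a~posteriori estimates for the biharmonic problem separately to the state and adjoint parts. The key observation is that $\yb_\rM$ is exactly the Morley approximation, in the sense of \eqref{intermediate} with $Q=J$, of $\ty$ with load $g=\sum_{\z\in\Z}\ub_{\z,h}\delta_\z$. Likewise $\pb_\rM$ is the Morley approximation, with $Q={\rm id}$, of $\tp$ with load $g=\yb_\rM-\yd\in L^2(\Omega)$. Comparing definitions, the estimator $\eta_{\rm aux}(\sum_\z \ub_{\z,h}\delta_\z,\yb_\rM)$ from \eqref{P3:estmpde} coincides with $\eta_S$, and $\eta_{\rm aux}(\yb_\rM-\yd,\pb_\rM)$ from \eqref{M_apost} coincides with $\eta_A$.

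\textbf{Reliability.} I would start from \eqref{equiv1}, which gives
\[\norm{\bar\bu-\bar\bu_\h}_{\mathbb R^n}+\enorm{\yb-\yb_\rM}_\pw+\enorm{\pb-\pb_\rM}_\pw\le C_{\rm c1}\big(\enorm{\ty-\yb_\rM}_\pw+\enorm{\tp-\pb_\rM}_\pw\big).\]
Lemma~\ref{bih_apost} (reliability part) bounds $\enorm{\ty-\yb_\rM}_\pw\le C_{\rm rel}\eta_S$. The first inequality in \eqref{M_apost} bounds $\enorm{\tp-\pb_\rM}_\pw\le C_{\rm rel}\eta_A$. Squaring and using $(a+b+c)^2\le 3(a^2+b^2+c^2)$ then yields the lower bound with $C_{\rm REL}$ of the form $\sqrt{6}\,C_{\rm c1}C_{\rm rel}$.

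\textbf{Efficiency.} Here I would use the efficiency halves of the same two results:
\[\eta_S\le C_{\rm eff}\enorm{\ty-\yb_\rM}_\pw,\qquad \eta_A\le C_{\rm eff}\big(\enorm{\tp-\pb_\rM}_\pw+\norm{h_\T^2(1-\Pi_{0,\T})(\yb_\rM-\yd)}\big).\]
The auxiliary errors are then returned to the true errors through \eqref{Cc2}. The oscillation term needs separate treatment:
\[\norm{h_\T^2(1-\Pi_{0,\T})(\yb_\rM-\yd)}\le\norm{h_\T^2(1-\Pi_{0,\T})\yd}+h^2\norm{\yb_\rM},\]
using the $L^2$-stability of $1-\Pi_{0,\T}$. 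By \eqref{call5}, $\norm{\yb_\rM}\le C_{\rm ap0}\max\{\norm{\bf a}_{L^\infty},\norm{\bf b}_{L^\infty}\}$. This produces exactly the extra terms $h^4\max\{\cdots\}^2+\norm{h_\T^2(1-\Pi_{0,\T})\yd}^2$ in the statement. Squaring and collecting constants gives $C_{\rm EFF}$.

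\textbf{Main obstacle.} The step needing care is the efficiency of the state estimator $\eta_S$, whose volume part involves the point sources. It relies on Lemma~\ref{bih_apost} and hence on the mesh conditions ({\bf C1})--({\bf C2}), unless $\Z\subseteq\mathcal N^i(\T)$, in which case the volume term vanishes. I would state that these conditions are assumed, as in Lemma~\ref{bih_apost}. I would also track that the constants depend on $n$ and $\alpha$ only through $C_{\rm c1}$, $C_{\rm c2}$ and $C_{\rm ap0}$. The remaining steps are direct substitutions.
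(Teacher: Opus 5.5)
Your proposal is correct and follows the paper's proof essentially step by step: the equivalence of Theorem~\ref{equivalence}, Lemma~\ref{bih_apost} for $\eta_S$, \eqref{M_apost} for $\eta_A$, and a triangle inequality with \eqref{call5} for the oscillation of $\yb_\rM-\yd$. One cosmetic point on constants: for reliability the steps you need are $a^2+b^2+c^2\le(a+b+c)^2$ and $(x+y)^2\le 2(x^2+y^2)$, which give $C_{\rm REL}=\sqrt{2}\,C_{\rm c1}C_{\rm rel}$ as in the paper; the factor $3$ belongs to the efficiency direction, although your larger constant $\sqrt{6}\,C_{\rm c1}C_{\rm rel}$ is still valid.
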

    
\begin{proof}
The error equivalence from Theorem~\ref{equivalence} shows 
    \begin{align}
     \norm{\bar{\bu}-\bar{\bu}_\h}_{\mathbb{R}^n}^2+\enorm{\yb -\yb_\rM}_{\pw}^2+\enorm{\pb -\pb_\rM}_{\pw}^2  \approx \enorm{\ty -\yb_\rM}_{\pw}^2+\enorm{\tp-\pb_\rM}_{\pw}^2 \label{eq}
    \end{align}
    with the constants from \eqref{equiv1} and \eqref{Cc2}. 

\noindent Employ the reliability and efficiency estimates in Lemma \ref{bih_apost} with $g=\sum_{\z\in \Z}\ub_{\rm z,h}\delta_\z$ in the first term of the right-hand side of \eqref{eq} to obtain
$$C_{\rm rel}^{-2}\enorm{\ty-\yb_\rM}_{\pw}^2\leq \eta_S^2\leq C_{\rm eff}^2\enorm{\ty-\yb_\rM}_{\pw}^2.$$ 
For the second term in the right-hand side of \eqref{eq}, \eqref{M_apost} with $g=\yb_\rM-\yd$ yields
\begin{align}
    C_{\rm rel}^{-2}\enorm{\tp-\pb_\rM}_{\pw}^2\leq \eta_A^2\leq C_{\rm eff}^2\big( \enorm{\tp-\pb_\rM}_\pw+\norm{h_\T^2(1-\Pi_{0,\T})(\yb_\rM-\yd)}\big)^2.\label{newref001}
\end{align}
A triangle inequality and  \eqref{call5} reveal $$\norm{h_\T^2(1-\Pi_{0,\T})(\yb_\rM-\yd)}^2\leq 2C_{\rm ap0}^2h^{4}\max\{\norm{{\bf a}}_{L^\infty({\mathbb R}^n)}^2,\norm{{\bf b}}_{L^\infty({\mathbb R}^n)}^2\}+2\norm{h_\T^2(1-\Pi_{0,\T})\yd}^2.$$ 
Substituting these estimates in \eqref{eq} proves the reliability and efficiency of the a~posteriori error estimator with $C_{\rm REL}^2(\alpha^{-1}):=2C_{\rm c1}^{2}C_{\rm rel}^2$ and $C_{\rm EFF}^2(\alpha^{-1}):=12C_{\rm c2}^{2}C_{\rm eff}^2(C_{\rm ap0}^2+1)$.
  \end{proof}

\begin{remark}[alternate approach 1]
    The proofs of reliability and efficiency of the a~posteriori error estimator are based on the reliability and efficiency of the a~posteriori error estimator for the biharmonic equation with point sources.
    Here, another a~posteriori error estimator is discussed based on the characterization of $\delta_{\rm z}\in H^{-2}(\Omega)$. Without loss of generality, we consider $\Z=\{\rm z\}$. There is a non-unique representation of $\delta_{\rm z}\in H^{-2}(\Omega)$ for $\phi\in H_0^2(\Omega)$ (ref \cite[Theorem 1, p. 23]{Evans})
    $\langle \delta_{\rm z}, \phi\rangle= \sum_{|\alpha|\leq 2}(f_\alpha,\partial^{\alpha} \phi)$
    with $f_\alpha\in L^2(\Omega)$ for all $|\alpha|\leq 2$. This representation also leads to a reliable and efficient a~posteriori error estimator (ref. \cite[Section 7.2]{CCBGNN24}).
{Since we do not have specific $f_\alpha$ for $|\alpha|\leq 2$,} this a~posteriori error estimator is difficult to compute in practice. 
\end{remark}
\begin{remark}[alternate approach 2]
    We have a specific representation of $\delta_{\rm z}$ as
    $\langle \delta_{\rm z}, \phi\rangle=\frac{1}{2\pi}\int_{\Omega}\log|x-{\rm z}|\Delta \phi (x)\dx$ for all $\phi\in V$ \cite[Section 5.2]{CCNN2021}. For every ${\rm z}\in \Z$, define $f_{\rm z}(x):=\frac{1}{2\pi}\log|x-{\rm z}|$. By replacing $\eta_{S,T}$ in \eqref{estmSA} with $\sum_{{\rm z}\in \Z}|\ub_{\rm z,h}|^2\norm{f_{\rm z}}_{L^2(T)}^2$ provides the a~posteriori error estimator corresponding to this representation. Although we have the reliability and efficiency of the a~posteriori error estimator, proving estimator reduction is challenging due to the absence of $h_T$ in the coefficient.
\end{remark}
\section{Quasi-optimality of AFEM for {\bf (P1)}}\label{AOA}
 Let $\widehat{\T}\in \mathbb{T}(\T)$ be a shape regular refinement of $\T$, and {$(\widehat{\yb}_\rM,\widehat{\pb}_\rM,\widehat{\bar{{\bu}}}_{\rm h})$ and $(\yb_\rM,\pb_\rM,\bar{{\bu}}_\h)$} be the solutions of \eqref{discrete1} corresponding to the triangulations $\widehat{\T}$ and $\T$, respectively. Let $\heta$ and $\eta$ (resp. $\widehat{\mu}$ and ${\mu}$) be the corresponding complete error estimator (resp. volume estimator) in \eqref{esti_com}  with contributions from state, adjoint, and control variables with respect to the triangulations $\widehat{\T}$ and $\T$. Similarly, denote the complete error estimator (resp. volume estimator) with respect to the triangulation $\T_k$ as $\eta_k$ (resp. $\mu_k$) for $k\in \mathbb N_0$.
 The distance function is defined as 
$$\dd(\T,\widehat{\T}):=\big(\norm{\widehat{\bar{\bu}}_\h-\bar{\bu}_\h}_{\mathbb{R}^n}^2+\enorm{\widehat{\yb}_\rM-\yb_\rM}_{\pw}^2 +\enorm{\widehat{\pb}_\rM-\pb_\rM}_{\pw}^2\big)^{1/2}.$$ 
The axioms of adaptivity \cite{CMMD2014,CCRH17} {\bf (A1)}-{\bf(A4)} guarantee the quasi-optimal convergence of the adaptive FEM. {\it Stability} axiom {\bf (A1)} establishes that the difference between the estimators associated with the triangulations $\widehat{\T}$ and $\T$ over the common triangles in $\widehat{\T} \cap \T$ is bounded by the distance function. {\it Reduction} axiom {\bf (A2)} proves that up to the distance function, the estimator for the triangulation $\widehat{\T}$ over the refined triangles is bounded by a fixed fraction (referred to as the reduction parameter) multiplied by the complete estimator at the $\T$ level for elements in $\T$ that have been refined. The {\it discrete reliability} {\bf (A3)} proves that the distance function is bounded by the complete estimator over  $ {\mathcal R}(\T,\widehat{\T}):=\{K\in \cT| \, \exists T\in \mathcal{T} \setminus \widehat{\T} \text{ with } \text{dist}(K,T)=~0\}$:
\begin{align}
\dd^2(\T,\widehat{\T}) \leq \Lambda_3 \eta^2(\mathcal{R}(\T,\widehat{\T})).\tag{\bf A3}
\end{align}
 The set ${\mathcal R}(\T,\widehat{\T})$ consists of triangles in $\mathcal{T} \setminus \widehat{\T}$ plus one layer of triangles around it satisfying $\big|\mathcal R(\T,\widehat{\T})\big| \leq \Lambda \big|\T\setminus\widehat{\T}\big|$. More recently, \cite[Theorem 4.6]{CS24} introduces a weak form of {\bf (A3)} by adding the complete estimator with the coefficient $\varepsilon$, over the whole domain. {The fourth axiom {\bf (A4)} is {\it quasi-orthogonality} that shows that the distance function converges to 0.} Along with {\it stability} and {\it reduction}, it also establishes the R-linear convergence for the error estimator \cite[Proposition 4.10]{CMMD2014}. 
 A weak form of {\bf (A4)} denoted as $({\rm \bf A4}_{\varepsilon})$ is introduced by adding $\varepsilon$ times the series of complete estimators, and it is shown that {\bf (A1)} and {\bf (A2)}, together with $({\rm \bf A4}_{\varepsilon})$, imply {\bf (A4)} for sufficiently small $\varepsilon > 0$ (ref. \cite[Theorem 3.1]{CCRH17}). Further, \cite[Theorem A.1]{CS24} proves that the axioms {\bf (A1)}, {\bf (A2)}, $({\rm \bf A3}_{\varepsilon})$, $({\rm \bf A4}_{\varepsilon})$ establish quasi-optimality of AFEM. The axioms are as follows.
For $0<\varepsilon\leq \varepsilon_0$, there exists $\delta_0>0$ such that for all $\T\in {\mathbb T}(\delta_0)$ satisfy
\[
\begin{array}{ll}
 ({\rm \bf A1}) & \text{\bf Stability. } \forall \widehat{\T} \in \mathbb{T}(\T), \big|\widehat{\eta}(\T\cap \widehat{\T})-\eta(\T\cap\widehat{\T})\big| \leq \Lambda_1 \dd(\T,\widehat{\T}) \\
 
({\rm \bf A2}) & \text{\bf Reduction. } \forall \widehat{\T} \in \mathbb{T}(\T),~ 0<\rho_2<1, \widehat{\eta}(\widehat{\T} \setminus \T) \leq \rho_2 \eta(\T\setminus \widehat{\T}) + \Lambda_2 \dd(\T,\widehat{\T}) \\

({\rm \bf A3}_{\varepsilon}) & \text{\bf Discrete Reliability. } \forall \widehat{\T} \in \mathbb{T}(\T),~ \dd^2(\T,\widehat{\T}) \leq \Lambda_3 \eta^2(\mathcal{R}(\T,\widehat{\T})) + \varepsilon \eta^2(\T) \\

({\rm \bf A4}_{\varepsilon}) & \text{\bf Quasi-orthogonality. } \forall \ell\in \mathbb{N}_0, \sum_{k=\ell}^\infty \dd^2(\T_k, \T_{k+1}) \leq \Lambda_{4(\varepsilon)} \eta^2_\ell+\varepsilon \sum_{k=\ell}^\infty \eta_k^2.
\end{array}
\]

 \noindent  {Here $\Lambda_i,i=1,2,3$ are universal constants, $\Lambda_{4(\varepsilon)}$ depends on $\varepsilon$, and $\rho_2$ is independent of mesh size.} In this section, the axioms, {\bf (A1)}, {\bf (A2)}, $({\rm \bf A3}_{\varepsilon})$, $({\rm \bf A4}_{\varepsilon})$ are verified for the OCP considered in \eqref{opt1}. The choices of $\varepsilon_0$ and $\delta_0$ are discussed in the respective proofs.  
\begin{thm}[optimal convergence rates in adaptive FEM]\label{thm:2.2}
Assume that the nodes in the initial triangulation include the source points, ${\rm z}\in \Z$ and 
      there exist  $\delta_0,\theta >0$ such that $0< \theta  < \theta_0:= 1/(1 + \Lambda_1^2\Lambda_3)$. Then for all $s>0$, the AFEM  with outputs  {$(\T_\ell )_{\ell \in {\mathbb N}_0}\in \mathbb{T}(\delta_0)$} and $ (\eta_\ell )_{\ell \in {\mathbb N}_0}$ satisfies 
 	    \begin{align*}
 	        \sup_{\ell\in {\mathbb N}_0}(1+|\T_\ell|-|\T_0|)^{s}\eta_\ell\approx  
          \sup_{N\in {\mathbb N}_0}  \min_{\T\in {\mathbb T}(\T_0,N)} (1+N)^{s}\eta(\T).
 	    \end{align*}
      Here for $N\in \mathbb{N}$, define ${\mathbb T}(\T_0,N):=\{\T\in \bT(\T_0):|\T|\leq N+|\T_0|\}.$
    	\end{thm}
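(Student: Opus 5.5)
The plan is to apply the abstract result \cite[Theorem A.1]{CS24}: the axioms {\bf (A1)}, {\bf (A2)}, $({\rm \bf A3}_{\varepsilon})$ and $({\rm \bf A4}_{\varepsilon})$, together with $0<\theta<\theta_0$, give the stated equivalence of approximation classes. The whole proof therefore reduces to verifying the four axioms for the estimator \eqref{estmSA} with the distance $\dd(\T,\widehat{\T})$. Because $\Z\subset{\cal N}(\T_0)\subseteq{\cal N}(\T)$ for every $\T\in\bT(\T_0)$, all weights $\lambda_T(\z)$ vanish. Hence $\eta_{S,T}=0$, and only the edge jump terms and the $L^2$ volume term $h_T^4\norm{\yb_\rM-\yd}^2_{L^2(T)}$ remain.

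For {\bf (A1)} and {\bf (A2)} I would use reverse triangle inequalities on each $T\in\T\cap\widehat{\T}$. The jump terms are controlled by inverse and trace estimates: $h_T^{1/2}\norm{[D^2_\pw(\widehat{\yb}_\rM-\yb_\rM)]_E\tau_E}_{L^2(E)}\lesssim\norm{D^2_\pw(\widehat{\yb}_\rM-\yb_\rM)}_{L^2(\omega_E)}$, where both functions are piecewise quadratic on $\widehat{\T}$. The same bound holds for $\pb_\rM$. The volume term satisfies $h_T^2\norm{\widehat{\yb}_\rM-\yb_\rM}_{L^2(T)}\lesssim\enorm{\widehat{\yb}_\rM-\yb_\rM}_\pw$ by Lemma~\ref{Poincare}$(b)$. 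Summing over the elements gives {\bf (A1)}. For {\bf (A2)}, newest-vertex bisection gives $h_{\widehat T}^2\le h_T^2/2$, so the volume contribution reduces with factor $2^{-2}$ and the jump contribution with factor $2^{-1/2}$. Jumps on new interior edges of $\widehat{\yb}_\rM$ restricted to a parent $T$ vanish for $\yb_\rM$, because $\yb_\rM|_T$ is smooth. The remainder is again bounded by $\Lambda_2\dd$, which yields $\rho_2=2^{-1/4}$ after Young's inequality.

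For $({\rm \bf A3}_{\varepsilon})$ I would imitate the proof of Theorem~\ref{equivalence} at the discrete level. I would introduce discrete auxiliary solutions on $\widehat{\T}$ with data $\bar{\bu}_\h$ and $\yb_\rM-\yd$, and prove $\dd\approx$ the errors between these discrete auxiliaries and $(\yb_\rM,\pb_\rM)$. The control step needs $|J\widehat{\pb}_\rM(\z)-J\pb_\rM(\z)|$, which is bounded by $L^\infty$ Poincar\'e-type estimates. The discrete reliability for the auxiliary biharmonic problems then follows from Lemma~\ref{enrithm}$(c)$ with $\widehat{v}^*_\rM=\widehat{I}_\rM J v_\rM$ and Lemma~\ref{Interpolation}$(c)$. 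The point-load residual reduces to $\sum_{\z}\varrho_\z(\widehat{J}\widehat w-J I_\rM\widehat w)(\z)$, and this vanishes at nodes by \eqref{res01}. This leaves only jump terms on ${\mathcal R}(\T,\widehat{\T})$ plus an $L^2$ load term $h_T^4\norm{\yb_\rM-\yd}^2$ there. Any leftover nonlocal piece comes from comparing $J$ and $\widehat J$ and from the data coupling through $\norm{\yb_\rM-\widehat{\yb}_\rM}$. I would bound it by $C h^{2-\gamma}\eta$, using the lower-order bound of Theorem~\ref{apriori}$(b)$ together with reliability; choosing $\delta_0$ small then makes this $\le\varepsilon\eta^2$.

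The main obstacle is $({\rm \bf A4}_{\varepsilon})$, because the Morley scheme is nonconforming and the problem is nonlinear through the control. I would first write $\dd^2(\T_k,\T_{k+1})$ via the discrete error equivalence as the error between the exact solution $x=(\yb,\pb,\bar{\bu})$ and the discrete solutions $x_k$. The expansion $\enorm{x-x_{k+1}}^2-\enorm{x-x_k}^2+\dd^2=2a_\pw(x-x_{k+1},x_k-x_{k+1})$, up to control terms, then telescopes over $k$. The cross term should be small: using Lemma~\ref{Interpolation}$(a)$ and the companion operator, it reduces to lower-order quantities such as $\norm{\bullet}_{H^\gamma(\T)}$ and $h\enorm{\pb_\rM-J\pb_\rM}_\pw$. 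These are bounded by $h^{2-\gamma}(\text{error}+\osc)$ via Theorems~\ref{aprioriHs} and~\ref{apriori}$(b)$. Young's inequality then gives $\varepsilon\sum\eta_k^2$ for $\delta_0$ small, and reliability (Theorem~\ref{aposteriori}) bounds the first telescoped term by $\Lambda_{4(\varepsilon)}\eta_\ell^2$. I expect the delicate points to be the terms coupling $J_k$ and $J_{k+1}$ at the source points, which again vanish at nodes, and keeping $\delta_0$ independent of $k$.
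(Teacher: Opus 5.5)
Your overall plan is the paper's: Theorem~\ref{thm:2.2} follows from \cite[Theorem A.1]{CS24} once {\bf (A1)}, {\bf (A2)}, $({\rm \bf A3}_{\varepsilon})$ and $({\rm \bf A4}_{\varepsilon})$ are verified. Your arguments for the first three follow Sections~\ref{A1}--\ref{disrel}. One remark on {\bf (A3)}: under $\Z\subset{\cal N}^i(\T_0)$, the sign trick from Step~3 of Theorem~\ref{equivalence} already controls the coupling through $\yb_\rM-\widehat{\yb}_\rM$. So no leftover piece appears, and plain {\bf (A3)} holds (Remark~\ref{directa3}).

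\textbf{The gap is in $({\rm \bf A4}_{\varepsilon})$.} You claim that every cross term in $\dd_{k,k+1}^2+e_{k+1}^2-e_k^2$ reduces to lower-order quantities carrying a factor $h^{2-\gamma}$. This fails for the adjoint cross term. Lemma~\ref{Interpolation}$(a)$ and \eqref{dadj} on $\T_{k+1}$ and $\T_k$ give
\[
a_\pw(\pb_{k+1}-\pb_k,\pb_{k+1}-\pb)=(\yb_{k+1}-\yb_k,I_{k+1}(\pb_{k+1}-\pb))+(\yb_k-\yd,(I_{k+1}-I_k)(\pb_{k+1}-\pb)).
\]
The first term is indeed small, since it is $\lesssim\dd_{k,k+1}\,\delta^{2-\gamma}\widehat{e}_{k+1}$. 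The second term is a consistency error of the $L^2$ adjoint load on the refined elements:
\begin{itemize}
\item it vanishes on $\T_k\cap\T_{k+1}$;
\item on $\T_k\setminus\T_{k+1}$ the best available bound is $\mu_k(\T_k\setminus\T_{k+1})\,\widehat{e}_{k+1}$, with $\mu_k^2=\sum_{T\in\T_k}\eta_{A,T}^2$, as in Lemma~\ref{lem_A4}$(c)$;
\item Morley interpolation does not preserve $\int_T v\,{\rm d}x$, so no orthogonality turns this into an oscillation or an $H^\gamma(\T)$-norm.
\end{itemize}
Shrinking $\delta_0$ does not help. The volume part $\mu_k$ is part of $\eta_k$ and may dominate it, so the product has no small factor relative to $\eta_k\eta_{k+1}$. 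The crude bound $\mu_k\le\delta^2\norm{\yb_k-\yd}$ instead leaves a data-sized contribution at every step, whose sum over $k$ diverges.

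\textbf{What is missing} is an estimator-reduction argument for $\mu$. The paper proceeds as follows.
\begin{itemize}
\item Young's inequality with weight $\varepsilon$ splits the term into $\varepsilon^{-1}\mu_k^2(\T_k\setminus\T_{k+1})+\varepsilon\,\widehat{e}^{\,2}_{k+1}$.
\item \eqref{cora12} of Remark~\ref{a12_rem}, which rests on stability and reduction of $\mu$ with perturbation $\Lambda_0h^2\dd$, replaces $\mu_k^2(\T_k\setminus\T_{k+1})$ by $4(\mu_k^2-\mu_{k+1}^2)$ plus terms of order $h^2\dd_{k,k+1}(\mu_k+\mu_{k+1})+h^4\dd_{k,k+1}^2$.
\item The differences $\mu_k^2-\mu_{k+1}^2$ telescope into the $\eta_\ell^2$ term.
\item The perturbations are absorbed through the smallness conditions \eqref{cndn2}.
\item The bound $\osc_{k+1}\le\mu_{k+1}$ handles the oscillation inside $\widehat{e}_{k+1}$.
\end{itemize}
This is why the paper's constant $\Lambda_{4(\varepsilon)}=4C_{\rm RL}^2+2^6\varepsilon^{-1}\Lambda_6^2$ contains the factor $\varepsilon^{-1}$, whereas your sketch would give an $\varepsilon$-independent constant from reliability alone. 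Your purely ``small cross term'' argument is exactly right only when both loads are point sources at nodes, as in the paper's treatment of \eqref{opt2n}.
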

  \noindent The proof of Theorem~\ref{thm:2.2} follows from the four axioms {\it stability} {\bf (A1)}, {\it reduction} {\bf (A2)}, {\it discrete reliability} $({\rm \bf A3}_{\varepsilon})$, and {\it quasi-orthogonality} $({\rm \bf A4}_{\varepsilon})$\cite{CMMD2014,CS24}. The axioms of adaptivity {\bf (A1)}-{\bf (A2)} are verified in Section \ref{A1}, $({\rm \bf A3}_{\varepsilon})$ in Section \ref{disrel}, and $({\rm \bf A4}_{\varepsilon})$ in Section \ref{axioma4}. The axiom $({\rm \bf A4}_{\varepsilon})$ is established under the assumption that the point sources are included in the nodes of the initial triangulation, whereas {\bf (A1)}, {\bf (A2)}, $({\rm \bf A3}_{\varepsilon})$ are established without this assumption. The choice of $\delta_0$ is also discussed in Remark \ref{delta}.\qed
 \subsection{Proofs of (A1) and (A2)}\label{A1}
		The proofs of {\bf (A1)} and {\bf (A2)} utilize triangle and Cauchy inequalities and the discrete jump control estimates.	
\begin{lem}(discrete jump control)\label{Ldjc}\cite[Lemma 5.2]{CCRH17}
For $k\in\mathbb{N}$, there exists $0<C_{\rm djc}<\infty$ depending on the shape regularity of $\T$ such that, for all $g\in\mathcal{P}_k(\T)$ and $\T\in \mathbb{T}$, we have \begin{equation}
    \sum_{T\in\T}h_T\sum_{E\in\E(T)}\norm{[g]_E}_{L^2(E)}^2\leq C_{\rm djc}^2\norm{g}_{L^2(\Omega)}^2.\tag*{\qed} 
\end{equation}
\end{lem}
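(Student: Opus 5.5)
The plan is a purely local argument: bound each edge jump by traces of $g$ from the two adjacent triangles, control each trace by an inverse trace inequality on a single triangle, and then sum using finite overlap. No property of the optimal control problem enters; only the polynomial degree $k$ and shape regularity matter.

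\emph{Step 1 (jump to traces).} Fix $T\in\T$ and $E\in\E(T)$. If $E\in\E^i$ is shared by $T^+$ and $T^-$, the triangle inequality gives
$\norm{[g]_E}_{L^2(E)}^2\leq 2\norm{g|_{T^+}}_{L^2(E)}^2+2\norm{g|_{T^-}}_{L^2(E)}^2$.
On a boundary edge, with the usual convention $[g]_E=g|_{T}$, the jump is a single trace.

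\emph{Step 2 (inverse trace inequality).} For every $K\in\T$, every $E'\in\E(K)$ and every $q\in\mathcal P_k(K)$, I claim
$\norm{q}_{L^2(E')}^2\leq C_{\rm tr,inv}\,h_K^{-1}\norm{q}_{L^2(K)}^2$.
To prove it, pull back by the affine map $F_K$ to the reference triangle $T_{\rm ref}$. On the finite-dimensional space $\mathcal P_k(T_{\rm ref})$, the map $\hat q\mapsto\norm{\hat q}_{L^2(\hat E)}$ is a seminorm and $\norm{\hat q}_{L^2(T_{\rm ref})}$ is a norm, so the first is bounded by a constant $C(k)$ times the second. Scaling back costs $|E'|/|K|\approx h_K^{-1}$, with constants depending only on the shape regularity. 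This is the only place where $g\in\mathcal P_k(\T)$ is used.

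\emph{Step 3 (summation).} Shape regularity gives $h_T\approx h_{T^\pm}$ for neighbouring triangles, since $\kappa_1^{-1}h_T\leq h_E\leq\kappa_2 h_T$ applied on both sides of $E$. Hence
$h_T\norm{[g]_E}_{L^2(E)}^2\lesssim\norm{g}_{L^2(T^+)}^2+\norm{g}_{L^2(T^-)}^2$.
Each triangle $K$ appears in this bound only for pairs $(T,E)$ with $E\in\E(K)$ and $T\ni E$, which is at most $3\cdot 2=6$ pairs. Summing over $T\in\T$ and $E\in\E(T)$ therefore yields the claim with $C_{\rm djc}^2\lesssim 12\,C_{\rm tr,inv}\kappa_1\kappa_2$.

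The main point to handle carefully is Step 2, namely that the constant depends only on $k$ and the shape regularity parameter. The scaling argument requires the affine map to be uniformly nondegenerate. That holds for shape-regular triangulations, so the step is routine rather than a genuine obstacle.
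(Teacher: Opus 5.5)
Your argument is correct and complete. The paper does not prove this lemma itself but cites \cite[Lemma 5.2]{CCRH17}, and your route is the standard argument behind that cited result: bound the jump by the two one-sided traces, apply the discrete trace/inverse inequality $\|q\|_{L^2(E)}^2\lesssim h_K^{-1}\|q\|_{L^2(K)}^2$ for $q\in\mathcal{P}_k(K)$ (obtained by scaling to the reference triangle), and sum using $h_T\approx h_E\approx h_{T^\pm}$ together with finite overlap.
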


\begin{figure}[htbp!]
\centering
\begin{tikzpicture}[scale=3]

\begin{scope}
  \coordinate (A) at (0,0);
  \coordinate (B) at (1,0);
  \coordinate (C) at (1,1);
  \coordinate (D) at (0,1);
  \coordinate (M) at (0.5,0.5);

  \fill[pattern=north east lines, pattern color=black!60] (A) -- (M) -- (B) -- cycle;
  \fill[pattern=north east lines, pattern color=black!60] (B) -- (M) -- (C) -- cycle;
  \fill[pattern=north east lines, pattern color=black!60] (C) -- (M) -- (D) -- cycle;

  \draw[thick] (A) -- (M);
  \draw[thick] (B) -- (M);
  \draw[thick] (C) -- (M);
  \draw[thick] (D) -- (M);

  \draw[thick] (0,0) rectangle (1,1);

  \node at (0.5,-0.15) {\small Triangulation $(\T)$ };
\end{scope}

\begin{scope}[xshift=1.6cm]

  \coordinate (A) at (0,0);
  \coordinate (B) at (1,0);
  \coordinate (C) at (1,1);
  \coordinate (D) at (0,1);
  \coordinate (M) at (0.5,0.5);
  \coordinate (P) at ($(D)!0.5!(A)$);  

  \fill[pattern=north east lines, pattern color=black!60] (A) -- (M) -- (B) -- cycle;
  \fill[pattern=north east lines, pattern color=black!60] (B) -- (M) -- (C) -- cycle;
  \fill[pattern=north east lines, pattern color=black!60] (C) -- (M) -- (D) -- cycle;

  \fill[pattern=crosshatch, pattern color=black]
    (D) -- (M) -- (P) -- cycle;
  \fill[pattern=crosshatch, pattern color=black]
    (P) -- (M) -- (A) -- cycle;

  \draw[thick] (A) -- (M);
  \draw[thick] (B) -- (M);
  \draw[thick] (C) -- (M);
  \draw[thick] (D) -- (M);

  \draw[thick] (D) -- (P);
  \draw[thick] (A) -- (P);
  \draw[thick] (P) -- (M);

  \draw[thick] (0,0) rectangle (1,1);

  \node at (0.5,-0.15) {\small Refined triangulation $(\widehat{\T})$};
\end{scope}

\end{tikzpicture}
\caption{ Common triangles $(\T\cap \widehat{\T})$ are shown with north-east hatching; the refined triangle $(\T\setminus\widehat{\T})$ are unmarked; new triangles $( \widehat{\T}\setminus \T)$ are shown with crosshatch pattern}
\end{figure}

\begin{proof}[{\bf Proof of (A1)}]
A reverse triangle inequality shows
\begin{align}
|\heta(\T\cap\widehat{\T})-\eta(\T\cap\widehat{\T})|^2
\leq  \sum_{T\in \T\cap\widehat{\T}}\big(\heta(T)-\eta(T)\big)^2 .\label{NNcal07}
\end{align}
For $T\in \T\cap\widehat{\T}$, the definition of $\eta(T)$ in \eqref{elemest}, and a reverse triangle inequality in  $\mathbb{R}^4 $ lead to 
\begin{align}
\big(\heta(T)-\eta(T)\big)^2
\leq & \big(\heta_{S,T}-\eta_{S,T}\big)^2+\big(\heta_{S,\E(T)}-\eta_{S,\E(T)}\big)^2+\big(\heta_{A,T}-\eta_{A,T}\big)^2+\big(\heta_{A,\E(T)}-\eta_{A,\E(T)}\big)^2.\label{NNcal06}
\end{align}
The reverse triangle inequality once more and the definitions of estimators \eqref{estmSA} yield 
\begin{align}\label{NNcal08}
\begin{aligned}
\big(\heta_{S,T}-\eta_{S,T}\big)^2 \leq & {\sum_{\z\in \Z}h_T^2 |{\widehat{\ub}_{\rm z,h}-\ub_{\rm z,h}}|^2}, ~\big(\heta_{A,T}-\eta_{A,T}\big)^2\leq h_T^{4} \norm{\widehat{\yb}_\rM-\yb_\rM}_{L^2(T)}^2,\\
\big(\heta_{S,\E(T)}-\eta_{S,\E(T)}\big)^2 \leq & h_T\sum_{E\in \E(T)} \norm{[D_{\rm pw}^2(\widehat{\yb}_\rM-\yb_\rM)]_E \tau_E}_{L^2(E)}^2,   \\
\text{ and } 
\big(\heta_{A,\E(T)}-\eta_{A,\E(T)}\big)^2
\leq &    h_T\sum_{E\in \E(T)} \norm{[D_{\rm pw}^2(\widehat{\pb}_\rM-\pb_\rM)]_E  \tau_E}_{L^2(E)}^2.
\end{aligned}
\hspace{1em} \left.\rule{0pt}{4em}\right\} 
\end{align}
A summation over $T\in \T\cap \widehat{\T}$, and utilizing Lemma \ref{Poincare}$(b)$ (for the first inequality below) and Lemma \ref{Ldjc} (for the second inequality below) show  
\begin{align}
 &\sum_{T\in \T\cap\widehat{\T}}\big(\heta_{S,T}-\eta_{S,T}\big)^2 + \big(\heta_{A,T}-\eta_{A,T}\big)^2 
 \leq (1+|\Omega|C_{\rm PI}^2) h^{2} ( \norm{\widehat{\bar{\bu}}_\h-\bar{\bu}_\h}_{\mathbb{R}^n}^2+ \enorm{\widehat{\yb}_\rM-\yb_\rM}_\pw^2),\no\\
&\sum_{T\in \T\cap\widehat{\T}}\big(\heta_{S,\E(T)}-\eta_{S,\E(T)}\big)^2+\big(\heta_{A,\E(T)}-\eta_{A,\E(T)}\big)^2\leq C_{\rm djc}^2 \big (\enorm{\widehat{\yb}_\rM-\yb_\rM}_\pw^2+\enorm{\widehat{\pb}_\rM-\pb_\rM}_\pw^2\big). \no
\end{align}
A substitution of the above estimates in \eqref{NNcal06} and \eqref{NNcal07} result in 
$|\heta(\T\cap\widehat{\T})-\eta(\T\cap\widehat{\T})|
		\leq  \Lambda_1 \dd(\T,\widehat{\T}).
		$
  This concludes the proof of {\bf (A1)} with $\Lambda_1^2:= (1+|\Omega|C_{\rm PI}^2)|\Omega|+C_{\rm djc}^2$.
		\end{proof}
	\begin{proof}[{\bf Proof of (A2)}]
A reverse triangle inequality yields 
		\begin{align}\label{a2pf}
		&\widehat{\eta}(\widehat{\T}\setminus \T)=\bigg( \sum_{K\in \T\setminus \widehat{\T}} \sum_{T\in \widehat{\T}(K)} \big(\heta_{S,T}^2 +\heta_{S,\E(T)}^2 +\heta_{A,T}^2+\heta_{A,\E(T)}^2 \big)\bigg )^{1/2} \nonumber \\
		 &\leq \bigg( \sum_{K\in \T\setminus\widehat{\T}}\sum_{T\in\widehat{\T}(K)}(\heta_{S,T}-\eta_{S,T})^2+(\heta_{S,\E(T)}-\eta_{S,\E(T)})^2+(\heta_{A,T}-\eta_{A,T})^2+(\heta_{A,\E(T)}-\eta_{A,\E(T)})^2 \bigg)^{1/2}	\nonumber \\	
&\quad +\bigg( \sum_{K\in \T\setminus\widehat{\T}}\sum_{T\in\widehat{\T}(K)}\eta_{S,T}^2+\eta_{S,\E(T)}^2+\eta_{A,T}^2+\eta_{A,\E(T)}^2 \bigg )^{1/2}=:S_1+S_2 ~~{\text{say.}}
		\end{align}
  	To estimate  $S_1$, proceed as in the proof of {\bf (A1)} by employing \eqref{NNcal08}, Lemma \ref{Poincare}$(b)$, and Lemma \ref{Ldjc} to establish
		\begin{align}		
  S_1^2\leq &~ (1+|\Omega|C_{\rm PI}^2) h^{2}\big( \norm{\widehat{\bar{\bu}}_\h-\bar{\bu}_\h}_{\mathbb{R}^n}^2
  +\enorm{\widehat{\yb}_\rM-\yb_\rM}_{\pw}^2\big)+ C_{\rm djc}^2\dd^2(\T,\widehat{\T}).	\label{NNcal2}
  \end{align}
 Note that for any $\phi_\rM\in V_\rM$, $[D_{\rm pw}^2\phi_\rM]_E=0$ for the edges $E\in \E(T)$ which are in the interior of the triangle $K$. Utilize  $h_T\leq 2^{-1/2}h_K$ and \eqref{estmSA} to obtain
	\begin{align}
  S_2^2&\leq \sum_{K\in \T\setminus\widehat{\T}} \left ( \frac{\eta_{S,K}^2+\eta_{A,K}^2}{2}+\frac{\eta_{S,\E(K)}^2+\eta_{A,\E(K)}^2}{\sqrt{2}}  \right ) \leq \frac{1}{\sqrt{2} } \sum_{K\in \T\setminus\widehat{\T}} \big (\eta_{S,K}^2+\eta_{A,K}^2+\eta_{S,\E(K)}^2+\eta_{A,\E(K)}^2\big ).\label{NNcal1}
	\end{align}
  A combination of \eqref{a2pf}-\eqref{NNcal1} leads to {\bf (A2)} with $\rho_2=2^{-1/4}$ and $\Lambda_2^2:= (1+|\Omega|C_{\rm PI}^2)|\Omega|+C_{\rm djc}^2.$ 
   \end{proof}
  \begin{remark}[{\bf (A1)} and {\bf (A2)} for volume estimators]\label{a12_rem}
     With the assumption, ${\rm z}\in \mathcal{N}^i(\T_0)\fl {\rm z}\in\Z$, the estimator $\eta_{S,T}$ vanishes and the volume estimator will have only contribution from the adjoint solution, $\mu^2=\sum_{T\in \T}\eta_{A,T}^2$. Consider the estimator, $\mu$ (resp. $\widehat{\mu}$) instead of the complete estimator, $\eta$ (resp. $\widehat{\eta}$) and revisit the arguments in {\bf (A1)} and {\bf (A2)} to arrive at  
      \begin{align}
 | \widehat{\mu}(\T\cap\widehat{\T})-\mu(\T\cap \widehat{\T})|\leq  \Lambda_{0}h^2\dd(\T,\widehat{\T}) \text{ and } 
\widehat{\mu}(\widehat{\T}\setminus\T)\leq 2^{-1/2}\mu(\T\setminus\widehat{\T})+\Lambda_{0}h^2\dd(\T,\widehat{\T})\no
\end{align}
with $\Lambda_{0}:=C_{\rm PI}$. Along with these estimates following the same arguments as in \cite[Step 3, Proof of Theorem 5.3]{AKDNNSN2024} yield
\begin{align}
    \mu^2(\T\setminus \widehat{\T}) \leq 4(\mu^2-\widehat{\mu}^2)+4\Lambda_0 h^2\dd(\T,\widehat{\T}) \big ( \widehat{\mu}+\mu \big )+3\times 2^2  \Lambda_0^2 h^4 \dd^2(\T,\widehat{\T}).\label{cora12}
\end{align}
These estimates will be used in verifying the axiom $({\rm \bf A4}_{\varepsilon})$.
  \end{remark}

	\subsection{Proof of $({\rm \bf A3}_{\varepsilon})$}\label{disrel}
   In this section, a weak form of {\it discrete reliability} is established \cite[Theorem 4.6]{CS24}. 
For a given discrete control and state $\bar{\bu}_\h$ and 
$\yb_\rM$ over $\mathcal T$, define the auxiliary problems that seek $(\widehat{\ty}_\rM,\widehat{\tp}_\rM)\in \widehat{V}_\rM\times \widehat{V}_\rM$ such that
\begin{align}
a_\pw(\widehat{\ty}_\rM,\widehat{\phi}_\rM)&=\sum_{{\rm z}\in \Z}\ub_{\rm z,h}\langle\delta_{\rm z},\widehat{J}\widehat{\phi}_\rM\rangle~\text{and}~a_\pw(\widehat{\phi}_\rM,\widehat{\tp}_\rM)=(\yb_\rM-\yd,\widehat{\phi}_\rM) \text{ for all }\widehat{\phi}_\rM\in \widehat{V}_\rM,\label{daux1}
\end{align}
where the companion operator $\widehat{J}:\widehat{V}_\rM\rightarrow V$. The well-posedness of the solutions of \eqref{daux1} follows from the Lax-Milgram lemma \cite[Section 6.2.1]{Evans}.
 The following lemma will be used in the proof of the $({\rm \bf A3}_{\varepsilon})$. {Recall that $\bT(\delta)$ denotes the set of all shape regular triangulations $\T$ with $\max_{T\in \T}h_T\leq \delta$.}
 \begin{lem}[discrete error in control]\label{cont_a3}
     Let $(\yb_\rM,\pb_\rM,\bar{\bu}_\h)\in V_\rM \times V_\rM \times U_{\rm ad}$ (resp. $(\widehat{\yb}_\rM,\widehat{\pb}_\rM,\widehat{\bar{\bu}}_\h)\in \widehat{V}_\rM \times \widehat{V}_\rM \times U_{\rm ad}$) solve \eqref{discrete1} with respect to the triangulation $\T\in \bT(\delta)$ (resp. $\widehat{\T}$) and $(\widehat{\ty}_\rM,\widehat{\tp}_\rM)$ solve \eqref{daux1}. Then, the control error satisfies the following bound:
     \begin{align*}
    C_{10}^{-1} \norm{\widehat{\bar{\bu}}_\h-\bar{\bu}_\h}_{\mathbb{R}^n}\leq \enorm{\widehat{\ty}_\rM-\yb_\rM}_\pw+ \enorm{\widehat{\tp}_\rM-\pb_\rM}_\pw + \delta\enorm{\tp-\pb_\rM}_\pw.
\end{align*}
 \end{lem}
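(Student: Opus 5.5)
The argument mimics Step 3 of the proof of Theorem~\ref{equivalence}, now at the discrete level. The continuous optimality system \eqref{sym1} is replaced by the discrete system \eqref{discrete1} on $\widehat{\T}$, and the continuous auxiliary problems \eqref{aux1} by the discrete auxiliary problems \eqref{daux1}.

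\emph{Variational inequalities.} Choose $\mathbf w=\bar{\bu}_\h$ in the variational inequality \eqref{optimality} on $\widehat{\T}$ and $\mathbf w=\widehat{\bar{\bu}}_\h$ in \eqref{optimality} on $\T$. Adding the two gives
\[
\alpha\norm{\widehat{\bar{\bu}}_\h-\bar{\bu}_\h}_{\mathbb{R}^n}^2\le \sum_{{\rm z}\in\Z}\big(J\pb_\rM({\rm z})-\widehat J\widehat{\pb}_\rM({\rm z})\big)(\widehat{\ub}_{\rm z,h}-\ub_{\rm z,h}).
\]
Insert $\widehat J\widehat{\tp}_\rM({\rm z})$ to split the right-hand side into two sums:
\[
\sum_{{\rm z}}\big(J\pb_\rM-\widehat J\widehat{\tp}_\rM\big)({\rm z})(\widehat{\ub}_{\rm z,h}-\ub_{\rm z,h})
\quad\text{and}\quad
\sum_{{\rm z}}\widehat J\big(\widehat{\tp}_\rM-\widehat{\pb}_\rM\big)({\rm z})(\widehat{\ub}_{\rm z,h}-\ub_{\rm z,h}).
\]

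\emph{Second sum.} Subtracting \eqref{daux1} from \eqref{discrete1} on $\widehat{\T}$ gives
\[
a_\pw(\widehat{\yb}_\rM-\widehat{\ty}_\rM,\widehat\phi_\rM)=\sum_{\rm z}(\widehat{\ub}_{\rm z,h}-\ub_{\rm z,h})\widehat J\widehat\phi_\rM({\rm z})
\quad\text{and}\quad
a_\pw(\widehat\phi_\rM,\widehat{\pb}_\rM-\widehat{\tp}_\rM)=(\widehat{\yb}_\rM-\yb_\rM,\widehat\phi_\rM).
\]
Take $\widehat\phi_\rM=\widehat{\tp}_\rM-\widehat{\pb}_\rM$ in the first identity and $\widehat\phi_\rM=\widehat{\yb}_\rM-\widehat{\ty}_\rM$ in the second, and use symmetry. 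The second sum then equals $-(\widehat{\yb}_\rM-\yb_\rM,\widehat{\yb}_\rM-\widehat{\ty}_\rM)$. Expanding as in Step 3 of Theorem~\ref{equivalence} yields
\[
-\norm{\widehat{\yb}_\rM-\yb_\rM}^2+\norm{\widehat{\ty}_\rM-\yb_\rM}^2+(\widehat{\ty}_\rM-\widehat{\yb}_\rM,\,\yb_\rM-\widehat{\ty}_\rM).
\]
Drop the negative term. For the last term, Lemma~\ref{Poincare}$(a)$ and the discrete analogue of \eqref{NNcal15} give
\[
\enorm{\widehat{\yb}_\rM-\widehat{\ty}_\rM}_\pw\le \sqrt n\,C_{\rm PJ}C_{\rm J}\norm{\widehat{\bar{\bu}}_\h-\bar{\bu}_\h}_{\mathbb{R}^n}.
\]
To prove this analogue, test the first identity with $\widehat\phi_\rM=\widehat{\yb}_\rM-\widehat{\ty}_\rM$ and bound $\norm{\widehat J\bullet}_{L^\infty(\Omega)}$ by Lemma~\ref{Poincare}$(a)$ and Lemma~\ref{enrithm}$(a)$. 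Young's inequality then absorbs this term into $\frac{\alpha}{4}\norm{\widehat{\bar{\bu}}_\h-\bar{\bu}_\h}^2_{\mathbb{R}^n}$. Finally, $\norm{\widehat{\ty}_\rM-\yb_\rM}\le C_{\rm PI}\enorm{\widehat{\ty}_\rM-\yb_\rM}_\pw$ by Lemma~\ref{Poincare}$(b)$.

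\emph{First sum.} Apply Young's inequality and bound $|(J\pb_\rM-\widehat J\widehat{\tp}_\rM)({\rm z})|$ by
\[
\norm{J\pb_\rM-\pb_\rM}_{L^\infty}+\norm{\pb_\rM-\widehat{\tp}_\rM}_{L^\infty}+\norm{\widehat{\tp}_\rM-\widehat J\widehat{\tp}_\rM}_{L^\infty}.
\]
The middle term is at most $C_{\rm PI}\enorm{\pb_\rM-\widehat{\tp}_\rM}_\pw$ by Lemma~\ref{Poincare}$(b)$. The first term is controlled by \eqref{sup2pw} with $v=\tp\in V$ and $h\le\delta$, which gives $C_{\rm J3}C_{\rm sr}\,\delta\,\enorm{\tp-\pb_\rM}_\pw$; this is the origin of the $\delta$-term in the statement.

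\emph{Main obstacle.} The third term $\norm{\widehat{\tp}_\rM-\widehat J\widehat{\tp}_\rM}_{L^\infty}$ is the hard part. By \eqref{sup2pw} on $\widehat{\T}$ it is bounded by $\widehat h\,\min_{v\in V}\enorm{\widehat{\tp}_\rM-v}_\pw$. Take $v=\tp$ and split
\[
\enorm{\widehat{\tp}_\rM-\tp}_\pw\le \enorm{\widehat{\tp}_\rM-\pb_\rM}_\pw+\enorm{\pb_\rM-\tp}_\pw,
\]
and use $\widehat h\le h\le\delta$. This produces $\delta\enorm{\widehat{\tp}_\rM-\pb_\rM}_\pw+\delta\enorm{\tp-\pb_\rM}_\pw$, both of the allowed form; the former is absorbed into the middle term. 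Both $\widehat{\tp}_\rM$ and $\tp$ carry the same load $\yb_\rM-\yd$, so $\tp$ is the natural conforming comparison. Alternatively, one could compare with the continuous solution for the load $\yb_\rM-\yd$ via the reliability of $\widehat J$ approximations.

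Collecting the estimates and absorbing the $\frac{\alpha}{4}$-terms gives the asserted bound, with $C_{10}$ depending on $\alpha$, $n$, $C_{\rm PI}$, $C_{\rm PJ}$, $C_{\rm J}$, $C_{\rm J3}$ and shape regularity.
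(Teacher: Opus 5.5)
Your proposal is correct and follows the paper's proof essentially step by step. The paper uses the same pair of variational inequalities with $\widehat J\widehat{\tp}_\rM$ inserted. It treats the second sum by the same duality identity, built from \eqref{dNNcal14} and the bound \eqref{dNNcal15} with $C_6=\sqrt n\,C_{\rm PJ}C_{\rm J}$. It bounds the first sum by the same three-term $L^\infty$ splitting, and it also obtains the $\delta\enorm{\tp-\pb_\rM}_\pw$ term by choosing $v=\tp$ in \eqref{sup2pw} on both meshes and then applying a triangle inequality.
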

\begin{proof}
The proof is presented in two steps. In the first step, key intermediate estimates are derived, followed by the discrete error estimates in the control variable in the second step.

\medskip
\noindent {\it Step 1.}~(intermediate estimates). For $\widehat{\phi}_\rM \in \widehat{V}_\rM$, \eqref{state_eq1} and \eqref{dadj} over $\widehat{\T}$, and \eqref{daux1} imply
\begin{align}
    a_\pw(\widehat{\yb}_\rM-\widehat{\ty}_\rM,\widehat{\phi}_\rM)= \sum_{{\rm z}\in \Z}(\widehat{\ub}_{\rm z,h}-\ub_{\rm z,h})\langle\delta_{\rm z},\widehat{J}\widehat{\phi}_\rM\rangle \text{ and } a_\pw(\widehat{\phi}_\rM,\widehat{\pb}_\rM-\widehat{\tp}_\rM)=(\widehat{\yb}_\rM-\yb_\rM,\widehat{\phi}_\rM).\label{dNNcal14}
\end{align}
A choice of the test function $\widehat{\phi}_\rM=\widehat{\yb}_\rM-\widehat{\ty}_\rM$ in the first equation of \eqref{dNNcal14} 
shows
\begin{align}
\enorm{\widehat{\yb}_\rM-\widehat{\ty}_\rM}_\pw^2&\leq   \sum_{{\rm z}\in \Z}|\widehat{\ub}_{\rm z,h}-\ub_{\rm z,h}||\widehat{J}\widehat{\yb}_\rM({\rm z})-\widehat{J}\widehat{\ty}_\rM({\rm z})|\leq  \sqrt{n}\norm{\widehat{\bar{\bu}}_\h-\bar{\bu}_\h}_{\mathbb{R}^n}\norm{\widehat{J}\widehat{\yb}_\rM-\widehat{J}\widehat{\ty}_\rM}_{L^{\infty}(\Omega)}.\nonumber
\end{align}
Utilizing Lemma \ref{Poincare}$(a)$ and Lemma \ref{enrithm}$(a)$ in the above estimate reveals
\begin{align}
 \enorm{\widehat{\yb}_\rM-\widehat{\ty}_\rM}_\pw& \leq C_6   \norm{\widehat{\bar{\bu}}_\h-\bar{\bu}_\h}_{\mathbb{R}^n} \label{dNNcal15}
\end{align}
with $C_6:=\sqrt{n}C_{\rm PJ}C_{\rm J} $. The choice of the test function $\widehat{\phi}_\rM=\widehat{\pb}_\rM-\widehat{\tp}_\rM\in \widehat{V}_\rM$ in the second equation of \eqref{dNNcal14} and Lemma \ref{Poincare}$(a)$-(b) show
 \begin{align} 
 \enorm{\widehat{\pb}_\rM-\widehat{\tp}_\rM}_\pw&\leq C_{\rm PJ} \norm{\widehat{\yb}_\rM-\yb_\rM}\leq C_{\rm PJ} C_{\rm PI}\enorm{\widehat{\yb}_\rM-\yb_\rM}_\pw. \label{dNNcal16}
\end{align} 

\noindent {\it Step 2.}~(discrete error estimate in control).~The choice $w_{\rm z}=\ub_{\rm z,h}$ (resp. $w_{\rm z}=\widehat{\ub}_{\rm z,h}$) in \eqref{optimality} with respect to the triangulation $\widehat{\T}$ (resp. $\T$) yields
\begin{align*}
 \sum_{{\rm z}\in \Z}(\alpha \widehat{\ub}_{\rm z,h}+\widehat{J}\widehat{\pb}_\rM({\rm z})) (\ub_{\rm z,h}-\widehat{\ub}_{\rm z,h})\geq 0\quad ({\rm resp. }\quad
 \sum_{{\rm z}\in \Z}(\alpha \ub_{\rm z,h}+J\pb_\rM({\rm z}))(\widehat{\ub}_{\rm z,h}-\ub_{\rm z,h})\geq 0).
\end{align*}
Add the above displayed inequalities and introduce the auxiliary solution $\widehat{\tp}_\rM$ to obtain
\begin{align}\label{dNth1}
\alpha  \sum_{{\rm z}\in \Z}(\widehat{\ub}_{\rm z,h}-\ub_{\rm z,h})^2
&\leq  \sum_{{\rm z}\in \Z} (J\pb_\rM({\rm z})-\widehat{J}\widehat{\pb}_\rM({\rm z}))(\widehat{\ub}_{\rm z,h}-\ub_{\rm z,h})\no\\
&=  \sum_{{\rm z}\in \Z}(J\pb_\rM({\rm z})-\widehat{J}\widehat{\tp}_\rM({\rm z}))(\widehat{\ub}_{\rm z,h}-\ub_{\rm z,h})+ \sum_{{\rm z}\in \Z}(\widehat{J}\widehat{\tp}_\rM({\rm z})-\widehat{J}\widehat{\pb}_\rM({\rm z}))(\widehat{\ub}_{\rm z,h}-\ub_{\rm z,h}). 
\end{align}
 The triangle inequality, \eqref{sup2pw}(used twice), and Lemma \ref{Poincare}$(b)$ show for $\z\in\Z\cap T$,
    \begin{align}
       |{J\pb_\rM(\z)-\widehat{J}\widehat{\tp}_\rM(\z)}|
       &\leq  \norm{J\pb_\rM-\pb_\rM}_{L^{\infty}({T})}+\norm{\pb_\rM-\widehat{\tp}_\rM}_{L^{\infty}({T})} +\norm{\widehat{\tp}_\rM-\widehat{J}\widehat{\tp}_\rM}_{L^{\infty}({T})} \label{sn11}\\
        &\leq  C_{\rm J3}  h_{T}\enorm{\tp-\pb_\rM}_\pw+C_{\rm PI}\enorm{\pb_\rM-\widehat{\tp}_\rM}_\pw + C_{\rm J3} h_{T}\enorm{\widehat{\tp}_\rM-\tp}_\pw\no\\
        &\leq  2C_{\rm J3}  h_{T}\enorm{\tp-\pb_\rM}_\pw+C_{\rm PI}\enorm{\pb_\rM-\widehat{\tp}_\rM}_\pw + C_{\rm J3} h_{T}\enorm{\widehat{\tp}_\rM-\pb_\rM}_\pw.\no
    \end{align}
    A summation over ${{\rm z}\in \Z}$ yields $$C_8^{-1}\sum_{{\rm z}\in \Z} |{J\pb_\rM(\z)-\widehat{J}\widehat{\tp}_\rM(\z)}| \leq \delta\enorm{\tp-\pb_\rM}_\pw+\enorm{\pb_\rM-\widehat{\tp}_\rM}_\pw$$
    with $C_8:=n\max\{2C_{\rm J3},C_{\rm PI}+|\Omega|^{1/2}C_{\rm J3}\}$ and $h_T\leq\delta$. Utilizing the Young's inequality and the above estimate in the first term of the right-hand side of \eqref{dNth1} leads to 
\begin{align}
     \sum_{{\rm z}\in \Z} (J\pb_\rM({\rm z})-\widehat{J}\widehat{\tp}_\rM({\rm z}))(\widehat{\ub}_{\rm z,h}-\ub_{\rm z,h})\leq  C_9^2\big(\delta^2 \enorm{\tp -\widehat{\tp}_\rM}_\pw^2+ \enorm{\pb_\rM -\widehat{\tp}_\rM}_\pw^2 \big)+ 2^{-2}\alpha \norm{\widehat{\bar{\bu}}_\h-\bar{\bu}_\h}_{\mathbb{R}^n}^2\no
\end{align} 
with $C_9^2:=2\alpha^{-1}C_8^2$. The substitutions $\widehat{\phi}_\rM=\widehat{\tp}_\rM-\widehat{\pb}_\rM\in \widehat{V}_\rM$ and $\widehat{\phi}_\rM=\widehat{\yb}_\rM-\widehat{\ty}_\rM\in \widehat{V}_\rM$ in the first and second inequalities of $\eqref{dNNcal14}$, respectively, and elementary algebra lead to 
   \begin{align*}
      &  \sum_{{\rm z}\in \Z} (\widehat{J}\widehat{\tp}_\rM({\rm z})-\widehat{J}\widehat{\pb}_\rM({\rm z}))(\widehat{\ub}_{\rm z,h}-\ub_{\rm z,h})= a_\pw(\widehat{\yb}_\rM-\widehat{\ty}_\rM,\widehat{\tp}_\rM-\widehat{\pb}_\rM)= (\yb_\rM-\widehat{\yb}_\rM,\widehat{\yb}_\rM-\widehat{\ty}_\rM)\no\\
       &=-\norm{\yb_\rM-\widehat{\yb}_\rM}^2+\norm{\yb_\rM-\widehat{\ty}_\rM}^2+(\widehat{\ty}_\rM-\widehat{\yb}_\rM,\yb_\rM-\widehat{\ty}_\rM)
\leq\norm{\yb_\rM-\widehat{\ty}_\rM}^2+(\widehat{\ty}_\rM-\widehat{\yb}_\rM,\yb_\rM-\widehat{\ty}_\rM)
   \end{align*} 
   by omitting the first term on the right side in the last inequality.
   An application of the Cauchy Schwarz inequality, Young's inequality, Lemma \ref{Poincare}$(b)$ (twice), and \eqref{dNNcal15} shows
   \begin{align*}
   (\widehat{\ty}_\rM-\widehat{\yb}_\rM,\yb_\rM-\widehat{\ty}_\rM)&\leq  2^{-2}{\alpha}{C_6^{-2}C_{\rm PI}^{-2}}\norm{\widehat{\ty}_\rM-\widehat{\yb}_\rM}^2+ {\alpha}^{-1}{C_6^2C_{\rm PI}^2} \norm{\yb_\rM-\widehat{\ty}_\rM}^2\no\\
   &\leq 2^{-2}{\alpha} \norm{\widehat{\bar{\bu}}_\h-\bar{\bu}_\h}_{\mathbb{R}^n}^2+{\alpha}^{-1}{C_6^2}C_{\rm PI}^4 \enorm{\yb_\rM-\widehat{\ty}_\rM}_\pw^2.
   \end{align*}
A combination of last three displayed estimates in \eqref{dNth1} with Lemma \ref{Poincare}$(b) $ establishes
   \begin{align}
    C_{10}^{-1} \norm{\widehat{\bar{\bu}}_\h-\bar{\bu}_\h}_{\mathbb{R}^n}\leq\enorm{\widehat{\ty}_\rM-\yb_\rM}_\pw+ \enorm{\widehat{\tp}_\rM-\pb_\rM}_\pw +\delta\enorm{\tp-\pb_\rM}_\pw\no
\end{align}
with $C_{10}^2(\alpha^{-1}):=2\alpha^{-1}\max\{C_9^2,(1+\alpha^{-1}C_6^2C_{\rm PI}^2)C_{\rm PI}^2\}$.
\end{proof}
\begin{remark}
    {The approach of the proof in Lemma \ref{cont_a3} is similar to Step 3 in the proof of Theorem \ref{equivalence}; however, Lemma \ref{cont_a3} requires handling companion operators at two different triangulations, which introduces additional difficulties. This also results in an extra term in the estimate, necessitating the use of a weaker form of discrete reliability $({\rm \bf A3}_{\varepsilon})$.}
\end{remark}

\begin{lem}[discrete function bound]\label{disequiv}
Let $(\yb_\rM,\pb_\rM,\bar{\bu}_\h)\in V_\rM \times V_\rM \times U_{\rm ad}$ (resp. $(\widehat{\yb}_\rM,\widehat{\pb}_\rM,\widehat{\bar{\bu}}_\h)\in \widehat{V}_\rM \times \widehat{V}_\rM \times U_{\rm ad}$) solve \eqref{discrete1} with respect to  $\T\in \bT(\delta)$ (resp. $\widehat{\T}$) and $(\widehat{\ty}_\rM,\widehat{\tp}_\rM)$ solve \eqref{daux1}. Then the following discrete error bounds hold.
   \begin{align*}
       (a) &~\norm{\widehat{\bar{\bu}}_\h-\bar{\bu}_\h}_{\mathbb{R}^n}+\enorm{\widehat{\yb}_\rM-\yb_\rM}_\pw+\enorm{\widehat{\pb}_\rM-\pb_\rM}_\pw\leq C_{\rm d1} \big(\enorm{\widehat{\ty}_\rM-\yb_\rM}_\pw+\enorm{\widehat{\tp}_\rM-\pb_\rM}_\pw + \delta\enorm{\tp-\pb_\rM}_\pw\big)\\
       (b) & ~\enorm{\widehat{\ty}_\rM-\yb_\rM}_\pw+\enorm{\widehat{\tp}_\rM-\pb_\rM}_\pw
   \leq C_{\rm d2} \big( \norm{\widehat{\bar{\bu}}_\h-\bar{\bu}_\h}_{\mathbb{R}^n}+\enorm{\widehat{\yb}_\rM-\yb_\rM}_\pw+\enorm{\widehat{\pb}_\rM-\pb_\rM}_\pw\big).
   \end{align*}
   \end{lem}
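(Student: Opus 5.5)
The plan mirrors Step 2 and Step 4 of the proof of Theorem~\ref{equivalence}, carried out at the discrete level with the intermediate estimates \eqref{dNNcal15}--\eqref{dNNcal16} from Step 1 of the proof of Lemma~\ref{cont_a3} in place of \eqref{NNcal15}--\eqref{NNcal16}.

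For $(a)$, I will bound each of the three terms on the left-hand side separately.

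\emph{Control.} The control error is already controlled by Lemma~\ref{cont_a3}, which gives the right-hand side of $(a)$ up to the constant $C_{10}$.

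\emph{State.} A triangle inequality and \eqref{dNNcal15} give
\[
\enorm{\widehat{\yb}_\rM-\yb_\rM}_\pw\leq C_6\norm{\widehat{\bar{\bu}}_\h-\bar{\bu}_\h}_{\mathbb{R}^n}+\enorm{\widehat{\ty}_\rM-\yb_\rM}_\pw .
\]
Inserting Lemma~\ref{cont_a3} into this bounds the state error by the right-hand side of $(a)$.

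\emph{Adjoint.} A triangle inequality and \eqref{dNNcal16} give
\[
\enorm{\widehat{\pb}_\rM-\pb_\rM}_\pw\leq C_{\rm PJ}C_{\rm PI}\enorm{\widehat{\yb}_\rM-\yb_\rM}_\pw+\enorm{\widehat{\tp}_\rM-\pb_\rM}_\pw .
\]
The state bound just obtained then controls the first term.

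Summing the three bounds proves $(a)$ with $C_{\rm d1}:=C_{10}+(1+C_{\rm PJ}C_{\rm PI})(1+C_6C_{10})+1$. The only care needed is to check that Lemma~\ref{cont_a3} already absorbs all terms involving the companion operators on the two meshes, so that no new $\widehat{J}$-versus-$J$ comparison appears here. That comparison was the genuinely hard part and has already been done; the present step is bookkeeping, and it also explains the extra $\delta\enorm{\tp-\pb_\rM}_\pw$ term in $(a)$.

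For $(b)$, the reverse direction, I again use triangle inequalities.

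\emph{First term.} Write
\[
\enorm{\widehat{\ty}_\rM-\yb_\rM}_\pw\leq\enorm{\widehat{\ty}_\rM-\widehat{\yb}_\rM}_\pw+\enorm{\widehat{\yb}_\rM-\yb_\rM}_\pw\leq C_6\norm{\widehat{\bar{\bu}}_\h-\bar{\bu}_\h}_{\mathbb{R}^n}+\enorm{\widehat{\yb}_\rM-\yb_\rM}_\pw ,
\]
where the second step uses \eqref{dNNcal15}.

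\emph{Second term.} Similarly,
\[
\enorm{\widehat{\tp}_\rM-\pb_\rM}_\pw\leq\enorm{\widehat{\tp}_\rM-\widehat{\pb}_\rM}_\pw+\enorm{\widehat{\pb}_\rM-\pb_\rM}_\pw\leq C_{\rm PJ}C_{\rm PI}\enorm{\widehat{\yb}_\rM-\yb_\rM}_\pw+\enorm{\widehat{\pb}_\rM-\pb_\rM}_\pw ,
\]
where the second step uses \eqref{dNNcal16}. Here Lemma~\ref{Poincare}$(b)$ is applicable because $\widehat{\yb}_\rM-\yb_\rM\in V_\rM+\widehat{V}_\rM$.

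Adding the two bounds gives $(b)$ with $C_{\rm d2}:=\max\{C_6,\,1+C_{\rm PJ}C_{\rm PI}\}$. No $\delta$-term arises in this direction.

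The main (minor) obstacle is justifying \eqref{dNNcal15} and \eqref{dNNcal16} in the mixed space $V_\rM+\widehat{V}_\rM$. In particular, the test function in \eqref{dNNcal16} must belong to $\widehat{V}_\rM$, and it does. Apart from this, both parts reduce to combining the Step~1 estimates with Lemma~\ref{cont_a3}.
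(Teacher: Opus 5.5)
Your proposal is correct and is essentially the paper's proof. For $(a)$, the paper likewise combines triangle inequalities with \eqref{dNNcal15}--\eqref{dNNcal16} and then inserts Lemma~\ref{cont_a3} for the control error; it only groups the state and adjoint terms before substituting, which gives different but equally valid constants. Your argument for $(b)$ is the same pair of triangle inequalities the paper uses.
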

\begin{proof}[Proof of $(a)$]
\noindent  A triangle inequality for $\enorm{\widehat{\pb}_\rM-\pb_\rM}_\pw$ first with \eqref{dNNcal16} followed by a triangle inequality  for $\enorm{\widehat{\yb}_\rM-\yb_\rM}_\pw$ with \eqref{dNNcal15} show
\begin{align}
    \enorm{\widehat{\yb}_\rM-\yb_\rM}_\pw+\enorm{\widehat{\pb}_\rM-\pb_\rM}_\pw
    &\leq (1+C_{\rm PJ}C_{\rm PI})\enorm{\widehat{\yb}_\rM-\yb_\rM}_\pw+\enorm{\widehat{\tp}_\rM-\pb_\rM}_\pw\no\\
   & \leq C_7\big( \norm{\widehat{\bar{\bu}}_\h-\bar{\bu}_\h}_{\mathbb{R}^n}+\enorm{\widehat{\ty}_\rM-\yb_\rM}_\pw+\enorm{\widehat{\tp}_\rM-\pb_\rM}_\pw\big)\no
\end{align}
 with $C_7:=(1+C_{\rm PJ}C_{\rm PI})(1+C_6)+1.$
 For $C_{\rm d1}(\alpha^{-1}):= (1+C_7) C_{10}$,  Lemma \ref{cont_a3} and the above estimate yield
\begin{align*}
 \norm{\widehat{\bar{\bu}}_\h-\bar{\bu}_\h}_{\mathbb{R}^n}+\enorm{\widehat{\yb}_\rM-\yb_\rM}_\pw+\enorm{\widehat{\pb}_\rM-\pb_\rM}_\pw&\leq C_{\rm d1} \big(\enorm{\widehat{\ty}_\rM-\yb_\rM}_\pw +\enorm{\widehat{\tp}_\rM-\pb_\rM}_\pw+\delta\enorm{\tp-\pb_\rM}_\pw\big).
\end{align*}


\noindent {\it Proof of $(b)$.} The triangle inequality and \eqref{dNNcal15}-\eqref{dNNcal16} conclude the proof with $C_{\rm d2}:=1+C_6(1+C_{\rm PJ}C_{\rm PI})$. 
\end{proof}

\noindent {\bf Proof of $({\rm \bf A3}_{\varepsilon})$.}~~Recall that $\widehat{\ty}_{\rM},\widehat{\tp}_{\rM} \in \widehat{V}_\rM$ solve \eqref{daux1}. Lemma \ref{disequiv}$(a)$ yields 
  \begin{align}
  \norm{\widehat{\bar{\bu}}_\h-\bar{\bu}_\h}_{\mathbb{R}^n}+\enorm{\widehat{\yb}_{\rM}-\yb_\rM}_{\pw} +\enorm{\widehat{\pb}_{\rM}-\pb_\rM}_{\pw}  \leq C_{\rm d1} \big(\enorm{\widehat{\ty}_{\rM}-\yb_\rM}_{\pw}  +\enorm{\widehat{\tp}_{\rM}-\pb_\rM}_{\pw}+\delta\enorm{\tp-\pb_\rM}_\pw\big).\label{equiref3}
  \end{align}
  \noindent {\it Step 1.}~The definition of the piecewise $\pw$-norm shows
        \begin{align}
        \enorm{\widehat{\ty}_{\rM}-\yb_\rM}_{\pw}^2&=a_\pw(\widehat{\ty}_{\rM},\widehat{\ty}_{\rM}-\yb_\rM)-a_\pw(\yb_\rM,\widehat{\ty}_{\rM}-\yb_\rM)\nonumber \\
         & =a_\pw(\widehat{\ty}_{\rM},\widehat{\yb}_{\rM}^*-\yb_\rM)+a_\pw(\widehat{\ty}_{\rM},\widehat{\ty}_{\rM}-\widehat{\yb}_{\rM}^*)-a_\pw(\yb_\rM,I_\rM\widehat{\ty}_{\rM}-\yb_\rM) \no
        \end{align}
        {with $\widehat{\yb}_{\rM}^*:=\widehat{I}_\rM J\yb_\rM\in \widehat{V}_\rM$ and Lemma \ref{Interpolation}$(a)$ employed in the last term. 
Lemma \ref{Interpolation}$(a)$ once again with $\yb_\rM=I_\rM\widehat{\yb}_{\rM}^*$  implies  $a_\pw(\yb_\rM,\widehat{\yb}_{\rM}^*-\yb_\rM)=0$ (see \cite[(C4)]{CCSP20}). Employ this with \eqref{daux1} and \eqref{discrete1} in the above estimate to arrive at } 
		\begin{align}
		 \enorm{\widehat{\ty}_{\rM}-\yb_\rM}_{\pw}^2 =
  a_\pw(\widehat{\ty}_{\rM}-\yb_\rM,\widehat{\yb}_{\rM}^*-\yb_\rM)-\sum_{\rm z\in \Z}{\ub}_{\rm z,h}\langle \delta_z,(\widehat{J}-JI_\rM)(\widehat{\yb}_{\rM}^*-\widehat{\ty}_{\rM})\rangle.\label{Ncal16}
	\end{align}
{For any $\z\in {\mathcal N}^i(\widehat{\T})$, the second expression on the right-hand side of the above estimate vanishes. Otherwise, for $T\in\T_{\rm d}(\widehat{\T})$ and $\widehat{w}_\rM\in \widehat{V}_\rM$,} utilize the triangle inequality, \eqref{sup2pw} (twice), inverse estimate, and Lemma \ref{Interpolation}$(b)$ to obtain 
\begin{align*}
 \norm{(\widehat{J}-JI_\rM)\widehat{w}_\rM}_{L^\infty(T)}
 &\leq \norm{(\widehat{J}-1)\widehat{w}_\rM}_{L^\infty(T)}+\norm{(1-I_\rM)\widehat{w}_\rM}_{L^\infty(T)}+\norm{(I_\rM-JI_\rM)\widehat{w}_\rM}_{L^\infty(T)} \\&\leq C_{S1} h_T|\widehat{w}_\rM|_{H^2(\omega_T)}
 \end{align*}
where $C_{S1}:=2C_{\rm J3}+C_{\rm inv}C_{\rm I}$. Since by definitions of interpolation and companion operator,  $(\widehat{J}\widehat{I}_\rM-JI_\rM)\widehat{w}_\rM|_T=0$ for all $T\in \T\setminus {\mathcal R}(\T,\widehat{\T})$ and $\widehat{J}\widehat{I}_\rM\widehat{w}_\rM=\widehat{J}\widehat{w}_\rM$, we obtain $(\widehat{J}-JI_\rM)\widehat{w}_\rM|_T=0$ for all $T\in \T\setminus {\mathcal R}(\T,\widehat{\T})$. These results with $\widehat{w}_\rM=\widehat{\yb}_{\rM}^*-\widehat{\ty}_{\rM}$ and a Cauchy-Schwarz inequality in \eqref{Ncal16} show
\begin{align} 
\enorm{\widehat{\ty}_{\rM}-\yb_\rM}_{\pw}^2 
  \leq & \enorm{\widehat{\ty}_{\rM}-\yb_\rM}_{\pw}\enorm{\widehat{\yb}_{\rM}^*-\yb_\rM}_{\pw}+C_{\rm S1}\enorm{\widehat{\yb}_{\rM}^*-\widehat{\ty}_{\rM}}_\pw\sum_{T \in {\mathcal R}(\T,\widehat{\T})}\sum_{\rm z\in \Z\cap T}|{\ub}_{\rm z,h}|h_T\nonumber\\
  \leq & \enorm{\widehat{\ty}_{\rM}-\yb_\rM}_{\pw}\enorm{\widehat{\yb}_{\rM}^*-\yb_\rM}_{\pw}+\sqrt{2n}C_{\rm S1}(\enorm{\widehat{\yb}_{\rM}^*-\yb_\rM}_\pw+\enorm{\yb_\rM-\widehat{\ty}_{\rM}}_\pw)\eta_S({\mathcal R}(\T,\widehat{\T})),\nonumber
  \end{align} 
  where the triangle inequality with \eqref{estmSA} is utilized in the last step. Lemma \ref{enrithm}$(c)$ and Young's inequality (used thrice with $\epsilon=2$) in the above displayed estimate reveal
  \begin{align}
  \enorm{\widehat{\ty}_{\rM}-\yb_\rM}_{\pw}^2
  \leq C_{\rm S} \eta_S^2( {\mathcal R}(\T,\widehat{\T})) \label{discreterel1}
 \end{align}
{ with $C_{\rm S}:=n C_{\rm S1}^2(C_{\rm IJ}^2+4)+2(C_{\rm IJ}^2+1)$.}\\
  \noindent {\it Step 2.}~For establishing the adjoint estimates, steps 
 similar to {\it Step 1} lead to 
		\begin{align}
		 \enorm{\widehat{\tp}_{\rM}-\pb_\rM}_{\pw}^2 =  a_\pw(\widehat{\tp}_{\rM}-\pb_\rM, \widehat{\pb}_{\rM}^*-\pb_\rM)-(\yb_\rM-y_{\rm d},(1-I_\rM)(\widehat{\pb}_{\rM}^*-\widehat{\tp}_{\rM})).\no  
	\end{align}
From Lemma \ref{Interpolation}$(c)$, $(1-I_\rM)(\widehat{\pb}_{\rM}^*-\widehat{\tp}_{\rM})=0$ holds for all $T\in \T\cap \widehat{\T}$. This,  a Cauchy-Schwarz inequality, and Lemma \ref{Interpolation}$(b)$ show
		\begin{align}
        \enorm{\widehat{\tp}_{\rM}-\pb_\rM}_{\pw}^2 
		\leq & \enorm{\widehat{\tp}_{\rM}-\pb_\rM}_{\pw}\enorm{\widehat{\pb}_{\rM}^*-\pb_\rM}_{\pw}+C_\rI\sum_{T\in \T\setminus \widehat{\T}}h_T^2\norm{\yb_\rM-y_{\rm d}}_{L^2(T)}\norm{D_\pw^2(\widehat{\pb}_{\rM}^*-\widehat{\tp}_{\rM})}_{L^2(T)}.\nonumber
  \end{align}
 Lemma \ref{enrithm}$(c)$, H\"older's inequality, and \eqref{estmSA} in the last displayed estimate reveal  
  \begin{align}
 C^{-1}_{\rm A1} \enorm{\widehat{\tp}_{\rM}-\pb_\rM}_{\pw}^2		\leq & (\enorm{\widehat{\tp}_{\rM}-\pb_\rM}_{\pw} +\enorm{\widehat{\pb}_{\rM}^*-\widehat{\tp}_{\rM}}_{\pw})\eta_A( {\mathcal R}(\T,\widehat{\T})) \no
 \end{align}
  with  $C_{\rm A1}:=\max\{ C_{\rm IJ},C_\rI\}$. A triangle inequality and Lemma \ref{enrithm}$(c)$ lead to 
  \begin{align*}
   C_{\rm A2}^{-1}\enorm{\widehat{\tp}_{\rM}-\pb_\rM}_{\pw}^2\leq \enorm{\widehat{\tp}_{\rM}-\pb_\rM}_{\pw} \: \eta_A( {\mathcal R}(\T,\widehat{\T}))+ \eta_A ^2( {\mathcal R}(\T,\widehat{\T}))
  \end{align*}
  with $C_{\rm A2}:=C_{\rm A1}\max\{2,  C_{\rm IJ}\}$. A Young's inequality with {$\epsilon=C_{\rm A2}$} shows 
  \begin{align}
      \enorm{\widehat{\tp}_{\rM}-\pb_\rM}_{\pw}^2\leq C_{\rm A}\eta_A^2( {\mathcal R}(\T,\widehat{\T}))\label{sn12}
  \end{align}
  with $C_{\rm A}:=C_{\rm A2}(2+C_{\rm A2})$.
  Substitute \eqref{discreterel1} and \eqref{sn12} in \eqref{equiref3} along with \eqref{newref001} (for $g=\yb_\rM-\yd$)
  to {conclude that for every $\varepsilon>0$ selecting $0<\delta^2\leq \delta_1^2:=3^{-1}C_{\rm rel}^{-2}C_{\rm d1}^{-2}\varepsilon$ leads to }
  $$\dd^2(\T,\widehat{\T}) \leq \Lambda_3 \eta^2(\mathcal{R}(\T,\widehat{\T})) + \varepsilon \eta^2(\T)$$
  with {$\Lambda_3(\alpha^{-1}):=3C_{\rm d1}^2\max\{C_{\rm S},C_{\rm A}\}$.} This concludes the proof of $({\rm \bf A3}_{\varepsilon})$ for all $\T\in {\mathbb T}(\delta_1)$.  \qed
\begin{remark}[$\Z\subseteq {\mathcal N}^i(\T_0)$]\label{directa3}
  In the case that the point sources are included in the nodes of the initial triangulation, the definition of the companion operator show $\langle \delta_{\rm z},J \phi_\rM\rangle=J\phi_\rM({\rm z})= \phi_\rM({\rm z})$ for ${\rm z}\in \Z$ and $\phi_\rM\in V_\rM$ (ref. \cite[Definition 6.5]{CCSP20}). Utilize this in \eqref{sn11} to observe that the first and third terms on the right side of the inequality vanish. With this, re-visit the arguments in {\it Step 2} in Lemma \ref{cont_a3} and substitute in Lemma \ref{apriori}$(a)$ to obtain $$\norm{\widehat{\bar{\bu}}_\h-\bar{\bu}_\h}_{\mathbb{R}^n}+\enorm{\widehat{\yb}_{\rm nc}-\yb_\rM}_{\pw} +\enorm{\widehat{\pb}_{\rm nc}-\pb_\rM}_{\pw}  \leq C_{\rm d1} \big(\enorm{\widehat{\ty}_{\rM}-\yb_\rM}_{\pw}  +\enorm{\widehat{\tp}_{\rM}-\pb_\rM}_{\pw}\big).$$
  Utilizing this instead of \eqref{equiref3} with \eqref{discreterel1} and \eqref{sn12} will prove directly {\it discrete reliability} {\bf (A3)} instead of $({\rm \bf A3}_{\varepsilon})$. Thus, the mesh-size restriction can be avoided by considering $\Z\subseteq {\mathcal N}^i(\T_0)$.
\end{remark}
  \subsection{Proof of $({\rm \bf A4}_{\varepsilon})$} \label{axioma4}
  This subsection verifies the axiom $({\rm \bf A4}_{\varepsilon})$.  
The proof adopts the assumption that {\it the point sources are located at the nodes of the initial triangulation}.
   Note that by the definition of the companion operator, for any $\phi_\rM\in V_\rM$, $\langle \delta_{\rm z},J\phi_\rM\rangle =J\phi_\rM({\rm z})= \phi_\rM({\rm z})\fl {\rm z}\in \Z$ \cite[Definition 6.5]{CCSP20}.  Let $(\yb_\ell,\pb_\ell,{\bf\ub}_\ell)$ be the solution of the optimality system \eqref{discrete1} with respect to the $\ell^{th}$ refinement of $\T_0$ denoted as  $\T_\ell\in\mathbb{T}(\delta)$. For all $\ell\in \mathbb{N}\cup \{0\}$, define $$e_\ell^2:=\norm{\bar{\bu}-\bar{\bu}_\ell}_{{\mathbb R}^n}^2+\enorm{\yb-\yb_\ell}_{\pw}^2+\enorm{\pb-\pb_\ell}_{\pw}^2 , \;\; \osc_\ell:=\osc(\yb_\ell-\yd,\T_\ell), \text{ and } \widehat{e}_{\ell}:= e_\ell+\osc_\ell.$$ 
  \begin{lem}[intermediate results]\label{lem_A4}
      Let $(\yb_\ell,\pb_\ell,{\bf\ub}_\ell)$ be the solution of the optimality system \eqref{discrete1} with respect to $\T_\ell\in\mathbb{T}(\delta)$. Then for $2-\sigma< \gamma \leq 2$, there exist $C_{\rm c}$, $C_{\rm s}$, and $C_{\rm a}$ $>0$ such that 
      \begin{itemize}
      \item[(a)] $ ({\bf\ub}_{k+1}-{\bf\ub}_k,{\bf\ub}_{k+1}-{\bf\ub}) \leq C_{\rm c}   \delta^{2-\gamma}\enorm{\pb_{k+1}-\pb_k}_{\pw}\widehat{e}_{k+1}$,
          \item[(b)] $a_\pw(\yb_{k+1}-\yb_k,\yb_{k+1}-\yb) \leq C_{\rm s}   \delta^{2-\gamma}\enorm{\pb_{k+1}-\pb_k}_{\pw}\widehat{e}_{k+1}$,  
    \item[(c)] $a_\pw(\pb_{k+1}-\pb_k,\pb_{k+1}-\pb)\leq C_{\rm a} \big ( \mu_{k}(\T_k\setminus \T_{k+1})+\delta^{2-\gamma}\enorm{\yb_{k+1}-\yb_k}_{\pw}\big )\widehat{e}_{k+1}$.
      \end{itemize}
  \end{lem}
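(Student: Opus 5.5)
The plan is to exploit the assumption $\Z\subseteq{\mathcal N}^i(\T_0)$, which gives $J_\ell\phi_\ell({\rm z})=\phi_\ell({\rm z})$ for all $\phi_\ell$ in the Morley space on $\T_\ell$ and all $\ell$. Then the discrete controls are $\ub_{{\rm z},\ell}=\Pi_{[a_{\rm z},b_{\rm z}]}(-\alpha^{-1}\pb_\ell({\rm z}))$, and the Morley interpolation $I_k$ preserves nodal values at ${\rm z}$. In all three parts, every lower-order factor will be converted into $\delta^{2-\gamma}\widehat{e}_{k+1}$ using \eqref{A4apriori} from the proof of Theorem~\ref{apriori}$(b)$ on $\T_{k+1}\in\mathbb{T}(\delta)$.

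\emph{Part (a).} By Cauchy--Schwarz in $\mathbb{R}^n$ and the Lipschitz continuity of $\Pi_{[a_{\rm z},b_{\rm z}]}$ with constant one,
\[
|\ub_{{\rm z},k+1}-\ub_{{\rm z},k}|\le\alpha^{-1}|\pb_{k+1}({\rm z})-\pb_k({\rm z})|.
\]
Lemma~\ref{Poincare}$(b)$ then gives $\norm{{\bf\ub}_{k+1}-{\bf\ub}_k}_{\mathbb{R}^n}\le\sqrt n\,\alpha^{-1}C_{\rm PI}\enorm{\pb_{k+1}-\pb_k}_\pw$. The remaining factor $\norm{{\bf\ub}_{k+1}-\bar{\bf u}}_{\mathbb{R}^n}$ is bounded by $C_{\rm AP}\delta^{2-\gamma}\widehat{e}_{k+1}$ through \eqref{A4apriori}.

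\emph{Part (b).} Since $\yb_{k+1}-\yb_k\in\mathcal P_2(\T_{k+1})$, Lemma~\ref{Interpolation}$(a)$ on $\T_{k+1}$ lets me replace $\yb$ by $I_{k+1}\yb$. So I test with $\phi:=\yb_{k+1}-I_{k+1}\yb\in V_{k+1}$.
\begin{itemize}
\item The state equation \eqref{state_eq1} on $\T_{k+1}$ gives $a_\pw(\yb_{k+1},\phi)=\sum_{\rm z}\ub_{{\rm z},k+1}\phi({\rm z})$.
\item On $\T_k$, Lemma~\ref{Interpolation}$(a)$ gives $a_\pw(\yb_k,\phi)=a_\pw(\yb_k,I_k\phi)=\sum_{\rm z}\ub_{{\rm z},k}(I_k\phi)({\rm z})=\sum_{\rm z}\ub_{{\rm z},k}\phi({\rm z})$, because ${\rm z}$ is a node of $\T_k$.
\end{itemize}
Hence
\[
a_\pw(\yb_{k+1}-\yb_k,\yb_{k+1}-\yb)=\sum_{\rm z}(\ub_{{\rm z},k+1}-\ub_{{\rm z},k})\,(\yb_{k+1}-\yb)({\rm z}).
\]
The first factor is handled as in (a). For the second, I use the elementwise embedding \eqref{inf2gamma} on the triangle containing ${\rm z}$ to get $|(\yb_{k+1}-\yb)({\rm z})|\le C_{\rm emb}\norm{\yb-\yb_{k+1}}_{H^\gamma(\T_{k+1})}\lesssim\delta^{2-\gamma}\widehat{e}_{k+1}$, again by \eqref{A4apriori}. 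I must check that $C_{\rm emb}$ is uniform over shape-regular triangles; the scaling of the $H^\gamma$ seminorm goes the right way for small $h_T$.

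\emph{Part (c).} This is the step I expect to be the main obstacle, since a volume term survives. As before, I test with $\phi:=\pb_{k+1}-I_{k+1}\pb\in V_{k+1}$ and use \eqref{dadj} on both levels. This gives
\[
a_\pw(\pb_{k+1}-\pb_k,\phi)=(\yb_{k+1}-\yb_k,\phi)+(\yb_k-\yd,(1-I_k)\phi).
\]
\begin{itemize}
\item For the first term, Lemma~\ref{Poincare}$(b)$ bounds $\norm{\yb_{k+1}-\yb_k}$ by $C_{\rm PI}\enorm{\yb_{k+1}-\yb_k}_\pw$. Then $\norm{\phi}\le\norm{\pb_{k+1}-\pb}+\norm{(1-I_{k+1})\pb}$. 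The first piece is $\lesssim\delta^{2-\gamma}\widehat{e}_{k+1}$ by \eqref{A4apriori}. For the second, note $(1-I_{k+1})\pb=(1-I_{k+1})(\pb-\pb_{k+1})$; Lemma~\ref{Interpolation}$(b)$ then gives a bound $h^2\enorm{\pb-\pb_{k+1}}_\pw\lesssim\delta^{2-\gamma}\widehat{e}_{k+1}$.
\item For the second term, Lemma~\ref{Interpolation}$(c)$ shows that $(1-I_k)\phi$ vanishes on $\T_k\cap\T_{k+1}$. On each $T\in\T_k\setminus\T_{k+1}$, Lemma~\ref{Interpolation}$(b)$ gives $\norm{(1-I_k)\phi}_{L^2(T)}\le C_\rI h_T^2\norm{D^2_\pw\phi}_{L^2(T)}$. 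Cauchy--Schwarz bounds the term by $C_\rI\mu_k(\T_k\setminus\T_{k+1})\enorm{\phi}_\pw$. Finally, $\enorm{\phi}_\pw\le\enorm{\pb_{k+1}-\pb}_\pw+\enorm{(1-I_{k+1})\pb}_\pw\le2\enorm{\pb-\pb_{k+1}}_\pw\le2\widehat{e}_{k+1}$, by the $\Pi_0$ best-approximation property \eqref{call3} (the projection $\Pi_0$ onto $\T_{k+1}$ does not increase the distance to $D^2_\pw\pb_{k+1}$).
\end{itemize}
Collecting constants yields $C_{\rm a}$.
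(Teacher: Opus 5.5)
Your proof follows the paper's route. It uses the orthogonality of $I_k$ and $I_{k+1}$ from Lemma~\ref{Interpolation}$(a)$, the discrete equations on both levels with $J_\ell\phi({\rm z})=\phi({\rm z})$ at nodes, Lemma~\ref{Interpolation}$(b)$--$(c)$ plus H\"older for the $\mu_k(\T_k\setminus\T_{k+1})$ term, and \eqref{A4apriori} for the lower-order factors. Your single test function $\phi\in V_{k+1}$ is a compact form of the paper's three-term splitting. In (a), bounding $\norm{\bar{\bu}_{k+1}-\bar{\bu}}_{\mathbb R^n}$ directly by the control part of \eqref{A4apriori} is cleaner than the paper's detour through $\norm{\pb_{k+1}-\pb}_{L^\infty(\Omega)}$. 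Parts (a) and (c) are fine.

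The step in (b) that does not work as you justify it is $|(\yb_{k+1}-\yb)({\rm z})|\le C_{\rm emb}\norm{\yb-\yb_{k+1}}_{H^\gamma(\T_{k+1})}$, and the uniformity check you flag fails. On a triangle only the seminorm scales favourably; the $L^2$ part of $\norm{\cdot}_{H^\gamma(T)}$ costs a factor $h_T^{-1}$. For example, $v\equiv 1$ on $T$ gives $\norm{v}_{L^\infty(T)}=1$ but $\norm{v}_{H^\gamma(T)}=|T|^{1/2}$. Nor can \eqref{inf2gamma} be applied on $\Omega$, since $\yb_{k+1}-\yb\notin H^\gamma(\Omega)$.

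The paper's proof asserts the same bound $\norm{\yb_{k+1}-\yb}_{L^\infty(\Omega)}\le C_{\rm emb}\norm{\yb_{k+1}-\yb}_{H^\gamma(\T)}$ without comment. So you are not missing an idea from the paper, but a different argument is needed.

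One that uses only tools available here exploits that ${\rm z}$ is a node. Take $\ty$ from \eqref{aux1} with control $\bar{\bu}_{k+1}$. Lemma~\ref{Poincare}$(a)$ and \eqref{NNcal15} give $|(\yb-\ty)({\rm z})|\le\sqrt{n}\,C_{\rm PJ}^2\norm{\bar{\bu}-\bar{\bu}_{k+1}}_{\mathbb R^n}$, which \eqref{A4apriori} controls. For the remaining part, let $G_{\rm z}\in V\cap H^{4-\gamma}(\Omega)$ solve $a(v,G_{\rm z})=v({\rm z})$ for all $v\in V$. Three facts combine:
\begin{itemize}
\item $\yb_{k+1}({\rm z})=(J_{k+1}\yb_{k+1})({\rm z})$, since ${\rm z}$ is a node.
\item $a(\ty,G_{\rm z})=a_\pw(\yb_{k+1},I_{k+1}G_{\rm z})=a_\pw(\yb_{k+1},G_{\rm z})$, by the nodal sources, \eqref{res01}, and Lemma~\ref{Interpolation}$(a)$.
\item $a_\pw(\yb_{k+1}-J_{k+1}\yb_{k+1},I_{k+1}G_{\rm z})=0$, because $I_{k+1}J_{k+1}={\rm id}$.
\end{itemize}
Together with Lemma~\ref{enrithm}$(b)$ and the interpolation estimate for $G_{\rm z}$, they yield
\[
|(\ty-\yb_{k+1})({\rm z})|=|a_\pw(\yb_{k+1}-J_{k+1}\yb_{k+1},G_{\rm z}-I_{k+1}G_{\rm z})|\lesssim\delta^{2-\gamma}\norm{G_{\rm z}}_{H^{4-\gamma}(\Omega)}\enorm{\ty-\yb_{k+1}}_\pw .
\]
Then \eqref{Cc2} gives $\max_{{\rm z}\in\Z}|(\yb-\yb_{k+1})({\rm z})|\lesssim\delta^{2-\gamma}\widehat{e}_{k+1}$, which is all (b) needs.
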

  \begin{proof}[Proof of (a)]
The Cauchy-Schwarz inequality, {the property of the projection operator $\Pi_{[a_\z,b_\z]}$ from \eqref{FAproj}} (used twice), Lemma \ref{Poincare}$(b)$, \eqref{inf2gamma}
and \eqref{A4apriori} show for $C_{\rm c}(\alpha^{-3}):=n^2\alpha^{-2}C_{\rm PI}C_{\rm emb} C_{\rm AP}(\alpha^{-1})$ that
\begin{align}
({\bf\ub}_{k+1}-{\bf\ub}_k,{\bf\ub}_{k+1}-{\bf\ub})&\leq n^2\alpha^{-2}C_{\rm PI}C_{\rm emb} \enorm{\pb_{k+1}-\pb_{k}}_\pw\norm{\pb_{k+1}-\pb}_{H^{\gamma}(\T)}\no\\
&\leq C_{\rm c}h^{2-\gamma}\enorm{\pb_{k+1}-\pb_{k}}_\pw\widehat{e}_{k+1}.\no
\end{align}
 \noindent {\it Proof of (b).}~The orthogonality condition in Lemma \ref{Interpolation}$(a)$ shows  $$a_\pw(\yb_k, 
    (1-I_k)(\yb_{k+1}-\yb))=0 \text{ and }  a_\pw(\yb_{k+1},(1-I_{k+1})(\yb_{k+1}-\yb))=0. $$ This,  (\ref{state_eq1}),  and an algebraic manipulation of terms yield
    \begin{align}			
    a_\pw&(\yb_{k+1}-\yb_k,\yb_{k+1}-\yb)= a_\pw(\yb_{k+1},I_{k+1}(\yb_{k+1}-\yb))-a_\pw(\yb_k,I_k(\yb_{k+1}-\yb))\nonumber \\
	& =\sum_{{\rm z}\in \Z}({\ub}_{{\rm z},k+1}-{\ub}_{{\rm z},k})\langle\delta_{\rm z},J_{k+1}I_{k+1}(\yb_{k+1}-\yb)\rangle+\sum_{{\rm z}\in \Z}{\ub}_{{\rm z},k}\langle \delta_{\rm z},(J_{k+1}I_{k+1}-J_kI_k)(\yb_{k+1}-\yb)\rangle. \nonumber
\end{align} 
 Since $\z\in \mathcal{N}^i(\T_0)$, 
 \begin{align}
     (J_{k+1}I_{k+1}-J_kI_k)(\yb_{k+1}({\rm z})-\yb({\rm z}))=0,\label{obstacle}
 \end{align}
and this implies 
$ \displaystyle  a_\pw(\yb_{k+1}-\yb_k,\yb_{k+1}-\yb)
    =\sum_{{\rm z}\in \Z}({\ub}_{{\rm z},k+1}-{\ub}_{{\rm z},k})(\yb_{k+1}-\yb)({\rm z}).$
{Recall the definition of $\T_{\rm d}(\T)$ from the proof of Lemma \ref{bih_apost}.} For all ${\rm z}\in \Z$, the definition of control in terms of the projection and \eqref{FAproj} show	
    \begin{align}			
    a_\pw(\yb_{k+1}-\yb_k,\yb_{k+1}-\yb) 
\leq n\alpha^{-1} \norm{\pb_{k+1}-\pb_k}_{L^{\infty}(\T_d(\T_{k+1}))}\norm{\yb_{k+1}-\yb}_{L^{\infty}(\T_d(\T_{k+1}))}.\label{42}
\end{align} 
{The Sobolev embedding from \eqref{inf2gamma}, \eqref{A4apriori}, and \eqref{lipschitz} yield}
    \begin{align}\label{NNcal24}
  { \norm{\yb_{k+1}-\yb}_{L^{\infty}(\T_{\rm d}(\T_{k+1}))}\leq \norm{\yb_{k+1}-\yb}_{L^\infty(\Omega)}\leq  C_{\rm emb}\norm{\yb_{k+1}-\yb}_{H^{\gamma}(\T)}\leq C_{\rm emb} C_{\rm AP}\delta^{2-\gamma} \widehat{e}_{k+1}}.
    \end{align}
 A combination of \eqref{42}-\eqref{NNcal24}, and Lemma \ref{Poincare}$(b)$ in \eqref{42} shows
\begin{align}
	a_\pw(\yb_{k+1}-\yb_k,\yb_{k+1}-\yb) \leq C_{\rm s}   \delta^{2-\gamma}\enorm{\pb_{k+1}-\pb_k}_{\pw}\widehat{e}_{k+1}\label{43}
\end{align}
   { with $C_{\rm s}(\alpha^{-2}):=n\alpha^{-1} C_{\rm emb} C_{\rm PI}C_{\rm AP}(\alpha^{-1})$.}\\
    \noindent {\it Proof of (c).}~The orthogonality condition in Lemma \ref{Interpolation}$(a)$ shows $$a_\pw(\pb_k, 
    (1-I_k)(\pb_{k+1}-\pb))=0 \text{ and }  a_\pw(\pb_{k+1},(1-I_{k+1})(\pb_{k+1}-\pb))=0. $$ The above displayed equations and \eqref{dadj} yield	
    \begin{align}			
    a_\pw(\pb_{k+1}-\pb_k,\pb_{k+1}-\pb)&= a_\pw(\pb_{k+1},I_{k+1}(\pb_{k+1}-\pb))-a_\pw(\pb_k,I_k(\pb_{k+1}-\pb))\nonumber \\
   &=(\yb_{k+1}-\yd,I_{k+1}(\pb_{k+1}-\pb))-(\yb_k-\yd,I_k(\pb_{k+1}-\pb)).\nonumber 
   \end{align}
This and some algebraic manipulation by rearranging the terms result in 
\begin{align}
    a_\pw&(\pb_{k+1}-\pb_k,\pb_{k+1}-\pb) =(\yb_{k+1}-\yb_k,I_{k+1}(\pb_{k+1}-\pb))+(\yb_k-\yd,(I_{k+1}-I_k)(\pb_{k+1}-\pb))\nonumber\\
    & =(\yb_{k+1}-\yb_k,(I_{k+1}-1)(\pb_{k+1}-\pb))+(\yb_{k+1}-\yb_k,\pb_{k+1}-\pb)+(\yb_k-\yd,(I_{k+1}-I_k)(\pb_{k+1}-\pb)). \nonumber
\end{align}
 From Lemma \ref{Interpolation}$(c)$, $(I_k-I_{k+1}) (\pb_{k+1}-\pb)=0$ for all $T\in \T_k\cap \T_{k+1}$. This, Cauchy-Schwarz inequality, Lemma \ref{Interpolation}$(b)$ {(used twice for $(I_k-1)(\pb_{k+1}-\pb)$ and $(1-I_{k+1})(\pb_{k+1}-\pb)$)}, and the stability of the nonconforming interpolation operator in Lemma \ref{Interpolation}$(b)$ show	
    \begin{align}			
    a_\pw(\pb_{k+1}-\pb_k,\pb_{k+1}-\pb) \leq &~ C_\rI \delta^2\norm{\yb_{k+1}-\yb_k}\enorm{\pb_{k+1}-\pb}_\pw+\norm{\yb_{k+1}-\yb_k}\norm{\pb_{k+1}-\pb}\nonumber\\
&+2C_\rI \sum_{T\in\T_k\setminus\T_{k+1}}h_T^2\norm{\yb_k-\yd}_{L^2(T)}\norm{D_\pw^2(\pb_{k+1}-\pb)}_{L^2(T)}.\label{420}
\end{align}
 The a~priori estimate in \eqref{A4apriori} yields
    \begin{align}\label{NNcal240}
   \norm{\pb_{k+1}-\pb}\leq \norm{\pb_{k+1}-\pb}_{H^{\gamma}(\T)}\leq  C_{\rm AP}\delta^{2-\gamma}\widehat{e}_{k+1}.
    \end{align}
    The H{\"o}lder's inequality in the third term of \eqref{420} leads to
    \begin{align}
    \sum_{T\in \T_k\setminus\T_{k+1}}h_T^2\norm{\yb_k-\yd}_{L^2(T)}\norm{D_\pw^2(\pb_{k+1}-\pb)}_{L^2(T)}\leq \mu_k(\T_k\setminus \T_{k+1})\enorm{\pb-\pb_{k+1}}_{\pw}.\label{NNcal230}
    \end{align}
    A combination of \eqref{420}-\eqref{NNcal230} and Lemma \ref{Poincare}$(b)$ in the first and second terms of \eqref{420} shows
\begin{align}
a_\pw(\pb_{k+1}-\pb_k,\pb_{k+1}-\pb) \leq C_{\rm a}  \big ( \delta^{2-\gamma} \enorm{\yb_{k+1}-\yb_k}_{\pw}+\mu_{k}(\T_k\setminus \T_{k+1})\big )\widehat{e}_{k+1}\no
\end{align}
    with {$C_{\rm a}(\alpha^{-1}):=\max\{2C_\rI, C_\rI C_{\rm PI}|\Omega|^{\gamma},C_{\rm PI}C_{\rm AP}(\alpha^{-1})\}$} and $\mu_{k}(\T_k\setminus \T_{k+1})$ denoting the volume estimator with respect to the triangulation $\T_k$ over the set $\T_k \setminus \T_{k+1}$. This completes the proof.
  \end{proof}

\noindent To establish $({\rm \bf A4}_{\varepsilon})$, {for given  $0<\varepsilon \leq \varepsilon_0:=\min \{2^3(1+C^2_{\rm REL}), \max\{2,2^{1/2} (1+C^2_{\rm REL})^{1/2}\}  2^3 \Lambda_6 \}$,} select maximum $\delta_2>0$ such that
     \begin{align}
     \max\{ 2^4\Lambda_6^2\delta_2^{2(2-\gamma)}, 3\times 2^6 \Lambda_6^2 \Lambda_0^2 \delta_2^4 ,{2^{13/3}} \Lambda_6^{4/3} \Lambda_0^{2/3}\delta_2^{4/3}\} \leq \varepsilon \label{cndn2}
     \end{align} 
     with $\Lambda_6^2:=C_{\rm sa}^2(1+ C_{\rm REL}^2)$  where $C_{\rm sa}:=2^{3/2}\max\{C_{\rm c},C_{\rm s},C_{\rm a}\}$.
     
		\begin{thm}[${\rm \bf A4}_{\varepsilon}$]\label{thma4e}
		Let $(\yb_\ell,\pb_\ell,\bar{\bu}_\ell)$ be the solution of the optimality system \eqref{discrete1} with respect to $\T_\ell\in\mathbb{T}(\delta_2)$ and $\dd_{\ell,\ell+1}:=\dd(\T_\ell,\T_{\ell+1})$. {Then for $0<\varepsilon \leq \varepsilon_0$,  there exists $0<\delta\leq \delta_2$ satisfying \eqref{cndn2} and $0<\Lambda_{4(\varepsilon)}<\infty$ such that for all $\ell,m\in \mathbb{N}\cup \{0\}$,  }
		\begin{align}
		\sum_{k=\ell}^{\ell+m}\dd_{k,k+1}^2 \leq \Lambda_{4(\varepsilon)}\eta_\ell^2+\varepsilon\sum_{k=\ell}^{\ell+m}\eta_k^2 .\tag{${\rm \bf A4}_{\varepsilon}$}\label{A4ep}
		\end{align}
		\end{thm}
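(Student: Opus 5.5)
The plan follows the standard quasi-orthogonality argument of \cite{CCRH17} and \cite{AKDNNSN2024}. The starting point is the Pythagorean-type identity for consecutive iterates. With $e_k$ as above, expand $e_k^2=\norm{(\bar{\bu}-\bar{\bu}_{k+1})+(\bar{\bu}_{k+1}-\bar{\bu}_k)}^2+\dots$ componentwise to get
\begin{align*}
\dd_{k,k+1}^2 = e_k^2-e_{k+1}^2+2({\bf\ub}_{k+1}-{\bf\ub}_k,{\bf\ub}_{k+1}-{\bf\ub})+2a_\pw(\yb_{k+1}-\yb_k,\yb_{k+1}-\yb)+2a_\pw(\pb_{k+1}-\pb_k,\pb_{k+1}-\pb).
\end{align*}
The three cross terms are bounded by Lemma \ref{lem_A4}(a)--(c). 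Since $\enorm{\pb_{k+1}-\pb_k}_\pw,\enorm{\yb_{k+1}-\yb_k}_\pw\le \dd_{k,k+1}$, this gives
\begin{align*}
2(\text{cross terms})\le C_{\rm sa}\big(\delta^{2-\gamma}\dd_{k,k+1}+\mu_k(\T_k\setminus\T_{k+1})\big)\widehat e_{k+1}.
\end{align*}
Young's inequality then absorbs $\tfrac12\dd_{k,k+1}^2$ on the left, leaving terms of size $\Lambda_6^2\delta^{2(2-\gamma)}\widehat e_{k+1}^2$ and $\mu_k(\T_k\setminus\T_{k+1})\widehat e_{k+1}$.

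Next we use reliability. By Theorem \ref{aposteriori}, $e_{k+1}\le C_{\rm REL}\eta_{k+1}$, and $\osc_{k+1}\le \eta_{k+1}$ because $\osc$ is dominated by the volume term $\eta_{A,T}$. Hence $\widehat e_{k+1}^2\le 2(1+C_{\rm REL}^2)\eta_{k+1}^2$. Summing over $k=\ell,\dots,\ell+m$ telescopes $e_k^2-e_{k+1}^2$ to at most $e_\ell^2\le C_{\rm REL}^2\eta_\ell^2$. The $\delta^{2(2-\gamma)}$ contribution becomes $2^4\Lambda_6^2\delta^{2(2-\gamma)}\sum\eta_{k+1}^2$, which is at most $\tfrac{\varepsilon}{4}\sum\eta_k^2$ by \eqref{cndn2}; the index shift costs only one extra term, which is harmless after enlarging the range up to $\ell+m+1$ or bounding $\eta_{\ell+m+1}$ via (A1)--(A2).

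The main obstacle is the volume term $\sum_k\mu_k(\T_k\setminus\T_{k+1})\widehat e_{k+1}$, since it carries no small factor $\delta$. Here we use Remark \ref{a12_rem}. With $\Z\subseteq\mathcal N^i(\T_0)$, \eqref{cora12} gives
\begin{align*}
\mu_k^2(\T_k\setminus\T_{k+1})\le 4(\mu_k^2-\mu_{k+1}^2)+4\Lambda_0h^2\dd_{k,k+1}(\mu_k+\mu_{k+1})+12\Lambda_0^2h^4\dd_{k,k+1}^2.
\end{align*}
Apply Young's inequality with a weight $\beta$: $\mu_k(\T_k\setminus\T_{k+1})\widehat e_{k+1}\le (4\beta)^{-1}\mu_k^2(\T_k\setminus\T_{k+1})+\beta\widehat e_{k+1}^2$, with $\beta$ a small multiple of $\varepsilon/\Lambda_6^2$, so that $\beta\widehat e_{k+1}^2$ contributes $\le\tfrac{\varepsilon}{4}\sum\eta_k^2$. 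Then insert \eqref{cora12} term by term:
\begin{itemize}
\item $\mu_k^2-\mu_{k+1}^2$ telescopes to $\mu_\ell^2\le\eta_\ell^2$, multiplied by $\beta^{-1}$. This is where $\Lambda_{4(\varepsilon)}\sim C_{\rm REL}^2+\Lambda_6^2\varepsilon^{-1}$ comes from.
\item The $h^4\dd^2$ term is absorbed on the left provided $3\cdot2^6\Lambda_6^2\Lambda_0^2\delta^4\le\varepsilon$, which is the second condition in \eqref{cndn2}.
\item The mixed term $\beta^{-1}\Lambda_0\delta^2\dd_{k,k+1}(\mu_k+\mu_{k+1})$ is split once more by Young's inequality into a part of $\dd_{k,k+1}^2$ to absorb and a part $\lesssim \Lambda_6^{4/3}\Lambda_0^{2/3}\delta^{4/3}(\eta_k^2+\eta_{k+1}^2)$. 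The exponents come from balancing; the third condition in \eqref{cndn2} makes this part $\le\tfrac{\varepsilon}{4}\sum\eta_k^2$.
\end{itemize}

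Collecting everything gives $\tfrac14\sum\dd_{k,k+1}^2\le \Lambda'\eta_\ell^2+\tfrac{\varepsilon}{4}\sum\eta_k^2$, and multiplying by $4$ yields \eqref{A4ep}. The delicate part will be the bookkeeping of the Young weights, which must be chosen so that each absorbed $\dd^2$ coefficient totals below $1/2$ and so that the powers of $\delta$ match the exponents $2(2-\gamma)$, $4$ and $4/3$ in \eqref{cndn2}. The restriction $\varepsilon\le\varepsilon_0$ is what keeps these weights at most $1$.
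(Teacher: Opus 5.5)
Your proposal is correct and follows the paper's proof essentially step for step. The shared steps are the Pythagorean identity, Lemma~\ref{lem_A4}, Young's inequality, reliability together with $\osc_{k+1}\le\mu_{k+1}$, and the estimator reduction \eqref{cora12} with telescoping, with the three conditions in \eqref{cndn2} playing exactly the roles you assign them. The only difference is the index shift at $k=\ell+m+1$: the paper neither enlarges the range nor invokes (A1)--(A2), but keeps the negative telescoped terms $-4e_{\ell+m+1}^2$ and $-2^6\varepsilon^{-1}\Lambda_6^2\mu_{\ell+m+1}^2$ to absorb the leftover $\widehat e_{\ell+m+1}^2$ and $\mu_{\ell+m+1}^2$ contributions, and this absorption (not keeping the Young weights below $1$) is precisely where $\varepsilon\le\varepsilon_0$ is needed.
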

\begin{proof}
For any $k\in \mathbb{N}\cup \{0\}$, the definitions of $\dd_{k,k+1}$ and $e_k$ plus a re-grouping of terms show 
        \begin{align}
			  \dd_{k,k+1}^2&+e_{k+1}^2-e_k^2 =2({\bf\ub}_k-{\bf\ub}_{k+1},{\bf\ub}-{\bf\ub}_{k+1})+2a_\pw(\yb_{k+1}-\yb_k,\yb_{k+1}-\yb)+2a_\pw(\pb_{k+1}-\pb_k,\pb_{k+1}-\pb). \no
			\end{align}   
The rest of the proof is divided into {\it two} steps for easy readability.

\noindent {\it Step 1.} (Crucial estimates).~ 
A substitution of Lemma \ref{lem_A4} in the above equality reveals
\begin{align}\label{45} 
    \dd_{k,k+1}^2+e_{k+1}^2-e_k^2\leq  C_{\rm sa} \big( \mu_k(\T_k\setminus \T_{k+1})+\delta^{2-\gamma} \dd_{k,k+1}\big)\widehat{e}_{k+1}
\end{align}
   with $C_{\rm sa}:=2^{3/2}\max\{C_{\rm c},C_{\rm s},C_{\rm a}\}$. Denote $C_{\rm RL}^2:=1+C_{\rm REL}^2$.
  The Young's inequality (used twice) with $\epsilon=2^{-3}\varepsilon C_{\rm RL}^{-2}$,
  $a=C_{\rm sa} \mu_k(\T_k\setminus\T_{k+1})$, and $b=\widehat{e}_{k+1}$ and $\epsilon=2$, $a=\dd_{k,k+1}$, and $b=C_{\rm sa}\delta^{2-\gamma} \widehat{e}_{k+1}$ for the first and second terms on the right-hand side of \eqref{45}, respectively,  yield
	\begin{align*}	\dd_{k,k+1}^2+e_{k+1}^2-e_k^2\leq & ~2^2\varepsilon^{-1}\Lambda_6^2\mu_k^2(\T_k\setminus\T_{k+1})+ \big(2^{-4}\varepsilon C_{\rm RL}^{-2} +C_{\rm sa}^2\delta^{2(2-\gamma)}\big)\widehat{e}_{k+1}^2 +\dd_{k,k+1}^2/4
	\end{align*}
 with $\Lambda_6^2:=C_{\rm sa}^2 C_{\rm RL}^2$.
	The estimate $C_{\rm sa}^2\delta^{2(2-\gamma)}\leq 2^{-4} \varepsilon C_{\rm RL}^{-2}$ from \eqref{cndn2} shows
\begin{align*}
	\frac{3}{4}\dd_{k,k+1}^2+e_{k+1}^2-e_k^2\leq & ~2^2\varepsilon^{-1}\Lambda_6^2\mu_k^2(\T_k\setminus\T_{k+1})+2^{-3}\varepsilon C_{\rm RL}^{-2} \widehat{e}_{k+1}^2.
\end{align*}
Substitute \eqref{cora12} in the first term of the right-hand side in the above displayed inequality to obtain
\begin{align*}
	\frac{3}{4}\dd_{k,k+1}^2+e_{k+1}^2-e_k^2\leq & ~2^4\varepsilon^{-1}\Lambda_6^2\big(\mu_{k}^2-\mu_{k+1}^2\big)+ 2^4\varepsilon^{-1} \Lambda_6^2 \Lambda_0 \delta^2\dd_{k,k+1}\big ( \mu_{k+1}+\mu_k \big ) \\
	&+3\times2^4 \varepsilon^{-1}\Lambda_6^2\Lambda_0^2 \delta^4\dd_{k,k+1}^2+2^{-3}\varepsilon C_{\rm RL}^{-2} \widehat{e}_{k+1}^2.
\end{align*}
 The estimate $3\times 2^4 \varepsilon^{-1}\Lambda_6^2\Lambda_0^2 \delta^4\leq 1/4$ from \eqref{cndn2} leads to
\begin{align}
	\frac{1}{2}\dd_{k,k+1}^2+e_{k+1}^2-e_k^2\leq  2^4\varepsilon^{-1}\Lambda_6^2\big(\mu_{k}^2-\mu_{k+1}^2\big)+ 2^4\varepsilon^{-1}\Lambda_6^2\Lambda_0 \delta^2\dd_{k,k+1}\big ( \mu_{k+1}+\mu_k \big )+2^{-3}\varepsilon C_{\rm RL}^{-2} \widehat{e}^2_{k+1}.\label{NNcal30}
 \end{align}
 A  Young's inequality with  $\epsilon=2^{-4}\varepsilon$, $a=2^4 \varepsilon^{-1} \Lambda_6^2 \Lambda_0 \dd_{k,k+1}$, and $b=\mu_{k+1}+\mu_k$ for the second term in the above inequality shows
\begin{align} \label{eqnew}
2^4\varepsilon^{-1}\Lambda_6^2\Lambda_0 \delta^2\dd_{k,k+1}\big ( \mu_{k+1}+\mu_k \big )\leq {2^{11}}\varepsilon^{-3}\Lambda_6^4\Lambda_0^2 \delta^4\dd_{k,k+1}^2 +2^{-4}\varepsilon \big ( \mu_{k+1}^2+\mu_k^2 \big ).
   \end{align}
Utilize the bound ${2^{11}}\varepsilon^{-3}\Lambda_6^4\Lambda_0^2\delta^4\leq 1/4$ from \eqref{cndn2} and substitute \eqref{eqnew} in \eqref{NNcal30} to establish
\begin{align*}
\dd_{k,k+1}^2+4\big(e_{k+1}^2-e_k^2\big)\leq & 2^6\varepsilon^{-1}\Lambda_6^2\big(\mu_{k}^2-\mu_{k+1}^2\big)+2^{-2}\varepsilon \big (\mu_{k+1}^2+\mu_k^2 \big )+\frac{\varepsilon}{2} C_{\rm RL}^{-2} \widehat{e}_{k+1}^2.
\end{align*}
A (partly telescoping) summation  from $\ell$ to $\ell+n$ on both sides lead to $e_{\ell+n+1}^2-e_\ell^2$ on the left-hand side and $\mu_{\ell}^2-\mu_{\ell+n+1}^2$ on the right-hand side in
\begin{align}
\sum_{k=\ell}^{\ell+n}\dd_{k,k+1}^2+&4\big(e_{\ell+n+1}^2-e_\ell^2\big)\leq 2^6\varepsilon^{-1}\Lambda_6^2\big (\mu_{\ell}^2-\mu_{\ell+n+1}^2\big )+\frac{\varepsilon}{4} \sum_{k=\ell}^{\ell+n}\big (\mu_{k+1}^2+\mu_k^2 \big )+\frac{\varepsilon}{2}   C_{\rm RL}^{-2} \sum_{k=\ell}^{\ell+n}\widehat{e}_{k+1}^2. \label{eqn:depart}
\end{align}

 \medskip \noindent {\it Step 2.}~(Conclusion of \eqref{A4ep}).~
Rearrange the terms in \eqref{eqn:depart} to obtain
\begin{align}
\sum_{k=\ell}^{\ell+n}\dd_{k,k+1}^2\leq &~ 4e_\ell^2 +2^6\varepsilon^{-1} \Lambda_6^2\mu_{\ell}^2 + \big(\frac{\varepsilon}{2} C_{\rm RL}^{-2}-4\big) e_{\ell+n+1}^2 +\frac{\varepsilon}{2} C_{\rm RL}^{-2}\osc_{\ell+n+1}^2+\big(\frac{\varepsilon}{4}-2^6\varepsilon^{-1}\Lambda_6^2\big)\mu_{\ell+n+1}^2\nonumber\\
&~+\frac{\varepsilon}{2}  \sum_{k=\ell}^{\ell+n}\mu_k^2+\frac{\varepsilon}{2}  C_{\rm RL}^{-2} \sum_{k=\ell}^{\ell+n-1}
\widehat{e}_{k+1}^2.\label{nn01}
    \end{align}
    The definition of the projection operator $\Pi_{2,k+1}$ shows $\norm{v-\Pi_{2,k+1}v}=\inf_{q\in \mathcal{P}_{2}(\T_{k+1})}\norm{v-q}$. The choice $v=\yb_{k+1}-\yd$ and $q=0$ lead to 
   \begin{align*}
    {\rm osc}^2(\yb_{k+1}-\yd,\mathcal{T}_{k+1})&=\norm{h_{\mathcal{T}_{k+1}}^2(1-\Pi_{2,k+1})(\yb_{k+1}-\yd)}^2\leq \norm{h_{\mathcal{T}_{k+1}}^2(\yb_{k+1}-\yd)}^2\leq \mu_{k+1}^2
    \end{align*}
    with $h_{\mathcal{T}_{k+1}}|_T:=|T|^{1/2}$ for $T\in \T_{k+1}$. 
    Utilize the bound $\osc_{\ell+n+1}\leq \mu_{\ell+n+1}$  in the fourth term of the right-hand side of \eqref{nn01} to obtain
    \begin{align}
\sum_{k=\ell}^{\ell+n}\dd_{k,k+1}^2 \leq &~ {4e_\ell^2+2^6\varepsilon^{-1}\Lambda_6^2 \mu_{\ell}^2+(\frac{\varepsilon}{4} C_{\rm RL}^{-2}-4)e_{\ell+n+1}^2+\big(\frac{\varepsilon}{2} C_{\rm RL}^{-2}+\frac{\varepsilon}{4}-2^6\Lambda_6^2\varepsilon^{-1}\big)\mu_{\ell+n+1}^2}\nonumber\\
&~+\frac{\varepsilon}{2} \sum_{k=\ell}^{\ell+n}\mu_k^2+\frac{\varepsilon}{2} C_{\rm RL}^{-2} \sum_{k=\ell}^{\ell+n-1}  \widehat{e}_{k+1}^2.\nonumber
\end{align}
This with  $\frac{1}{2}\varepsilon C_{\rm RL}^{-2}-4\leq 0$ and $\frac{1}{2}\varepsilon C_{\rm RL}^{-2}+\frac{1}{4}\varepsilon-2^6\varepsilon^{-1}\Lambda_6^2 \leq 0$ from \eqref{cndn2}, $e_{k+1}^2\leq C_{\rm REL}^2 \eta_{k+1}^2$, $\osc_{k}^2\leq\mu_{k}^2\leq \eta_{k }^2$,  and $\Lambda_{4(\varepsilon)}(\alpha^{-2}):=4C_{\rm RL}^2+2^6\varepsilon^{-1}\Lambda_6^2$ in the above displayed estimate  show
\begin{align}
\sum_{k=\ell}^{\ell+n}\dd_{k,k+1}^2\leq & \Lambda_{4(\varepsilon)} \eta_{\ell}^2+\frac{\varepsilon}{2} \sum_{k=\ell}^{\ell+n}\mu_k^2+ \frac{\varepsilon}{2} C_{\rm RL}^{-2} \sum_{k=\ell}^{\ell+n-1}  \widehat{e}_{k+1}^2
 \nonumber \\
	\leq & \Lambda_{4(\varepsilon)} \eta_{\ell}^2+\frac{\varepsilon}{2} \sum_{k=\ell}^{\ell+n}\mu_k^2 +\frac{\varepsilon}{2}  \sum_{k=\ell}^{\ell+n-1}\eta_{k+1}^2\nonumber 
	\leq \Lambda_{4(\varepsilon)} \eta_{\ell}^2+\varepsilon \sum_{k=\ell}^{\ell+n}\eta_k^2.
			\end{align}
   This concludes the proof.
\end{proof}
\begin{remark}[assumptions for axioms]\label{delta}
\noindent {\rm (i).}~The axioms, {\bf (A1)}-{\bf (A2)}, $({\rm \bf A3}_{\varepsilon})$-$({\rm \bf A4}_{\varepsilon})$ with $\delta_0:=\min\{\delta_1,\delta_2\}$ proves Theorem~\ref{thm:2.2} which shows the optimal convergence of the AFEM for the optimal control problem \cite[Theorem A.1]{CS24}. {\rm (ii).}~Lemma~\ref{lem_A4} provides the key estimates to establish $({\rm \bf A4}_{\varepsilon})$; however, when $\z\notin {\mathcal N}^i(\T_0)$, it is difficult to handle the term $(J_{k+1}I_{k+1}-J_kI_k)(\yb_{k+1}({\rm z})-\yb({\rm z}))$ in \eqref{obstacle} and hence the initial triangulation is constructed such that point sources are included in the nodes. Notice that only the axiom, $({\rm \bf A4}_{\varepsilon})$ is verified under this assumption $\Z\subseteq {\mathcal N}^i(\T_0)$.
\end{remark}

\section{Point-wise tracking problem {\bf (P2)}}\label{p2ocp}
The a~priori and a~posteriori error control and the convergence of the adaptive finite element method for the point-wise tracking OCP are discussed in this section. 
 {Recall the OCP in \eqref{opt1n} with the desired state ${\bf y}_{\rm d}:=\{y_{ \zeta\rm,d}\}_{\zeta\in\D}$} that seeks $(\yb,\ub)$ which minimizes the cost functional
\begin{align}  
\cJ(y,u):=&\frac{1}{2}\sum_{\zeta\in\D}|y(\zeta)-\zy|^2+\frac{\alpha}{2}\norm{u}_{L^2(\Omega)}^2\no\\
\text{ subject to }
\Delta^2 y=&f+u\text{ in }\Omega\text{ and } y=\frac{\partial y}{\partial \nu} =0 \text{ on } \partial \Omega\no
\end{align}	
in the admissible set $U_{\rm ad}=\{u\in L^2(\Omega):u_a\leq u(x)\leq u_b\text{ a.e. in } \Omega\}$.  For all $\phi\in V$, $\phi_\rM\in V_\rM$, and $v\in U_{\rm ad}$, the weak and discrete problems seek 
$(\yb,\pb,\ub)\in V\times V\times U_{\rm ad}$ and $(\bar{y}_\rM, \bar{p}_\rM, \bar{u}_{\h})\in V_\rM \times V_\rM \times U_{\rm ad}$ such that
\begin{equation}
\label{eq2:discrete}
\left.
\begin{array}{l@{\hspace{1cm}}l}
a(\yb,\phi) = (f+\ub,\phi) &
a_\pw(\bar{y}_\rM, \phi_\rM) = (f+\ub_\h, \phi_\rM) \\[1ex]
a(\phi,\pb) = \sum_{\zeta\in\D} (\yb(\zeta) - \zy) \langle \delta_\zeta, \phi \rangle &
a_\pw(\phi_\rM, \pb_\rM) = \sum_{\zeta\in\D} (\yb_\rM(\zeta) - \zy) \langle \delta_\zeta, \phi_\rM \rangle \\[1ex]
(\pb + \alpha \ub, v - \ub) \geq 0, &
(\pb_\rM + \alpha \ub_{\rm h}, v - \bar{u}_{\rm h}) \ge 0.
\end{array}
\right\}
\end{equation}
 \noindent {\it Throughout the analysis, we assume that the atoms of the Dirac measure, $\zeta\in \D$ are located at the nodes of the initial triangulation, ${\mathcal{N}}^i(\T_0)$.} {The Morley function $\phi_\rM$ is evaluated at the vertices $\zeta$ of $\T_0$. Choosing $\zeta\in {\cal N}^i(\T_0)$ guarantees that $\zeta$ will continue to be a vertex in the further refinements, thus leading to $\phi_\rM(\zeta)$ remaining single-valued.}
This ensures that the expression on the right-hand side of the discrete adjoint equation is well-defined.
The existence and uniqueness of the solution of the optimal control problems in \eqref{eq2:discrete} follow from \cite[Section 5.3]{AAS21}.

\medskip
\noindent 
The proof of the error estimates is based on an equivalence result (analogous to Theorem \ref{equivalence}) and a~priori and a~posteriori error estimates for the new auxiliary problems defined by: Seek $\ty,\tp\in V$ such that
$$a(\ty,\phi)=(f+\ub_\h,\phi) \text{ and }a(\phi,\tp)=\sum_{\zeta\in\D}(\yb_\rM(\zeta)-y_{\zeta\rm,d}) \langle\delta_\zeta ,\phi\rangle ~\text{for all }\phi\in V.$$
{In the case of \eqref{opt1}, the Dirac measure appears in the discrete state equation \eqref{state_eq1}, while the right-hand side of the discrete adjoint equation \eqref{dadj} lies in $L^2(\Omega)$. In contrast, for \eqref{opt1n}, the discrete state equation has an $L^2(\Omega)$ source term, whereas the right-hand side of the discrete adjoint equation consists of a finite sum of Dirac measures (see \eqref{eq2:discrete}). Accordingly, the roles of Dirac measure and $L^2(\Omega)$ data on the right-hand side of the continuous and discrete versions of auxiliary equations 
in \eqref{opt1} and 
in \eqref{opt1n} are interchanged. Therefore, to estimate the errors in the state variable for \eqref{opt1n}, the procedure used for the adjoint variable in \eqref{opt1} is applied, and vice versa.} 

 \begin{thm}[error control] \label{OCPap}
    The continuous and discrete solutions $(\yb,\pb,\ub)$ and
   $(\yb_\rM,\pb_\rM,\ub_\h)$
   to \eqref{eq2:discrete} satisfy
    (a) the a~priori error estimates below for $(\yb,\pb,\ub)
   \in  (V \cap H^{4-\gamma}(\Omega))^2\times U_{\ad}$:
    \begin{align*}
      &\norm{\bar{u}-\bar{u}_\h}+\enorm{\yb-\yb_\rM}_\pw+\enorm{\pb-\pb_\rM}_\pw\leq  {C_{\rm AE}h^{2-\gamma}\big(\norm{f}+\max\{|{u_a}|,|{u_b}|\} +\norm{\by_{\rm d}}_{L^\infty({\mathbb R}^m)}\big),}\\
      &\norm{\bar{u}-\bar{u}_\h}+\norm{\yb-\yb_\rM}_{H^{\gamma}(\T)} +\norm{\pb-\pb_\rM}_{H^{\gamma}(\T)}\leq  C_{\rm AP} h^{2-\gamma}\big(\norm{\bar{u}-\bar{u}_\h} +\enorm{\yb-\yb_\rM}_\pw \\
      &\hspace{8.8cm}+\enorm{\pb-\pb_\rM}_\pw+h^2(\norm{f}+\max\{|{u_a}|,|{u_b}|\})\big),
    \end{align*}
    (b) reliable and efficient a~posteriori error estimates stated as: 
   \begin{align*}
&C_{\rm REL}^{-2}(\norm{\bar{u}-\bar{u}_\h}^2+\enorm{\yb -\yb_\rM}_{\pw}^2+\enorm{\pb -\pb_\rM}_{\pw}^2) \leq  \eta^2 \\
     &\leq  C_{\rm EFF}^2\big( \norm{\bar{u}-\bar{u}_\h}^2+\enorm{\yb -\yb_\rM}_{\pw}^2 + \enorm{\pb-\pb_\rM}_{\pw}^2 +h^{4}\max\{|u_a|,|u_b|\} +\norm{h_\T^2(1-\Pi_{k,\T}) f)}^2\big).
\end{align*}
    \end{thm}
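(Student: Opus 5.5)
The plan mirrors the treatment of \eqref{opt1}, with the roles of state and adjoint interchanged. \textbf{First}, I establish an equivalence result analogous to Theorem~\ref{equivalence} for the auxiliary solutions $(\ty,\tp)$ defined just above the statement, namely
$$\norm{\ub-\ub_\h}+\enorm{\yb-\yb_\rM}_\pw+\enorm{\pb-\pb_\rM}_\pw\approx \enorm{\ty-\yb_\rM}_\pw+\enorm{\tp-\pb_\rM}_\pw.$$
Subtracting the continuous and auxiliary equations gives $a(\yb-\ty,\phi)=(\ub-\ub_\h,\phi)$ and $a(\phi,\pb-\tp)=\sum_{\zeta\in\D}(\yb-\yb_\rM)(\zeta)\langle\delta_\zeta,\phi\rangle$. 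Testing with the errors themselves and using Lemma~\ref{Poincare}$(a)$ (the $L^\infty$ part) yields $\enorm{\yb-\ty}_\pw\lesssim\norm{\ub-\ub_\h}$ and $\enorm{\pb-\tp}_\pw\lesssim\sqrt{m}\norm{\yb-\yb_\rM}_{L^\infty(\Omega)}$. The latter is in turn bounded by Lemma~\ref{Poincare}$(a)$ applied to $\yb-\yb_\rM\in V+V_\rM$.

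For the control error I add the two variational inequalities in \eqref{eq2:discrete} with $v=\ub_\h$ and $v=\ub$, respectively, which gives
$$\alpha\norm{\ub-\ub_\h}^2\le(\pb_\rM-\tp,\ub-\ub_\h)+(\tp-\pb,\ub-\ub_\h).$$
Using symmetry, the second term becomes
$$a(\yb-\ty,\tp-\pb)=\sum_{\zeta\in\D}(\yb_\rM-\yb)(\zeta)\,(\yb-\ty)(\zeta),$$
which I expand exactly as in Step~3 of Theorem~\ref{equivalence}. This yields $-|(\yb_\rM-\yb)(\zeta)|^2$ plus terms controlled by $\norm{\yb_\rM-\ty}_{L^\infty}$ and Young's inequality. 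Since $\zeta$ is a node of every $\T$, point values of $\yb_\rM$ are single-valued and Lemma~\ref{Poincare}$(a)$ applies. No companion operator appears here because the control is in $L^2(\Omega)$ and the discrete adjoint uses $\phi_\rM(\zeta)$ directly, so this step is actually simpler than in \eqref{opt1}.

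\textbf{Second}, for part (a) I apply \eqref{M_aprioriHs1} with $g=f+\ub_\h\in L^2(\Omega)$ ($Q={\rm id}$) for $\ty$, and with $g=\sum_{\zeta}(\yb_\rM(\zeta)-\zy)\delta_\zeta$ for $\tp$. The point-source case of \eqref{intermediate} uses $Q=J$, but for nodal atoms $\langle\delta_\zeta,J\phi_\rM\rangle=\phi_\rM(\zeta)$, so it coincides with the discrete adjoint equation. The regularity norms are bounded by $\norm{f}+\max\{|u_a|,|u_b|\}$ through \eqref{lipschitz}-type bounds on the projection. For the adjoint they are bounded by $|\yb_\rM(\zeta)|+|\zy|$, where $|\yb_\rM(\zeta)|\lesssim\enorm{\yb_\rM}_\pw\lesssim\norm{f+\ub_\h}$ by stability. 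The oscillation $\osc(f+\ub_\h,\T)\le h^2(\norm f+|\Omega|^{1/2}\max\{|u_a|,|u_b|\})$ gives the $h^2$ term. The lower-order bound follows as in Theorem~\ref{aprioriHs} and Theorem~\ref{apriori}$(b)$, using \eqref{M_aprioriHs}. The control error is bounded by $\norm{\pb_\rM-\tp}$ plus $\norm{\yb_\rM-\ty}_{L^\infty}\lesssim\norm{\yb_\rM-\ty}_{H^\gamma(\T)}$ via \eqref{inf2gamma}. No term $h\enorm{\pb_\rM-J\pb_\rM}_\pw$ arises.

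\textbf{Third}, for part (b) I combine the equivalence with \eqref{M_apost} for the state, which has $L^2$ data. I use Lemma~\ref{bih_apost} for the adjoint with $\varrho_\zeta=\yb_\rM(\zeta)-\zy$. Since $\D\subseteq{\mathcal N}^i(\T)$, the volume contribution vanishes and only the tangential jump estimator remains, so efficiency holds without conditions ({\bf C1})-({\bf C2}). The data term $\norm{h_\T^2(1-\Pi_{0,\T})(f+\ub_\h)}$ is split by the triangle inequality into an oscillation of $f$ and $h^2\max\{|u_a|,|u_b|\}$.

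The main obstacle I expect is the control estimate in the equivalence, specifically the cross term $\sum_\zeta(\yb_\rM-\yb)(\zeta)(\yb-\ty)(\zeta)$. It must be absorbed using only pointwise ($L^\infty$) control of $\yb-\ty$ by $\norm{\ub-\ub_\h}$, with constants depending on $m$ and $\alpha$. I would first try to handle it by reproducing the algebra of Step~3 of Theorem~\ref{equivalence} with $\ell^2(\D)$ point evaluations in place of the $L^2$ inner product.
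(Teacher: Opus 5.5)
Your proposal is correct and follows essentially the paper's route. You prove an equivalence with the auxiliary solutions $(\ty,\tp)$, with state and adjoint roles interchanged relative to \eqref{opt1} and the cross term $\sum_{\zeta\in\D}(\yb_\rM-\yb)(\zeta)(\yb-\ty)(\zeta)$ absorbed exactly as you describe. You then apply \eqref{M_aprioriHs1}--\eqref{M_aprioriHs}, \eqref{M_apost}, and Lemma~\ref{bih_apost}, where $J$ drops out because the atoms of $\D$ are nodes.

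One small correction: the bound $\norm{\ub_\h}\le|\Omega|^{1/2}\max\{|u_a|,|u_b|\}$ follows directly from $\ub_\h\in U_{\rm ad}$, not from \eqref{lipschitz}.
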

 \noindent  The proofs follow similar steps as in the proofs of Theorem \ref{apriori}-Theorem \ref{aposteriori}.
 \begin{tcolorbox}[colback=gray!5, colframe=black, boxrule=0.4pt, arc=2pt,
  left=6pt, right=6pt, top=6pt, bottom=6pt]
For any $T \in \T$, define the volume and edge estimators as 
\begin{align*}
\eta_{S,T}^2 & := h_T^4 \norm{f + \ub_{\rm h}}_{L^2(T)}^2,
\;
\eta_{S,\E(T)}^2 := \sum_{E \in \E(T)} h_T \norm{[D_{\rm pw}^2 \yb_\rM]_E \tau_E}_{L^2(E)}^2, \text{ and } \;\\ 
 \eta_{A,\E(T)}^2 & := \sum_{E \in \E(T)} h_T \norm{[D_{\rm pw}^2 \pb_\rM]_E \tau_E}_{L^2(E)}^2.
\end{align*}
The a~posteriori error estimator for \eqref{opt1n} reads 
$
\eta^2 := \eta_S^2 + \eta_A^2
$
with
\[
\eta_S^2 := \sum_{T\in \T} \big( \eta_{S,T}^2 + \eta_{S,\E(T)}^2 \big) \text{ and }
\eta_A^2 := \sum_{T\in \T} \eta_{A,\E(T)}^2.
\]
\end{tcolorbox}

\medskip
\noindent  The quasi-optimality of AFEM for $\eqref{opt1n}$ is established by verifying the axioms {\bf (A1)}, {\bf (A2)}, {\bf (A3)}, and $({\rm \bf  A4}_{\varepsilon})$ with the distance function $\dd^2(\T,\widehat{\T}):=\norm{\widehat{\bar{u}}_\h-\bar{u}_\h}^2+\enorm{\widehat{\yb}_\rM-\yb_\rM}_{\pw}^2 +\enorm{\widehat{\pb}_\rM-\pb_\rM}_{\pw}^2$(ref. \cite[Theorem 3.1]{CCRH17} and \cite[Theorem 4.1]{CMMD2014}). The proofs of axioms are similar to the proofs from Section \ref{AOA}, and hence only a brief sketch is provided below. {Note that, since $\D\in {\mathcal{N}}^i(\T_0)$, the proof of {\it discrete reliability} becomes more simpler (see Remark \ref{directa3}).}

\begin{lem}[intermediate results for $({\rm \bf  A4}_{\varepsilon})$]
      Let $(\yb_\ell,\pb_\ell,\ub_\ell)$ be the solution of the discrete optimality system \eqref{eq2:discrete} with respect to $\T_\ell\in\mathbb{T}(\delta)$. Then there exist $C_{\rm c}$, $C_{\rm s}$, and $C_{\rm a}$ $>0$ such that 
\begin{itemize}
\item[(a)] $ ({\ub}_{k+1}-{\ub}_k,{\ub}_{k+1}-{\ub}) \leq C_{\rm c}   \delta^{2-\gamma}\enorm{\pb_{k+1}-\pb_k}_{\pw}\widehat{e}_{k+1}$,
\item[(b)] $a_\pw(\yb_{k+1}-\yb_k,\yb_{k+1}-\yb) \leq C_{\rm s}  \big ( \mu_{k}(\T_k\setminus \T_{k+1})+\delta^{2-\gamma}\enorm{\pb_{k+1}-\pb_k}_{\pw}\big )\widehat{e}_{k+1}$ ,  
\item[(c)] $a_\pw(\pb_{k+1}-\pb_k,\pb_{k+1}-\pb)\leq C_{\rm a} \delta^{2-\gamma}\enorm{\yb_{k+1}-\yb_k}_{\pw}\widehat{e}_{k+1}$.
      \end{itemize}
  \end{lem}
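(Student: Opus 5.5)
The plan is to mirror the proof of Lemma~\ref{lem_A4}, swapping the roles of state and adjoint. In \eqref{opt1n} the $L^2$ load sits in the state equation and the Dirac measures sit in the adjoint equation. Throughout I use $\D\subseteq{\mathcal N}^i(\T_0)$, so every $\zeta\in\D$ remains a vertex of all $\T_k$. Hence $\phi_\rM(\zeta)$ is single-valued, and $(w-I_kw)(\zeta)=0$ for $w\in V+V_{k+1}$, since the Morley interpolation preserves vertex values. I also use the a~priori bound in weaker norms from Theorem~\ref{OCPap}(a), in the form analogous to \eqref{A4apriori}: $\norm{\yb_{k+1}-\yb}_{H^\gamma(\T)}+\norm{\pb_{k+1}-\pb}_{H^\gamma(\T)}\le C_{\rm AP}\delta^{2-\gamma}\widehat e_{k+1}$, with $\widehat e_{k+1}$ defined as before and $\osc_{k+1}:=\osc(f+\ub_{k+1},\T_{k+1})$.

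For (a), I write $\ub_k=\Pi_{[u_a,u_b]}(-\alpha^{-1}\pb_k)$ pointwise, which is the variational discretization. By \eqref{FAproj}, $\norm{\ub_{k+1}-\ub_k}\le\alpha^{-1}\norm{\pb_{k+1}-\pb_k}$ and $\norm{\ub_{k+1}-\ub}\le\alpha^{-1}\norm{\pb_{k+1}-\pb}$. Cauchy--Schwarz, Lemma~\ref{Poincare}$(b)$ for the first factor, and the $H^\gamma$ a~priori bound for the second factor then give (a).

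For (c), I use the orthogonality in Lemma~\ref{Interpolation}$(a)$ exactly as in the proof of Lemma~\ref{lem_A4}(b). This yields
\[
a_\pw(\pb_{k+1}-\pb_k,\pb_{k+1}-\pb)=\sum_{\zeta\in\D}\big[(\yb_{k+1}-\yb_k)(\zeta)\,(I_{k+1}w)(\zeta)+(\yb_k(\zeta)-\zy)\,((I_{k+1}-I_k)w)(\zeta)\big],\qquad w:=\pb_{k+1}-\pb .
\]
Since $\zeta$ is a vertex of both meshes, $(I_{k+1}-I_k)w(\zeta)=0$ and $I_{k+1}w(\zeta)=w(\zeta)$. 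What remains is bounded by $m\norm{\yb_{k+1}-\yb_k}_{L^\infty}\norm{\pb_{k+1}-\pb}_{L^\infty}$. I then apply Lemma~\ref{Poincare}$(b)$ to the first factor, and \eqref{inf2gamma} together with the $H^\gamma$ a~priori bound to the second.

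For (b), I proceed as in Lemma~\ref{lem_A4}(c) with the roles interchanged:
\[
a_\pw(\yb_{k+1}-\yb_k,\yb_{k+1}-\yb)=(\ub_{k+1}-\ub_k,I_{k+1}v)+(f+\ub_k,(I_{k+1}-I_k)v),\qquad v:=\yb_{k+1}-\yb .
\]
I split the first term as $(\ub_{k+1}-\ub_k,(I_{k+1}-1)v)+(\ub_{k+1}-\ub_k,v)$. Lemma~\ref{Interpolation}$(b)$ bounds the first piece by $C_\rI\delta^2\norm{\ub_{k+1}-\ub_k}\enorm{v}_\pw$. For the second piece, $\norm{v}\le C_{\rm AP}\delta^{2-\gamma}\widehat e_{k+1}$, and $\norm{\ub_{k+1}-\ub_k}\le\alpha^{-1}C_{\rm PI}\enorm{\pb_{k+1}-\pb_k}_\pw$. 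The last term is supported on $\T_k\setminus\T_{k+1}$ by Lemma~\ref{Interpolation}$(c)$. Lemma~\ref{Interpolation}$(b)$ and H\"older's inequality bound it by $2C_\rI\mu_k(\T_k\setminus\T_{k+1})\enorm{v}_\pw$, where $\mu_k$ is now the volume estimator $\eta_{S,T}$. Finally $\enorm{v}_\pw\le\widehat e_{k+1}$.

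The main obstacle is the point-evaluation bookkeeping in (c). I need the interpolation identities at $\zeta$ to hold for the non-discrete function $\pb$, not only for discrete functions; this relies on the vertex-preservation property of $I_k$ on $V$. I also need $\norm{\yb_{k+1}-\yb_k}_{L^\infty}$ to be controlled through the discrete Poincaré inequality Lemma~\ref{Poincare}$(b)$ on $V_k+V_{k+1}$, which is what makes the factor $\enorm{\yb_{k+1}-\yb_k}_\pw$ appear.
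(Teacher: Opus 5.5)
Your proposal is correct and follows the paper's own route. The paper obtains (a), (b), (c) by transposing the proofs of Lemma~\ref{lem_A4}(a), (c), (b), respectively, with the $L^2$ load and the Dirac loads interchanged between state and adjoint. This is exactly the correspondence you use, including the vanishing of $(I_{k+1}-I_k)(\pb_{k+1}-\pb)$ at $\zeta\in\D\subseteq\mathcal{N}^i(\T_0)$ and the restriction of $(f+\ub_k,(I_{k+1}-I_k)(\yb_{k+1}-\yb))$ to $\T_k\setminus\T_{k+1}$. The only difference is cosmetic: in (a) you use the $L^2$-Lipschitz continuity of the pointwise projection directly instead of passing through \eqref{inf2gamma}, which just removes $C_{\rm emb}$ from $C_{\rm c}$.
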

  \noindent The inequality in $(a)$ follows the proof of Lemma \ref{lem_A4}$(a)$ with $C_{\rm c}(\alpha^{-3}):=\alpha^{-2}C_{\rm emb}C_{\rm PI} C_{\rm AP}$; $(b)$ follows the proof of Lemma \ref{lem_A4}$(c)$ with $C_{\rm s}(\alpha^{-2}):=\max\{2C_\rI, C_\rI C_{\rm PI}|\Omega|^{2-\gamma},\alpha^{-1}C_{\rm PI}C_{\rm AP}\}$, and $(c)$ follows the proof of  Lemma \ref{lem_A4}$(b)$ with $C_{\rm a}(\alpha^{-1}):=n C_{\rm emb}^2 (1+ C_{\rm PI})C_{\rm AP}$. 
\begin{thm}[verification of axioms]
    For all $\T,\widehat{\T}\in {\mathbb{T}}(\delta)$ with $\zeta\in \D$,  {\bf (A1)}- {\bf (A3)}, and $({\rm \bf  A4}_{\varepsilon})$ hold. 
\end{thm}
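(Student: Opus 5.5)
The proof transfers the arguments of Section~\ref{AOA} to \eqref{eq2:discrete}, with the roles of state and adjoint exchanged: the state now has $L^2$ data $f+\ub_\h$, and the adjoint carries the Dirac loads at $\zeta\in\D\subseteq{\mathcal N}^i(\T_0)$. Because every $\zeta$ stays a vertex of all refinements, $\phi_\rM(\zeta)$ is single valued and $\langle\delta_\zeta,\phi_\rM\rangle=J\phi_\rM(\zeta)=\phi_\rM(\zeta)$, as in Remark~\ref{directa3}. Consequently no $\lambda_T$-volume term for the adjoint appears, and the $\delta$-dependent term in the control estimate disappears.

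\emph{(A1) and (A2).} For $T\in\T\cap\widehat\T$, use the reverse triangle inequality in $\mathbb R^3$ on $(\eta_{S,T},\eta_{S,\E(T)},\eta_{A,\E(T)})$. The volume term satisfies
\[
|\heta_{S,T}-\eta_{S,T}|\le h_T^2\norm{\widehat{\ub}_\h-\ub_\h}_{L^2(T)},
\]
which is controlled by $h^2\,\dd(\T,\widehat\T)$. The jump terms are controlled through Lemma~\ref{Ldjc} applied to $D^2_\pw(\widehat{\yb}_\rM-\yb_\rM)$ and $D^2_\pw(\widehat{\pb}_\rM-\pb_\rM)$. This gives (A1) with $\Lambda_1^2=|\Omega|^2+C_{\rm djc}^2$. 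For (A2), split $\heta(\widehat\T\setminus\T)$ into $S_1+S_2$ exactly as in \eqref{a2pf}. The term $S_1$ is handled as in (A1). For $S_2$, use that $h_T\le 2^{-1/2}h_K$ and that jumps vanish on edges interior to a refined $K$; this gives $\rho_2=2^{-1/4}$. The volume-estimator analogue of \eqref{cora12} holds for $\mu^2=\sum_T\eta_{S,T}^2$ with $\Lambda_0=1$, since here the control enters $\mu$ directly in $L^2$.

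\emph{(A3).} Introduce discrete auxiliary solutions $\widehat{\ty}_\rM,\widehat{\tp}_\rM\in\widehat V_\rM$ with data $f+\ub_\h$ and $\sum_\zeta(\yb_\rM(\zeta)-y_{\zeta,\rm d})\delta_\zeta$. First prove an analogue of Lemma~\ref{cont_a3}: test the two variational inequalities with each other's controls and add them. This yields $\alpha\norm{\widehat{\ub}_\h-\ub_\h}^2\le(\pb_\rM-\widehat{\pb}_\rM,\widehat{\ub}_\h-\ub_\h)$. Insert $\widehat{\tp}_\rM$ and apply the Poincar\'e inequality of Lemma~\ref{Poincare}(b) to $\pb_\rM-\widehat{\tp}_\rM\in V_\rM+\widehat V_\rM$; no companion-operator mismatch arises because the control is tested in $L^2$. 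For the cross term, use the sign identity
\[
(\widehat{\tp}_\rM-\widehat{\pb}_\rM,\widehat{\ub}_\h-\ub_\h)=a_\pw(\widehat{\yb}_\rM-\widehat{\ty}_\rM,\widehat{\tp}_\rM-\widehat{\pb}_\rM)=\sum_\zeta(\yb_\rM-\widehat{\yb}_\rM)(\zeta)\,(\widehat{\yb}_\rM-\widehat{\ty}_\rM)(\zeta),
\]
combined with the $L^\infty$ bound of Lemma~\ref{Poincare}(b). Together these give the discrete equivalence of Lemma~\ref{disequiv} without any $\delta$-term. The state part $\enorm{\widehat{\ty}_\rM-\yb_\rM}_\pw^2\lesssim\eta_S^2(\mathcal R)$ is proved exactly as in Step 2 of the proof of $({\rm\bf A3}_\varepsilon)$. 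The adjoint part is proved as in Step 1. There the point term $\langle\delta_\zeta,(\widehat J-JI_\rM)\widehat w_\rM\rangle$ vanishes because $\zeta$ is a node of both meshes; this is the same reasoning as in \eqref{res01}. Hence $\enorm{\widehat{\tp}_\rM-\pb_\rM}^2_\pw\lesssim\eta_A^2(\mathcal R)$ by Lemma~\ref{enrithm}(c), and (A3) follows with $\varepsilon=0$.

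\emph{$({\rm\bf A4}_\varepsilon)$.} Start from the identity for $\dd_{k,k+1}^2+e_{k+1}^2-e_k^2$ and insert the three intermediate bounds (a)--(c) of the preceding lemma. This reproduces \eqref{45} with $\mu_k$ now the state volume estimator. From there, repeat Step 1 and Step 2 of Theorem~\ref{thma4e} verbatim: Young's inequality, the volume-estimator version of \eqref{cora12}, the telescoping sum, $\osc_k\le\mu_k$, and reliability from Theorem~\ref{OCPap}(b). The mesh size $\delta$ is chosen as in \eqref{cndn2}.

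The main obstacle is the justification of intermediate bound (c). Its right-hand side reduces to $\sum_\zeta(\yb_{k+1}-\yb_k)(\zeta)(\pb_{k+1}-\pb)(\zeta)$ only after using $(J_{k+1}I_{k+1}-J_kI_k)v(\zeta)=0$ at nodes. It then requires the $H^\gamma(\T)$ a~priori bound of Theorem~\ref{OCPap}(a) to gain the factor $\delta^{2-\gamma}$ in $\norm{\pb_{k+1}-\pb}_{L^\infty}$ via \eqref{inf2gamma}. This is exactly where $\D\subseteq{\mathcal N}^i(\T_0)$ is indispensable.
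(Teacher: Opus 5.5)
Your proposal is correct and follows essentially the same route as the paper. The paper obtains (A1)--(A2) and the volume-estimator version of \eqref{cora12} by the arguments of Section~\ref{A1} with $\mu^2=\sum_T\eta_{S,T}^2$; it obtains (A3) from the discrete equivalence of Lemma~\ref{disequiv} together with Step~2 (state, now with $L^2$ data) and Step~1 (adjoint, now with Dirac data at nodes) of the proof of $({\rm \bf A3}_{\varepsilon})$; and it obtains $({\rm \bf A4}_{\varepsilon})$ by repeating Theorem~\ref{thma4e} with $\osc_\ell:=\osc(f+\ub_\ell,\T_\ell)$, exactly as you do. Where the paper only sketches the argument, you fill in the details, in particular the $L^2$ control variational inequality and the cross-term identity that remove the $\delta$-term, so that (A3) holds with $\varepsilon=0$.
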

The proofs of the axioms {\bf{(A1)}} and {\bf{(A2)}} 
 follow using similar arguments as in Section \ref{A1}. Similarly,  \eqref{cora12} in  Remark \ref{a12_rem} follows by redefining $\mu^2:=\sum_{T\in \T}\eta^2_{S,T}$. \\
\noindent {\it Sketch of the proof of {\bf (A3).}}~Let $(\yb_\rM,\pb_\rM,\bar{u}_\h)\in V_\rM \times V_\rM \times U_{\rm ad}$ (resp. $(\widehat{\yb}_\rM,\widehat{\pb}_\rM,\widehat{\bar{u}}_\h)\in \widehat{V}_\rM \times \widehat{V}_\rM \times U_{\rm ad}$) solve \eqref{eq2:discrete} with respect to  $\T$ (resp. $\widehat{\T}$) and $\widehat{\ty}_\rM,\widehat{\tp}_\rM$ solve 
\begin{align}
a(\widehat{\ty}_\rM,\widehat{\phi}_\rM)=(f+\ub_\h,\widehat{\phi}_\rM) \text{ and }a(\widehat{\phi}_\rM,\widehat{\tp}_\rM)=\sum_{\zeta\in\D}(\yb_\rM(\zeta)-y_{\zeta\rm,d}) \langle\delta_\zeta ,\widehat{\phi}_\rM\rangle ~\text{for all }\widehat{\phi}_\rM\in \widehat{V}_\rM.\no
\end{align}
Following the steps in Lemma \ref{disequiv}, we have the following error bound: 
   $$ \norm{\widehat{\bar{u}}_\h-\bar{u}_\h}+\enorm{\widehat{\yb}_\rM-\yb_\rM}_\pw+\enorm{\widehat{\pb}_\rM-\pb_\rM}_\pw\approx \enorm{\widehat{\ty}_\rM-\yb_\rM}_\pw+\enorm{\widehat{\tp}_\rM-\pb_\rM}_\pw. $$

\noindent Now follow the estimates in Step 2 in the proof of $({\rm \bf A3}_{\varepsilon})$ in Section \ref{disrel} to obtain
$\enorm{\widehat{\ty}_\rM-\yb_\rM}_\pw\lesssim \eta_S(\mathcal R\mathcal{(\cT,\widehat{\cT})}).$
The results in Step 1 in the proof of $({\rm \bf A3}_{\varepsilon})$ in Section \ref{disrel} yield $\enorm{\widehat{\tp}_\rM-\pb_\rM}_\pw\lesssim \eta_A(\mathcal R\mathcal{(\cT,\widehat{\cT})}).$
These two estimates conclude the proof of {\bf(A3)}. \qed


\noindent {The proof of $({\rm \bf  A4}_{\varepsilon})$ remains same as in Theorem~\ref{thma4e} by redefining $\osc_\ell:=\osc(f+\ub_\ell,\T_\ell)$.}


\section{Numerical experiments}\label{sec:num}
In this section, we present numerical experiments that verify the theoretical results for \eqref{opt1}-\eqref{opt2n} defined over convex and non-convex domains using uniform and adaptive mesh refinements.

\medskip
\noindent Each numerical experiment is initialized with a sufficiently fine triangulation to ensure adaptive convergence, with specific details provided in each example. To compute the discrete solution, we employ an iterative projected gradient method as described in \cite[Algorithm 4.2]{HM2005}. This method requires an initial guess for the control variable $\bar u_h$ and is followed by the computation of $\bar y_\M$ and $\bar p_\M$. The initial guess for the control variable is obtained by averaging the control constraints within the admissible space. In all the examples, we assume that the source points act on the nodes of the initial triangulation.
For uniform mesh refinement, the red-refinement technique is used. For adaptive mesh refinement, we adopt the standard adaptive FEM loop that consists of SOLVE, ESTIMATE, MARK, and REFINE.
    %
%
In the estimate step, we use the a~posteriori error estimators derived in Theorem~\ref{aposteriori} for \eqref{opt1}, Theorem~\ref{OCPap}$(b)$ for \eqref{opt1n}, and Theorem~\ref{OCPapn}$(b)$ for \eqref{opt2n}. For marking, the D{\"o}rfler's marking strategy \cite{WD96} is applied with a bulk parameter of 0.2. The adaptive refinement of the marked triangles is then carried out using the newest-vertex bisection method.

\medskip
\noindent We compute the errors in the state, adjoint, and control variables over both uniform and adaptive mesh refinements. The errors in the state and adjoint variables are measured in the energy norm, while the control error is evaluated in the Euclidean (or $L^2$) norm. The total error is calculated as the sum of these three individual errors. Convergence rates are determined with respect to the number of degrees of freedom (NDOF) over triangulation $\T$, where NDOF denotes only Morley degrees of freedom. The numerical experiments for \eqref{opt1}–\eqref{opt2n} and their corresponding results are presented in Subsections \ref{Ex1_OCP}–\ref{pwOCP2}, respectively. 

\medskip
\noindent Many interesting examples are constructed to validate the theoretical results and observe the behavior of the cost functions, as benchmark examples are {\it not }available in the literature. The details of the examples are summarized in the Table \ref{tab:examples-summary}.
\begin{table}[htbp]
\centering
\scalebox{0.5}{
\renewcommand{\arraystretch}{1.2} 
\setlength{\tabcolsep}{8pt} 

\newcolumntype{Y}{>{\centering\arraybackslash}m{2.8cm}} 
\newcolumntype{Z}{>{\centering\arraybackslash}m{1.8cm}} 
\newcolumntype{A}{>{\centering\arraybackslash}m{1cm}} 
\newcolumntype{B}{>{\centering\arraybackslash}m{2cm}} 
\newcolumntype{C}{>{\centering\arraybackslash}m{2.2cm}} 

\begin{tabularx}{\textwidth}{|A|B|C|X|Y|Z|}
\hline
\textbf{Sl. No.} & \textbf{Example} & \textbf{Problem} & \textbf{Domain} & \textbf{Figure with $\Z(\times)$ and $\D(\circ)$} & \textbf{Exact Solution} \\
\hline
1 & Example \ref{ex1} & \eqref{opt1} & \vspace{-1cm} Circular domain with one source point & \vspace{0.1cm}
\begin{tikzpicture}[scale=2, baseline={(0,-0.3)}]
  \draw[thick] (0,0) circle(0.6);
  \node at (0,0) {\large $\times$};
\end{tikzpicture} \vspace{0.1cm}& Available \\
\hline 
2 & Example \ref{ex2} & \eqref{opt1} &\vspace{-0.8cm} Square domain with 9 source points & \vspace{0.2cm}
\begin{tikzpicture}[scale=1.5, baseline={(0,-0.3)}]
  \draw[thick] (0,0) rectangle (1.2,1.2);
  \foreach \x in {0.3,0.6,0.9}
    \foreach \y in {0.3,0.6,0.9}
      \node at (\x,\y) {\large $\times$};
\end{tikzpicture} & Not available \\
\hline
3 & Example \ref{ex3} & \eqref{opt1} & \vspace{-0.5cm}$\Gamma$-shaped domain with source point at $(0.5, 0.5)$ & \vspace{0.1cm}
\begin{tikzpicture}[scale=1, baseline={(0,-0.3)}]
  \draw[thick]
    (-1,-1) -- (-1,1) -- (1,1) -- (1,0) -- (0,0) -- (0,-1) -- cycle;
  \node at (0.5,0.5) {\large $\times$};
\end{tikzpicture} \vspace{0.0cm}& Not available \\
\hline
4 & Example \ref{ex4} & \eqref{opt1n} & \vspace{-1cm}Circular domain with state solution approaching desired state,  ${\bf y}_{\rm d}$ at $(0,0)$ and distributed control & \vspace{0.3cm}
\begin{tikzpicture}[scale=2, baseline={(0,-0.3)}]
  \draw[thick] (0,0) circle(0.6);
  \node at (0,0) {\large $\circ$};
\end{tikzpicture} & Available \\
\hline
5 & Example \ref{ex5} & \eqref{opt1n} & \vspace{-1.2cm}$\Gamma$-shaped domain, with changing locations of points in $\D$, $\D_1=(-0.5,-0.5)$, $\D_2=(-0.5,0.5)$, $\D_3=(0.5,0.5)$ & \vspace{0.26cm}
\begin{tikzpicture}[scale=1.2, baseline={(0,-0.3)}]
  \draw[thick]
    (-1,-1) -- (-1,1) -- (1,1) -- (1,0) -- (0,0) -- (0,-1) -- cycle;
  \node at (-0.5,-0.5) {\large $\circ$};
  \node at (-0.5,0.5) {\large $\circ$};
  \node at (0.5,0.5) {\large $\circ$};
\end{tikzpicture} & Not available \\
\hline
6 & Example \ref{ex6} & \eqref{opt2n} & \vspace{-0.8cm}Increasing the distance between $\mathcal{Z}$ and $\mathcal{D}$ & \vspace{0.1cm}\begin{tikzpicture}[scale=1.8]
  \def\s{1.2}
  \draw[thick] (0,0) rectangle (\s,\s);
  \node at (0.25*\s,0.25*\s) {\large $\times$};
  \node at (0.75*\s,0.75*\s) {\large $\circ$};
\end{tikzpicture} & Not available \\
\hline
7 & Example \ref{ex7} & \eqref{opt2n} &\vspace{-0.9cm} Increasing the cardinality of $\Z$ while keeping $\Z$ fixed & \vspace{0.1cm}\begin{tikzpicture}[scale=1.8]
  \def\s{1.2}
  \draw[thick] (0,0) rectangle (\s,\s);
  \node at (0.5*\s,0.5*\s) {\large $\circ$};
  \node at (0.25*\s,0.25*\s) {\large $\times$};
  \node at (0.75*\s,0.25*\s) {\large $\times$};
  \node at (0.25*\s,0.75*\s) {\large $\times$};
  \node at (0.75*\s,0.75*\s) {\large $\times$};
\end{tikzpicture} & Not available \\
\hline
8 & Example \ref{ex8} & \eqref{opt2n} & \vspace{-0.9cm}Increasing the cardinality of $\Z$ while keeping $\D$ fixed & \vspace{0.2cm}\begin{tikzpicture}[scale=1.8]
  \def\s{1.2}
  \draw[thick] (0,0) rectangle (\s,\s);
  \node at (0.5*\s,0.5*\s) {\large $\times$};
  \node at (0.25*\s,0.25*\s) {\large $\circ$};
  \node at (0.75*\s,0.25*\s) {\large $\circ$};
  \node at (0.75*\s,0.75*\s) {\large $\circ$};
  \node at (0.25*\s,0.75*\s) {\large $\circ$};
\end{tikzpicture} & Not available \\
\hline
\end{tabularx}
}
\caption{Summary of examples, problem settings, and availability of exact solutions.}
\label{tab:examples-summary}
\end{table}

\subsection{OCP with point sources \eqref{opt1}}\label{Ex1_OCP}

This subsection presents three examples with point sources: the first two examples are defined on convex domains, while the third focuses on a non-convex domain. For the first example, the exact solution is known, thus allowing for a direct computation of the error. However, for the remaining two examples, where exact solutions are unavailable, errors are estimated using a reference solution computed on a highly refined mesh.
\begin{example}{(\bf Circular domain with one point source).}\label{ex1}
   Consider \eqref{opt1} defined on a circular domain $\Omega \subset \mathbb{R}^2$ centered at the origin with radius $r = e^{-1/2}$, where $r^2 := x^2 + y^2$. The point source is located at the center of the circle. The desired state is given by  
$\displaystyle
y_{\rm d} = -e^{-2} \frac{r^2}{8\pi} \log(r) - \frac{1}{16\pi} e^{-3} - 0.064.
$ 
The regularization parameter $\alpha$ is chosen as $10^{-3}$ and control constraints are selected as ${\bf a} = \{-1\}$, ${\bf b} = \{0\}$.  The exact solutions for the state and adjoint equations  (constructed based on the fundamental solution of the biharmonic equation) and exact control are given by
$\displaystyle
\bar{y} = -e^{-2} \frac{r^2}{8\pi} \log(r) - \frac{1}{16\pi} e^{-3} , \:
\bar{p} = 10^{-3} (e^{-1} - r^2)^2 , \text{ and }\bar{\bu}~=~-e^{-2},
$ respectively.

\medskip
\noindent This test case is defined on a smooth circular domain, which implies that the elliptic regularity index is $\sigma = 1$  with $\gamma = 1$. Theorem~\ref{apriori} predicts a quadratic convergence rate for the control error, and a linear convergence rate for the state and adjoint errors in the energy norm with respect to mesh-size. In this numerical experiment, the initial mesh is generated by applying one red refinement to the four-triangle mesh in diamond shape. At each mesh refinement, we project the boundary nodes to the circle to approximate the circular domain. The errors and their convergence rates for uniform mesh refinement are reported in Table~\ref{tab:OCP_Ex1_UniErr}, showing that the error in the control variable converges optimally at order 1, while the total error converges at the optimal rate of 0.5 with respect to the NDOF, thus validating the theoretical predictions in Theorem~\ref{apriori}.

 \begin{table}[ht!]
     	\centering \footnotesize
     	\begin{tabular}{|c|c|c|c|c|c|c|c|c|}
      \hline
      NDOF & $\enorm{\yb-\yb_\rM}_\pw$ & Order &$\enorm{\pb-\pb_\rM}_\pw$ & Order & $\norm{{\bar{\bu}}-{\bar{\bu}_\h}}_{\mathbb{R}^n}$ & Order & Total error & Order  \\
\hline\hline
             $25$ & $0.02529$ & -- & $0.00217$ & -- & $0.11859$ & -- & $0.14606$ & -- \\
     			$113$ & $0.00948$ & $0.65$ & $0.00117$ & $0.41$ & $0.03082$ & $0.89$ & $0.04147$ & $0.83$ \\
     			$481$ & $0.00448$ & $0.52$ & $0.00060$ & $0.46$ & $0.00779$ & $0.95$ & $0.01287$ & $0.81$ \\
     			$1985$ & $0.00234$ & $0.46$ & $0.00030$ & $0.48$ & $0.00195$ & $0.98$ & $0.00459$ & $0.73$ \\
     			$8065$ & $0.00125$ & $0.45$ & $0.00015$ & $0.49$ & $0.00049$ & $0.99$ & $0.00189$ & $0.63$ \\
     			$32513$ & $0.00067$ & $0.45$ & $0.00008$ & $0.50$ & $0.00012$ & $0.99$ & $0.00086$ & $0.56$ \\
        $130561$ & $0.00035$ & $0.46$ & $0.00003$ & $0.50$ & $0.00003$ & $1.00$ & $0.00042$ & $0.52$ \\
     			$523265$ & $0.00018$ & $0.46$ & $0.00002$ & $0.50$ & $0.00001$ & $1.00$ & $0.00021$ & $0.49$ \\
        \hline
     	\end{tabular}
       \caption{Errors and their convergence rates with uniform refinement in Example \ref{ex1}}
\label{tab:OCP_Ex1_UniErr}
     \end{table} 

\medskip
\noindent In case of adaptive mesh refinement, the a~posteriori error estimator in \eqref{elemest} is considered. Figure~\ref{fig1Ex1_OCP} shows the adaptive mesh at the 20th iteration (Iter) along with the corresponding state and adjoint solutions. As expected, the mesh exhibits increased refinement around the origin, since the point source is located at the origin. Table~\ref{tab:OCP_Ex1_Err} presents the errors in the state, adjoint, and control variables, along with the a~posteriori error estimators and their corresponding convergence rates. These results confirm the optimal convergence rates 0.5 for total error and estimator, as predicted in Theorem~\ref{aposteriori}. {It is observed that in the total error, the error in the state solution is dominating the errors obtained from the adjoint and control solutions. }
\begin{figure}[ht!] 
\begin{tabular}{ccc} 
\includegraphics[width=5cm,height=4cm]{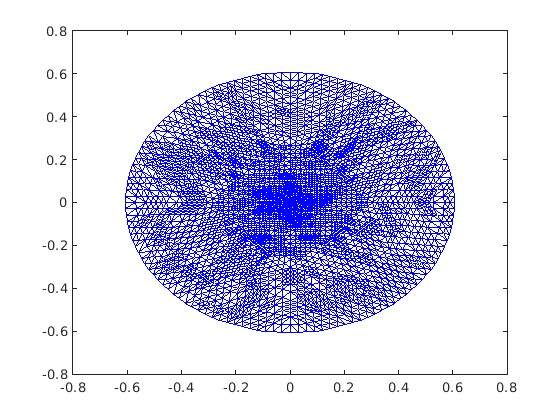} 
& \includegraphics[width=5cm,height=4cm]{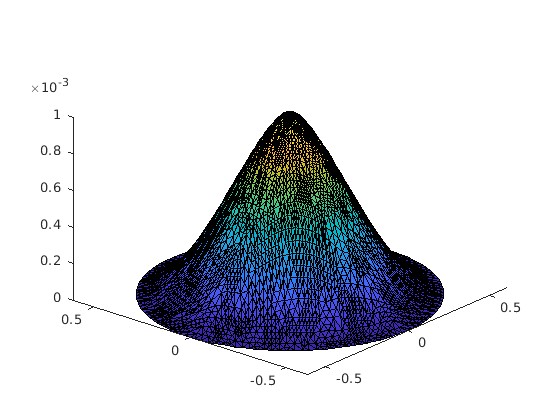}
& \includegraphics[width=5cm,height=4cm]{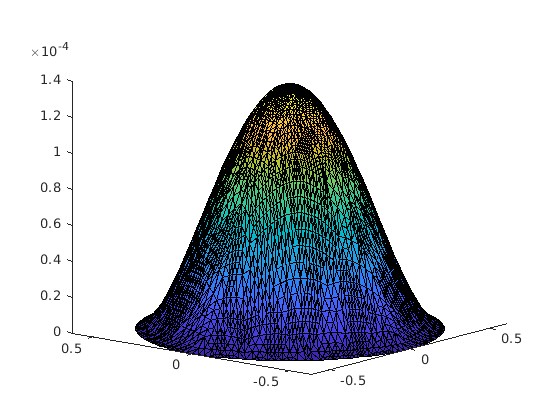}\\
(a) Adaptive Mesh &
(b) State solution &
(c) Adjoint solution 
	 \end{tabular}   	
  \caption{\small   The adaptive mesh at the $20{\rm th}$ iteration and corresponding discrete solution in Example \ref{ex1}}
	\label{fig1Ex1_OCP}
\end{figure}
 \begin{table}[ht!]
     	\centering\scalebox{0.8}{
     	\begin{tabular}{|c|c|c|c|c|c|c|c|c|c|c|c|}
      \hline
    Iter&  NDOF & $\enorm{\yb-\yb_\rM}_\pw$ & Order &$\enorm{\pb-\pb_\rM}_\pw$ & Order & $\norm{{\bar{\bu}}-{\bar{\bu}_\h}}_{\mathbb{R}^n}$ & Order & Total error & Order & $\eta$ & Order  \\
\hline\hline
1&$25$ & $0.02529$ & -- & $0.00217$ & -- & $0.11859$ & -- & $0.14606$ & -- & 0.07906 & --\\
5& $123$ & $0.01047$ & $1.23$ & $0.00159$ & $1.09$ & $0.05402$ & $1.74$ & $0.06608$ & $1.65$ &$0.02947$ &$0.98$\\
9& $463$ & $0.00423$ & $0.55$ & $0.00094$ & $0.36$ & $0.01372$ & $0.91$ & $0.01889$ & $0.81$ &$0.01321$ &0.54\\
13&$1723$ & $0.00199$ & $0.51$ & $0.00048$ & $0.45$ & $0.00357$ & $0.87$ & $0.00603$ & $0.72$ & 0.00670& 0.48\\
17& $5751$ & $0.00107$ & $0.44$ & $0.00027$ & $0.31$ & $0.00106$ & $1.00$ & $0.00240$ & $0.69$ &$0.00371$&$0.45$\\
21& $18684$ & $0.00059$ & $0.47$ & $0.00014$ & $0.35$ & $0.00034$ & $0.93$ & $0.00107$ & $0.61$ & $0.00208$ & $0.47$ \\
25&$58677$ & $0.00033$ & $0.47 $ & $0.00008$ & $0.56 $ & $0.00011$ & $1.09 $ & $0.00052$ & $0.62 $& $0.00119$&$0.47$ \\
29& $175719$ & $0.00019$ & $0.51$ & $0.00005$ & $0.44$ & $0.00004$ & $0.74$ & $0.00028$ & $0.53$ & $0.00069$ & $0.50$ \\
33 & $ 520861$ &  $0.00011$ &  $0.54$&   $0.00003$ & $ 0.71$& $ 0.00001$ &  $ 1.22$&   $0.00011$&  $0.62$& $0.00041$& $0.53$\\
        \hline
     	\end{tabular}}
      \caption{Errors and their convergence rates with adaptive  refinement in Example \ref{ex1}}
\label{tab:OCP_Ex1_Err}
     \end{table}

\medskip
\noindent The discrete costs function, $\cJ(\yb_\rM, \bar{\bu}_\h)$, computed using the discrete solution over adaptive meshes, is presented in Table \ref{tab:horizon_table}. The exact cost is $\cJ(\yb, \bar{\bu})= 0.0023756409164699$. The discrete cost is found to approach the exact cost value under adaptive mesh refinements. 
\begin{table}[ht!]
\centering\scalebox{0.8}{
\begin{tabular}{|c||c|c|c|c|c|c|c|c|}
\hline 
\multicolumn{9}{|c|}{\text{ADAPTIVE REFINEMENT}} \\ \hline \hline
Iter &  9 & 13 & 17 & 21 & 25 & 29 & 33 & 37\\ \hline
NDOF & $463$ & $1723$ & $5751$ & $18684$ & $58677$ & $175719$ & $520861$ & $1461945$\\ \hline
$10^{2}\times\cJ(\yb_\rM, \bar{\bu}_\h)$ & $0.2294468$ & $0.2329082$ & $0.2336079$ & $0.2338840$ & $0.2339504$ & $0.2339669$ & $0.2339716$ & $0.2339736$\\ \hline \hline
\multicolumn{9}{|c|}{\text{Exact cost is $\cJ(\yb, \bar{\bu})= 0.0023756409164699$.}} \\ \hline  
\end{tabular}}
\caption{The values of $\cJ(\yb_\rM, \bar{\bu}_\h)$ with respect to adaptive mesh refinements in Example \ref{ex1}}
\label{tab:horizon_table}
\end{table}

\end{example}

\begin{example}{(\bf Square domain with 9 point sources).}\label{ex2}
     This example is dedicated to \eqref{opt1} with multiple point sources. Consider \eqref{opt1} over $\Omega=[0,1]^2$ with nine source points located at $\Z=\{(x_i,x_j): i,j=1,2,3\}$
     with $x_1=1/6$, $x_2=1/2$, $x_3=5/6$, and ${\bf a}$, ${\bf b} \in \mathbb{R}^9$ with all the entries $-1$ and $2$, respectively. The desired state is given by $y_{\rm d}=10^4x^2y^2(1-x)^2(1-y)^2((0.25-x)^2(0.25-y)^2+(0.75-x)^2(0.25-y)^2+(0.25-x)^2(0.75-y)^2+(0.75-x)^2(0.75-y)^2)$ and the regularization parameter is $\alpha=10^{-3}$.

\medskip
\noindent The numerical solutions and errors are computed on the adaptive mesh, and the numerical experiment begins with an initial triangulation with uniform 36 triangles to ensure all the points in $\Z$ are at the nodes. Since the exact solution of the OCP is unknown in this case, errors are computed with respect to a reference solution obtained on the mesh after 25 adaptive refinement steps. Figure~\ref{fig1Ex2_OCP}(a) shows the adaptive mesh refinement at the 20th adaptive iteration.
At the 20th adaptive refinement, the control attains a magnitude of 0.84 at the center, 0.10 at the corners, and 0.28 at the remaining locations. This is also reflected in the adaptive mesh, where finer refinements are observed at the points in $\Z$ corresponding to higher control magnitudes. Figure~\ref{fig1Ex2_OCP}(b) presents the convergence rates of the error in the control variable and the total error, with the convergence behavior of the associated estimators. Optimal convergence rates are observed in Figure \ref{fig1Ex2_OCP}: linear convergence for the control variable and a rate of 0.5 for the error and estimators. Recall that the complete estimator (Estm) is a combination of the state estimator (St Estm) and the adjoint estimator (Adj Estm). It is worth noting that the plots of the state estimator and the complete estimator overlap, indicating that the contribution from the state estimator dominates the adjoint contributions.
\begin{figure}[ht!] 
\centering
\begin{tabular}{cc} 
\includegraphics[width=6cm,height=4.4cm]{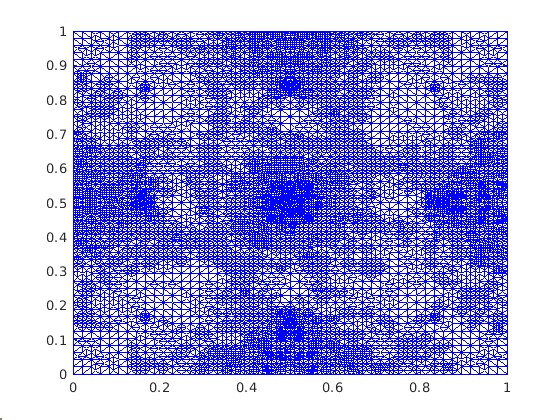} 
& \includegraphics[width=6cm,height=4.4cm]{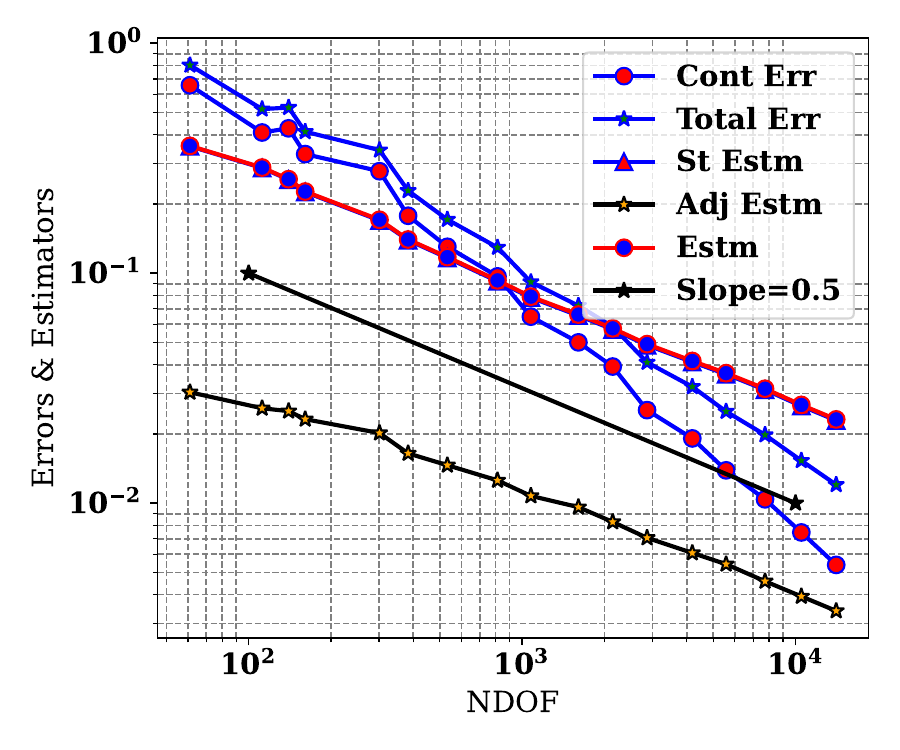}\\
(a) Adaptive Mesh &
(b) Error and Estimator 
	 \end{tabular}   	
  \caption{\small   The adaptive mesh at $20{\rm th}$ iteration and  convergence of the errors and estimators in Example \ref{ex2}}
	\label{fig1Ex2_OCP}
\end{figure}
\end{example}

\begin{example}{(\bf Non-convex domain with single point source).}\label{ex3}
 Consider \eqref{opt1} over $\Gamma$-shaped domain $\Omega=[-1,1]^2\setminus ((0,1]\times [-1,0))$,  ~$\Z=\{(0.5,0.5)\}$, $\alpha=10^{-3}$, and ${\bf a}:=\{-2\},~{\bf b}:=\{2\}$ with the desired state defined as $y_{\rm d}=(1-x^2)^2(1-y^2)^2$.
   \end{example}

 \medskip 
 \noindent The initial mesh is obtained by performing two successive red-refinements on the $\Gamma$-shaped domain, which consists of six isosceles triangles. For a coarse mesh, we do not observe convergence in the first few refinements. To achieve the expected convergence, we start with a fine mesh as predicted in {\it Quasi-orthogonality} {\bf{(A4)}}. Figure \ref{fig1Ex3_OCP}(a) shows the adaptive mesh refinement at the 15th iteration, which demonstrates more refinements at the origin and the location of the source point. In Figure \ref{fig1Ex3_OCP}(b)-(c), the sum of absolute values of the entries in the Hessian of solutions to the state and adjoint equations (that is, for all $T\in \T$, $\norm{(D^2\yb_\rM)|_{T}}_{\ell^1({\mathbb R}^4)}$ and $\norm{(D^2\pb_\rM)|_{T}}_{\ell^1({\mathbb R}^4)}$) are shown. It can be observed that the refinement around the source points is due to a high Hessian of the state solution, which is captured in the state edge estimators, while the higher Hessian of the adjoint solution leads to refinements near $(0,0)$ as we consider a non-convex domain. The errors for the state, adjoint, control variables, and total error, along with their corresponding convergence rates, are presented in Table \ref{tab:OCP_Ex3_Err}. As the exact solution of the OCP is unknown, the errors are calculated with respect to the discrete solution in the 30th adaptive mesh refinement. Optimal convergence rates are observed, which are 0.5 for state and adjoint errors in the energy norm and linear for control error in the Euclidean norm.

\medskip
 \noindent  In Table \ref{tab:OCP_Ex3_Estm}, we present the contributions arising from the state and adjoint estimators within the complete estimator, along with their corresponding convergence rates. In the beginning, the state estimator is dominant, but with more adaptive iterations, the contribution of the adjoint estimator increases. Observe that the complete estimator has an optimal convergence rate of 0.5.
\begin{figure}[ht!] 
\scalebox{0.9}{
\begin{tabular}{ccc} 
\includegraphics[width=5cm,height=4cm]{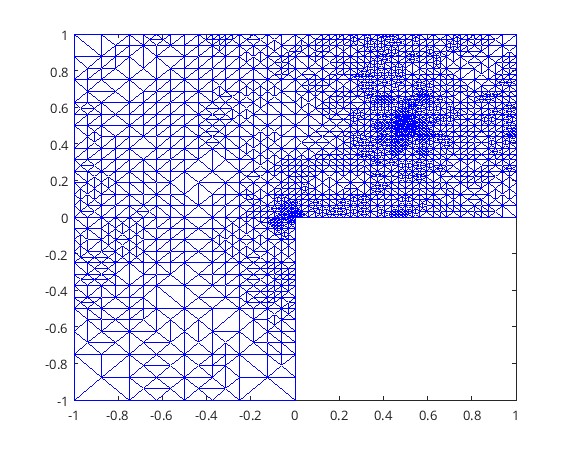} 
&\includegraphics[width=5cm,height=4cm]{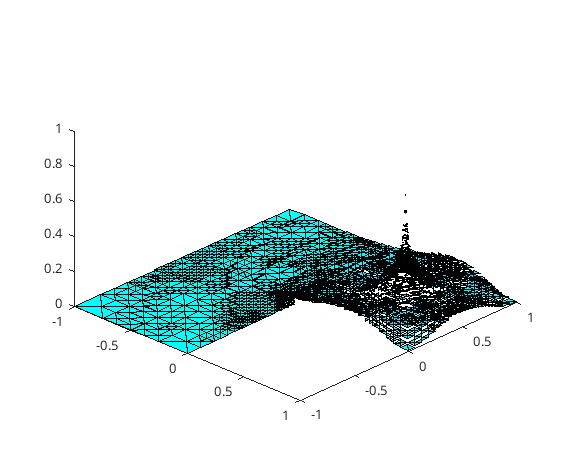} 
&\includegraphics[width=5cm,height=4cm]{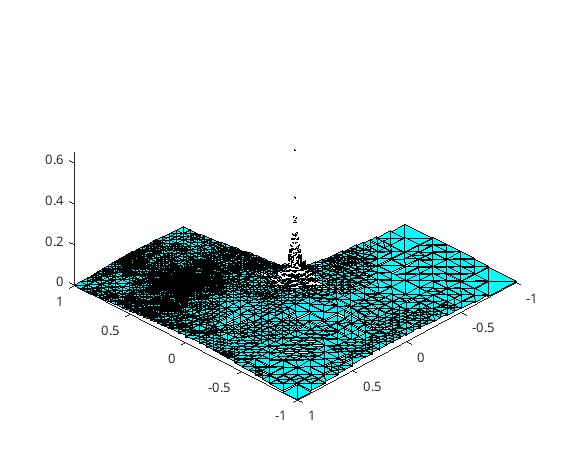}\\
(a) Adaptive Mesh &
(b) State Hessian spikes at the source &
 (c) Adjoint Hessian spikes at $(0,0)$
	 \end{tabular} }  	
  \caption{\small  The adaptive mesh at 15th iteration and corresponding Hessian of the discrete solutions in Example \ref{ex3}.}
	\label{fig1Ex3_OCP}
\end{figure}
 
\begin{table}[ht!]
\centering\footnotesize
\begin{tabular}{|c|c|c|c|c|c|c|c|c|c|}
\hline
 Iter & NDOF & $\enorm{\yb_{30}-\yb_\rM}_\pw$ & Order &$\enorm{\pb_{30}-\pb_\rM}_\pw$ & Order & $\norm{{\bar{\bu}_{30}}-{\bar{\bu}_\h}}_{\mathbb{R}^n}$ & Order & Total error & Order  \\
\hline\hline
1 & 161 & 0.10289 & -- & 0.02431 & -- & 0.55312 & -- & 0.68032 & -- \\
4 & 258 & 0.05249 & 1.35 & 0.02310 & 0.17 & 0.30274 & 1.46 & 0.37833 & 1.37 \\
7 & 632 & 0.02417 & 0.60 & 0.01387 & 0.71 & 0.09027 & 1.06 & 0.12832 & 0.94 \\
10 & 1654 & 0.01367 & 0.58 & 0.00967 & 0.39 & 0.03946 & 0.86 & 0.06280 & 0.73 \\
13 & 4471 & 0.00768 & 0.66 & 0.00592 & 0.33 & 0.01294 & 1.15 & 0.02654 & 0.84 \\
16 & 11584 & 0.00479 & 0.52 & 0.00353 & 0.52 & 0.00508 & 1.13 & 0.01339 & 0.76 \\
19 & 27914 & 0.00303 & 0.51 & 0.00224 & 0.54 & 0.00210 & 1.19 & 0.00737 & 0.73 \\
23 & 88461 & 0.00159 & 0.54 & 0.00119 & 0.63 & 0.00059 & 1.03 & 0.00338 & 0.66 \\
\hline 
\end{tabular}
\caption{Errors in state, adjoint, and control solutions and their convergence rates in Example \ref{ex3}}
\label{tab:OCP_Ex3_Err}
\end{table}
 \vspace{-0.5cm}
\begin{table}[ht!]
\centering\footnotesize
\begin{tabular}{|c|c|c|c|c|c|c|c|c|}
\hline
Iter&NDOF & $\eta_S$ & Order & $\eta_A$ & Order & $\eta$ & Order \\
\hline \hline
1&$161$ & $0.32566$ & -- & $0.08613$ & -- & $0.33686$ & -- \\
4&$258$ & $0.14885$ & $1.50$ & $0.07962$ & $0.26$ & $0.16881$ & $1.27$ \\
7&$632$ & $0.07492$ & $0.53$ & $0.04496$ & $0.82$ & $0.08737$ & $0.61$ \\
10&$1654$ & $0.04353$ & $0.52$ & $0.03063$ & $0.38$ & $0.05322$ & $0.47$ \\
13&$4471$ & $0.02548$ & $0.61$ & $0.01899$ & $0.32$ & $0.03178$ & $0.51$ \\
16&$11584$ & $0.01627$ & $0.49$ & $0.01177$ & $0.49$ & $0.02008$ & $0.49$ \\
19&$27914$ & $0.01057$ & $0.48$ & $0.00773$ & $0.50$ & $0.01309$ & $0.49$ \\
23&$88461$ & $0.00595$ & $0.46$ & $0.00441$ & $0.54$ & $0.00741$ & $0.49$ \\    
\hline
\end{tabular}
\caption{Estimators and their convergence rates in Example \ref{ex3}}
\label{tab:OCP_Ex3_Estm}
\end{table}

 \medskip
\subsection{Point-wise tracking \eqref{opt1n}}\label{pwOCP1}
In this section, the numerical experiments for the point-wise tracking OCP in \eqref{opt1n} are considered. Here, the state solution needs to approach the desired state at certain locations while control is applied to the whole domain. In the previous subsection, the force control was applied at only a few points within the domain, while the desired state was imposed over the whole domain.

\medskip
\noindent Recall that the set $\D$ denotes the set of points where we expect the state solution to approach the desired state, ${\bf y}_{\rm d}$. Here, two numerical experiments are conducted on convex and non-convex domains. First, we consider a circular domain where the exact solution of the OCP is known. For this problem, the errors, estimators, and their corresponding convergence rates in both uniform and adaptive mesh refinements are presented. In the second experiment, we examine the effect of the various locations of the point $\D$ on the adaptive mesh and the discrete solution in a $\Gamma$-shaped domain. 
\begin{example}{(\bf Convex domain)}\label{ex4}
Let $\Omega\subset {\mathbb{R}}^2$ be the circular domain as in Example \ref{ex1}. Consider \eqref{opt1n} with ${\bf y}_{\rm d}=\{10^{-3}(e^{-2}-1)\}$, $\alpha=10^{-3}$, $\D=\{(0,0)\}$, and the control constraints ${u_a}=-1,~{u_b}=1$. The exact solutions to the state and adjoint equations are $\yb=10^{-3}(e^{-1}-r^2)^2$ and $\pb=10^{-3}(\frac{r^2}{8\pi}{\rm log}(r)+\frac{1}{16\pi}e^{-1})$. The exact control is $\bar{u}$ can be obtained by the formula, $\Pi_{[u_a,u_b]}(-\alpha^{-1}\pb)$ and the force, $f=0.064-\ub$.

\medskip
\noindent Since the exact solutions are with elliptic regularity index $\sigma=1$, Theorem~\ref{OCPap} establishes an optimal convergence rate of 1 for the control variable and 0.5 for the total error. This behavior is confirmed in Table \ref{tab:Ex1_table}. Additionally, the results validate the a~posteriori error estimates in Theorem~\ref{OCPap}$(b)$, demonstrating that the total error and the complete estimator achieve the optimal order of convergence. 
The values of discrete cost functions at some iterations are presented in Table \ref{tab2:horizon_table} for both uniform and adaptive mesh refinements, which are close to the exact cost value.
\end{example}
\begin{table}[ht!]
\centering
\footnotesize
\begin{tabular}{|c|c|c|c|c|c|c|c|c|c|c|}
\hline
 \multicolumn{5}{|c|}{Uniform refinement} & \multicolumn{6}{c|}{Adaptive refinement} \\ \hline
 NDOF & Total error & Order & $\eta$ & Order & Iter & NDOF & Total error & Order & $\eta$ & Order  \\ \hline \hline
 $5$ & $0.01380$ & -- & $0.00752$ & -- & $5$ & $161$ & $0.00171$ & $0.29$ & $0.00301$ & $0.32$ \\
$25$ & $0.00518$ & $0.61$ & $0.00594$ & $0.15$ & $8$ & $415$ & $0.00101$ & $0.48$ & $0.00211$ & $0.36$ \\
$113$ & $0.00198$ & $0.64$ & $0.00359$ & $0.33$ & $11$ & $1049$ & $0.00059$ & $0.61$ & $0.00145$ & $0.59$ \\
$481$ & $0.00082$ & $0.61$ & $0.00194$ & $0.42$ & $14$ & $2907$ & $0.00037$ & $0.35$ & $0.00094$ & $0.38$ \\
$1985$ & $0.00037$ & $0.57$ & $0.00101$ & $0.46$ & $17$ & $7315$ & $0.00021$ & $0.52$ & $0.00060$ & $0.49$ \\
$8065$ & $0.00017$ & $0.54$ & $0.00051$ & $0.48$ & $20$ & $18256$ & $0.00013$ & $0.69$ & $0.00039$ & $0.52$ \\
$32513$ & $0.00008$ & $0.52$ & $0.00026$ & $0.49$ & $23$ & $44492$ & $0.00008$ & $0.42$ & $0.00026$ & $0.42$ \\
$130561$ & $0.00004$ & $0.51$ & $0.00013$ & $0.50$ & $26$ & $99288$ & $0.00005$ & $0.53$ & $0.00017$ & $0.53$ \\
$523265$ & $0.00002$ & $0.50$ & $0.00006$ & $0.50$ & $29$ & $234354$ & $0.00003$ & $0.49$ & $0.00011$ & $0.50$ \\
 \hline
\end{tabular}
\caption{Errors, estimators, and their orders of convergence in Example \ref{ex4}}
\label{tab:Ex1_table}
\end{table}

\begin{table}[ht!]
\centering\scalebox{0.8}{
\begin{tabular}{|c||c|c|c|c|c|c|c|c|c|c|}
\hline
\multicolumn{11}{|c|}{\text{UNIFORM REFINEMENT}} \\ \hline 
NDOF & 5 & 25 & 113 & 481 & 1985 & 8065 & 32513 & 130561 & 523265 & 2095105\\ \hline
$10^{7}\times\cJ(\yb_\rM, \ub_\h)$ & 0.73537 & 0.19754 & 0.09163 & 0.06856 & 0.06309 & 0.06174 & 0.06140 & 0.6132 & 0.06130 & 0.06129\\ \hline \hline 
\multicolumn{11}{|c|}{\text{ADAPTIVE REFINEMENT}} \\ \hline 
Iter & 1 & 5 & 9 & 13 & 17 & 21 & 25 & 29 &33 &37 \\
\hline
NDOF & $25$ & $161$ & $611$ & $1967$ & $7315$ & $24202$ & $76820$ & $234354$ & $663867$ & $1867454$\\ \hline
$10^{7}\times\cJ(\yb_\rM, \ub_\h)$  & 0.19754 & 0.08903 & 0.6798 & $0.06350$ & $0.06198$ & $0.06152$ & $0.06137$ & $0.06132$ & $0.06130$ & $0.06129$\\ \hline
\multicolumn{11}{|c|}{\text{Exact cost is $6.129158721563519\times 10^{-9}$.}} \\
\hline
\end{tabular}}
\caption{The values of $\cJ(\yb_\rM, \ub_\h)$ for uniform and adaptive mesh refinements in Example \ref{ex4}}
\label{tab2:horizon_table}
\end{table}  
  \begin{example}{(\bf Non-convex domain).}\label{ex5}
   Consider the OCP in \eqref{opt1n} with $\Omega=[-1,1]^2\setminus ((0,1]\times [-1,0))$, $\yd=\{1\}$, $\alpha=10^{-3}$, $u_a=0,~u_b=5$, and $f=0$. The numerical experiments are conducted separately for three different locations of $\D$. In the first case, the state solution is required to drive towards the desired state at the point \(\D1=\{(-0.5, -0.5)\}\), in the second case at \(\D2=\{(-0.5, 0.5)\}\), and in the third case at \(\D3=\{(0.5, 0.5)\}\).     
  \end{example}
 
\begin{figure}[ht!] 
\begin{tabular}{ccc}
\includegraphics[width=5cm,height=4cm]{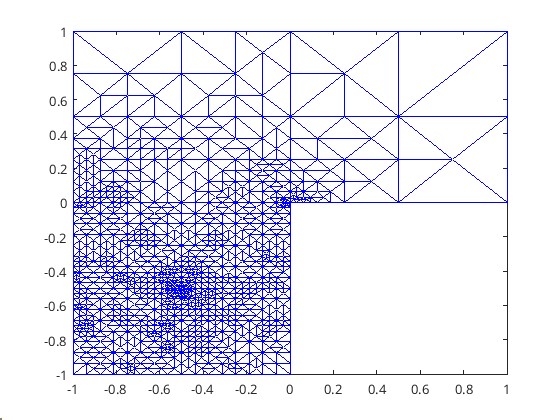} 
& \includegraphics[width=5cm,height=4cm]{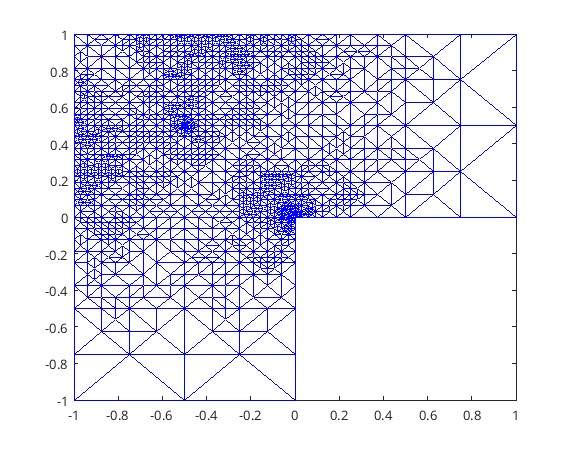}
& \includegraphics[width=5cm,height=4cm]{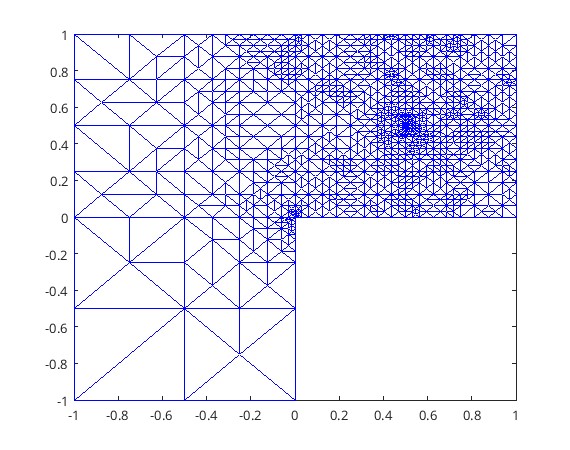}\\
(a) Adaptive mesh $(\D1)$& (b) Adaptive mesh $(\D2)$& (c) Adaptive mesh $(\D3)$\\
\includegraphics[width=5cm,height=4cm]{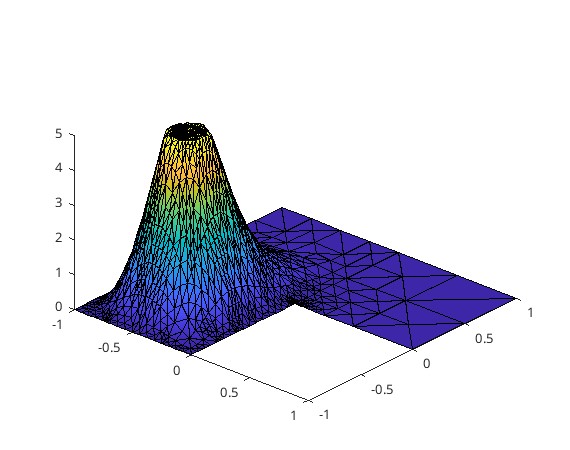} 
& \includegraphics[width=5cm,height=4cm]{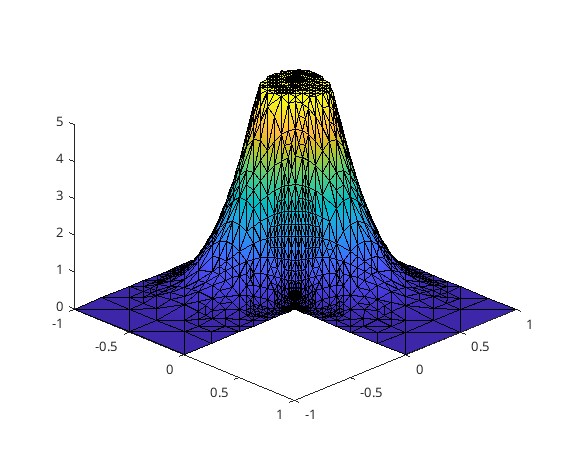}
& \includegraphics[width=5cm,height=4cm]{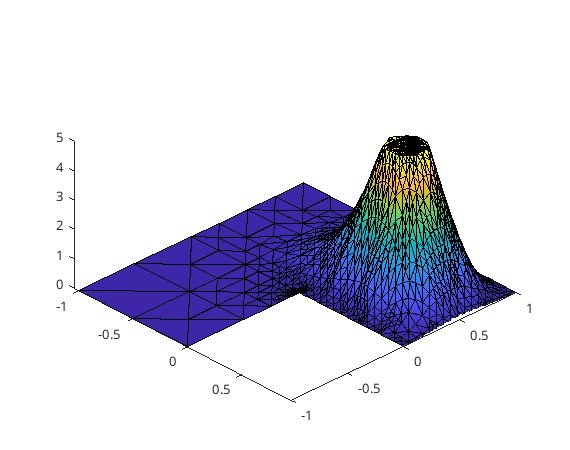}\\
(d) Control solution $(\D1)$ &
(e) Control solution $(\D2)$ &
(f) Control solution $(\D3)$\\
\includegraphics[width=5cm,height=4cm]{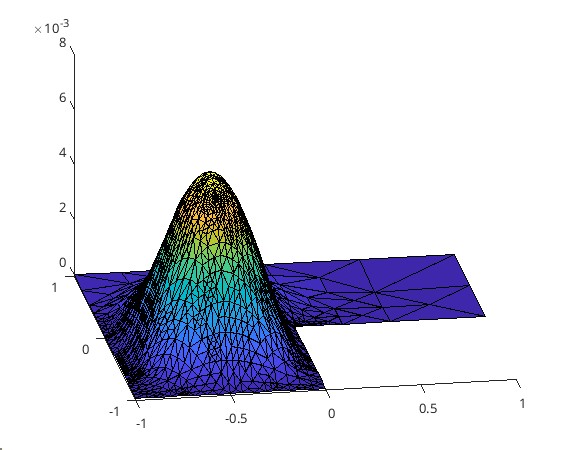} 
& \includegraphics[width=5cm,height=4cm]{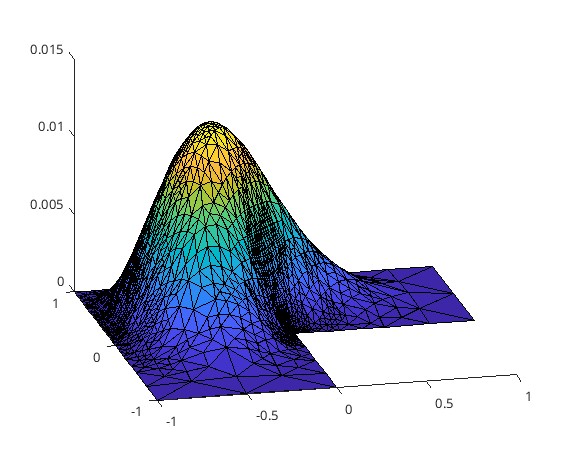}
& \includegraphics[width=5cm,height=4cm]{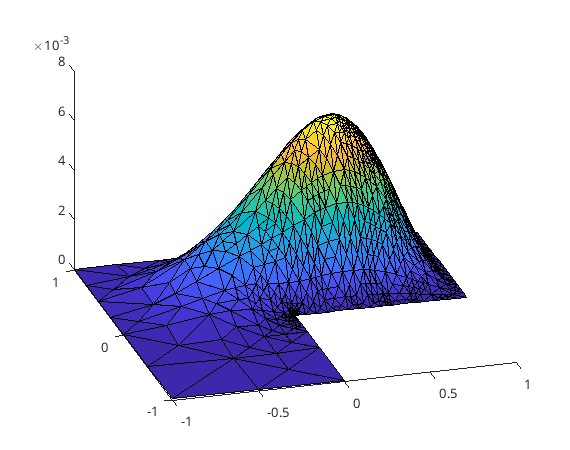}\\
(g) State solution $(\D1)$ &
(h) State solution $(\D2)$ &
(i) State solution $(\D3)$
	 \end{tabular}   	
  \caption{\small The adaptive mesh (15th iteration) and the respective discrete solutions $\ub_\h$ and $\yb_\rM$ in Example \ref{ex5}}
\label{fig1Ex2_OCPnew}
\end{figure}
\noindent In this example, the discrete control and state solutions, and adaptive mesh will be compared for different locations of points in $\D$. Figure \ref{fig1Ex2_OCPnew} illustrates the adaptive mesh refinements at the 15th iteration along with the discrete control and state solutions, \(\ub_\h\) and \(\yb_\rM\), respectively, for \(\D1\)-\(\D3\). The figures highlight the impact of varying the locations of points in \(\D\) on the control and state solutions. Specifically, increased refinements are observed at different locations of \(\D\) and at the corner \((0,0)\) due to the consideration of a nonconvex domain. Furthermore, it is evident that the state solutions approach the desired state at the location of \(\D\). This confirms the effectiveness of the a~posteriori error estimator in accurately identifying critical regions. 

\begin{figure}[ht!] 
\begin{tabular}{cc} 
\includegraphics[width=7cm,height=5.4cm]{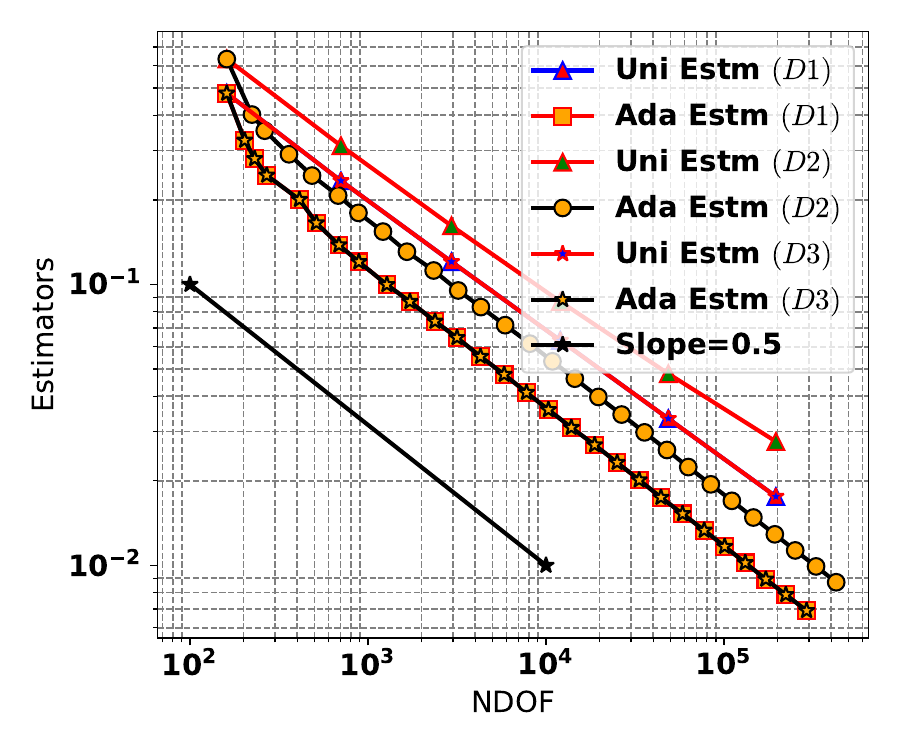} & \includegraphics[width=7cm,height=5.4cm]{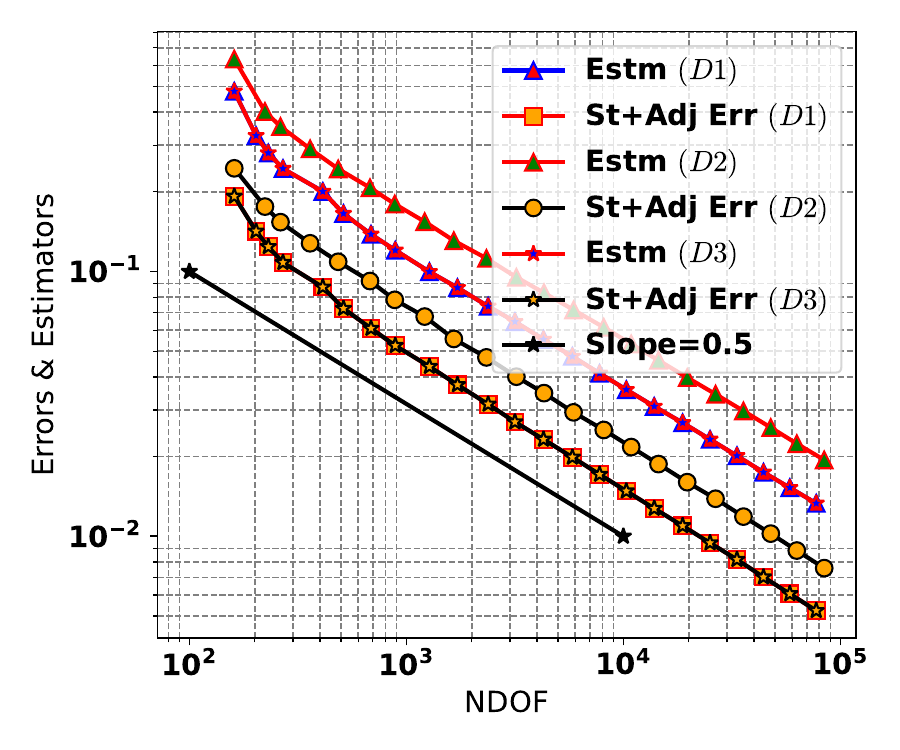}
	 \end{tabular}   	
  \caption{\small The rate of convergence of the errors and the estimators for all locations of $\D$ in Example \ref{ex5}}
	\label{fig1Ex2_OCP2}
\end{figure}

\medskip
\noindent In Figure \ref{fig1Ex2_OCP2}, we have observed the behavior of errors and estimators with $\D1$, $\D2$, and $\D3$ in the adaptive mesh refinements. As the analytical solution is unknown, the state and adjoint errors are calculated with respect to the discrete solutions at the 33rd adaptive iteration (\(\enorm{\yb_{33}-\yb_\rM}_\pw+\enorm{\pb_{33}-\pb_\rM}_\pw\)). 
The first figure in Figure \ref{fig1Ex2_OCP2} demonstrates the convergence rates of complete estimators in both uniform and adaptive mesh refinements. The convergence rate of 0.45 is observed in the case of uniform refinements for all the choices of $\D$, while in adaptive mesh refinement, the optimal convergence rate, 0.5, is achieved. In the second figure of Figure \ref{fig1Ex2_OCP2}, the optimal convergence rate is observed for the sum of state and adjoint errors and the complete estimator for adaptive refinement. Furthermore, we observed that the errors and estimator values for cases $\D1$ and $\D3$ are similar. That is due to the diagonally symmetric positions of the points in $\D1$ and $\D3$ within the domain, resulting in the plots that are completely overlapping.
\subsection{Point-wise tracking OCP with point sources \eqref{opt2n}}\label{pwOCP2}
 Recall that the set $\D$ is the set of points associated with the desired state, ${\bf y}_{\rm d}$, as discussed in Section \ref{pwOCP1}, and $\Z$ is the set of source points (see Section \ref{OCP_new2}). In this section, we analyze the cost function in three examples by: increasing the distance between $\D$ and $\Z$, increasing the cardinality of $\Z$ while keeping $\D$ fixed, and increasing the cardinality of $\D$ while keeping $\Z$ fixed.
Throughout this section, we consider \eqref{opt2n} over the domain $\Omega=[0,1]^2$, ${\bf y}_{\rm d}=\{1\}^m$, $\alpha=10^{-3}$, and ${\bf a}:=\{0\}^n$ and ${\bf b}:=\{7\}^n$, where \(m=|\D|\) and \(n=|\Z|\).

\begin{example}{(\bf{Different locations of $\D$ and $\Z$})}\label{ex6}
     In this example, we have considered 6 different pairs of locations for $\D$ and $\Z$: ($\D1=\{(0.5,0.5)\}$,~$\Z1=\{(0.5,0.5)\}$), ($\D2=\{(0.25,0.25)\}$,~$\Z2=\{(0.25,0.25)\}$), ($\D3=\{(0.25,0.25)\}$,~$\Z3=\{(0.5,0.5)\}$), ($\D4=\{(0.5,0.5)\}$,~$\Z4=\{(0.25,0.25)\}$), ($\D5=\{(0.25,0.75)\}$,~$\Z5=\{(0.25,0.25)\}$), and ($\D6=\{(0.75,0.75)\}$,~$\Z6=\{(0.25,0.25)\}$) and computed the numerical solutions in each cases. The impact of the distance between the points in \(\D\) and \(\Z\) on the cost function is analyzed.
\end{example}
\begin{figure}[ht!] 
\begin{tabular}{ccc}
\includegraphics[width=4.5cm,height=3.5cm]{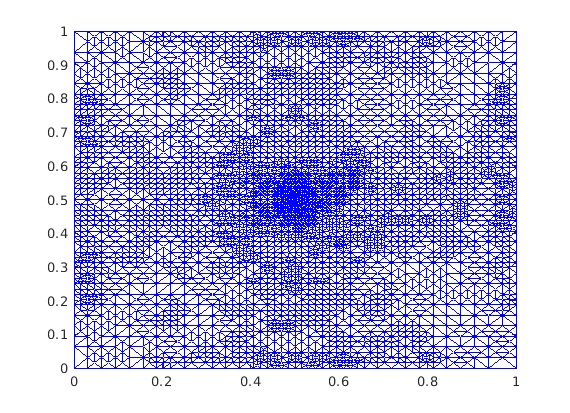} 
& \includegraphics[width=4.5cm,height=3.5cm]{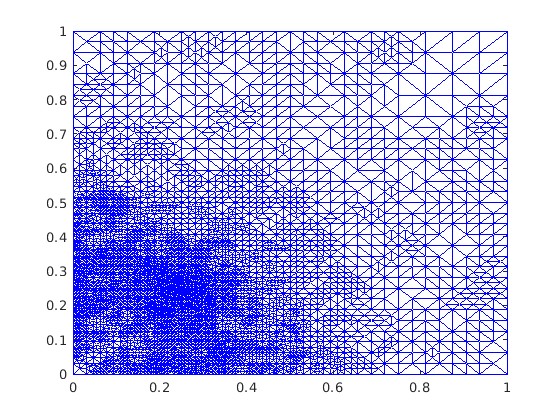}
& \includegraphics[width=4.5cm,height=3.5cm]{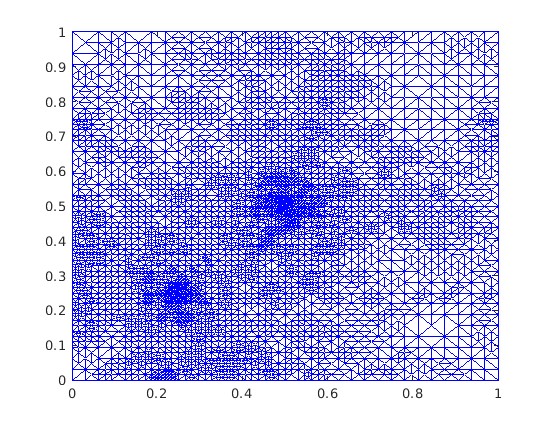}\\
 \footnotesize{(a) $\D1=\{0.5,0.5\}$,~$\Z1=\{0.5,0.5\}$} &
 \footnotesize{(b) $\D2=\{0.25,0.25\}$,~$\Z2=\{0.25,0.25\}$}  &
 \footnotesize{(c) $\D3=\{0.25,0.25\}$,~$\Z3=\{0.5,0.5\}$} \\
\includegraphics[width=4.5cm,height=3.5cm]{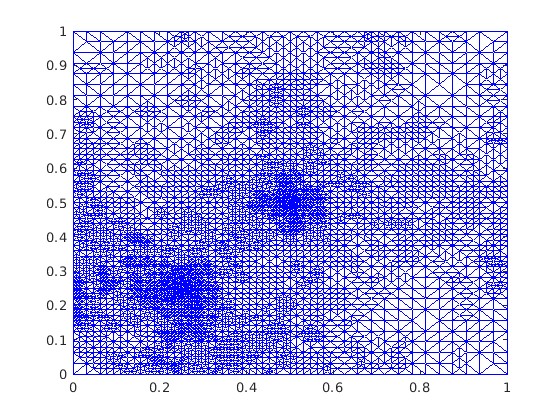} 
& \includegraphics[width=4.5cm,height=3.5cm]{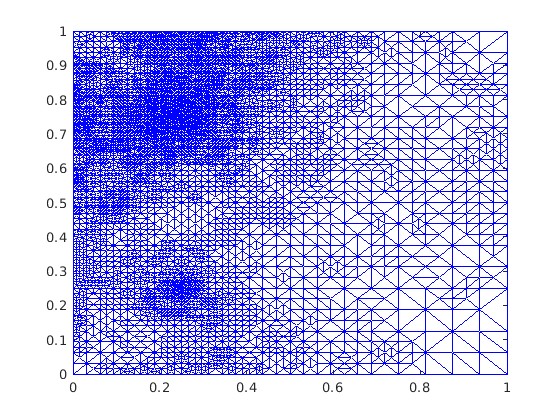}
& \includegraphics[width=4.5cm,height=3.5cm]{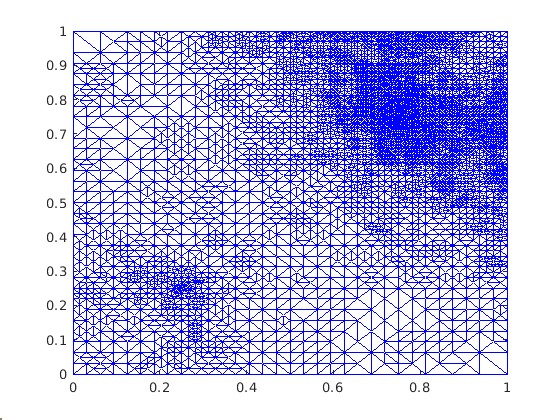}\\
 \footnotesize{(d) $\D4=\{0.5,0.5\}$,~$\Z4=\{0.25,0.25\}$} &
 \footnotesize{(e) $\D5=\{0.25,0.75\}$,~$\Z5=\{0.25,0.25\}$}  &
 \footnotesize{(f) $\D6=\{0.75,0.75\}$,~$\Z6=\{0.25,0.25\}$} 
	 \end{tabular}   	
  \caption{\small  The adaptive meshes at $15{\rm th}$ iteration for each pair in Example \ref{ex6}}
\label{Ex1_OCPnew}
\end{figure}

\begin{table}[ht!]
\centering
\footnotesize
\begin{tabular}{|c|c|c|c|c|c|c|c|c|c|}
\hline
 \multicolumn{1}{|c|}{}&\multicolumn{3}{c|}{\scalebox{0.9}{$\D1=\{(0.5,0.5)\}$,~$\Z1=\{(0.5,0.5)\}$}} & \multicolumn{3}{c|}{\scalebox{0.9}{$\D2=\{(0.25,0.25)\}$,~$\Z2=\{(0.25,0.25)\}$}} & \multicolumn{3}{c|}{\scalebox{0.9}{$\D3=\{(0.25,0.25)\}$,~$\Z3=\{(0.5,0.5)\}$}} \\ 
\hline 
Iter& NDOF &  $\cJ(\yb_\rM,\bar{\bu}_\h)$ & $\bar{\bu}_\h$ & NDOF & $\cJ(\yb_\rM,\bar{\bu}_\h)$ & $\bar{\bu}_\h$ & NDOF & $\cJ(\yb_\rM,\bar{\bu}_\h)$ & $\bar{\bu}_\h$ \\
\hline
1  & 25    & 0.4673400 & 7.0000000 & 25    & 0.4909403 & 7.0000000 & 25    & 0.4913056 & 4.1335805 \\
5  & 133   & 0.4972165 & 7.0000000 & 147   & 0.4950323 & 3.1363585 & 103   & 0.4979709 & 2.0104099 \\
9  & 502   & 0.4809692 & 6.0508541 & 655   & 0.4968692 & 2.4944567 & 447   & 0.4990126 & 1.4039051 \\
13 & 1906  & 0.4837230 & 5.6119778 & 2480  & 0.4972641 & 2.3327974 & 1758  & 0.4991923 & 1.2699722 \\
17 & 6633  & 0.4844498 & 5.4893668 & 8862  & 0.4973663 & 2.2890292 & 6703  & 0.4992346 & 1.2362865 \\
21 & 22254 & 0.4846488 & 5.4552490 & 28794 & 0.4973944 & 2.2768344 & 22825 & 0.4992454 & 1.2275927 \\
25 & 70018 & 0.4847064 & 5.4453291 & 89372 & 0.4974027 & 2.2732179 & 72052 & 0.4992484 & 1.2251527 \\
\hline
\end{tabular}
\caption{ The cost function and the discrete control for (a)-(c) in Figure \ref{Ex1_OCPnew} in Example \ref{ex6}}
\label{tab:Ex1_OCPnew}
\end{table}
 
\begin{table}[ht!]
\centering
\footnotesize
\begin{tabular}{|c|c|c|c|c|c|c|c|c|c|}
\hline
\multicolumn{1}{|c|}{}& \multicolumn{3}{c|}{\scalebox{0.9}{$\D4=\{(0.5,0.5)\}$,~$\Z4=\{(0.25,0.25)\}$}} & \multicolumn{3}{c|}{ \scalebox{0.9}{$\D5=\{(0.25,0.75)\}$,~$\Z5=\{(0.25,0.25)\}$}} & \multicolumn{3}{c|}{\scalebox{0.9}{$\D6=\{(0.75,0.75)\}$,~$\Z6=\{(0.25,0.25)\}$}} \\ 
\hline 
Iter & NDOF &  $\cJ(\yb_\rM,\bar{\bu}_\h)$ & $\bar{\bu}_\h$ & NDOF & $\cJ(\yb_\rM,\bar{\bu}_\h)$ & $\bar{\bu}_\h$ & NDOF & $\cJ(\yb_\rM,\bar{\bu}_\h)$ & $\bar{\bu}_\h$ \\
\hline
 1  & 25 & 0.4913056 & 4.1335805 & 25 & 0.4997017 & 0.7722038 & 25 & 0.4997417 & 0.7185069 \\
5  & 164 & 0.4984822 & 1.7396733 & 158 & 0.4999127 & 0.4177588 & 139 & 0.4999615 & 0.2775687 \\
9  & 590 & 0.4990660 & 1.3654542 & 664 & 0.4999466 & 0.3267888 & 611 & 0.4999789 & 0.2056170 \\
13 & 2189 & 0.4992070 & 1.2583849 & 2544 & 0.4999538 & 0.3040960 & 2428 & 0.4999854 & 0.1709136 \\
17 & 76970 & 0.4992371 & 1.2343197 & 9130 & 0.4999559 & 0.2968740 & 8607 & 0.4999866 & 0.1639613 \\
21 & 25566 & 0.4992460 & 1.2270944 & 30705 & 0.4999565 & 0.2951099 & 28940 & 0.4999868 & 0.1622927 \\
25 & 79916 & 0.4992485 & 1.2250408 & 97142 & 0.4999566 & 0.2945760 & 90680 & 0.4999869 & 0.1617794 \\
\hline
\end{tabular}
\caption{The cost function and the discrete control for (d)-(f) in Figure \ref{Ex1_OCPnew} in Example \ref{ex6}}
\label{tab:Ex1_OCPnew1}
\end{table}
\noindent  In Figure \ref{Ex1_OCPnew} and Table \ref{tab:Ex1_OCPnew}-\ref{tab:Ex1_OCPnew1}, we have presented the adaptive mesh refinements along with the values of the cost functions and the discrete control for the above pairs of locations of $\D$ and $\Z$. In Figure \ref{Ex1_OCPnew}, more adaptive refinements are observed at the locations of $\D$ and $\Z$ in all the examples. In the first case, the point source in \(\Z\) is positioned in the center, and the state solution \(\yb_\rM\) is required to drive toward the desired state \(\mathbf{y}_{\rm d}\), also at the center (\(\D\)), as shown in Figure \ref{Ex1_OCPnew}(a). However, in the second case, both \(\D\) and \(\Z\) are placed at a corner (0.25,0.25), as illustrated in Figure \ref{Ex1_OCPnew}(b). In particular, the first case results in a lower cost compared to the second (see Table \ref{tab:Ex1_OCPnew}).  When we move both the points locations in \(\D\) and \(\Z\) from the center to another location as shown in Figure \ref{Ex1_OCPnew}(a)-(b), the value of the cost function increases (ref. Table \ref{tab:Ex1_OCPnew}). As the distance between the points in \(\D\) and \(\Z\) increases, as shown in Figure \ref{Ex1_OCPnew}(b) and Figures \ref{Ex1_OCPnew}(d)-(f), the values of the cost functions gradually increase, which is reflected in Table \ref{tab:Ex1_OCPnew}-\ref{tab:Ex1_OCPnew1}. 
 
 \begin{example}{(\bf Increasing the cardinality of $\Z$ for fixed $\D$).}\label{ex7}
     In this example, the point in $\D$ is fixed, while the number of point sources in $\Z$ gradually increased as illustrated below in Figure \ref{fig:squares1}, to observe its effects on the cost functions. In Figure \ref{fig:squares1}, the location of point sources indicated by blue dots and red dots shows the coordinates in $\D$. Fix \(\D=\{(0.5,0.5)\}\) and gradually add point sources at the corners \((0.25,0.25)\), \((0.75,0.25)\), \((0.25,0.75)\), and \((0.75,0.75)\).
\begin{figure}[h!]
    \centering
    \begin{tikzpicture}
        \def\squaresize{2.7} 
        \def\gap{1} 
        \def\dotradius{3pt} 

        \def\descone{ $\D \,(\textcolor{red}{\bullet})$ \text{ and } $\Z1 \,(\textcolor{blue}{\bullet})$}
        \def\desctwo{ $\D \,(\textcolor{red}{\bullet})$ \text{ and } $\Z2 \,(\textcolor{blue}{\bullet})$}
        \def\descthree{ $\D \,(\textcolor{red}{\bullet})$ \text{ and } $\Z3 \,(\textcolor{blue}{\bullet})$}
        \def\descfour{ $\D \,(\textcolor{red}{\bullet})$ \text{ and } $\Z4 \,(\textcolor{blue}{\bullet})$}

        \foreach \i in {0,1,2,3} {
            \pgfmathsetmacro\x{\i * (\squaresize + \gap)}

            \pgfmathsetmacro\cx{\x + 0.5*\squaresize}
            \pgfmathsetmacro\cy{0.5*\squaresize}

            \pgfmathsetmacro\blx{\x}              
            \pgfmathsetmacro\bly{0}               
            \pgfmathsetmacro\brx{\x + \squaresize} 
            \pgfmathsetmacro\bry{0}               
            \pgfmathsetmacro\tlx{\x}              
            \pgfmathsetmacro\tly{\squaresize}     
            \pgfmathsetmacro\trx{\x + \squaresize} 
            \pgfmathsetmacro\try{\squaresize}     

            \pgfmathsetmacro\bxL{(\blx + \cx) / 2}
            \pgfmathsetmacro\byL{(\bly + \cy) / 2}

            \pgfmathsetmacro\bxR{(\cx + \brx) / 2}
            \pgfmathsetmacro\byR{(\cy + \bry) / 2}

            \pgfmathsetmacro\bxT{(\cx + \tlx) / 2}
            \pgfmathsetmacro\byT{(\cy + \tly) / 2}

            \pgfmathsetmacro\bxTR{(\cx + \trx) / 2}
            \pgfmathsetmacro\byTR{(\cy + \try) / 2}

            \draw[thick] (\x cm,0cm) --++ (\squaresize cm,0cm) --++ (0cm,\squaresize cm) --++
                         (-\squaresize cm,0cm) -- cycle;

            \fill[red] (\cx cm, \cy cm) circle (\dotradius);

            \fill[blue] (\bxL cm, \byL cm) circle (\dotradius);
            \node[below] at (\bxL cm, \byL cm - 0.1cm) {\scriptsize (0.25,0.25)};

            \ifnum\i>0
                \fill[blue] (\bxR cm, \byR cm) circle (\dotradius);
                \node[below] at (\bxR cm, \byR cm - 0.1cm) {\scriptsize (0.75,0.25)};
            \fi

            \ifnum\i>1
                \fill[blue] (\bxT cm, \byT cm) circle (\dotradius);
                \node[above] at (\bxT cm, \byT cm + 0.1cm) {\scriptsize (0.25,0.75)};
            \fi
 
            \ifnum\i=3
                \fill[blue] (\bxTR cm, \byTR cm) circle (\dotradius);
                \node[above] at (\bxTR cm, \byTR cm + 0.1cm) {\scriptsize (0.75,0.75)};
            \fi

            \node[right] at (\cx cm + 0.1cm, \cy cm) {\scriptsize (0.5,0.5)};

            \ifnum\i=0
                \node[below] at (\cx cm, -0.5cm) {\descone};
            \fi
            \ifnum\i=1
                \node[below] at (\cx cm, -0.5cm) {\desctwo};
            \fi
            \ifnum\i=2
                \node[below] at (\cx cm, -0.5cm) {\descthree};
            \fi
            \ifnum\i=3
                \node[below] at (\cx cm, -0.5cm) {\descfour};
            \fi
        }

    \end{tikzpicture}
    \caption{Illustration of locations of $\D$ and $\Z$ with red and blue dots, respectively in Example \ref{ex7}}
    \label{fig:squares1}
\end{figure}
\end{example}
\begin{figure}[ht!] 
\begin{tabular}{cccc}
\includegraphics[width=3.5cm,height=3.5cm]{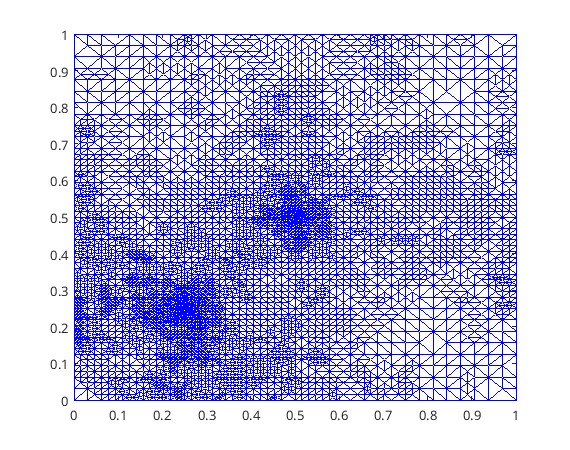} 
& \includegraphics[width=3.5cm,height=3.5cm]{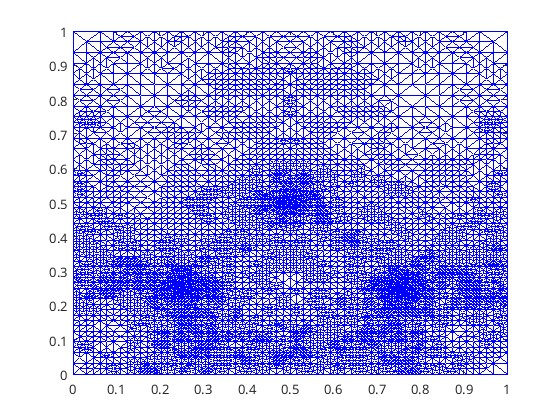}
& \includegraphics[width=3.5cm,height=3.5cm]{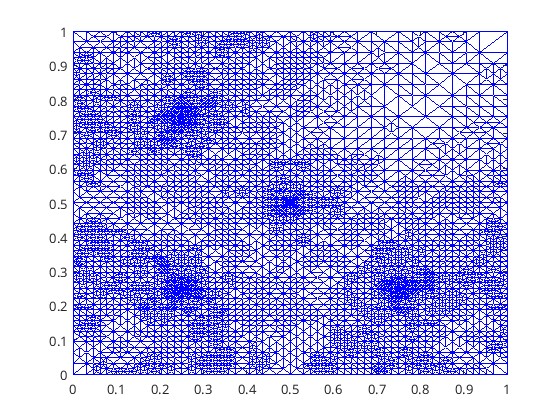}
& \includegraphics[width=3.5cm,height=3.5cm]{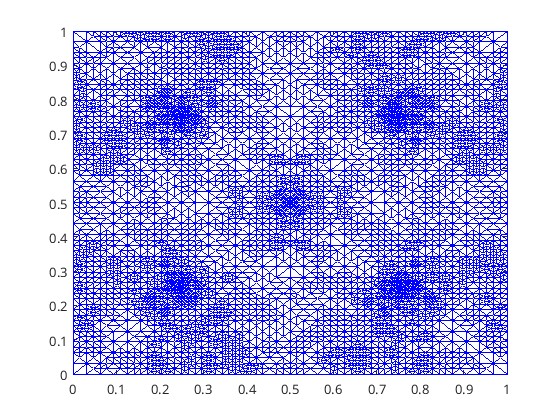}\\
 \footnotesize{(a) Adaptive mesh $(\D,\Z1)$} &
 \footnotesize{(b) Adaptive mesh $(\D,\Z2)$} &
 \footnotesize{(c) Adaptive mesh $(\D,\Z3)$} &
 \footnotesize{(d) Adaptive mesh $(\D,\Z4)$}
	 \end{tabular}   	
  \caption{\small  The adaptive meshes at the $15{\rm th}$ iteration in Example \ref{ex7}}
\label{Ex2_OCP3}
\end{figure}

\medskip
\noindent In Figure \ref{Ex2_OCP3}, the triangulations at the 15th iteration are presented for each case,  which shows a larger number of refinements near the points in $\D$ and $\Z$. In all cases, we observed that the state estimator $\eta_S$ gradually increases as we increase the cardinality of $\Z$ and dominates the adjoint estimator.
\begin{table}[ht!]
\centering
\footnotesize
\begin{tabular}{|c|c|c|c|c|c|c|c|c|}
\hline
\multicolumn{1}{|c|}{}& \multicolumn{2}{c|}{$\Z1$} & \multicolumn{2}{c|}{$\Z2$} & \multicolumn{2}{c|}{$\Z3$}
 & \multicolumn{2}{c|}{$\Z4$}\\ 
\hline 
Iter & NDOF &  $\cJ(\yb_\rM,\bar{\bu}_\h)$  & NDOF & $\cJ(\yb_\rM,\bar{\bu}_\h)$ &  NDOF & $\cJ(\yb_\rM,\bar{\bu}_\h)$ & NDOF & $\cJ(\yb_\rM,\bar{\bu}_\h)$  \\
\hline
1 & 25  & 0.4913056 & 25  & 0.4829083 & 25  & 0.4747933 & 25  & 0.4669466 \\
4 & 91  & 0.4981494 & 113 & 0.4962486 & 63  & 0.4922649 & 75  & 0.4921652 \\
7 & 288 & 0.4989332 & 326 & 0.4978780 & 303 & 0.4964251 & 354 & 0.4952236 \\
10 & 820 & 0.4991356 & 1086 & 0.4982860 & 843 & 0.4973521 & 817 & 0.4964078 \\
13 & 2189 & 0.4992070 & 2963 & 0.4984304 & 2137 & 0.4976077 & 2167 & 0.4968123 \\
16 & 5601 & 0.4992327 & 7760 & 0.4984736 & 5458 & 0.4976978 & 5661 & 0.4969383 \\
19 & 14030 & 0.4992430 & 19691 & 0.4984912 & 13641 & 0.4977313 & 14447 & 0.4969803 \\
22 & 34372 & 0.4992469 & 46253 & 0.4984969 & 33146 & 0.4977462 & 35423 & 0.4970010 \\
25 & 79916 & 0.4992485 & 106419 & 0.4984998 & 78632 & 0.4977517 & 85472 & 0.4970069 \\
\hline
\end{tabular}
\caption{Comparison of the cost functions for different pairs of $\D$ and $\Z$ as in Figure \ref{fig:squares1} in Example \ref{ex7}}
\label{tab:Ex1_OCPnew2}
\end{table}

\medskip
\noindent 

\noindent Table \ref{tab:Ex1_OCPnew2} shows that introducing additional point sources leads to a decrease in the values of the cost functions. This demonstrates that increasing the number of point sources contributes to a more effective control, thereby lowering the value of the cost functional.
\begin{example}{(\bf Increasing the cardinality of $\D$ for fixed $\Z$).}\label{ex8}
     Here, we fix the location of the point source $(\Z=\{(0.5,0.5)\})$, and gradually we force the state to drive towards the desired state at more number of points $(\D)$, and compare its effects on the cost functions. {The locations of points in $\D $ are same as position of $\Z $ in Figure \ref{fig:squares1}.}
 \end{example}
\begin{figure}[ht!] 
\begin{tabular}{cccc}
\includegraphics[width=3.5cm,height=3.5cm]{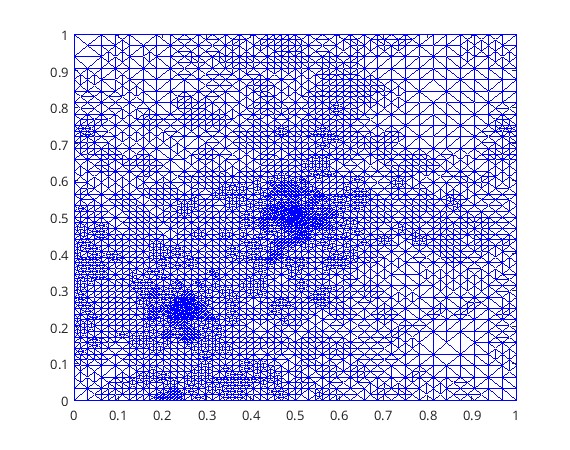} 
& \includegraphics[width=3.5cm,height=3.5cm]{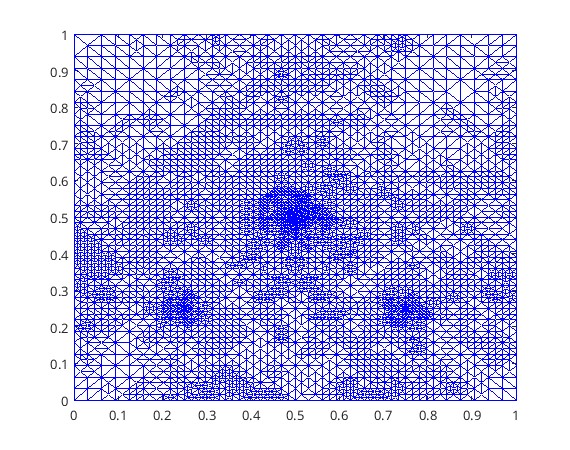}
& \includegraphics[width=3.5cm,height=3.5cm]{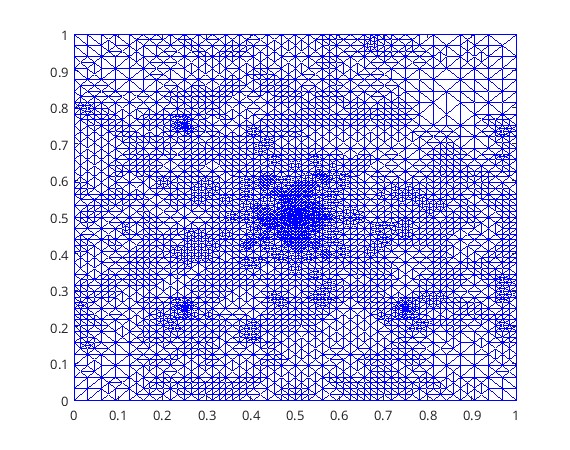}
& \includegraphics[width=3.5cm,height=3.5cm]{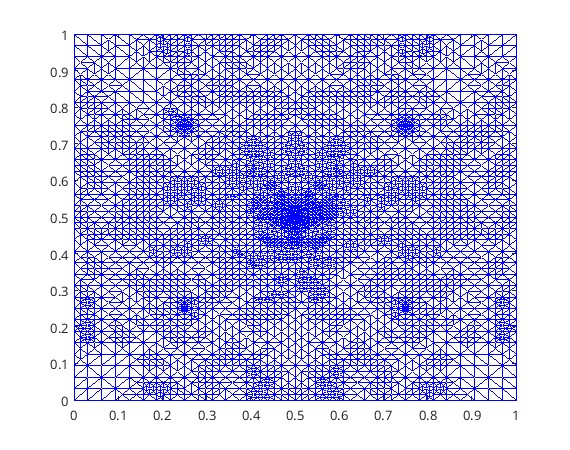}\\
 \footnotesize{(a) Adaptive mesh $(\D1,\Z)$} &
 \footnotesize{(b) Adaptive mesh $(\D2,\Z)$} &
 \footnotesize{(c) Adaptive mesh $(\D3,\Z)$} &
 \footnotesize{(d) Adaptive mesh $(\D4,\Z)$}
	 \end{tabular}   	
  \caption{\small  The adaptive meshes at the $15{\rm th}$ iteration in Example \ref{ex8}}
\label{Ex3_OCP3}
\end{figure}
\begin{table}[ht!]
\centering
\scalebox{0.8}{
\begin{tabular}{|c|c|c|c|c|c|c|c|c|c|c|c|c|}
\hline
\multicolumn{1}{|c|}{}& \multicolumn{3}{c|}{$\D1$} & \multicolumn{3}{c|}{$\D2$} & \multicolumn{3}{c|}{$\D3$}
 & \multicolumn{3}{c|}{$\D4$}\\ 
\hline 
Iter & NDOF &  $\cJ(\yb_\rM,\bar{\bu}_\h)$ & $\bar{\bu}_\h$ & NDOF & $\cJ(\yb_\rM,\bar{\bu}_\h)$ & $\bar{\bu}_\h$ &  NDOF & $\cJ(\yb_\rM,\bar{\bu}_\h)$ & $\bar{\bu}_\h$ & NDOF & $\cJ(\yb_\rM,\bar{\bu}_\h)$ & $\bar{\bu}_\h$ \\
\hline
1  & 25  & 0.49131 & 4.13 & 25  & 0.99527 & 7.00 & 25  & 1.48065 &7.00 & 25 & 1.96604 &7.00 \\
        4  & 89 & 0.49626 & 2.72 & 89  & 0.98516 & 5.41  & 89  & 1.49582 &7.00 & 99  & 1.98906 &7.00 \\
        7  & 213 & 0.49868 & 1.62 & 221 & 0.99452 &3.30 & 250 & 1.48687 &5.10 & 253 & 1.97674  &6.78\\
        10 & 632 & 0.49909 & 1.35 & 680 & 0.99633 &2.70 & 621 & 1.49123  &4.18& 659 & 1.98461 &5.53 \\
        13 & 1758 & 0.49919 & 1.27 & 1830 & 0.99677 &2.54 & 1827 & 1.49253 &3.85 & 1775 & 1.98683 &5.11 \\
        16 & 4976 & 0.49923 & 1.24 & 4916 & 0.99692 &2.48 & 4754 & 1.49303 &3.72 & 4943 & 1.98762 &4.96 \\
        19 & 12639 & 0.49924 & 1.23 & 11868 & 0.99697 &2.46 & 11612 & 1.49318 &3.69 & 12059 & 1.98789 &4.91 \\
        22 & 30262 & 0.49925 & 1.23 & 28979 & 0.99699 &2.45 & 28182 & 1.49323 &3.67 & 29519 & 1.98798 & 4.89 \\
        25 & 72052 & 0.49925 & 1.23 & 68502 & 0.99700 &2.45 & 67320 & 1.49325 &3.67 & 71138 & 1.98802 &4.88 \\
\hline
\end{tabular}}
\caption{Comparison of the cost functions for different pairs of $\D$ and $\Z$ as in Example \ref{ex8}}
\label{tab:Ex2_OCPnew2}
\end{table}
\medskip
\noindent The adaptive mesh at 15th iteration is illustrated in Figure \ref{Ex3_OCP3}. It shows more refinements near the points in $\D$ and $\Z$. We have observed (in the undisplayed result) that both state and adjoint estimators gradually increase as we increase the cardinality of $\D$, and the state estimator dominates the adjoint estimator in all the cases. Table \ref{tab:Ex2_OCPnew2} indicates that the values of the cost functions and discrete control solutions are gradually rising. This shows that when we apply force at a fixed point and aim to minimize the cost function at multiple locations, it results in higher overall costs and necessitates more force \((u_\z)\) at that point.

\medskip\noindent {\bf Acknowledgments.}
The authors sincerely thank the three anonymous referees for their careful reviews and valuable corrections.
Asha K. Dond acknowledges the support  provided by the Anusandhan National Research Foundation (ANRF) through the ARG grant ANRF/ARG/2025/009285/MS. {Neela Nataraj and Subham Nayak acknowledge the JC Bose grant ANRF/JBG/2025/000209/HAA. } 

\medskip\noindent {\bf Data Availability}{ Data sharing is not applicable to this article as no datasets were generated or analyzed.}

\medskip\noindent {\bf Declarations}\\
{ \textbf{Conflict of interest} The authors declare no conflict of interest during the current study.}
 
 {\footnotesize{
\bibliographystyle{plain}
\bibliography{vKeBib} }}
\pagestyle{empty}
\section*{Appendix}

\appendix

\renewcommand{\thesection}{\Alph{section}}

\counterwithin{equation}{section}
\renewcommand{\theequation}{\thesection.\arabic{equation}}

\section{Proof of first-order discrete optimality condition}\label{ap:1stcndn}
\begin{proof}[Proof of \ref{optimality}]
     Define the map $S: \mathbb{R}^n\rightarrow L^2(\Omega) $ such that $S\bu:=y_\rM $ with 
     $(D^2_\pw y_\rM,D^2_\pw v_\rM)_{L^2(\Omega)}=\sum_{{\rm z}\in \Z}u_{\rm z}\langle \delta_{\rm z},Jv_\rM \rangle \fl v_\rM\in V_\rM,$ 
     where $\bu=\{u_\z\}_{\z\in\Z}$. The reduced functional corresponding to the OCP in \eqref{discrete_cost} is 
     $\displaystyle f(\bu):=\frac{1}{2}\norm{S\bu-\yd}_{L^2(\Omega)}^2+\frac{\alpha}{2}\sum_{{\rm z}\in \Z}|u_{\rm z}|^2.$ 
     The map $f$ is lower semi-continuous and $U_{\rm ad}$ is a closed, bounded, and convex subset of $\mathbb{R}^n$. Hence, the existence and uniqueness of the OCP in \eqref{discrete_cost} follows (see \cite[Theorem 2.14]{TF2010}). Let $\bar{\bu}_\h:=\{\ub_{\rm z,h}\}_{\z\in\Z}$ be the optimal control. The convexity of $U_{\rm ad}$ leads to the following variational inequality: 
      $   f'(\bar{\bu}_\h)(\bu-\bar{\bu}_\h)\geq 0 \fl \bu\in U_{\rm ad}.$
     The Fr\'echet differentiation of the reduced functional, $f$ at $\ub_{\rm z,h}$ reveals
     \begin{align}
         f'(\bar{\bu}_\h)(\bu-\bar{\bu}_\h)=(S\bar{\bu}_\h-\yd,S(\bu-\bar{\bu}_\h))+\alpha (\bar{\bu}_\h,\bu-\bar{\bu}_\h)\geq 0.\label{dpl3}
     \end{align}
     Consider \eqref{dis_cost_b} with respect to $\bu$ and $\bar{\bu}_\h$ to arrive at
      $  a_\pw( y_\rM-\yb_\rM , v_\rM)=\sum_{{\rm z}\in \Z}(u_{\rm z}-\ub_{\rm z,h})\langle\delta_{\rm z},Jv_\rM\rangle$ $\fl v_\rM\in V_\rM.$
     Utilize $v_\rM=\pb_\rM$ as the test function in this equation to obtain
         $a_\pw( y_\rM-\yb_\rM ,  \pb_\rM)=\sum_{{\rm z}\in \Z}(u_{\rm z}-\ub_{\rm z,h})\langle\delta_{\rm z},J\pb_\rM\rangle.$
     Consider $\phi_\rM=y_\rM-\yb_\rM$ in \eqref{dadj} to arrive at
      $   a_\pw( y_\rM-\yb_\rM , \pb_\rM)=(\yb_\rM-\yd,y_\rM-\yb_\rM).$
     The last two equations show $(\yb_\rM-\yd,y_\rM-\yb_\rM)=\sum_{{\rm z}\in \Z}(u_{\rm z}-\ub_{\rm z,h})J\pb_\rM({\rm z})$.  Substitute this in  \eqref{dpl3} to arrive at 
     \begin{align}
        \sum_{{\rm z}\in \Z} (u_{\rm z}-\ub_{\rm z,h})J\pb_\rM({\rm z})+\sum_{{\rm z}\in \Z}(\alpha \ub_{\rm z,h},u_{\rm z}-\ub_{\rm z,h})=\sum_{{\rm z}\in \Z}(J\pb_\rM({\rm z})+\alpha \ub_{\rm z,h})(u_{\rm z}-\ub_{\rm z,h})\geq 0.\no
     \end{align}
     This concludes the proof.
 \end{proof} 

 \section{Point-wise tracking OCP with point sources {\bf (P3)}}\label{OCP_new2}
This section discusses the point-wise tracking OCP governed by the biharmonic equation with point sources. The state variable will drive towards the desired state at points specified in $\D$ by applying force at some points denoted as $\Z$ in the domain.
{Recall the OCP in \eqref{opt2n} with the desired state ${\bf y}_{\rm d}:=\{y_{ \zeta\rm,d}\}_{\zeta\in\D}$} that minimizes the cost functional
\begin{align} 
\cJ(y,\bu):=\frac{1}{2}\sum_{\zeta\in\D}|y(\zeta)-y_{ \zeta\rm,d}|^2+\frac{\alpha}{2}\norm{\bu}_{\mathbb{R}^n}^2\no
\end{align} 
in the admissible set $U_{\rm ad}=\big\{\bu=\{u_{\rm z}\}_{{\rm z}\in\Z}\in \mathbb{R}^n:~ a_{\rm z}\leq u_{\rm z} \leq b_{\rm z}\fl {\rm z}\in \Z \big\}$ subject to 
\begin{align}
\Delta^2 y=\sum_{{\rm z}\in\Z}u_{\rm z}\delta_{\rm z} \text{ in }\Omega\text{ and } y=\frac{\partial y}{\partial \nu} =0 \text{ on } \partial \Omega.\no
\end{align}
For all $\phi\in V$, $\phi_\rM\in V_\rM$, and ${\bf{v}}:=\{v_{\rm z}\}_{{\rm z}\in \Z}\in U_{\rm ad}$, the continuous and discrete optimality systems are:
\begin{equation}
\label{eq3:discrete}
\left.
\begin{array}{l@{\hspace{1cm}}l}
\text{Seek } (\yb,\pb,\bar{\bu})\in V\times V\times U_{\rm ad} \text{ such that} &
\text{Seek } (\bar{y}_\rM, \bar{p}_\rM, \bar{\bu}_{\h})\in V_\rM \times V_\rM \times U_{\rm ad} \text{ such that} \\[1.2ex]
a(\yb,\phi) = \sum_{{\rm z}\in\Z} \ub_{\rm z}\langle \delta_{\rm z}, \phi \rangle &
a_\pw(\yb_\rM,\phi_\rM) = \sum_{{\rm z}\in\Z} \ub_{\rm z,h} \langle \delta_{\rm z}, \phi_\rM \rangle \\[1.2ex]
a(\phi,\pb) = \sum_{\zeta\in\D} (\yb(\zeta) - y_{\zeta\rm,d}) \langle \delta_\zeta, \phi \rangle &
a_\pw(\phi_\rM, \pb_\rM) = \sum_{\zeta\in\D} (\yb_\rM(\zeta) - y_{\zeta\rm,d}) \langle \delta_\zeta, \phi_\rM \rangle \\[1.2ex]
\sum_{\z\in\Z} (\pb(\z) + \alpha \ub_{\z}) (v_{\z} - \ub_{\z}) \geq 0. &
\sum_{\z\in\Z} (\pb_\rM(\z) + \alpha \ub_{\rm z,h}) (v_{\z} - \ub_{{\rm z},h}) \ge 0.
\end{array}
\right\}
\end{equation}

\noindent {\it The atoms of the Dirac measure, $\z\in \Z$ and $\zeta\in \D$ are considered at the nodes of the initial triangulation, ${\mathcal{N}}^i(\T_0)$.} This is to ensure that the expression on the right-hand side of the discrete state and adjoint equation is well-defined. The existence and uniqueness of the solution of the OCP follow from \cite[Section 5.3]{AAS21}.

\noindent Define the auxiliary equations for this problem as in \eqref{aux1} by replacing the loads according to the discrete optimality system as follows. Let $\ty,\tp\in V$ satisfy
$$a(\ty,\phi)=\sum_{{\rm z}\in\Z}\ub_{\rm z,h}\langle \delta_{\rm z},\phi\rangle \text{ and }a(\phi,\tp)=\sum_{\zeta\in\D}(\yb_\rM(\zeta)-y_{\zeta\rm,d}) \langle\delta_\zeta ,\phi\rangle \text { for all }\phi\in V.$$

\noindent {For \eqref{opt1}, the discrete state equation \eqref{state_eq1} involves a Dirac measure on the right-hand side, whereas the discrete adjoint equation \eqref{dadj} has a source term in $L^2(\Omega)$. On the other hand, in the case of \eqref{opt2n}, the right-hand side terms in both the state and adjoint equations include finite sums of Dirac measures (see \eqref{eq3:discrete}). As a result, the strategy used to estimate the state error in \eqref{opt1} is employed for estimating both the state and adjoint errors in \eqref{opt2n}.}

\medskip
\noindent {Following analogous steps as  Theorem~\ref{equivalence}, we arrive at the result that the complete error
is equivalent to the sum of energy norm errors in the auxiliary equations.}
This, with the a~priori and a~posteriori error control for auxiliary PDEs defined above, will play the key role in establishing the following a~priori and a~posteriori error estimates for \eqref{opt2n}.
 \begin{tcolorbox}[colback=gray!5, colframe=black, boxrule=0.4pt, arc=2pt,
  left=6pt, right=6pt, top=6pt, bottom=6pt]
For any $T \in \T$, define the edge estimators as 
$$\displaystyle
\eta_{S,\E(T)}^2 := \sum_{E \in \E(T)} h_T \norm{[D_{\rm pw}^2 \yb_\rM]_E \tau_E}_{L^2(E)}^2, \qquad
\eta_{A,\E(T)}^2 := \sum_{E \in \E(T)} h_T \norm{[D_{\rm pw}^2 \pb_\rM]_E \tau_E}_{L^2(E)}^2.
$$
The a~posteriori error estimator reads  
$\displaystyle 
\eta^2 := \eta_S^2 + \eta_A^2
$
with
$$ \displaystyle 
\eta_S^2 = \sum_{T \in \T} \eta_{S,\E(T)}^2, \text{ and } 
\eta_A^2 = \sum_{T \in \T} \eta_{A,\E(T)}^2.
$$
\end{tcolorbox}
 \begin{thm}[error control] \label{OCPapn}
   The continuous and discrete solutions $(\yb,\pb,{\bar\bu})$ and
   $(\yb_\rM,\pb_\rM,\bar{\bu}_\h)$
   to \eqref{eq3:discrete} satisfy
    (a) the a~priori error estimates below for $(\yb,\pb,{\bar\bu})
   \in  (V \cap H^{4-\gamma}(\Omega))^2\times U_{\ad}$:
    \begin{align*}
     & \norm{\bar{\bu}-\bar{\bu}_\h}_{\mathbb{R}^n}+\enorm{\yb-\yb_\rM}_\pw+\enorm{\pb-\pb_\rM}_\pw\leq {C_{\rm AE}h^{2-\gamma}\big(\max\{\norm{\bf a}_{L^\infty(\mathbb{R}^n)},\norm{\bf b}_{L^\infty(\mathbb{R}^n)}\}+\norm{\by_{\rm d}}_{L^\infty(\mathbb{R}^m)}\big),}\\
     &  \norm{\bar{\bu}-\bar{\bu}_\h}_{\mathbb{R}^n}+\norm{\yb-\yb_\rM}_{H^{\gamma}(\T)} +\norm{\pb-\pb_\rM}_{H^{\gamma}(\T)} \leq  C_{\rm AP} h^{2-\gamma}\big(\norm{\bar{\bu}-\bar{\bu}_\h}_{\mathbb{R}^n} +\enorm{\yb-\yb_\rM}_\pw+\enorm{\pb-\pb_\rM}_\pw\big).
    \end{align*}
    %
%
(b) reliable and efficient a posteriori error estimates that read:
\begin{align*}
C_{\rm REL}^{-2}(\norm{\bar{\bu}-\bar{\bu}_\h}^2_{\mathbb{R}^n}+\enorm{\yb -\yb_\rM}_{\pw}^2+\enorm{\pb -\pb_\rM}_{\pw}^2) \leq  \eta^2 \leq  C_{\rm EFF}^2\big( \norm{\bar{\bu}-\bar{\bu}_\h}^2_{\mathbb{R}^n}+\enorm{\yb -\yb_\rM}_{\pw}^2 + \enorm{\pb-\pb_\rM}_{\pw}^2\big).
\end{align*}
\end{thm} 
\noindent The proof is skipped as it is analogous to the proofs of Theorem \ref{apriori}-Theorem \ref{aposteriori}. 

\medskip
\noindent Now we discuss the quasi-optimality of AFEM. The axioms {\bf{(A1)}} and {\bf{(A2)}} follow similar to the discussions in Section \ref{A1}. Here, brief details of the proofs of the verification of {\bf{(A3)}} -$({\rm \bf  A4}_{\varepsilon})$ are provided which conclude the proof of Theorem~\ref{thm:2.2} (see \cite{CMMD2014} for details). Consider the distance function $\dd(\T,\widehat{\T}):=\left(\norm{\widehat{\bar{\bu}}_\h-\bar{\bu}_\h}_{\mathbb{R}^n}^2+\enorm{\widehat{\yb}_\rM-\yb_\rM}_{\pw}^2 +\enorm{\widehat{\pb}_\rM-\pb_\rM}_{\pw}^2\right)^{1/2}$. \\
  
\noindent {\it Sketch of the proof of {\bf (A3).}}~
Let $(\yb_\rM,\pb_\rM,\bar{\bu}_\h)\in V_\rM \times V_\rM \times U_{\rm ad}$ (resp. $(\widehat{\yb}_\rM,\widehat{\pb}_\rM,\widehat{\bar{\bu}}_\h)\in \widehat{V}_\rM \times \widehat{V}_\rM \times U_{\rm ad}$) solve \eqref{eq3:discrete} with respect to  $\T$ (resp. $\widehat{\T}$) and $(\widehat{\ty}_\rM,\widehat{\tp}_\rM)$ solve the auxiliary problems $$a(\widehat{\ty}_\rM,\widehat{\phi}_\rM)=\sum_{{\rm z}\in\Z}\ub_{\rm z,h}\langle \delta_{\rm z},\widehat{\phi}_\rM\rangle  \text{ and }a(\widehat{\phi}_\rM,\widehat{\tp}_\rM)=\sum_{\zeta\in\D}(\yb_\rM(\zeta)-y_{\zeta\rm,d}) \langle\delta_\zeta ,\widehat{\phi}_\rM\rangle ~\text{for all }\widehat{\phi}_\rM\in \widehat{V}_\rM.$$
Now apply the steps in the proof of Lemma \ref{disequiv} to obtain
   $$ \norm{\widehat{\bar{\bu}}_\h-\bar{\bu}_\h}_{\mathbb{R}^n}+\enorm{\widehat{\yb}_\rM-\yb_\rM}_\pw+\enorm{\widehat{\pb}_\rM-\pb_\rM}_\pw\approx \enorm{\widehat{\ty}_\rM-\yb_\rM}_\pw+\enorm{\widehat{\tp}_\rM-\pb_\rM}_\pw . $$
To estimate the above two terms on the right side of the above inequality, follow  Step 1 in the proof of $({\rm \bf A3}_{\varepsilon})$ in Section \ref{disrel} to arrive at
$\enorm{\widehat{\ty}_\rM-\yb_\rM}_\pw\lesssim \eta_S(\mathcal R\mathcal{(\cT,\widehat{\cT})})\text{ and }
\enorm{\widehat{\tp}_\rM-\pb_\rM}_\pw\lesssim \eta_A(\mathcal R\mathcal{(\cT,\widehat{\cT})}).$ \qed
\begin{lem}[intermediate results for $({\rm \bf  A4}_{\varepsilon})$]\label{P3:lem_A4}
Let $(\yb_\ell,\pb_\ell,{\bf\ub}_\ell)$ be the solution of the discrete optimality system \eqref{eq3:discrete} with respect to  $\T_\ell\in\mathbb{T}(\delta)$. Then there exist $C_{\rm c}$, $C_{\rm s}$, and $C_{\rm a}$ $>0$ such that 
\begin{itemize}
\item[(a)] $ ({\bf\ub}_{k+1}-{\bf\ub}_k,{\bf\ub}_{k+1}-{\bf\ub}) \leq C_{\rm c}   \delta^{2-\gamma}\enorm{\pb_{k+1}-\pb_k}_{\pw}{e}_{k+1}$,
\item[(b)] $a_\pw(\yb_{k+1}-\yb_k,\yb_{k+1}-\yb) \leq C_{\rm s} \delta^{2-\gamma}\enorm{\pb_{k+1}-\pb_k}_{\pw}{e}_{k+1}$ ,  
\item[(c)] $a_\pw(\pb_{k+1}-\pb_k,\pb_{k+1}-\pb)\leq C_{\rm a} \delta^{2-\gamma}\enorm{\yb_{k+1}-\yb_k}_{\pw}{e}_{k+1}$.
\end{itemize}
\end{lem}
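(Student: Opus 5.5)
The plan is to follow the proof of Lemma~\ref{lem_A4} line by line, exploiting two simplifications. First, both $\Z$ and $\D$ lie in ${\mathcal N}^i(\T_0)$, hence are vertices of every $\T_k$. Second, both right-hand sides in \eqref{eq3:discrete} are finite sums of Dirac measures, so no $L^2$ volume term and no oscillation appear. This is why $e_{k+1}$ replaces $\widehat{e}_{k+1}$ and no $\mu_k$ term arises. The common tool in all three parts is the lower-order a~priori bound of Theorem~\ref{OCPapn}(a) (second estimate, which carries no oscillation term) combined with the embedding \eqref{inf2gamma}, applied on a triangle $T\in\T_{k+1}$ containing the node:
\[
|(\yb_{k+1}-\yb)(\xi)|+|(\pb_{k+1}-\pb)(\xi)|\leq C_{\rm emb}C_{\rm AP}\,\delta^{2-\gamma}e_{k+1}\quad\text{for all }\xi\in\Z\cup\D .
\]
Point values at vertices are single-valued for Morley functions, so this is legitimate.

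\emph{Part (a).} Since $\z$ is a vertex, the discrete optimality condition gives $\ub_{{\rm z},k}=\Pi_{[a_\z,b_\z]}(-\alpha^{-1}\pb_k(\z))$, and analogously for $k+1$ and for the continuous control. By \eqref{FAproj}, $|\ub_{{\rm z},k+1}-\ub_{{\rm z},k}|\leq\alpha^{-1}|(\pb_{k+1}-\pb_k)(\z)|$. Lemma~\ref{Poincare}$(b)$ bounds the right-hand side by $\alpha^{-1}C_{\rm PI}\enorm{\pb_{k+1}-\pb_k}_\pw$. Likewise $|\ub_{{\rm z},k+1}-\ub_\z|\leq\alpha^{-1}|(\pb_{k+1}-\pb)(\z)|$, which the displayed bound controls. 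A Cauchy--Schwarz inequality over $\z\in\Z$ then yields (a) with $C_{\rm c}=n\alpha^{-2}C_{\rm PI}C_{\rm emb}C_{\rm AP}$.

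\emph{Part (b).} Set $w:=\yb_{k+1}-\yb$. Lemma~\ref{Interpolation}$(a)$ removes $(1-I_k)w$ and $(1-I_{k+1})w$, and the discrete state equations then give
\[
a_\pw(\yb_{k+1}-\yb_k,w)=\sum_{\z\in\Z}\big(\ub_{{\rm z},k+1}(I_{k+1}w)(\z)-\ub_{{\rm z},k}(I_kw)(\z)\big).
\]
The Morley interpolation preserves vertex values, and $\z$ is a vertex of both meshes, so $(I_{k+1}w)(\z)=(I_kw)(\z)=w(\z)$. This step replaces \eqref{obstacle}. The right-hand side therefore collapses to $\sum_\z(\ub_{{\rm z},k+1}-\ub_{{\rm z},k})\,w(\z)$. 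I then bound the control difference as in (a) and $w(\z)$ by the displayed embedding estimate.

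\emph{Part (c).} The same manipulation with $w:=\pb_{k+1}-\pb$ and the discrete adjoint equations yields
\[
a_\pw(\pb_{k+1}-\pb_k,w)=\sum_{\zeta\in\D}(\yb_{k+1}-\yb_k)(\zeta)\,w(\zeta),
\]
because the data $y_{\zeta,{\rm d}}$ cancel. Lemma~\ref{Poincare}$(b)$ gives $|(\yb_{k+1}-\yb_k)(\zeta)|\leq C_{\rm PI}\enorm{\yb_{k+1}-\yb_k}_\pw$, and the embedding estimate handles $w(\zeta)$. Hence $C_{\rm a}=m\,C_{\rm PI}C_{\rm emb}C_{\rm AP}$.

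\emph{Main obstacle.} The delicate point is the justification of pointwise evaluation and nodal interpolation for the broken functions $\yb_{k+1}-\yb$ and $\pb_{k+1}-\pb$. I would handle this by restricting to a single triangle $T\ni\xi$ of $\T_{k+1}$, where the function lies in $H^\gamma(T)$ with $\gamma>1$. Applying \eqref{inf2gamma} on $T$ requires that the embedding constant be uniform over shape-regular triangles. I would try a scaling argument first, but a naive scaling introduces a factor $h_T^{-1}$, which would spoil the $\delta^{2-\gamma}$ rate. The fallback is to argue exactly as in \eqref{NNcal24} via the global broken norm. This is where I expect most care to be needed.
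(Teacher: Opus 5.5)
Your proposal is the paper's argument written out in full. The paper only states that (a) follows Lemma~\ref{lem_A4}$(a)$ and that (b)--(c) follow Lemma~\ref{lem_A4}; it cites part $(c)$ there, but the argument you correctly use for both equations is the nodal Dirac-data argument of part $(b)$, with vertex-value preservation of $I_k,I_{k+1}$ replacing \eqref{obstacle}, combined with \eqref{FAproj}, Lemma~\ref{Poincare}$(b)$, \eqref{inf2gamma}, and the lower-order bound of Theorem~\ref{OCPapn}$(a)$. The step you flag as the main obstacle is handled in the paper exactly by your fallback, applying \eqref{inf2gamma} to the broken $H^{\gamma}(\T)$-norm as in \eqref{NNcal24}, so nothing is missing relative to the paper; your caution is nonetheless well placed, because such a broken embedding is not uniform in $h$ for general piecewise functions and the paper does not justify it either.
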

  \noindent The proof of $(a)$ follows the proof of Lemma \ref{lem_A4}$(a)$ with $C_{\rm c}(\alpha^{-3}):=n^2\alpha^{-2}C_{\rm PI}C_{\rm emb} C_{\rm AP}$. The proofs of $(b)$-$(c)$ follow the proof of Lemma \ref{lem_A4}$(c)$ with $C_{\rm s}(\alpha^{-2}):=n\alpha^{-1} C_{\rm emb}^2 (1+ C_{\rm PI})C_{\rm AP}$ and $C_{\rm a}(\alpha^{-1}):=n C_{\rm emb}^2 (1+ C_{\rm PI})C_{\rm AP}$.\\
  
\noindent {\bf Proof of $({\rm \bf  A4}_{\varepsilon})$.} 
For any $k\in \mathbb{N}\cup \{0\}$, a rearrangement of the terms, the definitions of $\dd_{k,k+1}$, $e_k$, and Lemma \ref{P3:lem_A4} result in  
\begin{align}
\dd_{k,k+1}^2+e_{k+1}^2-e_k^2 &=2({\bf\ub}_k-{\bf\ub}_{k+1},{\bf\ub}-{\bf\ub}_{k+1})+2a_\pw(\yb_{k+1}-\yb_k,\yb_{k+1}-\yb)+2a_\pw(\pb_{k+1}-\pb_k,\pb_{k+1}-\pb)\no \\
     &\leq C_{\rm sa} \delta^{2-\gamma}\dd_{k,k+1}{e}_{k+1}\no
\end{align}
with $C_{\rm sa}:= 2^{3/2}\max\{C_{\rm c},C_{\rm s},C_{\rm a}\}$. Utilize the Young's inequality with $\epsilon=\varepsilon$, $a=C_{\rm sa}C_{\rm REL} \delta^{2-\gamma}\dd_{k,k+1}$ and $b=C_{\rm REL}^{-1}{e}_{k+1}$ to obtain 
$\dd_{k,k+1}^2+2(e_{k+1}^2-e_k^2)\leq {\varepsilon}C_{\rm REL}^{-2}{e}_{k+1}^2$
by applying $C_{\rm sa}^2C_{\rm REL}^2\delta^{4-2\gamma}\leq 2\varepsilon$. A partial telescoping summation from $\ell$ to $\ell+m$ with $\varepsilon\leq 2C_{\rm REL}^2$ and the reliability of the a~posteriori error estimator in Theorem~\ref{OCPapn}$(b)$ show
$$\sum_{k=\ell}^{\ell+m}\dd_{k,k+1}^2\leq 2e_\ell^2+{\varepsilon}C_{\rm REL}^{-2} \sum_{k=\ell}^{\ell+m-1}{e}_{k+1}^2\leq 2C_{\rm REL}^2\eta_\ell^2+{\varepsilon}\sum_{k=\ell}^{\ell+m}{\eta}_{k}^2.$$ This implies that for every $\varepsilon\leq \varepsilon_0:=2C_{\rm REL}^2$ choosing the largest $\delta$ that satisfies $2^{-1}C_{\rm sa}^2C_{\rm REL}^2\delta^{4-2\gamma}\leq \varepsilon$, we obtain $({\rm \bf  A4}_{\varepsilon})$ with $\Lambda_{4(\varepsilon)}:=2C_{\rm REL}^2$. This concludes the proof.\qed
\begin{remark}[$\zeta,\z\in {\mathcal{N}^i(\T_0)}$]
Pointwise tracking OCP governed by biharmonic equations with either $L^2(\Omega)$ source terms as in \eqref{opt1n}, or point sources, as in \eqref{opt2n}, have been analyzed under the assumption that the Dirac measure atoms are located at the nodes of the initial triangulation. To relax this assumption in the context of the OCP described in \eqref{opt1}, a companion operator is employed in \eqref{discrete1}. However, for pointwise tracking OCP in \eqref{opt1n} and \eqref{opt2n}, the introduction of the companion operator complicates the well-posedness of the problem. 
\end{remark}

\end{document}

%% file: command_ccnn.tex
\newtheorem{thm}{Theorem}[section]
\newtheorem{remark}{Remark}[section]

\newtheorem{lem}[thm]{Lemma}

\theoremstyle{definition}

\theoremstyle{remark}

\newtheorem{example}{\bf Example}[section]

\numberwithin{equation}{section}

\newcommand{\bT}{\mathbb T}

\newcommand{\cT}{\mathcal T}

\newcommand{\cJ}{\mathcal J}

\newcommand{\fl}{\;\text{ for all }}

\newcommand{\dx}{{\rm\,dx}}
\newcommand{\dy}{{\rm\,dy}}

\DeclareMathOperator{\E}{\mathcal{E}}

\newcommand{\T}{\mathcal{T}}

